\newcommand{\rar}{\rightarrow}
\newcommand{\cl}{\mathcal}
\def\cZ {\mathcal{Z}}
\newcommand{\tx}[1]{\mathrm{#1}}
\newcommand{\uln}[1]{{\underline{ #1 }}}
\definecolor{deepgreen}{cmyk}{1,0,1,0.5}
\newcommand{\A}{\mathcal{A}}
\newcommand{\E}{\mathcal{E}}
\newcommand{\HH}{\mathcal{H}}
\newcommand{\LL}{\mathcal{L}}
\newcommand{\KK}{\mathcal{K}}
\newcommand{\ZZ}{\mathcal{Z}}
\newcommand{\N}{\mathbb{N}}
\newcommand{\R}{\mathbb{R}}
\newcommand{\Z}{\mathbb{Z}}
\newcommand{\al}{\alpha}
\newcommand{\be}{\beta}
\newcommand{\ga}{\gamma}
\newcommand{\de}{\delta}
\newcommand{\e}{\epsilon}
\newcommand{\fy}{\varphi}
\newcommand{\om}{\omega}
\newcommand{\la}{\lambda}
\newcommand{\te}{\theta}
\newcommand{\s}{\sigma}
\newcommand{\De}{\Delta}
\newcommand{\Om}{\Omega}
\newcommand{\La}{\Lambda}
\newcommand{\p}{\partial}
\newcommand{\Rmnum}[1]{\expandafter\@slowromancap\romannumeral #1@}
\newcommand{\I}{\infty}
\newcommand{\ti}{\widetilde}
\newcommand{\ang}[1]{\left\langle{#1}\right\rangle}
\newcommand{\abs}[1]{\left\lvert{#1}\right\rvert}
\newcommand{\EQ}[1]{\begin{equation}\begin{split} #1 \end{split}\end{equation}}
\newcommand{\pmat}[1]{\begin{pmatrix} #1 \end{pmatrix}}
\newcommand{\Del}[1]{}
\numberwithin{equation}{section}
\newtheorem{thm}{Theorem}[section]
\newtheorem{cor}[thm]{Corollary}
\newtheorem{lem}[thm]{Lemma}
\newtheorem{prop}[thm]{Proposition}
\newtheorem{claim}[thm]{Claim}
\theoremstyle{remark}
\newtheorem{rem}[thm]{Remark}
\newcommand{\mand}{{\ \ \text{and} \ \  }}
\newcommand{\mif}{{\ \ \text{if} \ \ }}
\newcommand{\mas}{{\ \ \text{as} \ \ }}
\newcommand{\ula}{\underline{\lambda}}
\newcommand{\umu}{\underline{\mu}}
\definecolor{green}{rgb}{0,0.8,0} % Redefines the color green.
\newcommand{\eps}{\epsilon}
\newcommand{\bfd}{{\bf d}}
\newcommand{\bbS}{\mathbb S}
\newcommand{\calK}{\mathcal K}
\begin{document}

\title[Threshold corotational wave maps]{Threshold dynamics for corotational wave maps}

\author{Casey Rodriguez}

\begin{abstract}
We study the dynamics of corotational wave maps from $\R^{1+2} \rar  \mathbb S^2$ at threshold energy.  It is known that topologically trivial wave maps with energy $< 8\pi$ are global and scatter to a constant map.  In this work, we prove that a corotational wave map with energy equal to $8\pi$ is globally defined and scatters in one time direction, and in the other time direction, either the map is globally defined and scatters, or the map breaks down in finite time and converges to a superposition of two harmonic maps.  The latter behavior stands in stark contrast to higher equivariant wave maps with threshold energy which have been proven to be globally defined for all time. Using techniques developed in this paper, we also construct a corotational wave map with energy $= 8\pi$ which blows up in finite time. The blow-up solution we construct provides the first example of a minimal topologically trivial non-dispersing solution to the full wave map evolution.  
%Comments on proof
\end{abstract}

\maketitle

\section{Introduction}

\subsection{Wave maps}

In this paper we study the dynamics of energy critical wave maps which are defined as follows.  Let $\eta$ be the Minkowski metric on $\R^{1+2}_{t,x}$, and let $\cl N$ be a Riemannian manifold with metric $h$.  A map $u:\R^{1+2} \rar \cl N$ is a \emph{wave map} if it is a critical point of the action 
\begin{align*}
\cl A(u) = \frac{1}{2} \int_{\R^{1+2}} \langle \p^\mu u , 
\p_\mu u \rangle_{h} \, dx dt,
\end{align*}
where we raise and lower indices using the Minkowski metric $\eta$.  
The associated Euler-Lagrange equations are the \emph{wave maps equations} given in local coordinates by 
\begin{align}\label{wm}
\p^\mu \p_\mu u^a + \Gamma^a_{bc}(u) \p^\mu u^b \p_\mu u^c = 0.
\end{align}
Here the $\Gamma^a_{bc}$ are the Christoffel symbols associated to the metric $h$ on $\cl N$. The time translational symmetry of Minkowski space and Noether's theorem provide a conserved energy for the evolution
\begin{align}\label{energy}
\cl E(u(t),\p_t u(t)) := \frac{1}{2} \int_{\R^2} |\p_t u(t,x)|^2_h + |\nabla u(t,x)|_h^2 \, dx
= \mbox{const.}   
\end{align}
We study wave maps as solutions to the Cauchy problem \eqref{wm} with prescribed finite energy initial data $\vec u(0) = (u_0, u_1)$ where 
\begin{align*}
u_0(x) \in \cl N, \quad u_1(x) \in T_{u_0(x)} \cl N, \quad x \in \R^2.  
\end{align*}
Here and throughout the paper we use the notation $\vec u(t)$ to denote the pair of functions 
\begin{align*}
\vec u(t) := (u(t,\cdot), \p_t u(t,\cdot)).  
\end{align*}
We also assume that there exists $u_\infty \in \cl N$ such that 
\begin{align}\label{eq:behav_at_infinity}
u_0(x) \rar u_\infty \mbox{ as } |x| \rar \infty. 
\end{align}
Due to the conformal symmetry of Minkowski space, we also have the following scaling symmetry: if $\vec u(t)$ is a wave map and $\la > 0$, then 
\begin{align}\label{scale}
\vec u_\la(t,x) = (u_\la(t,x), \p_t u_\la(t,x)) := \left (
u \Bigl ( \frac{t}{\la},  \frac{x}{\la} \Bigr ),
\frac{1}{\la} \p_t u \Bigl ( \frac{t}{\la},  \frac{x}{\la} \Bigr )
\right )
\end{align}
is also a wave map.  The energy is scale invariant, 
\begin{align*}
\cl E (\vec u_\la) = \cl E(\vec u),
\end{align*}
and for this reason, the wave maps equations in (1+2)-dimensions are said to be \emph{energy critical}.  Wave maps have been extensively studied over the past several decades, and we refer the reader to \cite{SSbook} and \cite{GG} for reviews of the work that has been done. 

In this work we  specialize to the case $\cl N = \bbS^2$ (with the usual round metric) and wave maps which respect the rotational symmetry of the background and target.  More precisely, we fix an origin in $\R^2$ and north pole $N \in \bbS^2$. We say a map $u : \R^{1+2} \rar \bbS^2$ is \emph{corotational} or $1$-\emph{equivariant} if $u \circ \rho = \rho \circ u$ for all $\rho \in SO(2)$.  Here $\rho$ acts on $\bbS^2$ by rotation about the axis determined by $N$. Choosing $N = (0,0,1)$ without loss generality, we can write a corotational map as   
\begin{align}\label{equiv}
u(t,r,\theta) = (\sin \psi(t,r) \cos \theta, \sin \psi(t,r) \sin \theta, \cos \psi(t,r)) \in \bbS^2 \subset \R^3, 
\end{align}  
where $(t,r,\theta)$ are polar coordinates on $\R^{1+2}$, and $(\psi,\theta)$ are spherical coordinates on $\bbS^2$.  For corotational maps, the Cauchy problem \eqref{wm} reduces to a single equation for the azimuth angle $\psi = \psi(t,r)$:   
\EQ{ \label{eq:wmk}
\begin{aligned}
&\p_t^2 \psi -  \p_r^2 \psi  - \frac{1}{r} \p_r \psi +  \frac{\sin 2 \psi}{2r^2} = 0, \\ 
&\vec \psi(0) = (\psi_0, \psi_1),
\end{aligned}
}
The conserved energy \eqref{energy} is given by 
\EQ{ \label{eq:en} 
 \E( \vec \psi(t) ) = \pi \int_0^\I  \left( (\p_t \psi (t, r))^2  +  ( \p_r \psi(t, r))^2 + \frac{\sin^2\psi(t, r)}{r^2}  \right) rdr,
 } 
and the scaling symmetry of the equation \eqref{scale} is given by
\begin{align}
\vec \psi_\la(t,r) := \left (
\psi \Bigl ( \frac{t}{\la},  \frac{r}{\la} \Bigr ),
\frac{1}{\la} \p_t \psi \Bigl ( \frac{t}{\la},  \frac{r}{\la} \Bigr )
\right ).
\end{align}
The expression for the energy implies that there exists $m,n \in \Z$ such that 
$
\lim_{r \rar 0} \psi_0(r) = m\pi$ and $\lim_{r \rar \infty} \psi_0(r) = n\pi. 
$
By continuity of the flow $\vec \psi(t)$, 
\begin{align*}
\lim_{r \rar 0} \psi(t,r) = m\pi, \quad \lim_{r \rar \infty} \psi(t,r) 
= n\pi, \quad \forall t.  
\end{align*}
Without loss of generality, we may assume that $m = 0$ and $n \in \N \cup \{0\}$.
Thus, finite energy solutions to \eqref{eq:wmk} are split into disjoint classes given by  
\EQ{\label{eq:Hn}
	\HH_n := \{ (\psi_0, \psi_1) \mid \E(\psi_0, \psi_1) < \infty \mand \lim_{r \to 0}\psi_0(r) =0, \, \lim_{r \to \infty}\psi_0(r) = n\pi\}. 
}
The parameter $n \in \N \cup \{0\}$ we refer to as the \emph{degree} of the map, and it can be thought of as parameterizing the minimal number of times the map $\psi(t)$ (more precisely, $u(t)$ given by \eqref{equiv}) wraps $\R^2$ around the sphere. We study those corotational initial data $(\psi_0, \psi_1) \in \cl H_0$, i.e. which satisfy
\begin{align*}
\lim_{r \rar 0} \psi_0(r) = \lim_{r \rar \infty} \psi_0(r) = 0. 
\end{align*}  

A corotational ansatz reduces the complexity of the wave maps equations greatly and is possible in the more general case when $\cl N$ is a surface of revolution.  Choosing $\cl N = \bbS^2$ is motivated by what is known about stationary wave maps, or \emph{harmonic maps}, in this setting. By an ODE argument, the unique (up to scaling) nontrivial corotational harmonic map is given explicitly by 
\begin{align*}
Q(r) = 2 \arctan r,
\end{align*} 
with energy
\begin{align*}
\cl E(\vec Q) = 4\pi. 
\end{align*}
We note that 
\begin{align*}
\lim_{r \rar 0} Q(r) = 0, \quad \lim_{r \rar \infty} Q(r) = \pi, 
\end{align*}
so that $\vec Q \in \cl H_1$.  In fact, it can be shown that $Q$ minimizes the energy in $\cl H_1$ (see Section 2). As we will soon discuss, these harmonic maps play a fundamental role in the long time dynamics of wave maps with large initial data. 

We conclude this subsection by discussing $k$-equivariant maps, a generalization of our corotational reduction.  For $k \in \N$, we say a map $u : \R^{1+2} \rar \bbS^2$ is \emph{k-equivariant} if $u \circ \rho = \rho^k \circ u$ for all $\rho \in SO(2)$ where $SO(2)$ acts on the $\R^{1+2}$ and $\bbS^2$ as before.  Then we may write $$u(t,r,\theta) = (\sin \psi(t,r) \cos k\theta, \sin \psi(t,r) \sin k\theta, \cos \psi(t,r))$$ and the wave maps equations reduce to the single equation 
\EQ{ \label{eq:equiv}
	\begin{aligned}
		&\p_t^2 \psi -  \p_r^2 \psi  - \frac{1}{r} \p_r \psi +  k^2 \frac{\sin 2 \psi}{2r^2} = 0, \\ 
		&\vec \psi(0) = (\psi_0, \psi_1),
	\end{aligned}
}
The conserved energy \eqref{energy} is given by $\E^k( \vec \psi(t) ) = \pi \int_0^\I  \left( (\p_t \psi (t, r))^2  +  ( \p_r \psi(t, r))^2 + k^2 \frac{\sin^2\psi(t, r)}{r^2}  \right) rdr$.
As in the corotational setting, the unique (up to scaling) nontrivial $k$-equivariant harmonic map is given by 
\begin{align*}
Q^k(r) = 2 \arctan (r^k). 
\end{align*}
The harmonic map $\vec Q^k \in \cl H_1$, $\E^k(\vec Q^k) = 4 \pi k$ and $\vec Q^k$ minimizes the energy $\E^k(\cdot)$ in the class $\cl H_1$.  In particular, the corotational harmonic map $Q = Q^1$ has the least energy of all nontrivial equivariant harmonic maps.   

We now turn to motivating our main results. 

\subsection{History and motivation}
Strichartz estimates suffice to prove global existence for equivariant wave maps evolving from small degree-0 data (see Section 2), so recent work has been dedicated to understanding the long-time dynamics of wave maps evolving from large initial data. It is here that the family of harmonic maps play a fundamental role.  Indeed, a classical result of Struwe \cite{Struwe} states that if a smooth $k$-equivariant wave map $\vec \psi(t)$ breaks down at time $t = 1$, say, then $\vec \psi(t)$ converges to the harmonic map $\vec Q^k$ in a local spacetime norm.  Moreover, $\vec \psi(t,r)$ must concentrate energy in excess of $\E^k(\vec Q^k)$ at the tip of the inverted light cone centered at $(T_+,r) = (1,0)$.  Thus, a $k$-equivariant wave map $\vec \psi(t)$ with energy less that $\E^k(\vec Q_k)$ is globally defined and smooth.   
 The works by Krieger, Schlag, Tataru~\cite{KST}, Rodnianski, Sterbenz~\cite{RS}, and Rapha\"el, Rodnianski~\cite{RR} constructed examples of degree-1 wave maps that blow-up by bubbling off a harmonic map, i.e. 
 \begin{align*}
 \vec \psi(t) = \vec Q^k_{\la(t)} + \vec \varphi(t),
 \end{align*}
 with $\la(t) \rar 0$ as $t \rar T_+ < \infty$ and $\varphi(t)$ regular up to $t = T_+$.  
 
As we've discussed, harmonic maps play a key role in singularity formation for wave maps, but in fact they should be fundamental in describing the dynamics of \emph{arbitrary} wave maps.  Indeed, according to the \emph{soliton resolution conjecture}, one expects the following beautiful simplification of the dynamics: smooth wave maps asymptotically break up into a sum of dynamically rescaled harmonic maps and a free radiation term (a solution to the linearized equations).  
The problem of describing the dynamics of corotational wave maps with energy $= 2\cl E(\vec Q)$ we address in this paper is motivated by several recent advances made in establishing this conjecture for equivariant wave maps. We first state the following refined threshold theorem proved in~\cite{CKLS1}. 

\begin{thm}\emph{\cite{CKLS1}\label{t:2EQ}}  For smooth initial data $(\psi_0,\psi_1) \in \HH_0$ with 
\EQ{
\E^k(\psi_0,\psi_1) < 2\E^k(\vec Q^k), 
}
there exists a unique global smooth $k$-equivariant wave map $\vec \psi \in C(\R; \HH_0)$ with $\vec \psi(0) = (\psi_0,\psi_1)$. Moreover, $\vec\psi(t)$ scatters both forward and backward in time, i.e. there exist solutions $\vec \fy_L^\pm$ to the linearized equation
\begin{align}\label{free}
\p_t^2 \varphi - \p_r^2 \varphi - \frac{1}{r} \p_r \varphi + \frac{k^2}{r^2} \p_r \varphi = 0, 
\end{align}
 such that
\EQ{ \label{scat}
 \vec{\psi}(t) = \vec \fy_L^\pm(t) + o_{\HH_0}(1) \mas t \to  \pm\infty. 
}
\end{thm}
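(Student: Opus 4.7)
The plan is to follow the Kenig--Merle concentration-compactness plus rigidity framework, adapted to equivariant wave maps in the style of Cote-Kenig-Lawrie-Schlag and Sterbenz-Tataru. First I would set up the local Cauchy theory: small data global well-posedness and scattering for \eqref{eq:equiv}, together with a stability/perturbation lemma in a scale-invariant Strichartz-type norm on spacetime slabs. This small data theory is classical in the equivariant setting and yields scattering whenever the initial energy is below an absolute constant $\delta_0$.

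Second, I would establish a uniform a priori bound under the threshold assumption. For $\vec \psi \in \HH_0$, a Bogomol'nyi-type factorization of the potential energy, combined with $\E^k(\vec \psi) < 2\E^k(\vec Q^k) = 8\pi k$, prevents $\psi(t, r)$ from reaching the value $\pi$ for any $r$, yielding a bound $\|\psi(t)\|_{L^\I} \leq \pi - \eta$ for some $\eta > 0$ uniformly over the lifespan of the solution. This bound is preserved under profile decompositions and keeps any limit object inside the degree-$0$ class.

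Third, I would run the standard compactness argument. Supposing the theorem fails, let $E_c < 2\E^k(\vec Q^k)$ be the infimal energy of nonscattering data in $\HH_0$. A Bahouri-Gerard profile decomposition adapted to equivariant wave maps, combined with the perturbation lemma from the first step, produces a nonzero \emph{critical element} $\vec \psi_c$ of energy $E_c$ whose trajectory is precompact in $\HH_0$ modulo the scaling symmetry \eqref{scale}.

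The fourth and main step is rigidity, which I expect to be the hardest. The strategy is to combine a virial/Morawetz integrated local energy decay inside the forward light cone, which forces the modulation scale $\lmb(t)$ to stabilize and the solution to exist globally in time, with an exterior ``channel of energy'' lower bound for the linearized equation \eqref{free} outside light cones, which forces the exterior part of $\vec \psi_c$ to vanish in at least one time direction. The $L^{\I}$ control from the second step rules out bubbling off a harmonic map (which would put the profile into $\HH_1$, not $\HH_0$), and compactness then propagates exterior vanishing inward to give $\vec \psi_c \equiv 0$, contradicting $E_c > 0$. The delicate point is producing a channel-of-energy inequality robust enough to survive the first-order potential obtained by linearizing around an arbitrary element of the compact orbit, and meshing it correctly with the virial bound; it is precisely this interaction which identifies $2\E^k(\vec Q^k)$ as the sharp threshold.
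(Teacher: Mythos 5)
This theorem is not proved in the present paper at all: it is quoted from \cite{CKLS1}, so there is no internal proof to compare your proposal against, only the cited one. Your outline does follow essentially the same route as that cited proof: the Kenig--Merle concentration-compactness/rigidity scheme run on the $4d$ reduction $u=\psi/r$ (exactly the Cauchy theory and profile decompositions recalled in Section 2 here), together with the variational input that $\E^k(\vec\psi)<2\E^k(\vec Q^k)$ forces $\sup_r|\psi(t,r)|\le \pi-\eta$, which is also the heuristic the paper records right after the statement. Two points of detail differ from what is actually done in \cite{CKLS1} and are worth flagging. First, global existence does not come from a virial/modulation argument stabilizing $\la(t)$: it follows directly from Struwe's bubbling theorem combined with the $L^\infty$ bound, since a degree-$0$ map uniformly bounded away from $\pi$ cannot converge locally (after rescaling) to a harmonic map $Q^k_{\la}$. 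Second, the rigidity step in \cite{CKLS1} applies the exterior energy (channel of energy) estimates of C\^ot\'e--Kenig--Schlag for the \emph{free} radial wave equation in $\R^{1+4}$ to the nonlinear solution on exterior cones, where the nonlinearity is perturbative, rather than to a linearization around elements of the compact orbit as you suggest; the genuinely delicate issue is that in even dimensions the exterior estimate holds only for one component of the data, and handling that restriction is where the work lies. As a plan your proposal is sound and identifies the correct threshold mechanism, but the hard ingredients (equivariant profile decomposition in $H\times L^2$, the long-time perturbation lemma, and the even-dimensional channel-of-energy rigidity) are named rather than carried out.
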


The intuition for the threshold energy being $2 \E^k(\vec Q^k)$ rather than $\E^k(\vec Q^k)$ is the following.  If a $k$-equivariant map $\vec \psi(t) \in \cl H_0$ wraps the plane around the sphere once, then it must also unwrap the sphere once more in order to have degree 0.  Since the minimum amount of energy needed for a $k$-equivariant map to wrap the plane around the sphere once is equal to $\E^k(\vec Q^k)$, it follows that if $\E^k(\vec \psi) < 2 \E^k(\vec Q^k)$ then $\psi(t)$ is bounded away from the south pole (i.e. $\psi(t,r) < \pi - \e, \quad \forall t,r$).  Thus, $\vec \psi(t)$ cannot converge locally to a harmonic map $\vec Q^k$ which by Struwe's bubbling result implies $\vec \psi(t)$ is globally regular.

A result analogous to Theorem \ref{t:2EQ} for the full wave map system, with no symmetry assumptions, was established by Lawrie and Oh in \cite{LO1}.  More precisely, we say initial data $(u_0,u_1)$ (with target $\bbS^2$) is \emph{topologically trivial} if 
\begin{align*}
\frac{1}{4\pi} \int_{\R^2} u_0^* \, \om_{\bbS^2} = 0,  
\end{align*}
where $\om_{\bbS^2}$ is the volume form on $\bbS^2$. It can be checked that the above condition is propagated by the wave map evolution, and an equivariant map $\vec u$ with associated azimuth angle $\vec \psi \in \HH_0$ is topological trivial.  The authors obtain the following result as a consequence of the analysis from \cite{ST2}. 
  \begin{thm}\emph{ \cite{LO1}} \label{t:2EQfull}
  Suppose that $(u_0,u_1)$ is smooth topologically trivial finite energy initial data with 
  \begin{align*}
  \cl E(u_0,u_1) < 8\pi = 2 \cl E(\vec Q^1).
  \end{align*} 
Then there exists a unique global solution $u:\R^{1+2} \rar \bbS^2$ to the wave maps equations \eqref{wm} with $\vec u(0) = (u_0,u_1)$.  Moreover, $\vec u(t)$ scatters to the constant map as $t \rar \pm \infty$.  
  \end{thm}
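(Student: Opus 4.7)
The approach is a proof by contradiction combining the concentration dichotomy of Sterbenz--Tataru~\cite{ST2} with topological constraints. Suppose $\vec u(t)$ is a smooth topologically trivial wave map with $\cl E(\vec u) < 8\pi$ which fails to scatter to the constant map in at least one time direction. By the main theorem of \cite{ST2}, this forces the concentration of a harmonic map: there exist a sequence of times $t_n$ approaching the endpoint of the maximal interval of existence, points $x_n \in \R^2$, and scales $\la_n > 0$ such that, after translating by $x_n$ and rescaling by $\la_n$, $\vec u(t_n)$ converges locally in the energy topology to a non-trivial finite-energy harmonic map $\vec Q_* : \R^2 \to \bbS^2$.

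By the classical classification of finite-energy harmonic maps into $\bbS^2$, every such non-trivial map is a rational map of the Riemann sphere, and hence carries a well-defined topological degree $k_* \in \Z \setminus \{0\}$ with energy
\[
\cl E(\vec Q_*) = 4\pi |k_*| \geq 4\pi.
\]

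The key step is to upgrade this single concentrating bubble to a full profile decomposition. Via a Bahouri--Gerard-type decomposition adapted to wave maps into $\bbS^2$, one writes $\vec u(t_n)$ asymptotically as a superposition of finitely many concentrating harmonic-map bubbles $\vec Q^{(j)}$ at scales $\la_n^{(j)}$ and positions $x_n^{(j)}$, together with a dispersive remainder that converges locally to zero. The energies of these pieces are asymptotically additive, so each bubble contributes at least $4\pi$. Moreover, topological degree passes to the limit and is additive across the bubbles -- while the dispersive remainder contributes no degree -- so the topological triviality of $\vec u$ forces $\sum_j k^{(j)} = 0$.

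Since at least one bubble is present, namely $\vec Q_*$ with $k_* \ne 0$, the vanishing of $\sum_j k^{(j)}$ forces the existence of at least one additional bubble. There are therefore at least two bubbles in total, each with energy $\geq 4\pi$, so $\cl E(\vec u) \geq 8\pi$, contradicting $\cl E(\vec u) < 8\pi$. Global existence then follows because any finite-time blow-up would also constitute a failure to scatter. The main obstacle is establishing the profile decomposition rigorously for $\bbS^2$-valued maps: the non-linearity of the target prevents naive vector-space subtraction, and one must show both that the energies decouple asymptotically and that topological degree is additive in the limit. Both are handled through the bubble-tree analysis developed in \cite{ST2}.
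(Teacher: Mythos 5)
The paper does not prove this theorem; it quotes it from \cite{LO1}, where it is obtained from the Sterbenz--Tataru analysis \cite{ST2}. Measured against that route, your first step is fine: if the solution blows up or fails to scatter, \cite{ST2} produces times $t_n$, centers $x_n$ and scales $\la_n$ along which a nontrivial finite-energy harmonic map (more precisely, a Lorentz transform of one) bubbles off, and every such map has energy $4\pi|k_*|$ with $k_* \neq 0$. The gap is in your ``key step.'' A full Bahouri--G\'erard-type bubble decomposition for \emph{non-equivariant} $\bbS^2$-valued wave maps --- finitely many harmonic-map bubbles plus a dispersive remainder, with asymptotic additivity of both energy \emph{and} topological degree, and with the remainder carrying zero degree --- is not contained in \cite{ST2} and is not available off the shelf at this level of generality; where such decompositions exist they are themselves major theorems (and typically require symmetry or are formulated only along sequences with much weaker structural conclusions). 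Moreover, the degree $\frac{1}{4\pi}\int u^*\om_{\bbS^2}$ is not continuous under the local-in-space, modulated convergence you actually get from \cite{ST2}, and ``the dispersive remainder contributes no degree'' does not follow from dispersive smallness alone, since the degree density is quadratic in derivatives and the remainder is not small in energy. Because you cannot subtract sphere-valued maps, the second bubble you need is precisely the object your argument assumes rather than produces.

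The fix --- and the actual mechanism in \cite{LO1} --- avoids any second bubble or profile decomposition. One uses the pointwise Bogomol'nyi-type bound $|u^*\om_{\bbS^2}| \le e(u)$ (the energy density), valid on any region. Along the concentration, the rescaled map is close in energy on a ball $B_n$ (of radius $\gg \la_n$, $\ll 1$) to the degree-$k_*$ harmonic map, so $\int_{B_n} u(t_n)^*\om_{\bbS^2} \to 4\pi k_*$ and $\cl E(u(t_n); B_n) \ge 4\pi|k_*| - o(1)$. Topological triviality gives $\int_{\R^2} u(t_n)^*\om_{\bbS^2} = 0$, hence $\int_{B_n^c} u(t_n)^*\om_{\bbS^2} \to -4\pi k_*$, and the same pointwise bound forces $\cl E(u(t_n); B_n^c) \ge 4\pi|k_*| - o(1)$ \emph{without} knowing anything about the structure of the exterior. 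Summing, $\cl E(\vec u) \ge 8\pi|k_*| - o(1) \ge 8\pi - o(1)$, contradicting $\cl E(\vec u) < 8\pi$; this simultaneously rules out finite-time blow-up and failure of scattering. So your overall strategy (one bubble plus the degree-zero constraint costs $8\pi$) is the right one, but the step that converts the topological constraint into energy must go through the local degree/energy inequality, not through an unproven multi-bubble decomposition.
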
 
  
The works~\cite{CKLS1, CKLS2} also established soliton resolution for corotational wave maps in $\HH_1$ with energy below $3 \cl E(\vec Q)$.  In this setting only one concentrating bubble is possible, and these works showed that for any such wave map there exists a solution $\vec \varphi_L(t) \in \HH_0$ to the free equation \eqref{free} (the radiation) and a continuous dynamical scale  $\la(t) \in (0, \infty)$ such that 
\EQ{ \label{eq:sr1} 
	\vec \psi(t) = \vec Q_{\la(t)}  +  \vec \fy_L(t) + o_{\HH_0}(1) \mas t \to T_+. 
} 
Proving soliton resolution above $3 \cl E(\vec Q)$ is very challenging since one can conceivably have multiple harmonic maps concentrating at different scales and interacting.  However, there has been exciting recent progress in establishing a weaker form of the conjecture.  The work by Cote \cite{Cote15} (for $1$-equivariant maps) and Jia, Kenig \cite{JK} (for all equivariant maps) established the following soliton resolution result along a well-chosen sequence of times. 

\begin{thm}\emph{ \cite{Cote15, JK}}\label{t:cjk} Let $\vec \psi(t)\in \HH_{n}$ be a smooth $k$-equivariant wave map on $[0, T_+)$. Then there exists a sequence of times $t_n \to T_+$, an integer $J \in \N \cup \{0\}$, a solution $\vec \varphi_L(t) \in \cl H_0$ to \eqref{free}, sequences of scales $\la_{n, j}$ which satisfy $0 < \la_{n,1} \ll \la_{n,2} \ll \cdots \ll \la_{n,J}$ and signs $\iota_j \in \{-1, 1\}$ for $j \in \{1,  \dots, J\}$,  so that 
\EQ{ \label{eq:seq} 
 \vec \psi(t_n)  = \sum_{j =1}^J \iota_j \vec Q^k_{\la_{n,j}}  + \vec \fy_L(t_n) + o_{\HH_0}(1) \mas n \to \infty.
}
If $T_+ < \infty$ then $J \geq 1$, $0 < \la_{n,1} \ll \cdots \ll \la_{n,J} \ll T_+ - t_n$, and if $T_+ = \infty$ then $0 < \la_{n,1} \ll \cdots \ll \la_{n,J} \ll t_n.$  The 
 signs $\iota_j$ are required to satisfy the topological constraint $\vec \psi(t) \in \cl H_n$, i.e. $$\lim_{r \to \infty} \sum_{j =1}^J \iota_j  Q^k_{\la_{n,j}}(r) = n\pi.$$ 
\end{thm}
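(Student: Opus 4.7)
The plan is to follow the strategy of C\^ote (for $k=1$) and Jia-Kenig (for general $k$), which combines a time-averaged virial estimate (to select the sequence of times), an outer-energy extraction of the radiation field, and a profile decomposition to peel off bubbles iteratively. The guiding principle is that along a well-chosen sequence $t_n \to T_+$ the solution becomes ``almost stationary,'' which, modulo a free radiation term supported near the light cone, forces it to resolve into a superposition of harmonic maps.

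First, starting from a Morawetz/virial-type identity applied to $\vec \psi$, I would derive a time-averaged integrability bound for $\p_t \psi$ on the (inverted) light cone, which by a mean-value argument produces a sequence $t_n \to T_+$ along which $\p_t \psi(t_n, \cdot)$ is asymptotically negligible in $L^2(rdr)$ on $\{r \leq T_+ - t_n\}$ (or on $\{r \leq t_n\}$ when $T_+ = \I$). The dynamics along this sequence are thus asymptotically stationary. Next, I would extract the radiation using exterior energy estimates for the linearized equation \eqref{free}, adapting the Duyckaerts-Kenig-Merle channel-of-energy method to the $k$-equivariant setting: these produce a free solution $\vec \varphi_L \in \HH_0$ to \eqref{free} such that $\vec \psi(t) - \vec \varphi_L(t) \to 0$ in the energy norm restricted to the exterior of the cone. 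Setting $\vec w_n := \vec \psi(t_n) - \vec \varphi_L(t_n)$, the radiation extraction localizes $\vec w_n$ essentially inside the cone, while the virial step forces $\p_t w_n \to 0$ in $L^2$.

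Finally, I would apply a Bahouri-G\'erard style profile decomposition to $\vec w_n$ in $\HH_0$, invoking Struwe's bubbling compactness at each stage to identify each extracted profile with a signed rescaled harmonic map $\iota_j \vec Q^k_{\lambda_{n,j}}$; the strict separation $0 < \lambda_{n,1} \ll \cdots \ll \lambda_{n,J}$ and the upper bound $\lambda_{n,J} \ll T_+ - t_n$ (resp.\ $\ll t_n$) both follow from the exterior energy estimates, which rule out a bubble at scale comparable to the cone size coexisting with a nontrivial radiation. The topological condition $\vec \psi(t_n) \in \HH_n$ then forces $\sum_j \iota_j = n$ at $r = \I$. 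The main obstacle will be the rigidity step: showing that the residual $\vec w_n - \sum_j \iota_j \vec Q^k_{\lambda_{n,j}}$ converges to zero in $\HH_0$. This amounts to ruling out nontrivial compact non-radiating $k$-equivariant solutions distinct from superpositions of harmonic maps, and it relies on combining the asymptotic stationarity from the virial step with the classification of smooth static finite-energy equivariant maps (only $Q^k$, up to sign and scale) to force the residual to vanish.
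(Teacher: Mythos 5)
The paper does not prove this statement at all: Theorem \ref{t:cjk} is imported verbatim from C\^ote \cite{Cote15} and Jia--Kenig \cite{JK}, and is used later as a black box (e.g.\ in Proposition \ref{p:cjk}). So there is no in-paper argument to compare yours against; what can be assessed is whether your sketch faithfully reproduces the strategy of the cited works. At the level of the overall architecture it does: a virial/Morawetz time average selects $t_n$ along which the solution is almost stationary, a free radiation $\vec\varphi_L$ is subtracted, and a Bahouri--G\'erard decomposition combined with Struwe-type bubbling identifies the remaining profiles with $\pm \vec Q^k_{\lambda}$, with the residual killed by energy Pythagorean expansions and the almost-stationarity.

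However, one load-bearing step in your outline is not how those proofs go and, as stated, would fail. You propose to extract the radiation and to obtain the scale bounds $\lambda_{n,J} \ll T_+ - t_n$ (resp.\ $\ll t_n$) by ``adapting the Duyckaerts--Kenig--Merle channel-of-energy method'' to the linearized equation \eqref{free}. After the substitution $u = \psi/r$ (Section 2 of this paper), that linear equation is the radial free wave equation on $\R^{1+4}$, and in even space dimensions the exterior-energy lower bound underlying the channel-of-energy method is known to fail for half of the initial data (this is the C\^ote--Kenig--Schlag energy-partition phenomenon). This failure is precisely the reason the equivariant wave-map resolution was first obtained only along a sequence of times, by virial-type arguments, rather than by the DKM rigidity scheme. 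In \cite{Cote15, JK} the radiation term is constructed by a different mechanism (matching the nonlinear solution with a free wave in the exterior of light cones, using finite speed of propagation, monotonicity of exterior energy and small-data scattering), and the upper bound on the largest bubble scale comes from the classical vanishing of energy in the self-similar region (Christodoulou--Tahvildar-Zadeh, Shatah--Tahvildar-Zadeh) together with the virial estimate, not from exterior-energy lower bounds. Similarly, the rigidity you invoke is lighter than ``no compact non-radiating solutions'': along the selected times the profiles have vanishing velocity, so elliptic theory plus the classification of finite-energy equivariant harmonic maps identifies them directly. If you replace the channel-of-energy step by these ingredients, your outline matches the cited proofs.
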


We remark that the works
~\cite{CKLS1, CKLS2, Cote15, JK, JL} use ideas and techniques inspired by the seminal papers on the focusing quintic nonlinear wave equation in three space dimensions by Duyckaerts, Kenig, and Merle~\cite{DKM1, DKM2, DKM3, DKM4} (see also \cite{KBook-15} for an account of the important techniques and ideas in these papers).   

In \cite{JJ-AJM}, Jendrej showed it is possible for more than one bubble to form in the decomposition \eqref{eq:seq}.

\begin{thm}\emph{\cite{JJ-AJM}}
	\label{thm:deux-bulles-wmap}
	Fix an equivariance class $k > 2$. There exists a solution $\vec\psi: (-\infty, T_+) \to \HH_0$ of \eqref{eq:equiv}
	such that
	\begin{equation}
	\label{eq:mainthm-wmap}
	\lim_{t\to -\infty}\big\|\vec\psi(t) - \big(\vec Q_{c_k |t|^{-\frac{2}{k-2}}} - \vec Q \big)\big\|_{\HH_0} = 0,
	\end{equation}
	where $c_k > 0$ is explicit. \qed
\end{thm}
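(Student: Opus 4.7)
The plan is to construct $\vec\psi$ as a backward-in-time limit of a sequence of approximate solutions, following a modulation-plus-compactness strategy. Throughout, $\vec Q$ denotes the $k$-equivariant harmonic map $\vec Q^k$, and I set $\La Q := r\p_r Q$.

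\textbf{Step 1 (Ansatz and formal modulation ODE).} First I would analyze the degree-zero two-bubble ansatz $\vec U_\mu := \vec Q_\mu - \vec Q \in \HH_0$. For $\mu \ll 1$ this is an ``almost solution'' of \eqref{eq:equiv}: substituting $\vec U_{\mu(t)}$ into the equation yields a residual with two dominant contributions---(i) an acceleration term proportional to $(\ddot\mu/\mu)\La Q_\mu$, and (ii) the bubble--bubble interaction from the nonlinearity $k^2\sin(2\psi)/(2r^2)$. Exploiting the explicit formula $Q^k(r) = 2\arctan(r^k)$ and the asymptotic $\pi - Q^k(r) = O(r^{-k})$, I would show that the projection of this residual onto the scaling mode $\La Q_\mu$ has leading size $\mu^{k-1}$; matching produces an ODE of the form $\ddot\mu = C_k \mu^{k-1}$ with $C_k > 0$. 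The hypothesis $k > 2$ is precisely what makes this ODE admit the decaying self-similar solution $\mu(t) = c_k|t|^{-2/(k-2)}$ as $t \to -\infty$, with explicit $c_k$.

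\textbf{Step 2 (Backward approximate Cauchy problems).} Next, fix a sequence $T_n \to -\infty$ and let $\vec\psi_n$ be the solution of \eqref{eq:equiv} with data $\vec\psi_n(T_n) = \vec U_{\mu(T_n)}$, which exists on some maximal forward interval by local well-posedness in $\HH_0$. Decompose
\[
\vec\psi_n(t) = \vec U_{\mu_n(t)} + \vec g_n(t),
\]
where the modulation parameter $\mu_n(t)$ is fixed by an orthogonality condition such as $\vec g_n \perp \La Q_{\mu_n}$ in a suitably chosen inner product, making $\mu_n$ smooth for as long as $\|\vec g_n\|_{\HH_0}$ remains small.

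\textbf{Step 3 (Uniform bootstrap --- the main obstacle).} The heart of the proof is to establish \emph{uniform-in-$n$} a priori bounds
\[
\|\vec g_n(t)\|_{\HH_0} + \left|\frac{\mu_n(t)}{\mu(t)} - 1\right| \aleq |t|^{-\alpha}, \qquad t \in [T_n, T_*],
\]
on an interval $[T_n, T_*]$ with $T_*$ independent of $n$. Closing this bootstrap requires constructing a coercive Lyapunov functional for the linearization of \eqref{eq:equiv} at $\vec U_{\mu_n}$---combining the spectral gap of the linearized operator at $Q^k$ with a virial-type correction to neutralize the scaling mode---and sharply controlling its time derivative. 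Time integration produces bubble-interaction error terms of size $\mu^{k-1}$ whose integral on $(-\infty, T_*]$ converges thanks to the fast decay $\mu(t) \aleq |t|^{-2/(k-2)}$; this integrability would fail for $k = 1, 2$, which is exactly why the theorem requires $k > 2$. Differentiating the orthogonality condition yields a modulation equation for $\mu_n$ that gives $|\dot\mu_n - \dot\mu|$ bounds, which feed back into and close the bootstrap.

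\textbf{Step 4 (Compactness and conclusion).} Finally, the uniform bounds from Step 3 give precompactness of $\{\vec\psi_n(T_*)\}_n$ in $\HH_0$; a profile decomposition of any weak limit cannot contain profiles beyond the two bubbles already present, thanks to the uniform error control. Extracting a subsequential limit $\vec\psi(T_*)$ and solving \eqref{eq:equiv} produces a solution $\vec\psi$ on some maximal interval $(-\infty, T_+)$. Passing to the limit in the a priori bounds as $T_n \to -\infty$ yields $\|\vec\psi(t) - \vec U_{\mu(t)}\|_{\HH_0} \to 0$ as $t \to -\infty$, which is precisely \eqref{eq:mainthm-wmap}.
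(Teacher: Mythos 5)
The statement you are proving is not proved in this paper at all: Theorem \ref{thm:deux-bulles-wmap} is quoted from \cite{JJ-AJM}, so there is no internal proof to compare against. Your outline does follow the general scheme of that construction, which is also the scheme the present paper adapts in Section 5 for the corotational case: a formal modulation ODE $\ddot\mu = C_k\mu^{k-1}$ whose zero-energy orbit is $c_k|t|^{-2/(k-2)}$, a sequence of Cauchy problems posed at times $T_n\to-\infty$, uniform modulation and energy--virial estimates, and a limiting argument. At that level of description the plan is the right one.

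However, Step 2 as written contains a genuine error, and Step 3 (where all the analytic content lives) is only asserted. Taking data $\vec\psi_n(T_n)=(Q_{\mu(T_n)}-Q,\,0)$ with \emph{zero velocity} cannot work: the trajectory you want to track is the zero-energy orbit of $\ddot\mu = C_k\mu^{k-1}$, whereas zero-velocity data sits at ODE energy $\tfrac12\dot\mu^2-\tfrac{C_k}{k}\mu^k=-\tfrac{C_k}{k}\mu(T_n)^k$, a deficit of exactly the critical size. Concretely, if the bootstrap bounds held then $\mu_n$ would approximately obey the ODE, and a solution started from rest at $\mu(T_n)$ lags behind the reference $\mu(t)$ by a fixed multiplicative factor after an elapsed time comparable to $|T_n|$ (the time scale $\mu(T_n)^{-(k-2)/2}\sim|T_n|$ on which the reference itself doubles); so already at $t\sim T_n/2$ one has $|\mu_n(t)/\mu(t)-1|\gtrsim 1$, contradicting the claimed uniform bound $\lesssim|t|^{-\alpha}$. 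No Lyapunov functional can repair this, because the discrepancy is in the modulation ODE itself, not in the error $g_n$. This is precisely why the analogous construction in Section 5 of this paper prepares the data with the matching velocity component $\psi_{1,n}=-\ell'(t_n)\La Q_{\underline{\ell(t_n)}}\chi_{\sqrt{R_n\ell(t_n)}}$ (cut off so that the energy is exactly $2\E(\vec Q)$) and checks in Corollary \ref{l:zeta'} that the resulting $b_n(t_n)$ matches the formal dynamics to leading order; \cite{JJ-AJM} makes an analogous preparation. Beyond this, closing the bootstrap is not just ``spectral gap plus virial correction'': one needs the sign/monotonicity structure of the bubble interaction (the analogue of the lower bound $b'(t)\ge(8-\de)/\mu$ in Proposition \ref{p:modp2}, here $b'\gtrsim\mu^{k-1}$) together with the prepared initial value of $b$, and coercivity must be arranged in the presence of the zero modes of the linearization at \emph{both} bubble scales, which your single orthogonality condition does not address. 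Finally, in Step 4 strong precompactness and a profile decomposition are not how the limit is taken; the standard route (and the one used at the end of Section 5 here) is a weak limit combined with weak continuity of the flow (Corollary A.6 of \cite{JJ-AJM}), the two-bubble asymptotics surviving by weak lower semicontinuity of the norm.
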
  
A similar construction is possible when $k = 2$ with an explicit exponentially decaying scale as $t \rar -\infty$. By Theorem \ref{t:2EQ}, these solutions are examples of non-dispersing \emph{threshold solutions} to \eqref{eq:equiv} for $k \geq 2$. 

In \cite{JL}, Jendrej and Lawrie classified the dynamics of $k$-equivariant wave maps $\vec \psi(t)$ with \emph{threshold energy} $\E^k(\vec \psi(t)) = 2 \E^k(\vec Q^k)$ for $k \geq 2$.   Their work provided the primary motivation and roadmap for establishing our main results.  To state their results concisely, 
we first introduce some terminology.  Let $\vec \psi(t) : (T_-, T_+) \to \HH_0$ be a $k$-equivariant wave map with $\E^k(\vec \psi) = 2 \E^k(\vec Q^k)$. We say that $\vec\psi(t)$ is a \emph{two-bubble in
	the forward time direction} if there exist $\iota \in \{1, -1\}$
and continuous functions $\lambda(t), \mu(t) > 0$ such that
\EQ{
	\lim_{t \to T_+} \| (\psi(t) - \iota(Q^k_{\la(t)} - Q^k_{\mu(t)}), \psi_t(t))\|_{\HH_0} = 0, \quad \lambda(t) \ll \mu(t)\text{ as }t \to T_+.
}
The notion of a \emph{two-bubble in the backward time direction} is defined similarly.  

\begin{thm}\emph{ \cite{JL}}\label{t:JL} Let $k \geq 2$, and let $\vec\psi :(T_-, T_+) \to \HH_0$ be a $k$-equivariant wave map such that
 	\EQ{
 		\E^k(\vec \psi) = 2 \E^k(\vec Q^k) = 8\pi k.
 	}
 	Then $T_- = -\infty$, $T_+ = +\infty$ and one the following alternatives holds:
 	\begin{itemize}[leftmargin=0.5cm]
 		\item $\vec \psi(t)$ scatters in both time directions,
 		\item $\vec\psi(t)$ scatters in one time direction and is a two-bubble in the other time direction.  Moreover if $\vec \psi(t)$ is a two-bubble in the forward time direction, then there exists $C = C(k) > 0, \mu_0 > 0$ such that $\mu(t) \rar \mu_0$ and 
 		\begin{align}
 		\mu_0 \exp(-Ct) \leq&\,\, \la(t) \leq \mu_0 \exp(-t/C), \quad \mbox{if } k = 2, \\
 		\frac{\mu_0}{C} t^{\frac{-2}{k-2}} \leq &\,\,\la(t) \leq C \mu_0 t^{\frac{-2}{k-2}}, \quad \mbox{if } k \geq 3. 
 		\end{align}
An analogous estimate holds if $\vec  \psi(t)$ is a two-bubble in the backward time direction.  
 	\end{itemize} 
 \end{thm}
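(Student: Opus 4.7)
The plan is to upgrade the subsequential soliton resolution of Theorem \ref{t:cjk} to a continuous two-bubble decomposition, and then to extract sharp ODE asymptotics for the modulation scales. Throughout, the threshold energy identity $\E^k(\vec\psi) = 2\E^k(\vec Q^k)$ together with the topological class $\HH_0$ is exploited to rigidly constrain both the number of bubbles and the size of the radiation.

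Fix a time direction, say $t\to T_+$, and suppose $\vec\psi(t)$ does not scatter forward. Theorem \ref{t:cjk} provides $t_n \to T_+$ with
$$\vec\psi(t_n) = \sum_{j=1}^J \iota_j\,\vec Q^k_{\la_{n,j}} + \vec\varphi_L(t_n) + o_{\HH_0}(1).$$
By the Pythagorean expansion of the energy across well-separated scales and radiation,
$$2\E^k(\vec Q^k) = \E^k(\vec\psi) = J\,\E^k(\vec Q^k) + \|\vec\varphi_L\|_{\HH_0}^2 + o(1),$$
which forces $J \leq 2$; the $\HH_0$ constraint $\sum_j \iota_j = 0$ forces $J$ even. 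Hence $J \in \{0,2\}$, with $\iota_1 = -\iota_2$ and $\vec\varphi_L \equiv 0$ when $J = 2$. The case $J = 0$ is excluded as follows: if $J = 0$, then $\vec\psi(t_n) = \vec\varphi_L(t_n) + o_{\HH_0}(1)$, so along this sequence $\vec\psi$ is approximately free; a standard perturbation argument applied to \eqref{eq:equiv} then forces $\vec\psi(t)$ to scatter forward, contradicting the non-scattering hypothesis. Hence $J = 2$ and
$$\vec\psi(t_n) = \iota\big(\vec Q^k_{\la_n} - \vec Q^k_{\mu_n}\big) + o_{\HH_0}(1),\qquad \la_n \ll \mu_n.$$

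Next I upgrade to a continuous decomposition. Standard modulation theory, applied at each $t$ close to $T_+$, produces continuous scales $\la(t), \mu(t)$ and an orthogonal remainder $\vec g(t)$ vanishing in $\HH_0$ as $t \to T_+$. Projecting \eqref{eq:equiv} onto the two scaling directions yields a coupled second-order system for $(\la(t), \mu(t))$ whose leading-order right-hand side is governed by the $Q^k$--$Q^k$ bubble interaction. Because $\pi - Q^k(r) \sim c_k\, r^{-k}$ as $r \to \infty$, this interaction scales like a positive power of $\la/\mu$, reducing the system to a one-dimensional effective ODE for $s(t) := \log(\mu(t)/\la(t))$. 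Integrating this ODE yields $\mu(t) \to \mu_0 > 0$ together with the asserted rates: exponential two-sided bounds on $\la(t)/\mu_0$ for $k = 2$, and $\la(t) \sim \mu_0\, t^{-2/(k-2)}$ for $k \geq 3$. Because the ODE has global forward-in-time solutions with $\la(t) \to 0$ monotonically but only at these slow rates, the two-bubble ansatz persists all the way to $T_+$, which together with small-remainder bounds precludes finite-time breakdown; hence $T_+ = +\infty$. If instead $\vec\psi(t)$ scatters forward, the same analysis applied in backward time gives the claim.

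The main obstacle is the derivation and control of the modulation ODE. Two technical points drive the analysis: (i) establishing coercivity of the linearized energy around the two-bubble profile $\iota(\vec Q^k_{\la}-\vec Q^k_{\mu})$ modulo the scaling directions, which underpins the orthogonal decomposition; and (ii) extracting the correct interaction coefficient at leading order, which relies crucially on the tail exponent $k$. This is precisely why $k \geq 2$ is required: for $k = 1$ the decay $\pi - Q(r) \sim 2/r$ is too slow, bubble interactions lose their leading-order structure, and a genuine finite-time two-bubble blow-up becomes possible --- as the present paper establishes. Handling (i)--(ii) with sufficient precision to obtain the sharp rates, and then leveraging them to rule out finite-time blow-up, constitutes the heart of the argument.
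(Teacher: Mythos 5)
Your reduction of the sequential decomposition to $J\in\{0,2\}$ via the Pythagorean energy expansion and the topological sign constraint, and your exclusion of $J=0$ by perturbation theory, are sound and match the starting point of Jendrej--Lawrie (and of the corotational analogue proved in this paper). The genuine gap is the step ``Next I upgrade to a continuous decomposition,'' which you attribute to ``standard modulation theory.'' Modulation theory (Lemma \ref{l:modeq} here, or its $k\ge 2$ analogue) only furnishes the scales $\la(t),\mu(t)$ and the remainder $g(t)$ on time intervals where the solution is \emph{already} inside a small neighborhood of the two-bubble family; it says nothing about whether the solution stays in that neighborhood, and it certainly does not give $\|\vec g(t)\|_{\HH_0}\to 0$ as $t\to T_+$. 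Theorem \ref{t:cjk} only gives closeness along a sequence $t_n$, and the solution could a priori leave and re-enter the neighborhood infinitely often. Ruling this out is the heart of the proof: in \cite{JL} (and in Section 4 of this paper for $k=1$) one splits time into ``good'' and ``bad'' intervals, proves a compactness property of the modulated trajectory on the good intervals via concentration compactness, runs a localized virial identity whose boundary and exterior errors are controlled by $\sqrt{\bfd(\vec\psi(t))}$, and --- crucially --- proves an ejection/no-return estimate (the analogue of Proposition \ref{prop:modulation}) showing that each excursion away from the two-bubble family produces enough kinetic energy $\int\|\p_t\psi\|_{L^2}^2$ to absorb the bad-interval error terms; the contradiction then comes from the rigidity that the only degree-$0$ harmonic map is trivial. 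None of this machinery appears in your outline, so the continuous-in-time convergence, and with it the whole classification, is unproven.

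A secondary, smaller objection: the passage from ``projecting the equation onto the scaling directions'' to a clean ``one-dimensional effective ODE for $s(t)=\log(\mu/\la)$'' that one simply integrates is substantially optimistic. In \cite{JL} the second-order information is not a genuine ODE but a differential \emph{inequality} for a carefully constructed virial-corrected quantity (the analogue of $b(t)$ here, built with the truncated operators $\A(\la),\A_0(\la)$), with errors controlled only through coercivity of the linearized operator under the orthogonality conditions; the sharp two-sided rates for $\la(t)$ and the conclusion $T_\pm=\pm\infty$ are extracted from these inequalities, not from solving an autonomous ODE. Your identification of the tail exponent of $\pi-Q^k$ as the source of the rates and of the $k\ge 2$ restriction is the right heuristic, but as written both the continuous resolution and the rate bounds rest on asserted rather than established estimates.
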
  

\subsection{Main results}

The two-bubble solutions given by Theorem \ref{thm:deux-bulles-wmap} and the classification result Theorem \ref{t:JL} are for $k$-equivariant wave maps with $k \geq 2$.  The first main result of this paper establishes the existence of a corotational two-bubble solution.  In contrast to higher equivariant wave maps, our solution is in fact a threshold \emph{blow-up solution}.    

 \begin{thm}[Main Theorem 1]\label{t:main2}
 	There exists a corotational wave map $\vec \psi_c : (0,T_+) \to \HH_0$, a continuous scale $\la_c(t) >  0$, and constant $C > 0$ such that
 	\begin{align}
 	\frac{1}{C} t^2 \leq \la_c(t) |\log \la_c(t)| \leq C t^2,
 	\end{align}  
 	and 
 	\begin{align}
 	\lim_{t \rar 0^+} \left  \| \vec \psi_c(t) - \bigl (\vec Q_{\la_c(t)} - \vec Q \bigr ) \right  \|_{\HH_0} = 0. 
 	\end{align}
 	In particular, $\cl E(\vec \psi_c) = 8\pi$ and $T_- = 0$.  
 \end{thm}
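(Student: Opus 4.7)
The plan is to adapt the strategy used by Jendrej (Theorem \ref{thm:deux-bulles-wmap}) for the higher equivariance classes $k \geq 3$, while incorporating the logarithmic corrections forced by the corotational setting $k = 1$. The fundamental new feature is that the linearized operator around the harmonic map $Q$ has a formal zero mode $\Lambda Q = r \partial_r Q$ which is at the borderline of $L^2$ integrability, and the associated modulation ODE will produce the scale law $\lambda_c(t) |\log \lambda_c(t)| \sim t^2$ rather than the algebraic rate $t^{2/(k-2)}$ obtained in \cite{JJ-AJM} for $k \geq 3$. Because $\lambda_c(t) \to 0$ as $t \to 0^+$ yet the construction cannot be extended past $t = 0$, the resulting two-bubble will blow up in finite time, in sharp contrast to the globally defined two-bubbles of Theorem \ref{thm:deux-bulles-wmap}.

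Step 1 (Ansatz and modulation). I would seek a solution of the form
\[
\psi(t,r) = Q_{\lambda(t)}(r) - Q(r) + \Phi_{\lambda(t),\dot\lambda(t)}(r) + g(t,r),
\]
where $\Phi$ is an explicit lower-order correction chosen to absorb the leading interaction between the two bubbles and the time-derivative terms generated by $\vec Q_{\lambda(t)}$, and $g$ is a small remainder. Projecting the residual of \eqref{eq:wmk} against the approximate zero mode $\Lambda Q_{\lambda(t)}$ yields a modulation ODE for $\lambda$ whose leading order reads
\[
\ddot\lambda(t) \sim -\frac{c_1\,\lambda(t)}{t^2 \, \lvert \log \lambda(t) \rvert},
\]
with an explicit positive constant $c_1$, whose formal solution satisfies $\lambda(t) \lvert\log \lambda(t)\rvert \sim t^2$. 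Simultaneously, a second equation determines $\dot\lambda$ up to the required accuracy.

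Step 2 (Energy estimates with logarithmic losses). After subtracting the approximate two-bubble, the remainder $g$ obeys a linearized wave equation with time-dependent potential, the main potential term being the Schr\"odinger operator $\mathcal{L}_{\lambda(t)}$ associated with the linearization of \eqref{eq:wmk} at $\vec Q_{\lambda(t)}$. I would control $g$ via a weighted energy functional adapted to the scale $\lambda(t)$, combined with a virial correction to handle the slowly decaying $\Lambda Q_{\lambda}$ direction, in the spirit of \cite{RR} and \cite{JL}. The key analytic difficulty, which is the central obstacle of the entire argument, is that the coercivity of $\mathcal{L}_{\lambda}$ modulo the approximate zero mode fails by a logarithmic factor in the corotational case: this forces one to track inverse powers of $\lvert \log \lambda \rvert$ throughout the estimates, and to close the bootstrap only when these losses are compatible with the modulation law obtained in Step 1.

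Step 3 (Compactness and passage to the limit). Following the Martel--Merle--style scheme employed by Jendrej, I would fix a sequence $t_n \to 0^+$, let $\lambda_n(t)$ solve the modulation ODE backwards from $t = t_n$ with $\lambda_n(t_n) \lvert\log \lambda_n(t_n)\rvert \sim t_n^2$, and construct solutions $\vec\psi_n$ on intervals $[t_n, T]$ with initial data $\vec\psi_n(t_n) = \vec Q_{\lambda_n(t_n)} - \vec Q$. The uniform estimates of Step 2 allow me to extract a subsequential $\HH_0$-limit $\vec\psi_c$ defined on $(0, T]$. The limiting solution inherits the profile decomposition and the scale law, so $\vec\psi_c(t)$ converges to $\vec Q_{\lambda_c(t)} - \vec Q$ in $\HH_0$ as $t \to 0^+$, giving $\mathcal{E}(\vec\psi_c) = 8\pi$ and $T_- = 0$ by the asserted behavior of $\lambda_c$.

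The hardest step is unambiguously Step 2. In the $k \geq 3$ setting of \cite{JJ-AJM} and in the threshold classification of \cite{JL} for $k \geq 2$, the coercivity of the linearized energy modulo the scaling direction holds uniformly, and the error estimates close with a polynomial gain. For $k = 1$ the coercivity degrades logarithmically and the only way to close the estimates is to exploit the precise rate $\lambda \lvert\log\lambda\rvert \sim t^2$ produced by the modulation equation itself, so the two steps must be closed simultaneously in a single bootstrap. This logarithmic obstruction is ultimately what forces finite-time, rather than infinite-time, blow-up of the constructed solution.
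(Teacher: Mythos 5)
Your overall scheme (corrected two-bubble ansatz, modulation ODE, remainder energy estimates, then a Martel--Merle compactness limit) is a legitimate alternative in principle, but as written it has genuine gaps, and it is not the route the paper takes. First, the modulation law in your Step 1 is wrong: the leading interaction computed in the paper (Claim \ref{c:lead}, \eqref{eq:mlead}) gives $b'(t) \simeq 8/\mu$, where $b$ is a \emph{truncated} projection of $\partial_t\psi$ onto the scaling direction, and since $b$ approximates $\frac{d}{dt}\bigl(2\lambda|\log(\lambda/\mu)|\bigr)$ (this is precisely why the corrected variable $\zeta$ of \eqref{eq:zetadef} is introduced), the formal law is $\frac{d^2}{dt^2}\bigl(\lambda|\log\lambda|\bigr) \approx 4$, i.e.\ $\ddot\lambda \approx +4/|\log\lambda|$. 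Your ODE $\ddot\lambda \sim -c_1\lambda/(t^2|\log\lambda|)$ has the wrong sign and the wrong size (substituting $\lambda|\log\lambda|\sim t^2$ makes the left side $\sim |\log\lambda|^{-1}>0$ and the right side $\sim -|\log\lambda|^{-2}$), and the interaction force should depend on $\lambda/\mu$, not explicitly on $t$; so it does not produce the claimed rate. Second, your Step 2 misidentifies the central difficulty: coercivity of $\LL_\lambda$ under the truncated orthogonality condition \eqref{eq:ola} is not the problem (it is used in the paper exactly as in the $k\ge 2$ case). The real obstruction is that $\Lambda Q \notin L^2$, so the natural modulation functional $\langle \Lambda Q_{\uln\lambda}\mid \partial_t\psi\rangle$ and the associated virial pairings are not even defined; the paper's substitute is the pair $b(t)$, $\zeta(t)$ of \eqref{eq:bdef}, \eqref{eq:zetadef} built from cutoffs at scale $M\sqrt{\lambda\mu}$ and the operators $\A_0(\lambda)$, together with the estimates \eqref{eq:kala'}, \eqref{eq:b-bound}, \eqref{eq:b'lb} of Proposition \ref{p:modp2}. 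Your proposal supplies no analogue of this device, and since all of the analytic content of the construction lives there, the argument as sketched cannot be closed.

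For comparison, the paper's actual proof is far lighter than what you propose: there is no corrected profile $\Phi$ and no linearized energy estimate for a remainder at all. One takes data at times $t_n\to 0$ of the form $\psi_{0,n}=Q_{\ell(t_n)}-Q$, $\psi_{1,n}=-\ell'(t_n)\Lambda Q_{\underline{\ell(t_n)}}\chi_{\sqrt{R_n\ell(t_n)}}$ with $\ell(t)|\log\ell(t)|=2t^2$ and $R_n$ tuned so that the energy is exactly $8\pi$ and $b_n(t_n)=8t_n(1+o(1))$; then the already-proved inequalities \eqref{eq:kala'}, \eqref{eq:b-bound}, \eqref{eq:b'lb}, \eqref{eq:mu'} close a short bootstrap giving the two-sided bound $t^2/3 \le \lambda_n(t)|\log\lambda_n(t)| \le 75\,t^2$ uniformly in $n$, and the solution is obtained by weak continuity of the flow (Corollary A.6 of \cite{JJ-AJM}). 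Note also that your choice of data $\vec Q_{\lambda_n(t_n)}-\vec Q$ with \emph{zero} velocity has energy strictly below $8\pi$ (the cross term contributes $-16\pi\lambda_n(t_n)(1+o(1))$), so each approximating solution scatters by Theorem \ref{t:2EQ} and you lose both the exact-threshold structure underlying the modulation bounds such as \eqref{eq:leading-psit} and the saturation $b_n(t_n)\approx 8t_n$ that drives the lower bound on the scale; this choice would at minimum have to be modified along the lines of the paper.
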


By Theorem \ref{t:2EQ}, $\vec \psi_c$ is a minimal energy non-dispersing solution to \eqref{eq:wmk}. Moreover, by Theorem \ref{t:2EQfull} the map $u_c:(0,T_+) \times \R^2 \rar \bbS^2$ given by 
\begin{align*}
u_c(t,r,\theta) = (\sin \psi_c(t,r) \cos \theta, \sin \psi_c(t,r) \sin \theta, \cos \psi_c(t,r)),
\end{align*}   
is a topologically trivial minimal energy non-dispersing solution to the \emph{full wave map equations.} The existence 
of such a solution has been an open question up until now. 
The proof of Theorem \ref{t:main2} is a byproduct of estimates we derive to prove our second main result and the general scheme for constructing multi-soliton solutions introduced by Martel \cite{Martel-AJM15} and Merle \cite{Merle90}.

 \begin{thm}[Main Theorem 2] \label{t:main} 
Let $\vec\psi(t) :(T_-, T_+) \to \HH_0$ be a solution to~\eqref{eq:wmk} such that
\EQ{
\E(\vec \psi) = 2 \E(\vec Q) = 8\pi.
}
Then either $T_- = -\infty$ or $T_+ = +\infty$.  Assume that $T_- = -\infty$. Then $\vec \psi(t)$ scatters in backward time, while in forward time one of the following holds: 
 \begin{itemize}[leftmargin=0.5cm]
\item $T_+ = \infty $ and $\vec \psi(t)$ scatters in forward time,
\item $T_+ < \infty$, $\vec\psi(t)$ is a two-bubble in the forward time direction, and there exists an absolute constant $C > 0$ such that the scales of the bubbles $\lambda(t), \mu(t)$ satisfy
\EQ{
	\lim_{t \rar T_+} \mu(t) = \mu_0 \in (0, \infty), \qquad
\frac{1}{C}(T_+ - t)^2 \leq \frac{\la(t)}{\mu_0}\left |
\log \Bigl ( 
\frac{\la(t)}{\mu_0}
\Bigr )
\right | \leq C (T_+ - t)^2.
}
\end{itemize} 
 If we assume initially that $\vec \psi(t)$ satisfies $T_+ = \infty$, then $\vec \psi(t)$ scatters in forward time, and one of analogous alternatives formulated in backward time must hold.
 \end{thm}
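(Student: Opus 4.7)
\medskip

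\noindent\textbf{Proof proposal for Theorem~\ref{t:main}.} The plan is to adapt the roadmap of Jendrej--Lawrie \cite{JL} developed for $k \geq 2$ to the corotational setting $k = 1$, where the presence of a resonance (the slowly decaying mode tangent to the modulation family) introduces logarithmic corrections and, crucially, permits finite-time blow up at the threshold. The argument naturally splits into three stages: (i) rigidity, showing that any non-scattering threshold solution approaches $\vec Q_{\la(t)} - \vec Q$ up to scaling and sign in one time direction; (ii) modulation analysis, giving sharp control on $\la(t)$ and $\mu(t)$; (iii) ruling out simultaneous non-dispersion in both time directions, and establishing scattering on the opposite side.

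\emph{Stage 1 (rigidity).} Suppose $\vec \psi(t)$ does not scatter as $t \to T_+$. Via the sequential soliton resolution of Theorem~\ref{t:cjk} applied to a suitable sequence $t_n \to T_+$, combined with the topological constraint $\vec \psi \in \HH_0$, the energy budget $\E(\vec\psi) = 8\pi = 2\E(\vec Q)$ forces $J = 2$ with opposite signs $\iota_1 = -\iota_2$. After a possible sign change one obtains
\begin{align*}
\vec \psi(t_n) = \vec Q_{\la_n} - \vec Q_{\mu_n} + o_{\HH_0}(1),\qquad \la_n \ll \mu_n,
\end{align*}
with no radiation. The absence of radiation is the key gain: it should be extracted either by the Duyckaerts--Kenig--Merle channel-of-energy method (using that outgoing radiation would contribute strictly positive asymptotic energy in the exterior region, violating the threshold budget) or, equivalently, by a concentration-compactness + rigidity argument showing that the threshold solution has a pre-compact trajectory modulo the two scaling parameters. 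Upgrading from a sequence $t_n$ to all times $t$ near $T_+$ uses the continuity of the modulation parameters and a contradiction argument based on the fact that $\vec Q_\la - \vec Q_\mu$ cannot converge (in $\HH_0$) to a single bubble or to the zero map.

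\emph{Stage 2 (modulation dynamics).} Parameterize the solution as $\vec\psi(t) = \vec Q_{\la(t)} - \vec Q_{\mu(t)} + \vec g(t)$ with orthogonality conditions chosen to fix $\la$ and $\mu$, and $\|\vec g(t)\|_{\HH_0} \to 0$. The equations for $\la, \mu$ and the coercivity of the linearized energy modulo the modulation directions yield a system of modulation ODEs. The new feature at $k=1$ is the interaction functional between $Q_\la$ and $Q_\mu$: a direct computation gives
\begin{align*}
\langle V'(Q_\la - Q_\mu) - V'(Q_\la) + V'(Q_\mu), \Lambda Q_\la\rangle \sim \frac{\la}{\mu}\Bigl|\log\tfrac{\la}{\mu}\Bigr|,
\end{align*}
where $V(\psi) = \sin^2\psi/(2r^2)$ and $\Lambda$ is the infinitesimal scaling generator. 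This logarithmic factor (absent for $k \geq 2$) enters the modulation equations and produces the scaling law $\la(t)|\log \la(t)| \sim (T_+ - t)^2$. A bootstrap argument, analogous to the one in \cite{JL} but with the modified ODE, pins down $\mu(t) \to \mu_0 \in (0,\infty)$ and the stated two-sided bound on $\la(t)/\mu_0$.

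\emph{Stage 3 (one-sidedness and scattering in the other direction).} To show $T_- = -\infty$ or $T_+ = +\infty$, use the coercivity of a suitably truncated virial quantity on the non-dispersive side: the sign of the virial time-derivative along the two-bubble regime is essentially monotonic and incompatible with two-sided non-dispersion at threshold, by the argument that a solution which is a two-bubble as $t \to T_+$ carries a definite amount of ``trapped'' energy that cannot also be recaptured as $t \to T_-$ without exceeding $8\pi$. Once one direction is known to be global, Theorem~\ref{t:2EQ}-style small data plus profile decomposition imply scattering on the global side: any non-scattering profile at $t \to -\infty$ (say) would again force the concentrated bubble structure, leading to a two-bubble on that side too, contradicting the one-sidedness just established.

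\emph{Main obstacle.} The hardest step is obtaining the sharp logarithmic modulation law in Stage~2. Unlike the $k \geq 2$ case where the leading interaction is purely polynomial in $\la/\mu$, at $k=1$ one must track logarithmic corrections through several orders of the ansatz, carefully choosing a refined profile (of the form $\vec Q_\la - \vec Q_\mu + $ explicit corrector) so that the remainder $\vec g$ satisfies a coercive energy inequality modulo a controllable, slowly-decaying term. Handling this corrector—and verifying that the zero mode of the linearized operator around $\vec Q$ does not destabilize the bootstrap—is where the main technical work lies, and it is precisely this analysis that simultaneously yields the blow-up construction of Theorem~\ref{t:main2}.
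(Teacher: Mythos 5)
Your outline reproduces the high-level shape of the argument (sequential resolution, modulation with a logarithmic correction, virial rigidity), but the step you treat as routine is precisely where the proof lives, and as written it has a genuine gap. In Stage 1 you upgrade the sequential convergence of Theorem~\ref{t:cjk} to convergence for \emph{all} $t\to T_+$ by ``continuity of the modulation parameters and a contradiction argument.'' This does not work: the sequential resolution only gives $\liminf_{t\to T_+}\bfd(\vec\psi(t))=0$, the modulation parameters are defined only while $\vec\psi(t)$ stays in a small neighborhood of the two-bubble family, and nothing soft prevents the solution from repeatedly exiting and re-entering such neighborhoods between the times $t_n$. Ruling out this recurrence is the core of the paper: one needs the no-return/ejection mechanism of Proposition~\ref{prop:modulation}, which rests on the second-order modulation analysis of Proposition~\ref{p:modp2} (the truncated resonance $\chi_{M\sqrt{\la\mu}}\La Q_{\ula}$, the quantities $\zeta(t)\approx 2\la|\log(\la/\mu)|$ and $b(t)$, and the lower bound $b'(t)\geq(8-\de)/\mu$), combined with the good/bad interval decomposition of Lemma~\ref{l:time_split}, compactness of the modulated trajectory on the good intervals (Lemma~\ref{l:Icompact}, Corollary~\ref{cor:k1}), and the localized virial identity with the interaction-generated kinetic energy absorbing the bad-interval errors; the contradiction is then that averaged kinetic energy tends to zero, forcing a degree-$0$ harmonic map, i.e.\ $\vec\psi=0$, against $\E(\vec\psi)=8\pi$ (Proposition~\ref{p:psi_t}). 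Your alternative suggestion of a Duyckaerts--Kenig--Merle channel-of-energy argument is not what is used and is problematic here, since the problem reduces to the radial wave equation on $\R^{1+4}$, where the exterior energy estimates needed for that method fail at the $\dot H^1\times L^2$ level.

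Two further points. First, your displayed interaction asymptotics carry a spurious logarithm: the pairing of $\frac{1}{r^2}\big(f(Q_\la-Q_\mu)-f(Q_\la)+f(Q_\mu)\big)$ against $\La Q_\la$ is of size $\la/\mu$ with no log (Claim~\ref{c:lead} gives $8/\mu$ against the $L^2$-normalized $\La Q_{\ula}$); the logarithm enters because $\La Q\notin L^2(\R^2)$, so the truncated modulation direction has divergent mass $\|\chi_{M\sqrt{\la\mu}}\La Q_{\ula}\|_{L^2}^2\approx 2|\log(\la/\mu)|$ --- which is exactly why the paper works with $\zeta$ and $b$ rather than with a corrected profile ansatz. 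Second, Stage 3 is not a proof as stated: you never show $T_+<\infty$ on the non-scattering side (in the paper this follows from $\xi'(t)\le -1/4$, $\xi>0$, $\xi\to 0$ for $\xi=-b+\zeta^{1/2}$), and the ``trapped energy cannot be recaptured without exceeding $8\pi$'' heuristic is not a mechanism --- energy is conserved and equal to $8\pi$ regardless. The actual argument for backward scattering is: if it failed, the backward analysis would give a finite $T_-$ with $\bfd(\vec\psi(t))\to 0$ at both endpoints, and then integrating the virial identity over $(T_-,T_+)$, using $|\Om_R(\vec\psi(t))|\le C_0\sqrt{\bfd(\vec\psi(t))}\in L^1$ and letting $R\to\infty$ by dominated convergence, forces $\p_t\psi\equiv 0$, hence a degree-$0$ harmonic map, again contradicting $\E(\vec\psi)=8\pi$. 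Without these ingredients the proposal does not yet constitute a proof.
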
   
 
Overall, our main results state that the dynamics of corotational wave maps at threshold energy are very different from the those of higher equivariant wave maps at threshold energy. 
  
We remark that by Theorem \ref{t:main}, the blow-up solution $\vec \psi_c$ from Theorem \ref{t:main2} is global in forward time, $T_+ = \infty$, and scatters.  Thus, $\vec \psi_c$ is a trajectory connecting asymptotically free behavior to blow-up behavior. Theorem \ref{t:main} also asserts that for \eqref{eq:wmk}, the collision of two bubbles produces only radiation and is therefore inelastic.  This is consistent with what is known and expected for nonintegrable dispersive equations (see \cite{MM11-2, MM11, MM17, JL}).  Our main results are in the spirit of the classification results at threshold energy by \cite{DM, DM-NLS, JL}, but one may also draw parallels to the study of minimal blow-up solutions for dispersive equations (see for example \cite{Merle93}, \cite{RaSz11}).  Finally, we remark that apart from the seminal work by Duyckaert, Kenig and Merle \cite{DKM4} which verified the soliton resolution conjecture for the $3d$ radial energy critical wave equation and Theorem \ref{t:JL} due to Jenrej and Lawrie, Theorem \ref{t:main} is the only other result which proves soliton resolution continuously in time at an energy level that a priori allows two solitons in the asymptotic decomposition. In fact, Theorem 1.7 shows that solutions with two concentrating bubbles cannot occur, and any non-scattering solution must blow up precisely one bubble while radiating a second stationary harmonic map outside the inverted light cone. 

\subsection{Outline}

The general framework for proving Theorem \ref{t:main} is inspired by the work \cite{JL} on higher equivariant wave maps, but due to the slow convergence to $\pi$ of the corotational harmonic map $Q(r) = 2 \arctan r$,   
there are serious technical challenges not found in the higher equivariant setting that arise. The main source of these obstacles will be elaborated on below.

A rough outline of the proof of Theorem \ref{t:main} is as follows. By Theorem \ref{t:cjk}, a corotational wave map $\vec \psi$ that does not scatter forward in time must approach the space of two-bubbles along a sequence of times.  Towards a contradiction, we assume that $\vec \psi$ does not approach the space of two-bubbles continuously in time.  We then split time into a sequence of intervals $[a_m, b_m]$ so that $\vec \psi(t)$ is close to the space of two-bubbles on $[a_m,b_m]$ (``bad intervals"), and $\vec \psi(t)$ stays away from the space of two-bubbles on $[b_m, a_{m+1}]$ (``good intervals").  By concentration compactness techniques, the trajectory $\vec \psi(t)$ has a certain \emph{compactness property} on the union of good intervals (see Section 2, Section 4).  Past experience suggests that $\vec \psi(t)$ converges to a degree-0 stationary solution to \eqref{eq:wmk} along a sequence of times in the good intervals (see \cite{DKM16} for example).  Since the only degree-0 stationary solution to \eqref{eq:wmk} is $0$, we conclude $\vec \psi = 0$, a contradiction. 

To prove that $\vec \psi(t)$ approaches a stationary solution to \eqref{eq:wmk}, we use a virial identity for wave maps (see Section 2) which bounds an integral of $\|  \p_t \psi(t) \|_{L^2}^2$ over certain good intervals by small error terms plus an integral of $\bfd(\vec \psi(t))^{1/2}$ over certain bad intervals.  Here $\bfd(\cdot)$ is a measure of the distance to the space of two-bubbles (see Section 2).  The errors can be made small because $\vec \psi$ is close to a two-bubble on the bad intervals and has the compactness property on the good intervals. The time integral of $\bfd(\vec \psi(t))^{1/2}$ can be absorbed into the left-hand side, which shows that $\|  \p_t \psi(t) \|_{L^2}^2$ converges to 0 in a certain averaged (over the good intervals) sense. The compactness property then allows us to conclude that $\vec \psi(t)$ must approach a stationary solution.  The fact that the integral of $\bfd(\vec \psi(t))^{1/2}$ can be absorbed into the left-hand side is due to the following informal fact: leaving the space of two-bubbles on a bad interval causes an appreciable amount of kinetic energy, $\| \p_t \psi(t) \|_{L^2}^2$, to be present on the neighboring good interval (see Proposition \ref{prop:modulation}, Lemma \ref{l:time_split} and Section 4 for precise statements). 
 We prove this fact by studying the interaction of corotational two-bubbles using the modulation method (see Section 3).  This is one of the main novelties of this paper. 
 
 On a time interval where a corotational wave map $\vec \psi$ is close to a two-bubble, we decompose the solution as 
 \begin{align*}
 \vec \psi(t) = \Bigl ( 
 Q_{\la(t)} - Q_{\mu(t)} - g(t), \p_t \psi(t)
 \Bigr )
 \end{align*} 
 where the modulation parameters $\la(t)$ and $\mu(t)$ are chosen by imposing  certain orthogonality conditions on $g$.  The choice also ensures that $\bfd(\vec \psi(t))$ is comparable to $\la(t)/\mu(t)$. The goal of Section 3 is to show and control growth of the ratio $\la(t)/\mu(t)$ in the future of a time $t_0$ where $\frac{d}{dt} |_{t = t_0} \la(t)/\mu(t) > 0$ (see Proposition \ref{p:modp}, Proposition \ref{prop:modulation}).  In contrast to the work by Jendrej and Lawrie \cite{JL} on higher equivariant wave maps, the function $(r \p_r Q)_{\la(t)}$, which is the tangent vector to the curve $t \mapsto Q_{\la(t)}$ is not in $L^2(\R^2)$.  This function plays a key role in the scheme since $\la(t)^{-1} \ang{(r \p_r Q)_{\la(t)} \mid \p_t \psi(t)}_{L^2}$ should heuristically be proportional to $\la'(t)$, so we may then differentiate it and use \eqref{eq:wmk} to get information about $\la''(t)$.  The fact that $r \p_r Q \notin L^2$ is the major obstacle in deriving the estimates, and the technique we introduce in Section 3 to overcome this challenge is a central contribution of this work.
 
  We conclude our discussion of the proof of Theorem \ref{t:main} with the following remarks.  Our overall scheme of proving Theorem \ref{t:main} may also be summarized as showing that a threshold wave map that leaves a small neighborhood of the space of two-bubbles can never return. This type of \emph{ejection} result is similar in appearance to those obtained by Krieger, Nakanishi and Schlag in their study of the dynamics near the unstable ground state for the energy critical wave equation \cite{KNS13, KNS15}.  However, the ejection of a near two-bubble wave map is due to a purely nonlinear mechanism (the interaction of the harmonic maps).
 
We now briefly outline the proof of Theorem \ref{t:main2}. The construction of the blow-up solution $\vec \psi_c(t)$ is quite short due to the results proved in Section 3.  We consider initial data at time $t_n$ of the form
\begin{align*}
\vec \psi_n(t_n) = \Bigl ( Q_{\ell(t_n)} - Q, -\ell'(t_n) \ell(t_n)^{-1} (r \p_r Q)_{\ell(t_n)} \chi_n \Bigr )
\end{align*}
where $\chi_n$ is a cutoff that ensures $\cl E(\vec \psi(t_n)) = 8 \pi$. The function $\ell(t)$ is chosen to satisfy $\ell'(t_n) > 0$ and to essentially saturate the bounds on the modulation parameters in Proposition \ref{p:modp}. Let $\vec \psi_n(t)$ denote the solution to \eqref{eq:wmk} with data $\vec \psi_n(t_n)$ at time $t = t_n$.  By our choice of the data, the control of the growth of the modulation parameters obtained in Proposition \ref{p:modp} and a bootstrap argument, we conclude that there exist absolute constants $\al, C, T > 0$ with $T$ small such that $T_+(\vec \psi_n) > T$ and  
\begin{align*}
\inf_{\substack{\mu \in [1/2,2] \\
		\la |\log \la| \in [t^2/C,Ct^2]}}  \| \vec \psi_n(t) - (Q_\la - Q_\mu) \|_{\HH_0}^2 \leq \al t^2, \quad \forall n, \forall t \in [t_n,T].
\end{align*}
Passing to a weak limit then finishes the proof.  Full details are in Section 5.

 \subsection{Acknowledgments}
Support of the National Science Foundation, DMS-1703180, is gratefully acknowledged.

 \section{Preliminaries}

 The purpose of this section is to recall preliminary facts about solutions to~\eqref{eq:wmk} that will be required in our analysis. Before recalling these facts, we establish some notation. For two quantities $A$ and $B$, we write $A \lesssim B$ if there exists a constant $C > 0$ such that $A \leq C B$, and we write $A \sim B$ if $A \lesssim B \lesssim A$. For the paper, we denote by $\chi$ a smooth cutoff $\chi \in C^{\infty}_{\mathrm{rad}}(\R^2)$, so that, writing $\chi = \chi(r)$ we have 
 \EQ{
 	\chi(r) = 1 \mif r \le 1 \mand  \chi(r) = 0 \mif r \ge 2 \mand \abs{\chi'(r)} \le 2 \quad \forall r \ge 0.
 }
We denote $\chi_R(r) := \chi(r/R)$.  The $L^2$ pairing of two radial functions is denoted by 
\EQ{
\ang{f \mid g}  := \frac{1}{2\pi}\ang{ f \mid g}_{L^2(\R^2)} = \int_0^\infty f(r) g(r) \, r d r.
}
The $\dot H^1$ and $L^2$ re-scalings of a radial function $f$ are denoted by 
\EQ{ \label{eq:scaledef} 
	f_\la(r) = f(r/ \la), \quad
	f_{\ula}(r)  = \frac{1}{\la} f(r/ \la),
}
and the corresponding infinitesimal generators  are given by 
\EQ{ \label{eq:LaLa0} 
	&\La f := -\frac{\partial}{\partial \lambda}\bigg|_{\lambda = 1} f_\la = r \p_r f  \quad (\dot H^1_{\textrm{rad}}(\R^2) \,  \textrm{scaling}), \\
	& \La_0 f := -\frac{\partial}{\partial \lambda}\bigg|_{\lambda = 1} f_{\ula} = (1 + r \p_r ) f  \quad (L^2_{\textrm{rad}}(\R^2) \,  \textrm{scaling}).
}

Recall the definition of the space of degree-0 data with finite energy: 
\EQ{
\HH_0:= \{ (\psi_0, \psi_1) \mid \E(\psi_0, \psi_1)< \infty, \quad \lim_{r \to 0} \psi_0(r) = \lim_{r \to \infty} \psi_0(r) = 0 \}.
}
We define the following norm $H$ via 
 \EQ{
 \| \psi_0 \|_{H}^2 := \int_0^\infty  \left(( \p_r \psi_0(r))^2 +  \frac{(\psi_0(r))^2}{r^2} \right)  r d r 
  }
 and for pairs $\vec \psi = (\psi_0, \psi_1) \in \HH_0$ we write  
 \EQ{
 \| \vec \psi \|_{\HH_0} := \| (\psi_0, \psi_1)\|_{H \times L^2}.
 }
Given $\psi_0 \in H$, if we define $\tilde \psi_0(x) := \psi(e^x)$, $x \in \R$, we see that 
$\|\psi_0 \|_{H} = \| \tilde \psi_0 \|_{H^1(\R)}$. Thus, by Sobolev embedding on $\R$ we conclude that 
\EQ{
 \| \psi_0 \|_{L^{\infty}} \le C \| \psi_0 \|_{H}. 
}
This fact will be used frequently in our analysis.

\subsection{Cauchy theory %
%: $4d$ reduction for solutions to~\eqref{eq:wmk} in $\HH_0$
} \label{s:2-4}
The study of the Cauchy problem for \eqref{eq:wmk} with initial data $(\psi_0, \psi_1) \in \HH_0$, is facilitated by a well-known reduction that takes into account the extra dispersion provided by the nonlinearity.  In particular, the nonlinearity satisfies 
\begin{align*}
\frac{\sin 2 \psi}{2r^2} = \frac{\psi}{r^2} + \frac{\sin 2\psi - 2\psi}{2r^2}.
\end{align*}
The second term on the right-hand side is now cubic in $\psi$. 
Thus, the linear part of \eqref{eq:wmk} is given by 
\begin{align}
\p_t^2 \varphi - \p_r^2 \varphi - \frac{1}{r} \p_r \varphi + \frac{1}{r^2} \varphi = 0, \label{eq:2dlin} 
\end{align}
which due to the strong repulsive potential $\frac{1}{r^2}$ has more dispersion than the wave equation on $\R^{1+2}$. By a change of the dependent variable, one sees that the linearized equation \eqref{eq:2dlin} has the same dispersion as the wave equation on $\R^{1+4}$.  Indeed, for $\vec \varphi(t) \in \HH_0$, we define $\vec v(t) = \frac{1}{r} \vec \varphi(t)$.   
Then 
\EQ{ \label{eq:free} 
\frac{1}{r}\left (\p_t^2 - \Delta_{\R^2} + \frac{1}{r^2} \right ) \varphi = (\p_t^2-\De_{\R^{4}}) v. 
}

We now use this change of variables to study \eqref{eq:wmk}. 
Let $\vec \psi(t)$ be a solution to~\eqref{eq:wmk}, and define  $u$ by $ru = \psi$.  Then $u$ satisfies 
\EQ{\label{eq:4d}
&\p_t^2 u - \p_r^2 u -\frac{3}{r} \p_r u   = Z(ru) u^3 \\
&\vec u(0)= (u_0, u_1). 
}
where the function $$Z(\psi) := \frac{2\psi - \sin 2 \psi}{2\psi^3}$$ is a smooth, bounded, even function. The linear part of~\eqref{eq:4d} is the radial wave equation in $\R^{1+4}$
\EQ{\label{eq:4dlin}
&\p_t^2 v - \p_r^2 v -\frac{3}{r} \p_r v=0.
} 
To compare the size of $\vec u$ to $\vec \psi$, we note that by Hardy's inequality we have 
\EQ{ \label{eq:hardy} 
\int_0^{\infty} \left ( 
(\p_r u)^2 + \frac{u^2}{r^2}
\right ) r^4 dr \sim \| u \|_{\dot{H}^1(\R^4)}^2. 
} 
Thus, the map 
 \EQ{
 (u_0, u_1) \mapsto  ( \psi_0, \psi_1):= (ru_0, ru_1)
}
 satisfies 
 \EQ{ \label{eq:2-4}
\|(u_0, u_1) \|_{\dot{H}^1 \times L^2(\R^4)} \sim  \| (\psi_0, \psi_1) \|_{H \times L^2 (\R^2)},
 }
and we conclude that the Cauchy problem for~\eqref{eq:4d} with initial data in $\dot{H}^1 \times L^2(\R^4)$ is equivalent to the Cauchy problem for~\eqref{eq:wmk} for initial data $(\psi_0, \psi_1) \in \HH_0$. 

We recall the following Strichartz estimates for solutions to the free wave equation on $\R^{1+4}$. Let $v$ be a solution to the wave equation
	\EQ{
		&\p_t^2v  - \Delta_{\R^4}  v = F(t, x), \\ &\vec v(0) = (v_0, v_1) \in \dot{H}^1 \times L^2 (\R^4).
	}
	Then there exists an absolute constant $C > 0$ such that for any time interval $I \subset \R$ we have 
	\EQ{ \label{eq:strich} 
		\| v \|_{L^{3}_t(I; L^6_x(\R^4))} + \sup_{t \in I}\| \vec v(t) \|_{\dot{H}^1 \times L^2(\R^4)} \leq C \left (   \| \vec v(0) \|_{\dot{H}^1 \times L^2(\R^4)} + \| F \|_{L^1_t(I; L^2_x(\R^4)} \right ).
	}  
Using the Strichartz estimates \eqref{eq:strich} and a contraction mapping argument, it is now standard to obtain the following well-posedness and scattering criterion for \eqref{eq:4d}.  These facts will be stated in terms of the original azimuth angle $\psi = ru$.  

\begin{prop}\label{l:scattering}	Let $(\psi_0,\psi_1) \in \cl H_0$.  Then there exists a unique solution $\vec \psi \in C(I_{\max}; \cl H)$ to \eqref{eq:wmk} with $\vec \psi(0) = (\psi_0,\psi_1)$ 
	defined on a maximal time interval of existence $I_{\max}(\psi) = (T_-(\psi), T_+(\psi))$ such that for any $J \Subset I_{\max}$, 
	\begin{align*}
	\| \vec \psi \|_{L^\infty_t(J, H \times L^2(\R^2))} + \| \psi/r \|_{L^3_t(J; L^6_x(\R^4))} < \infty.  
	\end{align*}
	A solution $\vec \psi(t)$ satisfies $T_+(\psi) = \infty$ and scatters in forward time if and only if 
	$$\| \psi/r \|_{L^3_t((0,T_+);L^6_x(\R^4))} < \infty.$$ A similar statement holds for negative times.  
	
	  Finally, we have the standard finite time blow--up criterion:
	$T_+(\psi) < \infty$ if and only if $$\| \psi/r \|_{L^3_t((0,T_+);L^6_x(\R^4))} = \infty.$$  A similar statement holding for negative times. 
\end{prop}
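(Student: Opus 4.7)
The plan is to reduce the two-dimensional problem \eqref{eq:wmk} to the four-dimensional radial semilinear wave equation \eqref{eq:4d} satisfied by $u := \psi/r$, exploiting the norm equivalence \eqref{eq:2-4} to identify $\HH_0$ with radial $\dot H^1 \times L^2(\R^4)$, and then to run a standard Strichartz contraction-mapping argument. The essential inputs are the Strichartz inequality \eqref{eq:strich} with the admissible pair $(q,r) = (3,6)$ in $4+1$ dimensions, together with the observation that the nonlinearity $N(u) := Z(ru)\, u^3$ is cubic in $u$ with smooth, bounded multiplier $Z$, so that $|N(u)| \lesssim |u|^3$ and $|N(u) - N(v)| \lesssim (u^2 + v^2) |u - v|$ pointwise.

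For local existence I would fix data $(u_0,u_1) \in \dot H^1 \times L^2(\R^4)$, denote by $v_L$ its free evolution under \eqref{eq:4dlin}, and consider the Duhamel map
\[
\Phi(u)(t) := v_L(t) + \int_0^t \frac{\sin\!\big((t-s)\sqrt{-\Delta_{\R^4}}\big)}{\sqrt{-\Delta_{\R^4}}}\, N(u)(s)\, ds.
\]
H\"older's inequality yields $\|N(u)\|_{L^1_t(I; L^2_x)} \lesssim \|u\|_{L^3_t(I; L^6_x)}^3$ and a matching difference estimate, so combining with \eqref{eq:strich} shows $\Phi$ is a contraction on a small ball of $L^3_t(I; L^6_x) \cap C(I; \dot H^1 \times L^2)$ provided either $|I|$ or $\|v_L\|_{L^3_t(I; L^6_x)}$ is sufficiently small; either condition can be arranged on a short enough interval since $v_L \in L^3_t L^6_x(\R \times \R^4)$ globally. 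Iterating the local construction and translating back via \eqref{eq:2-4} yields a unique solution $\vec\psi \in C(I_{\max}; \HH_0)$ on a maximal interval $I_{\max}(\psi) = (T_-(\psi), T_+(\psi))$, with finiteness of both $\|\vec\psi\|_{L^\infty_t(J; H \times L^2)}$ and $\|\psi/r\|_{L^3_t L^6_x(J \times \R^4)}$ on every $J \Subset I_{\max}$ built into the fixed-point construction.

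The forward scattering criterion then follows by a standard Cook's-method argument: if $\|\psi/r\|_{L^3_t L^6_x((0,\infty) \times \R^4)} < \infty$ and $T_+ = \infty$, then $\|N(u)\|_{L^1_t L^2_x((t,\infty))} \to 0$ as $t \to \infty$, so the Duhamel tails are Cauchy in $\dot H^1 \times L^2$ and produce a linear asymptotic profile. The converse follows by partitioning $(0,\infty)$ into finitely many intervals on which the free evolution of the limiting linear profile has small $L^3_t L^6_x$ norm and bootstrapping \eqref{eq:strich} piece by piece. For the finite-time blow-up alternative, if $T_+ < \infty$ while $\|\psi/r\|_{L^3_t L^6_x((0,T_+))} < \infty$, the same partition-and-bootstrap argument gives a uniform bound on $\sup_{t < T_+} \|\vec u(t)\|_{\dot H^1 \times L^2}$ and on the Strichartz norm in a left neighborhood of $T_+$; applying the local theory with data at a time close enough to $T_+$ then extends the solution strictly past $T_+$, contradicting maximality. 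The backward-time statements follow by time reversal. The reduction from 2d to 4d removes the strongly repulsive $1/r^2$ potential and leaves a classical energy-subcritical semilinear wave equation, so no genuine new difficulty arises beyond tracking the norm identifications \eqref{eq:2-4} and \eqref{eq:hardy}.
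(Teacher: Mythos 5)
Your proposal follows essentially the same route the paper intends (and leaves unwritten as ``standard''): the substitution $u=\psi/r$, the equivalence \eqref{eq:2-4}, the Strichartz estimate \eqref{eq:strich} with the pair $L^3_tL^6_x$, and a contraction-mapping/bootstrap argument for local existence, scattering, and the blow-up criterion, so it is correct in substance. One small correction: the reduced equation \eqref{eq:4d} is energy-\emph{critical} in $\R^{1+4}$, not subcritical, so local well-posedness must rest (as your argument in fact does) on smallness of $\|v_L\|_{L^3_tL^6_x(I)}$ rather than on smallness of $|I|$ alone, and likewise the extension past a hypothetical $T_+<\infty$ should be phrased via smallness of the Strichartz norm of the free evolution of $\vec u(t_0)$ slightly beyond $T_+$, since the local existence time is not controlled by the energy norm alone.
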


 Using Strichartz estimates and continuity arguments, one also has the following long-time perturbation lemma from \cite{CKLS1}. 
 \begin{lem}\emph{\cite[Lemma 2.18]{CKLS1} \label{l:pert}} There are continuous functions $\e_0,  C_0: (0, \infty) \to (0, \infty)$ with the following property. Let $I\subset \R$ be an open interval, and let $\psi, \fy \in C^0(I; H) \cap C^1(I; L^2) $ be radial functions such that for some $A>0$ 
\begin{align*} 
&\|\vec \psi\|_{L^{\I}_t(I; H \times L^2(\R^2))}+ \|\vec\fy\|_{L^{\I}_t(I; H \times L^2(\R^2))}+ \|\fy/r\|_{L^3_t(I; L^6_x(\R^4))} \le A\\
&\|\textrm{eq}(\psi/r)\|_{L^1_t(I; L^2_x(\R^4))}+\|\textrm{eq}(\fy/r)\|_{L^1_t(I; L^2_x(\R^4))} + \|w_0/r\|_{L^3_t(I; L^6_x(\R^4))} \le \e \le \e_0(A)
\end{align*} 
where $\textrm{eq}(\psi/r):= \Box_{\R^4} (\psi/r) +(\psi/r)^3Z(\psi)$ in the sense of distributions, $\vec w_0(t):= S(t-t_0)(\vec \psi-\vec \fy)(t_0)$ with $t_0 \in I$ fixed, and $S$ denotes the propagator for~\eqref{eq:2dlin}. Then,
\begin{align*} 
\|\vec \psi -\vec \fy - \vec w_0\|_{L^{\I}_t(I; H \times L^2(\R^2))} + \|\frac{1}{r}(\psi-\fy)\|_{L^3_t(I; L^6_x(\R^4))} \le C_0(A) \e
\end{align*} 
In particular, $\|\psi/r\|_{L^3_t(I; L^6_x(\R^4))} < \I$. 
\end{lem}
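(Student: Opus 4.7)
I plan to transfer the problem to the $\R^{1+4}$ wave equation via $u := \psi/r$, $\tilde u := \fy/r$ from Section~2.1, and close the estimate by a Strichartz-based bootstrap on a partition of $I$ into finitely many subintervals on which $\tilde u$ is small in $L^3_t L^6_x$. Setting $v := u - \tilde u$, $W := w_0/r$, and $h := v - W$, the difference $h$ vanishes at $t_0$ and satisfies, in the sense of distributions,
\begin{equation}
\Box_{\R^4} h = [\mathrm{eq}(u) - \mathrm{eq}(\tilde u)] - \bigl[Z(\psi) u^3 - Z(\fy) \tilde u^3\bigr].
\end{equation}

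Fix a threshold $\eta = \eta(A) > 0$ to be determined, and partition $I$ into consecutive subintervals $I_1, \ldots, I_N$ with $N \leq C(A/\eta)^3$ such that $\|\tilde u\|_{L^3_t L^6_x(I_j)} \leq \eta$ on each. On $I_j$, the Strichartz estimate~\eqref{eq:strich} applied to $h$ with initial data at the endpoint $a_j$ nearest $t_0$, together with the error bound $\|\mathrm{eq}(u) - \mathrm{eq}(\tilde u)\|_{L^1 L^2} \leq \eps$ from the hypothesis, reduces the problem to controlling the nonlinear difference $Z(\psi) u^3 - Z(\fy) \tilde u^3$ in $L^1_t L^2_x$.

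To handle this nonlinearity I split
\begin{equation}
Z(\psi) u^3 - Z(\fy) \tilde u^3 = Z(\psi)(u^3 - \tilde u^3) + \bigl[Z(\psi) - Z(\fy)\bigr] \tilde u^3.
\end{equation}
The first piece is controlled by $\lesssim (\|u\|^2 + \|\tilde u\|^2)_{L^3 L^6(I_j)} \|v\|_{L^3 L^6(I_j)}$ since $Z$ is globally bounded. For the second piece, I use that $\|\psi\|_{L^\I_{t,x}}, \|\fy\|_{L^\I_{t,x}} \lesssim A$ (by $H \hookrightarrow L^\I$) and $Z$ is smooth to get $|Z(\psi) - Z(\fy)| \lesssim_A r|v|$, then absorb the extra factor of $r$ through $\|r \tilde u\|_{L^\I} = \|\fy\|_{L^\I} \lesssim A$ and H\"older in space, producing a bound $\lesssim_A \eta^2 \|v\|_{L^3_t L^6_x(I_j)}$ on $I_j$. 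Writing $v = h + W$ with $\|W\|_{L^3 L^6} \leq \eps$ and $\|u\|_{L^3 L^6(I_j)} \leq \eta + \|h\|_{L^3 L^6(I_j)} + \eps$, a standard continuity bootstrap on each $I_j$ closes once $\eta = \eta(A)$ and $\eps_0(A)$ are chosen small, yielding
\begin{equation}
\|h\|_{L^3 L^6(I_j)} + \|\vec h\|_{L^\I(I_j; \dot H^1 \times L^2)} \leq K(A)\bigl(\|\vec h(a_j)\|_{\dot H^1 \times L^2} + \eps\bigr).
\end{equation}
Iterating over the $N = N(A)$ subintervals, forward and backward from the one containing $t_0$ (where $\vec h$ vanishes), produces the full-interval bound $\leq C_0(A)\eps$, and the equivalence~\eqref{eq:2-4} converts this back to the $2d$ norms in the statement.

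The main technical obstacle is the off-diagonal term $[Z(\psi) - Z(\fy)] \tilde u^3$, for which the natural Lipschitz estimate on $Z$ produces an extra factor of $r|v| = |\psi - \fy|$ rather than $|v|$ itself. This is not present in a pure cubic $\R^4$ wave equation and could spoil the cubic H\"older structure. The resolution above, absorbing the excess $r$ through the uniform pointwise bound $\|\fy\|_{L^\I_{t,x}} \lesssim A$ coming from the Sobolev embedding $H \hookrightarrow L^\I$, is precisely what forces $\eps_0$ and $C_0$ to depend on $A$ rather than being absolute constants.
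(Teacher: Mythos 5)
The paper itself gives no proof of this lemma---it is quoted verbatim from \cite{CKLS1}---and your argument is essentially the standard proof given there: pass to the 4d variable $u=\psi/r$, partition $I$ into finitely many subintervals on which $\|\fy/r\|_{L^3_t L^6_x}$ is small, and run a Strichartz bootstrap for $h=(\psi-\fy-w_0)/r$, with the only non-pure-power feature, the term $[Z(\psi)-Z(\fy)]\tilde u^3$, handled exactly as you do, by writing the Lipschitz loss $r|v|=|\psi-\fy|$ against $|r\tilde u|=|\fy|$ and invoking $\|\fy\|_{L^\infty}\lesssim \|\fy\|_{H}\le A$, which is precisely why $\e_0$ and $C_0$ depend on $A$. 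Your proposal is correct and follows essentially the same route as the cited proof.
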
 
 
 \subsection{Concentration Compactness} \label{s:cc} 
Two fundamental tools used in the study of \eqref{eq:wmk} (and in the study of large data solutions of dispersive equations in general) are the linear and nonlinear profile decompositions of Bahouri and Ger\'ard.  The following linear profile decomposition for the azimuth angles follows from the main result in \cite{BG} and the equivalence of \eqref{eq:free} and \eqref{eq:2-4}. 

\begin{lem}\emph{\cite{BG}} \label{c:bg} Let $(\vec \psi_n) \subset \HH_0$ be a sequence that is uniformly bounded in $\HH_0$. Then, after extracting a subsequence if necessary,  there exists a sequence of solutions $\vec \fy^j_L \in \HH_0$ to~\eqref{eq:2dlin},  sequences of times $\{t_{n, j}\}\subset \R$,  sequences of scales $\{\la_{n, j}\}\subset (0, \infty)$, and errors $\vec \ga_n^J(0) \in \cl H_0$ defined by 
\EQ{
\vec \psi_n = \sum_{j=1}^J  (\vec \varphi^j_L)_{\la_{n,j}}\Bigl 
( -\frac{t_{n,j}}{\la_{n,j}} \Bigr )
+ \vec \gamma^J_n(0)
%& \psi_{n, 1}(r)= \sum_{j=1}^k \frac{1}{\la_{n, j}}\dot\fy^j_L( -t_{n, j}/ \la_{n, j}, r/ \la_{n, j}) + \ga_{n, 1}^k(r)
}
with the following properties. Let $\ga_{n, L}^J(t) \in \HH_0$ denote the solution to \eqref{eq:2dlin} with initial data $\vec \ga_n^J(0) \in \HH_0$. Then,  for any $j \le \ell$, 
\EQ{ \label{eq:ga-weak} 
(\ga_n^\ell(  t_{n, j},  \la_{n, j}\cdot) , \la_{n, j} \ga_n^\ell(  t_{n, j},  \la_{n, j}\cdot)) \rightharpoonup 0 \textrm{ weakly in }\HH_0. 
}
In addition, for any $j\neq \ell$ we have
\EQ{ \label{eq:po}
\frac{\la_{n, j}}{\la_{n, \ell}} + \frac{\la_{n, \ell}}{\la_{n, j}} + \frac{\abs{t_{n, j}-t_{n, \ell}}}{\la_{n, j}} + \frac{\abs{t_{n, j}-t_{n, \ell}}}{\la_{n, \ell}} \to \infty \quad \textrm{as} \quad n \to \infty.
}
The errors $\vec \ga_n^J$ vanish asymptotically in the dispersive sense  
\EQ{
\limsup_{n \to \infty} \left\|\frac{1}{r} \ga_{n, L}^J\right\|_{L^{\infty}_tL^4_x \cap L^3_tL^6_x( \R \times \R^4)}  \to 0 \quad \textrm{as} \quad J \to \infty.
}
Finally, we have a Pythagorean expansion of the $\HH_0$ norms: 
\EQ{ \label{ort H} 
\|\vec \psi_n\|_{\HH_0}^2 = \sum_{1 \le j \le J} \| \vec \fy_L^j( - t_{n, j}/ \la_{n, j}) \|_{\HH_0}^2  +  \|\vec \ga_n^J\|_{\HH_0}^2 + o_n(1) \mas n \to \infty
} 
\end{lem}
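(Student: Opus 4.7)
The plan is to deduce this lemma from the classical Bahouri--Gérard profile decomposition for the free wave equation on $\R^{1+4}$, transported via the change of variable $u = \psi/r$ exploited in Section~2.1. First I would pass from the sequence $\vec\psi_n \in \HH_0$ to $\vec u_n := (\psi_n/r, \psi_{n,1}/r)$; by the equivalence \eqref{eq:2-4}, the sequence $(\vec u_n)$ is radial and uniformly bounded in $\dot H^1 \times L^2(\R^4)$.

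Second, I would apply the standard Bahouri--Gérard profile decomposition in $\dot H^1 \times L^2(\R^4)$ for the free wave equation~\eqref{eq:4dlin} to $(\vec u_n)$. Because the sequence is radial, one may arrange all concentration cores at the origin: any nontrivial translation center $x_{n,j} \in \R^4$ with $|x_{n,j}|/\la_{n,j}\to\infty$ is incompatible with radial symmetry of the associated profile, forcing the profile to vanish. After passing to a subsequence, this produces radial profiles $\vec V^j_L$ solving~\eqref{eq:4dlin}, parameters $(t_{n,j}, \la_{n,j})$ satisfying the pseudo-orthogonality~\eqref{eq:po}, and errors $\vec \delta^J_n$ whose free evolutions satisfy
\EQ{
\limsup_{n\to\infty} \|\delta^J_{n,L}\|_{L^\infty_t L^4_x \cap L^3_t L^6_x(\R\times\R^4)} \to 0 \mas J \to \infty,
}
together with the Pythagorean expansion in $\dot H^1\times L^2(\R^4)$ and the weak convergence statement analogous to~\eqref{eq:ga-weak}.

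Third, I would translate the decomposition back using $\vec\fy^j_L := r \vec V^j_L$ and $\vec\ga^J_n := r\vec\delta^J_n$. By~\eqref{eq:free}, the map $v \mapsto rv$ intertwines \eqref{eq:4dlin} with~\eqref{eq:2dlin}, so each $\vec\fy^j_L$ solves~\eqref{eq:2dlin} in $\HH_0$. The scaling convention $(u_\la)(r) = \la^{-1} u(r/\la)$ in 4D corresponds under multiplication by $r$ exactly to $(\fy^j_L)_{\la_{n,j}}$ in the $H \times L^2(\R^2)$ scaling from \eqref{eq:scaledef}, so the profiles and parameters match up with the stated form. The Pythagorean expansion~\eqref{ort H} and the weak convergence~\eqref{eq:ga-weak} follow immediately from the isometry~\eqref{eq:2-4} applied after an affine rescaling, and the dispersive smallness is transferred verbatim since the target norm $\|\gamma^J_{n,L}/r\|_{L^\infty_tL^4_x \cap L^3_tL^6_x(\R\times\R^4)}$ coincides with $\|\delta^J_{n,L}\|_{L^\infty_tL^4_x \cap L^3_tL^6_x(\R\times\R^4)}$.

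The main difficulty is purely bookkeeping rather than analytic: verifying that rescaling, time-translation, weak convergence, and Strichartz-type norms all transfer correctly through the change of variables $u = \psi/r$, and in particular that the radiality of the $\vec\psi_n$ allows one to avoid spatial translation parameters. Once this dictionary is in place, the lemma reduces immediately to the classical Bahouri--Gérard theorem on $\R^{1+4}$.
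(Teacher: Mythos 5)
Your proposal is correct and follows essentially the same route as the paper, which simply invokes the Bahouri--G\'erard decomposition for the radial free wave equation on $\R^{1+4}$ together with the equivalence \eqref{eq:free}, \eqref{eq:2-4} given by the substitution $u = \psi/r$. The dictionary you describe (radiality removing translation cores, the correspondence of scalings, and the transfer of the Pythagorean, weak-convergence, and dispersive statements) is precisely the intended argument.
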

Applying the concentration-compactness methods of Kenig and Merle \cite{KM06}, \cite{KM08} to the study of \eqref{eq:wmk} requires the following Pythagorean expansion of the nonlinear energy proved in~\cite{CKLS1}. 
\begin{lem}\emph{\cite[Lemma $2.16$]{CKLS1} }\label{l:enorth}
Let  $\vec \psi_n \in \HH_0$ be a bounded sequence with a profile decomposition as in Lemma \ref{c:bg}. Then the following Pythagorean expansion for the nonlinear energy holds: 
\EQ{\label{eq:enorth} 
\E(\vec \psi_n) = \sum_{j=1}^J \E(\vec \fy_L^j(-t_{n, j}/ \la_{n, j})) + \E(\vec \ga_n^J) + o_n(1)  \mas n \to \infty.
} 
 \end{lem}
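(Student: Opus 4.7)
The plan is to split the energy into its quadratic and genuinely nonlinear parts by writing $\E(\vec\psi) = \E_{\mathrm{q}}(\vec\psi) + \E_{\mathrm{nl}}(\psi_0)$, where
\EQ{
\E_{\mathrm{q}}(\vec\psi) := \pi\int_0^\infty \bigl((\p_t\psi)^2 + (\p_r\psi)^2 + \psi^2/r^2\bigr)\, r\,dr, \quad \E_{\mathrm{nl}}(\psi_0) := \pi\int_0^\infty \frac{\sin^2\psi_0 - \psi_0^2}{r^2}\, r\,dr.
}
Since $\E_{\mathrm{q}}$ is a positive-definite quadratic form equivalent to $\|\cdot\|_{\HH_0}^2$, its Pythagorean expansion follows immediately from \eqref{ort H}. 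It therefore suffices to establish
\EQ{
\E_{\mathrm{nl}}(\psi_n) = \sum_{j=1}^J \E_{\mathrm{nl}}\bigl(\fy_L^j(-t_{n,j}/\la_{n,j})\bigr) + \E_{\mathrm{nl}}\bigl(\ga_n^J(0)\bigr) + o_n(1) \mas n \to \infty.
}

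I would pass to the equivalent 4D picture by setting $u_n := \psi_n/r$ and $v_L^j := \fy_L^j/r$. The auxiliary function $g(s) := \sin^2 s - s^2$ obeys the pointwise bound $|g(s)| \lesssim s^4/(1+s^2)$, so up to an absolute constant $\E_{\mathrm{nl}}(\psi) = \int_{\R^4} g(|x|u)\, |x|^{-4}\,dx$, a quantity controlled by $\|u\|_{L^4(\R^4)}^4 \lesssim \|\psi\|_H^4$ via the Sobolev embedding $\dot H^1(\R^4) \hookrightarrow L^4(\R^4)$ and the equivalence \eqref{eq:2-4}. Writing $\psi_n = \sum_{j=1}^J \Psi_n^j + \ga_n^J(0)$ with $\Psi_n^j := (\fy_L^j)_{\la_{n,j}}(-t_{n,j}/\la_{n,j})$, and iteratively applying the identities $\sin^2(a+b) - \sin^2 a - \sin^2 b = 2\sin a\sin b\cos(a+b)$ and $(a+b)^2 - a^2 - b^2 = 2ab$, one expands $\E_{\mathrm{nl}}(\psi_n) - \sum_j \E_{\mathrm{nl}}(\Psi_n^j) - \E_{\mathrm{nl}}(\ga_n^J(0))$ into a finite collection of cross-interaction integrals, each of which involves at least two distinct ingredients among the $\Psi_n^j$ and the error.

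The remaining task is to drive every such cross integral to zero. For an interaction between two profiles $\Psi_n^j$ and $\Psi_n^k$ with $j\neq k$, I would rescale so that $\Psi_n^j$ sits at unit scale; the pseudo-orthogonality \eqref{eq:po} then forces $\Psi_n^k$ either to concentrate on a scale tending to $0$ or to $\infty$, or to have its time shift diverge, and a change of variables together with the uniform $\dot H^1 \times L^2(\R^4)$ bound on a single linear evolution drives the interaction integral to zero as $n\to\infty$. For interactions involving $\ga_n^J(0)$, I would use the dispersive smallness $\limsup_n \|\ga_{n,L}^J/r\|_{L^\infty_t L^4_x(\R\times\R^4)} \to 0$ as $J\to\infty$ from Lemma~\ref{c:bg}, combined with H\"older's inequality and the uniform $\dot H^1(\R^4)$ bound, so that the error contributions vanish in the iterated limit $\lim_{J\to\infty}\limsup_{n\to\infty}$. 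The main obstacle will be the large-amplitude regime, where the naive quartic bound on $g$ becomes wasteful; I would resolve this by splitting each integrand into $\{r|u_n| \leq 1\}$ (where the quartic Sobolev estimates apply) and its complement (where boundedness of $\sin$ reduces each interaction to an $L^2(r\,dr)$-type inner product to which the pseudo-orthogonality applies directly).
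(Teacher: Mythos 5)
Your overall strategy (split off the quadratic part via \eqref{ort H}, pass to $u=\psi/r$ on $\R^4$, expand the potential term with trigonometric identities, and kill cross terms using orthogonality of the parameters and smallness of the error) is the standard route; note the paper does not reprove this lemma but cites \cite{CKLS1}, whose argument is of this type. However, as written your argument only establishes the weaker, iterated-limit statement $\lim_{J\to\infty}\limsup_{n\to\infty}\bigl|\E(\vec\psi_n)-\sum_{j\le J}\E(\vec\fy^j_L(-t_{n,j}/\la_{n,j}))-\E(\vec\ga_n^J)\bigr|=0$, whereas \eqref{eq:enorth} asserts an error which is $o_n(1)$ as $n\to\infty$ for each \emph{fixed} $J$. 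The only tool you invoke against interactions involving $\ga_n^J(0)$ is the dispersive smallness of $\ga^J_{n,L}$, which is small only after $J\to\infty$; for fixed $J$ it gives nothing, and the fixed-$J$ statement cannot be recovered from the double-limit one (their difference is a quantity of exactly the same type for the sequence $\ga_n^J$ with its own decomposition). The missing ingredient is \eqref{eq:ga-weak}: after rescaling to the parameters of a fixed profile $j\le J$ (and reducing, after extraction, to the two cases $t_{n,j}\equiv 0$ or $|t_{n,j}|/\la_{n,j}\to\infty$), the error tends to $0$ weakly in $\HH_0$, hence weakly in $L^4(\R^4)$ and, by Rellich, strongly in $L^p_{\mathrm{loc}}(\R^4)$ for $p<4$; approximating the fixed rescaled profile in $L^4$ by a bounded, compactly supported function then kills terms such as $\int_{\R^4}(\ga_n^J/r)^2(\Psi_n^j/r)^2\,dx$ and $\int_{\R^4}|\ga_n^J/r|^3|\Psi_n^j/r|\,dx$ for each fixed $J$, while the case $|t_{n,j}|/\la_{n,j}\to\infty$ is handled by the decay $\|\fy^j_L(t)/r\|_{L^4(\R^4)}\to0$ as $|t|\to\infty$.

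Two smaller points. First, the large-amplitude split at the end is unnecessary and, as described, does not work: the pseudo-orthogonality \eqref{eq:po} is a statement about the parameters and does not ``apply directly'' to an $L^2(r\,dr)$ pairing (indeed $\int \frac{|a||b|}{r^2}\,r\,dr$ need not even be finite for $a,b\in H$). The correct observation is the global cancellation $\bigl|\sin a\sin b\cos(a+b)-ab\bigr|\lesssim |a|^3|b|+|a||b|^3$ for all real $a,b$ (from $|\sin a-a|\le|a|^3/6$ and $|\cos x-1|\le x^2/2$), which makes every cross term a quartic expression containing at least one factor from each of two distinct ingredients, hence controlled by H\"older in $L^4(\R^4)$ with no case distinction on the size of $\psi_n$. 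Second, for two profiles with comparable scales and diverging relative times, ``a change of variables together with the uniform $\dot H^1\times L^2$ bound'' does not by itself force the interaction to vanish; there as well you need the $L^4_x$ decay of free waves (or an approximation by compactly supported space-time functions) to conclude.
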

 
To apply Lemma \ref{c:bg} and Lemma \ref{l:enorth} in the context of the nonlinear problem \eqref{eq:wmk}, we construct the following nonlinear profiles. 
For each linear profile $\varphi^j_L$ with parameters $\{ t_{n,j}, \la_{n,j} \}_n$, we define its associated nonlinear profile $\varphi^j$ to be the unique
solution to \eqref{eq:wmk} such that after passing to a subsequence if necessary, for all $n$ sufficiently large, $-t_{n,j}/\la_{n,j} \in I_{\max}(\vec \varphi^j)$, and 
\begin{align*}
\lim_{n \rar \infty} \| \vec \varphi^j(-t_{n,j}/\la_{n,j}) - \vec \varphi^j_L(-t_{n,j}/\la_{n,j}) \|_{\HH_0} = 0. 
\end{align*}
It is easy to see that a nonlinear profile always exists.  Indeed, if $-t_{n,j}/\la_{n,j} \rar_n t_0 \in \R$, then we set $\varphi^j$ to be the 
solution to \eqref{eq:wmk} with initial data $\vec \varphi^j(t_0) = \vec \varphi^j_L(t_0)$.  If $-t_{n,j}/\la_{n,j} \rar_n \infty$, say, then we set 
$\varphi^j$ to be the unique solution to the integral equation 
\begin{align}%%%%%%%%%%%%%%%%%%%%%%%%%%%%%%%%%
\varphi^j(t) = \vec \varphi^j_L(t) - \int_t^\infty S(t-s)\left (0,\frac{2\varphi - \sin 2 \varphi}{2r^2} \right ) ds. \label{e912}
\end{align}
A unique solution to \eqref{e912} can be shown to exist using contraction mapping arguments and Strichartz esimates (for $u = \psi/r$). A similar construction can be made if $-t_{n,j} / \la_{n,j} \rar -\infty$.

The existence of nonlinear profiles and the long-time perturbation lemma yield the following nonlinear profile decomposition.  

\begin{lem}\emph{\cite[Proposition 2.17]{CKLS1}\cite[Proposition 2.8]{DKM1}}\label{p:nlprof} Let $\vec \psi_n(0)$ be a bounded sequence in $\HH_0$ with a profile decomposition as in Lemma~\ref{c:bg}.
Let $\vec \varphi_j$ be the associated nonlinear profiles. Let $s_n \in (0, \infty)$ be any sequence such that for all $j$ and for all $n$, 
\EQ{
\frac{s_n - t_{n, j}}{\la_{n, j}} < T_+(\vec \fy^j), \quad \limsup_{n \to \infty} \|\fy^j/ r\|_{L^3_t([-\frac{t_{n, j}}{\la_{n, j}}, \frac{s_n - t_{n, j}}{\la_{n, j}}); L^6_x(\R^4))} <\infty.
}
Let $\vec \psi_n(t)$ be the solution of \eqref{eq:wmk} with initial data $\vec \psi_n(0)$.  Then, for all $n$ sufficiently large, $\vec \psi_n(t)$ exists on the interval $s \in (0, s_n)$ and  
\EQ{
\limsup_{n \to \infty} \|\psi_n/r\|_{L^3_t([0, s_n); L^6_x(\R^4))} < \infty.
}
Finally, the following non-linear profile decomposition holds for all $s \in [0, s_n)$,  
\EQ{
\vec \psi_n(s, r) = \sum_{j=1}^J \left(\fy^j\left( \frac{s- t_{n, j}}{\la_{n, j}}, \frac{r}{\la_{n, j}}\right), \frac{1}{\la_n^j}\p_t \fy^j\left(\frac{s-t_{n, j}}{\la_{n, j}}, \frac{r}{\la_{n,j}}\right) \right) + \vec \ga_{n, L}^{J}(s, r)+ \vec \te_n^J(s, r)
}
where $\ga_{n, L}^J(t)$ is defined in Lemma \ref{c:bg} and 
\EQ{\label{eq:nlerror}
\lim_{J \to \infty} \limsup_{n\to \infty} \left( \|\te_n^J/r\|_{L^3_t([0, s_n); L^6_x(\R^4))} + \|\vec \te_n^J\|_{L^{\infty}_t ([0, s_n); \HH_0)} \right) =0.
}
An analogous statement holds for sequences $s_n\in (-\infty, 0)$. 
\end{lem}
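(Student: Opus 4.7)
The plan is to construct an explicit approximate solution $\vec\Psi_n^J$ built from the nonlinear profiles plus the free linear remainder, and then invoke the long-time perturbation Lemma~\ref{l:pert} to show that the genuine nonlinear flow $\vec\psi_n$ stays close to $\vec\Psi_n^J$ on $[0,s_n)$. This automatically yields existence on $[0,s_n)$, the uniform Strichartz bound on $\psi_n/r$, and the representation~\eqref{eq:nlerror}.

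First I define
\EQ{
\vec\Psi_n^J(s,r) := \sum_{j=1}^J \Bigl(\varphi^j\bigl(\tfrac{s-t_{n,j}}{\la_{n,j}}, \tfrac{r}{\la_{n,j}}\bigr),\ \tfrac{1}{\la_{n,j}}\partial_t\varphi^j\bigl(\tfrac{s-t_{n,j}}{\la_{n,j}}, \tfrac{r}{\la_{n,j}}\bigr)\Bigr) + \vec\gamma_{n,L}^J(s,r).
}
By the very definition of the nonlinear profiles we have $\vec\psi_n(0) - \vec\Psi_n^J(0) \to 0$ in $\HH_0$ as $n\to\infty$, so the free evolution $\vec w_0$ of this difference is arbitrarily small in the Strichartz norms appearing in Lemma~\ref{l:pert}, once $n$ is taken large after fixing $J$. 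Next, using the scale-invariance of the $4$d Strichartz norm, the hypothesis that each $\varphi^j/r$ has bounded $L^3_tL^6_x$-norm on $\bigl[-t_{n,j}/\la_{n,j},(s_n-t_{n,j})/\la_{n,j}\bigr)$, the pseudo-orthogonality~\eqref{eq:po} of the parameters, and the Pythagorean expansion~\eqref{ort H}, a standard almost-orthogonality argument yields
\EQ{
\limsup_{n\to\infty}\bigl(\|\Psi_n^J/r\|_{L^3_tL^6_x([0,s_n)\times\R^4)} + \|\vec\Psi_n^J\|_{L^\infty_t([0,s_n);\HH_0)}\bigr) \leq A,
}
with $A$ independent of $J$; this provides the constant entering the perturbation lemma.

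The heart of the argument is to control the equation error
\EQ{
e_n^J := \textrm{eq}(\Psi_n^J/r) = (\Psi_n^J/r)^3\, Z(\Psi_n^J) - \sum_{j=1}^J (\varphi_{n,j}^\star/r)^3\, Z(\varphi_{n,j}^\star),
}
where $\varphi_{n,j}^\star(s,r) := \varphi^j((s-t_{n,j})/\la_{n,j}, r/\la_{n,j})$ denotes the $j$-th rescaled profile (the subtracted terms cancel because each $\varphi^j/r$ solves~\eqref{eq:4d} while $\gamma_{n,L}^J/r$ solves~\eqref{eq:4dlin}). The goal is $\lim_{J\to\infty}\limsup_{n\to\infty}\|e_n^J\|_{L^1_tL^2_x([0,s_n)\times\R^4)} = 0$. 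Pure cross-terms mixing two distinct rescaled profiles vanish after a change of variables into the scale of one of them: by~\eqref{eq:po} the other profile becomes dispersively small in every relevant spacetime Lebesgue norm, so H\"older in spacetime closes the estimate. Terms coupling the profile sum with $\gamma_{n,L}^J$ are bounded via H\"older using the asymptotic smallness $\|\gamma_{n,L}^J/r\|_{L^3_tL^6_x \cap L^\infty_tL^4_x} \to 0$ as $J\to\infty$ from Lemma~\ref{c:bg} and the uniform bound from the previous step. Smoothness and boundedness of $Z$ make the difference $Z(\Psi_n^J) - Z(\varphi_{n,j}^\star)$ a harmless mean-value factor throughout.

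With the uniform bound of the second step providing the constant $A$, and both $e_n^J$ and $\vec w_0$ made arbitrarily small by first taking $J$ large and then $n$ large, Lemma~\ref{l:pert} applies and delivers $[0,s_n) \subset I_{\max}(\vec\psi_n)$ for all large $n$, the uniform bound $\limsup_n\|\psi_n/r\|_{L^3_tL^6_x} \leq 2A$, and the asserted decomposition~\eqref{eq:nlerror} upon setting $\vec\theta_n^J := \vec\psi_n - \vec\Psi_n^J$. The case $s_n < 0$ is identical by time-reversal. I expect the main obstacle to be the $L^1_tL^2_x$ decay of the cubic cross-terms in $e_n^J$: this is where the pseudo-orthogonality of $(t_{n,j},\la_{n,j})$ is genuinely exploited, and one must split into the subcases where scales separate versus where time-shifts separate, treating each through an appropriate change of variables together with a density reduction to smooth, compactly supported profiles — the standard Bahouri–G\'erard orthogonality argument in the $4$d radial setting.
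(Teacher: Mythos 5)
Your proposal is correct and is essentially the standard argument behind this lemma: the paper itself gives no proof but cites \cite{CKLS1} and \cite{DKM1}, whose proofs proceed exactly as you describe — build the approximate solution from the rescaled nonlinear profiles plus the linear remainder, get a $J$-independent Strichartz/energy bound via the Pythagorean expansion and small-data theory for high-index profiles, kill the cross terms in the equation error using the pseudo-orthogonality \eqref{eq:po} after reduction to compactly supported approximations, and close with the long-time perturbation Lemma~\ref{l:pert}. No substantive deviation or gap to report.
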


The main result obtained from concentration-compactness methods along with Theorem \ref{t:2EQ} is the following compactness statement for nonscattering threshold solutions. The proof is the same as the higher equivariant analog found in \cite{JL} and is omitted. 
 
  \begin{lem}\emph{\cite[Lemma 2.9]{JL}} \label{l:1profile} Let $\vec \psi(t) \in \HH_0$ be a solution to~\eqref{eq:wmk} defined on $[0, T_+(\vec \psi))$. Suppose that $\E(\vec \psi) = 8 \pi$ and $\vec \psi(t)$ does not scatter in forward time. Then if $t_n \to T_+ $ is any sequence of times such that 
  \EQ{ \label{eq:Hbounded} 
  \sup_n \| \vec\psi(t_n)\|_{\HH_0} \le C < \infty,
  }
  there exist a subsequence which we continue to denote by $t_n$, scales $\nu_n>0$ and a nonzero $\vec \fy \in \HH_0$ such that 
  \EQ{
   \vec \psi(t_n)_{\frac{1}{\nu_n}} \to \vec \fy \in \HH_0
  }
  strongly in $\HH_0$. Moreover, $ \E(\vec \fy ) = 8 \pi$, and the solution $\vec \fy(s)$ to \eqref{eq:wmk} with data $\vec \fy(0) = \vec \fy$ is non-scattering in forwards and backwards time. 
 \end{lem}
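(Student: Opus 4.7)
My approach follows the standard concentration-compactness extraction of a critical element along the Kenig--Merle scheme, as carried out in \cite{JL} for higher equivariance classes. First, since $\{\vec\psi(t_n)\}$ is bounded in $\HH_0$, I will apply Lemma \ref{c:bg} to obtain, along a subsequence, a linear profile decomposition
\begin{align*}
\vec\psi(t_n) = \sum_{j=1}^J (\vec\fy^j_L)_{\la_{n,j}}\bigl(-t_{n,j}/\la_{n,j}\bigr) + \vec\ga^J_n(0),
\end{align*}
with pseudo-orthogonal parameters, and associate to each linear profile its nonlinear profile $\vec\fy^j$ as in the discussion preceding Lemma \ref{p:nlprof}. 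The Pythagorean energy expansion (Lemma \ref{l:enorth}) will yield
\begin{align*}
8\pi = \E(\vec\psi(t_n)) = \sum_{j\ge 1} \E(\vec\fy^j) + \limsup_n \E(\vec\ga^J_n) + o_J(1).
\end{align*}

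The central reduction uses the forward non-scattering hypothesis: if every $\vec\fy^j$ had $\E(\vec\fy^j) < 8\pi$, Theorem \ref{t:2EQ} would make each nonlinear profile scatter, and Lemma \ref{p:nlprof} together with the long-time perturbation Lemma \ref{l:pert} would give $\|\psi/r\|_{L^3_t L^6_x([t_n, T_+)\times \R^4)} < \infty$ for all $n$ large, contradicting the blow-up criterion of Proposition \ref{l:scattering}. Hence exactly one profile, which I label $\vec\fy^1$, satisfies $\E(\vec\fy^1) = 8\pi$; energy orthogonality then forces $\vec\fy^j \equiv 0$ for $j \ge 2$ and $\vec\ga_n^J \to 0$ in $\HH_0$ along a diagonal subsequence, using the coercivity $\|\cdot\|_{\HH_0}^2 \lesssim \E(\cdot)$ which holds once the energy drops below a universal threshold (a consequence of $\sin^2\psi \ge \psi^2 - \psi^4/3$ together with the embedding $\|\psi_0\|_{L^\infty} \lesssim \|\psi_0\|_H$).

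I then set $\nu_n := \la_{n,1}$ and $s_n := -t_{n,1}/\la_{n,1}$ and pass to a further subsequence so that $s_n \to s_\infty \in [-\infty,+\infty]$. The degenerate cases $s_\infty = \pm\infty$ are excluded as follows. If $s_\infty = +\infty$, then by construction $\vec\fy^1$ coincides with its free evolution in its asymptotic future and hence scatters forward; the reduction above then forces $\vec\psi$ to scatter forward, a contradiction. A parallel argument rules out $s_\infty = -\infty$: there the profile lies in its dispersive far past throughout the forward time window relevant to $\vec\psi$, and the perturbation lemma again yields forward scattering of $\vec\psi$. Thus $s_\infty \in \R$, and by continuity of the linear flow, $\vec\fy^1_L(s_n) \to \vec\fy^1_L(s_\infty) = \vec\fy^1(s_\infty) =: \vec\fy$ in $\HH_0$. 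Consequently $\vec\psi(t_n)_{1/\nu_n} \to \vec\fy$ strongly in $\HH_0$, with $\E(\vec\fy) = 8\pi$ and $\vec\fy \neq 0$.

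Finally, the solution to \eqref{eq:wmk} with data $\vec\fy$ is the time-shifted profile $\vec\fy^1(\cdot + s_\infty)$, whose forward non-scattering follows from the previous step. Backward non-scattering will be obtained by applying the same single-profile reduction in reverse time: were $\vec\fy$ to scatter backward, a profile decomposition of its past tail would, by energy orthogonality and Theorem \ref{t:2EQ}, contradict $\vec\fy^1$ carrying the full threshold energy $8\pi$. The main technical obstacle will be the careful exclusion of the degenerate cases $s_\infty = \pm\infty$ and the subsequent verification of two-directional non-scattering of $\vec\fy$; these steps require precise control of Strichartz tails of the nonlinear profile via the perturbation lemma, and constitute the delicate heart of the Bahouri--G\'erard/Kenig--Merle framework in this threshold setting.
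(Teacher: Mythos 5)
Your overall framework (profile decomposition, Pythagorean expansion, the reduction via Theorem \ref{t:2EQ} and Lemmas \ref{p:nlprof}, \ref{l:pert} to a single profile of energy $8\pi$ with vanishing error) is the right one and is the route the cited proof in \cite{JL} takes; the paper itself omits the argument. However, two of your steps are genuinely wrong, and notably they are exactly the places where the hypothesis $t_n \to T_+$ must be used --- a hypothesis your write-up never invokes. First, the exclusion of $s_n := -t_{n,1}/\la_{n,1} \to -\infty$: your claim that the profile ``lies in its dispersive far past throughout the forward time window relevant to $\vec\psi$'' has no justification. The forward window $[t_n, T_+)$ corresponds, after rescaling, to profile times in $[s_n,\, s_n + (T_+-t_n)/\la_{n,1})$, and since there is no lower bound on $\la_{n,1}$ this rescaled window can extend far beyond the dispersive past; the forward evolution of $\vec\psi$ from $t_n$ then shadows the full forward evolution of the nonlinear profile, which need not scatter, so no contradiction with forward non-scattering of $\vec\psi$ can be extracted this way. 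The correct exclusion runs \emph{backward} in time: when $s_n \to -\infty$ the nonlinear profile scatters backward, so its Strichartz norm on $(-\infty, s_n]$ tends to $0$; applying Lemma \ref{p:nlprof}/\ref{l:pert} in negative times from $t_n$ gives a bound on $\|\psi/r\|_{L^3_tL^6_x((T_-,t_n]\times\R^4)}$ that is uniform in $n$, and since $t_n \to T_+$ these intervals exhaust $(T_-,T_+)$, so Proposition \ref{l:scattering} forces $T_+=\infty$ and forward scattering of $\vec\psi$, the desired contradiction.

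Second, your argument for backward non-scattering of the limit $\vec\fy$ is simply false as stated: having energy exactly $8\pi$ does not preclude scattering in one time direction, and indeed the paper's own results show the opposite --- every threshold solution scatters in at least one direction (Theorem \ref{t:main}), and the solution of Theorem \ref{t:main2} has energy $8\pi$ and scatters forward while blowing up backward. So ``a profile decomposition of its past tail'' cannot contradict ``$\vec\fy$ carrying the full threshold energy.'' The correct argument is again the exhaustion argument above: if the solution with data $\vec\fy$ scattered backward, with finite backward Strichartz norm $B$, then by strong convergence $\vec\psi(t_n)_{1/\nu_n}\to\vec\fy$ and the long-time perturbation Lemma \ref{l:pert}, one gets $\|\psi/r\|_{L^3_tL^6_x((T_-,t_n]\times\R^4)} \lesssim B+1$ uniformly for large $n$; letting $t_n \to T_+$ yields a finite global Strichartz norm on $(T_-,T_+)$, hence forward scattering of $\vec\psi$ by Proposition \ref{l:scattering}, a contradiction. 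With these two steps repaired (both hinging on $t_n\to T_+$ and the blow-up/scattering criterion), the rest of your outline --- Case $s_\infty=+\infty$, the single-profile reduction, the coercivity of the energy at small energy, and forward non-scattering of $\vec\fy$ --- is sound.
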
 

 \subsection{Near two-bubble maps} \label{s:hm} 
 
We recall that the unique (up to scaling) nontrivial corotational harmonic map $Q$ is given by 
\begin{align*}
Q(r) = 2 \arctan r. 
\end{align*}
The harmonic map $Q$ has a variational characterization as follows. As in the introduction, let $\cl H_{1}$ be the set of all finite energy corotational maps which map infinity to the south pole, i.e. 
\EQ{
	\HH_1:= \{ (\phi_0, \phi_1) \mid \E(\vec \phi)< \infty, \quad \phi_0(0) = 0, \quad \lim_{r \to \infty} \fy_0(r) = \pi\}.
}
Then for $(\varphi_0, \varphi_1) \in \HH_1$, we have the following 
Bogomol'nyi factorization of the nonlinear energy:
\EQ{ \label{eq:bog}
	\E( \fy_0, \fy_1)  &=   \pi \| \fy_1 \|_{L^2}^2 +  \pi \int_0^{\infty} \left(\p_r \fy_0  - \frac{\sin(\fy_0)}{r}\right)^2 \, r\, dr +  2\pi \int_0^{\infty} \sin(\fy_0) \p_r\fy_0 \, dr\\
	&= \pi \| \fy_1 \|_{L^2}^2  + \pi \int_0^{\infty} \left(\p_r \fy_0  - \frac{\sin(\fy_0)}{r}\right)^2 \, rdr + 2 \pi  \int_{\fy_0(0)}^{\fy_0(\infty)} \sin(\rho)  \, d\rho \\
	&  =  \pi \| \fy_1 \|_{L^2}^2  +  \pi \int_0^{\infty} \left(\p_r \fy_0  - \frac{\sin(\fy_0)}{r}\right)^2 \, rdr  + 4\pi. 
} 
By solving the differential equation in the parentheses, we see that $\cl E(\fy_0, \fy_1) \geq 4\pi$ with equality if and only if $(\fy_0,\fy_1) = (Q_\la,0)$ for some $\la > 0$.  

In our analysis, we will need several technical facts related to the distance of a map $\vec \psi$ to the set of $2$-bubbles. More precisely, given a map $ \vec\phi =  (\phi_0, \phi_1) \in \HH_0$ we define its distance $\bfd(\vec \phi)$ to the set of $2$-bubbles by 
\EQ{ \label{eq:ddef} 
\bfd(\vec \phi) := \inf_{\la, \mu >0, \iota \in \{+1, -1\}}  \Big(  \| (\phi_0 - \iota (Q_\la - Q_\mu), \phi_1) \|_{\HH_0}^2 + \left( \la/\mu \right) \Big).
}
To distinguish between the two cases of a map being close to a pure two-bubble ($\iota = +1$ above) or an anti two-bubble ($\iota = -1$ above), we define 
\EQ{\label{eq:dpm} 
	\bfd_\pm(\vec \phi) := \inf_{\la, \mu >0}  \Big(  \| (\phi_0  \mp  (Q_\la - Q_\mu), \phi_1) \|_{\HH_0}^2 + \left( \la/\mu \right) \Big).
} 
The next two lemmas follow from the same arguments given in \cite{JL} for higher equivariant wave maps, and the proofs will be omitted.  The first lemma shows that the size of a map $\vec \psi$ with threshold energy can be controlled by its distance to the surface of two-bubbles.  The second lemma proves the intuitive fact that a map $\vec \psi$ cannot simultaneously be close to a pure two-bubble and anti two-bubble.  
 
 \begin{lem}\emph{\cite[Lemma 2.13]{JL}} \label{l:d-size}
 Suppose that $\vec \phi = (\phi_0, \phi_1) \in \HH_0$ and
\EQ{
&\E( \vec \phi)  = 2 \E(\vec Q) = 8 \pi. 
%&\inf_{\la, \mu >0}  \Big(  \| (\phi_0 - (Q_\la - Q_\mu), \phi_1) \|_{H \times L^2}^2 + \left( \la/\mu \right)^k \Big)\ge \al_0.
 }
  Then for each $\be>0$ there exists $C(\be)>0$ such that 
 \EQ{ \label{eq:d-big} 
  \bfd(\vec \phi) \ge \be \Longrightarrow  \|(\phi_0, \phi_1) \|_{\HH_0} \le C(\be).
   } 
Conversely, for each $A>0$ there exists $\al  = \al(A)$ such that 
\EQ{\label{eq:d-small}
 \bfd(\vec \phi) \le \al(A) \Longrightarrow \|(\phi_0, \phi_1) \|_{\HH_0}  \ge A.
}
 \end{lem}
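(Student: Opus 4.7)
The plan is to prove the two implications \eqref{eq:d-big} and \eqref{eq:d-small} separately, both building on the asymptotic
\[
	\|Q_\la - Q_\mu\|_H^2 \gtrsim |\log(\la/\mu)| \qquad \text{as } \la/\mu \to 0,
\]
which follows from the fact that on the annulus $\la \ll r \ll \mu$ we have $Q_\la(r) \approx \pi$ and $Q_\mu(r) \approx 0$, so that $(Q_\la - Q_\mu)^2/r^2 \sim \pi^2/r^2$ on a set of logarithmic length $\log(\mu/\la)$. For \eqref{eq:d-small}, given $A > 0$ and $\bfd(\vec \phi) \le \alpha$, the definition of $\bfd$ provides $\la, \mu > 0$ and $\iota \in \{\pm 1\}$ with $\la/\mu \le 2\alpha$ and $\|\phi_0 - \iota(Q_\la - Q_\mu)\|_H^2 \le 2\alpha$; the reverse triangle inequality then gives $\|\vec \phi\|_{\HH_0} \ge c\sqrt{|\log \alpha|} - \sqrt{2\alpha}$, which exceeds $A$ once $\alpha(A)$ is taken small enough.

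For \eqref{eq:d-big} I argue by contradiction. Assume a sequence $\vec \phi_n \in \HH_0$ satisfies $\cl E(\vec \phi_n) = 8\pi$, $\bfd(\vec \phi_n) \ge \beta$, and $\|\vec \phi_n\|_{\HH_0} \to \infty$. Energy conservation bounds $\|\phi_{n,1}\|_{L^2}$, $\|\p_r \phi_{n,0}\|_{L^2}$, and $\|\sin(\phi_{n,0})/r\|_{L^2}$ uniformly, so divergence of $\|\phi_{n,0}\|_H$ comes entirely from $\int \phi_{n,0}^2/r^2 \cdot r\, dr$; since $|\sin s| \ge \sin \delta$ for $s \in [\delta, \pi - \delta]$, this forces $\max_r |\phi_{n,0}(r)| \to \pi$ as $n \to \infty$. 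Passing to a subsequence and replacing $\phi_{n,0}$ by $-\phi_{n,0}$ if necessary, I assume the maximum is attained at $r_n^*$ with $\phi_{n,0}(r_n^*) \to \pi$, and set
\[
	\la_n := \inf\{r > 0 : \phi_{n,0}(r) = \pi/2\}, \qquad \mu_n := \sup\{r > 0 : \phi_{n,0}(r) = \pi/2\}.
\]
The algebraic identity underlying \eqref{eq:bog}, localized with appropriate signs on the four intervals $(0, \la_n)$, $(\la_n, r_n^*)$, $(r_n^*, \mu_n)$, $(\mu_n, \infty)$, produces four nonnegative Bogomol'nyi defects whose sum, together with $\pi \|\phi_{n,1}\|_{L^2}^2$, equals $\cl E(\vec \phi_n) - 8\pi + o(1) = o(1)$ once $\max_r \phi_{n,0} \to \pi$ is used; all four defects and $\|\phi_{n,1}\|_{L^2}$ therefore vanish in the limit.

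Concentration-compactness applied to the inner rescalings $\phi_{n,0}(\la_n\,\cdot)$ and outer rescalings $\phi_{n,0}(\mu_n\,\cdot)$ then produces, along a subsequence, strong local convergence in $H$ to the unique BPS minimizers $Q$ and $\pi - Q$ respectively. Using the decomposition $Q_{\la_n} - Q_{\mu_n} = Q_{\la_n} + (\pi - Q_{\mu_n}) - \pi$ as the natural template, and exploiting that $\phi_{n,0}$ stays uniformly close to $\pi$ on the middle annulus $\la_n \ll r \ll \mu_n$, a gluing step upgrades this to
\[
	\|\phi_{n,0} - (Q_{\la_n} - Q_{\mu_n})\|_H \to 0 \qquad \text{with} \qquad \la_n/\mu_n \to 0,
\]
so that $\bfd(\vec \phi_n) \to 0$, contradicting $\bfd(\vec \phi_n) \ge \beta$.

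The main obstacle is this gluing step. The scale invariance of the $H$-norm prevents any rescaling from producing a bounded sequence, so Bahouri--Ger\'ard cannot be applied in standard form; instead one must simultaneously track both concentration scales $\la_n \ll \mu_n$ by localizing to the three regions $\{r \lesssim \la_n\}$, $\{\la_n \ll r \ll \mu_n\}$, and $\{r \gtrsim \mu_n\}$. The slow $1/r$ decay of $Q - \pi$ at infinity in the corotational setting -- the same feature responsible for the logarithm in \eqref{eq:d-small} -- produces logarithmic corrections that must be carefully reconciled with the near saturation of the Bogomol'nyi identities; this is the principal technical wrinkle not encountered for higher equivariance classes in \cite{JL}.
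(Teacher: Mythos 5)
Note first that the paper does not actually prove this lemma: it is quoted from \cite{JL} with the remark that ``the proofs will be omitted,'' so the relevant comparison is with the JL-style variational argument the paper invokes. Your proof of \eqref{eq:d-small} is complete and correct: the lower bound $\|Q_\la-Q_\mu\|_{H}^2\gtrsim|\log(\la/\mu)|$ from the middle annulus plus the reverse triangle inequality is exactly the right mechanism. Your skeleton for \eqref{eq:d-big} is also the right one (and consistent with the cited approach): at threshold energy the sup of $|\phi_0|$ cannot exceed $\pi$, divergence of the $H$-norm forces $\sup|\phi_{n,0}|\to\pi$, and the localized Bogomol'nyi identities then force the kinetic energy and the defects to vanish. (Two small points you should make explicit: the step ``$\int\phi_{n,0}^2 r^{-2}\,r\,dr\to\infty$ forces $\max|\phi_{n,0}|\to\pi$'' first needs the exclusion of values beyond $\pi$, which follows from $\E(\vec\phi)\ge 4\pi\int_0^{m}|\sin\rho|\,d\rho$ with $m=\sup|\phi_0|$; and the paper's profile decomposition, Lemma \ref{c:bg}, is indeed unavailable here since your sequence is unbounded in $\HH_0$, so the ``concentration-compactness'' you invoke for the inner/outer rescalings has to be a direct compactness argument, e.g.\ equicontinuity in $\log r$ from the energy bound plus the vanishing defect and the normalization $\phi_{n,0}(\la_n)=\pi/2$.)

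The genuine gap is the step you yourself flag as ``the main obstacle'': the gluing that upgrades local convergence at the two scales to $\|\phi_{n,0}-(Q_{\la_n}-Q_{\mu_n})\|_{H}\to 0$ is asserted, not proved, and as written the worry about the middle annulus is real, since that annulus has logarithmic length $\log(\mu_n/\la_n)\to\infty$ and a bound of the form $\sup|\phi_{n,0}-\pi|^2\cdot\log(\mu_n/\la_n)$ does not close. The missing idea is that the long annulus must be controlled by the \emph{residual energy} there, not by pointwise smallness times length: first use the total-variation bound for $G(\phi):=\int_0^{\phi}|\sin\rho|\,d\rho$ (namely $2\pi\,\mathrm{TV}(G\circ\phi_{n,0})\le\E(\vec\phi_n)=8\pi$, while the main climb already uses $4\pi G(\sup\phi_{n,0})\to 8\pi$) to conclude that $\phi_{n,0}$ stays within, say, $\pi/2$ of $\pi$ on all of $(\la_n,\mu_n)$; then $(\pi-\phi_{n,0})^2\lesssim\sin^2\phi_{n,0}$ there, so $\int_{K\la_n}^{\mu_n/K}(\pi-\phi_{n,0})^2 r^{-1}\,dr\lesssim\E$ restricted to the middle region, which is $o_n(1)+O_K(1/K)$ by the Pythagorean accounting of the energy ($4\pi-o_K(1)$ near each scale), with no logarithmic loss. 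The tails of the bubbles contribute only $\int_{K\la_n}^{\infty}(\pi-Q_{\la_n})^2 r^{-1}\,dr+\int_0^{\mu_n/K}Q_{\mu_n}^2 r^{-1}\,dr=O(1/K^2)$, so the slow $1/r$ decay you worry about is harmless. A symmetric argument handles $(0,\la_n/K)$ and $(K\mu_n,\infty)$, and the derivative part is controlled directly by the residual energy. Without this (or an equivalent quantitative gluing), the contradiction $\bfd(\vec\phi_n)\to 0$ is not established, so as it stands the proof of \eqref{eq:d-big} is incomplete.
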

 
 \begin{lem}\emph{\cite[Lemma 2.14]{JL}} \label{l:dpm} 
 There exists an absolute constant $\al_0>0$ such that for any $\vec \phi \in \HH_0$ %For all $\al  \le  \al_0$, 
 \EQ{
 \bfd_{\pm}(\vec \phi) \le \al_0 \Longrightarrow \bfd_{\mp}(\vec \phi)  \ge \al_0. 
 }
 \end{lem}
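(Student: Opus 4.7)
The plan is to proceed by contradiction. Fix $\al_0 > 0$ small (to be determined), and suppose $\vec \phi \in \HH_0$ satisfies both $\bfd_+(\vec \phi) \le \al_0$ and $\bfd_-(\vec \phi) \le \al_0$. First I would extract near-minimizing scales $\la_1 < \mu_1$ and $\la_2 < \mu_2$ from the definition \eqref{eq:dpm} with $\la_i/\mu_i \le 2\al_0$ and
\EQ{
\|\phi_0 - (Q_{\la_1} - Q_{\mu_1})\|_H + \|\phi_0 + (Q_{\la_2} - Q_{\mu_2})\|_H \le C \sqrt{\al_0}.
}
The triangle inequality then yields
\EQ{
\|(Q_{\la_1} - Q_{\mu_1}) + (Q_{\la_2} - Q_{\mu_2})\|_H \le C \sqrt{\al_0}.
}

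Next I would upgrade this to a pointwise bound. The Sobolev embedding $H \hookrightarrow L^\infty$ for radial functions, which was recorded in the preliminaries via the logarithmic change of variable $\tilde f(x) = f(e^x)$ together with $H^1(\R) \hookrightarrow L^\infty(\R)$, gives
\EQ{
\sup_{r > 0} \bigl| (Q_{\la_1} - Q_{\mu_1})(r) + (Q_{\la_2} - Q_{\mu_2})(r)\bigr| \le C\sqrt{\al_0}.
}

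The final step exploits the explicit formula $Q(r) = 2\arctan r$. Since $\arctan$ is strictly increasing and $\la_i < \mu_i$, both of the functions $Q_{\la_i} - Q_{\mu_i}$ are non-negative on $(0, \infty)$. Moreover, at the geometric mean $r_* := \sqrt{\la_1 \mu_1}$ a direct computation gives
\EQ{
(Q_{\la_1} - Q_{\mu_1})(r_*) = 2\arctan\sqrt{\mu_1/\la_1} - 2\arctan\sqrt{\la_1/\mu_1} \ge \pi - \eta(\al_0),
}
where $\eta(\al_0) \to 0$ as $\al_0 \to 0$ (using $\la_1/\mu_1 \le 2\al_0$). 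Combining this with the non-negativity of the second summand at $r_*$ produces the chain
\EQ{
\pi - \eta(\al_0) \le (Q_{\la_1} - Q_{\mu_1})(r_*) \le \bigl| (Q_{\la_1} - Q_{\mu_1})(r_*) + (Q_{\la_2} - Q_{\mu_2})(r_*)\bigr| \le C \sqrt{\al_0},
}
which is absurd once $\al_0$ is small enough, yielding the desired contradiction.

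I do not expect any step here to be a significant obstacle. Morally, a pure two-bubble plateaus near $+\pi$ at its intermediate scale, whereas an anti two-bubble plateaus near $-\pi$; the resulting pointwise gap of order $\pi$ cannot be closed by a perturbation that is small in $H$, thanks to the logarithmic Sobolev embedding. The only minor choice in the proof is to evaluate the bubble sum at the geometric mean $\sqrt{\la_1 \mu_1}$, which is precisely where the first bubble sum approaches its peak value of $\pi$.
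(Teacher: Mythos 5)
Your argument is correct: the penalty term $\la/\mu$ in the definition \eqref{eq:dpm} forces near-minimizing scales to satisfy $\la_i/\mu_i \le 2\al_0$, the triangle inequality plus the embedding $\|\cdot\|_{L^\infty} \lesssim \|\cdot\|_{H}$ gives the pointwise smallness, and evaluating at $r_* = \sqrt{\la_1\mu_1}$ (where $Q_{\la_1}-Q_{\mu_1} = \pi - 4\arctan\sqrt{\la_1/\mu_1}$ is within $O(\sqrt{\al_0})$ of $\pi$ while $Q_{\la_2}-Q_{\mu_2} \ge 0$) yields the contradiction. The paper omits the proof, deferring to \cite[Lemma 2.14]{JL}, and your pointwise argument is essentially the same standard route taken there, so nothing further is needed.
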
 
 
 The final preliminary results we will need for our analysis are related to a virial identity for solutions to \eqref{eq:wmk}. The following virial identity follows easily from  \eqref{eq:wmk} and integration by parts.  
 
 \begin{lem} \label{l:vir} 
 Let $\vec \psi(t)$ be a solution to~\eqref{eq:wmk} on a time interval $I$. Then for any time $t \in I$  and $R>0$ fixed we have 
 \EQ{\label{eq:vir}
 \frac{d}{d t} \ang{ \psi_t \mid \chi_R \, r \p_r \psi}_{L^2}(t)  = - \int_0^\infty \psi_t^2(t, r) \, rdr + \Om_R(\vec \psi(t))
 }
 where 
 \EQ{ \label{eq:OmRdef} 
 \Om_R(\vec \psi(t)) &:=  \int_0^\infty \psi_t^2(t)(1 - \chi_R) \, rdr   \\
 & \quad -\frac{1}{2} \int_0^\infty  \Big( \psi_t^2(t) + \psi_r^2(t) - \frac{\sin^2 \psi(t)}{r^2} \Big)   \frac{r}{R} \chi'(r/R) \, rdr
 }
 satisfies 
 \EQ{ \label{eq:OmRest} 
 \abs{\Om_R(\vec \psi(t))} &\lesssim \int_{R}^\infty \psi_t^2(t, r) \, r d r  \, d t + \int_{R}^{\infty} \abs{ \psi_r^2 - \frac{\sin^2 \psi}{r^2} } \,r  d r d t  \\
 &\lesssim  \int_{R}^\infty \left (
 \psi_t^2(t,r) + \psi_r^2(t,r) + \frac{\sin^2 \psi(t,r)}{2r^2}
 \right ) r \, dr. 
 }
\end{lem}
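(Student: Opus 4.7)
\emph{Proof plan.} The plan is to differentiate via Leibniz, substitute the equation \eqref{eq:wmk} for $\psi_{tt}$, and integrate by parts to isolate the principal term $-\int_0^\infty \psi_t^2 \, r\,dr$ from cutoff corrections supported where $\chi_R \ne 1$. Throughout, I use that the pairing $\ang{\cdot \mid \cdot}_{L^2}$ carries weight $r\,dr$, that $\p_r\chi_R(r) = R^{-1}\chi'(r/R)$ is supported in $R \le r \le 2R$ with $|(r/R)\chi'(r/R)| \lec 1$, and the trigonometric identity $\tfrac{1}{2}\sin(2\psi)\,\p_r\psi = \tfrac{1}{2}\p_r(\sin^2\psi)$ which converts the nonlinearity into an exact $r$-derivative.

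For the kinetic term, $\ang{\psi_t \mid \chi_R r\p_r\psi_t}_{L^2}$ equals $\tfrac{1}{2}\int_0^\infty \chi_R r^2 \p_r(\psi_t^2)\,dr$; integrating by parts using $\p_r(\chi_R r^2) = 2r\chi_R + (r^2/R)\chi'(r/R)$ produces $-\int \chi_R \psi_t^2\,r\,dr - \tfrac{1}{2}\int (r/R)\chi'(r/R)\psi_t^2\,r\,dr$, and splitting $-\int \chi_R \psi_t^2\,r\,dr = -\int \psi_t^2\,r\,dr + \int(1-\chi_R)\psi_t^2\,r\,dr$ recovers the main term together with the first summand of $\Om_R$. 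For $\ang{\psi_{tt} \mid \chi_R r\p_r\psi}_{L^2}$, substituting $\psi_{tt} = r^{-1}\p_r(r\p_r\psi) - (\sin 2\psi)/(2r^2)$ and letting $u = r\p_r\psi$, the linear part becomes $\int \p_r(u)\,\chi_R u\,dr = -\tfrac{1}{2}\int (r/R)\chi'(r/R)(\p_r\psi)^2\,r\,dr$, while the nonlinear part, after applying the trig identity and one more integration by parts, becomes $+\tfrac{1}{2}\int (r/R)\chi'(r/R)(\sin^2\psi/r^2)\,r\,dr$. No boundary contributions survive at $r=0$ or $r=\infty$ since $r\p_r\psi$ and $\sin^2\psi$ both vanish at the endpoints for $\vec\psi(t) \in \HH_0$, and $\chi_R(\infty)=0$. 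Collecting the three cutoff contributions yields exactly $\Om_R(\vec\psi(t))$ as defined.

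The bound \eqref{eq:OmRest} is then immediate from the support properties: the first summand of $\Om_R$ is supported in $\{r \ge R\}$ and contributes $\int_R^\infty \psi_t^2\,r\,dr$, while the second is supported in $\{R \le r \le 2R\}$ with $|(r/R)\chi'(r/R)| \lec 1$, so it is controlled by $\int_R^\infty (\psi_t^2 + |\psi_r^2 - \sin^2\psi/r^2|)\,r\,dr$, and the second line of \eqref{eq:OmRest} follows from the triangle inequality $|\psi_r^2 - \sin^2\psi/r^2| \le \psi_r^2 + \sin^2\psi/r^2$. I do not anticipate any genuine obstacle in this lemma: the computation is routine bookkeeping of cutoff boundary terms, with the only noteworthy ingredient being the trigonometric identity that makes the nonlinear contribution an exact derivative.
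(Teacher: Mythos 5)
Your computation is correct and is exactly the routine argument the paper has in mind (the paper omits the proof, noting only that the identity follows from \eqref{eq:wmk} and integration by parts): substituting $\psi_{tt}=\frac1r\p_r(r\p_r\psi)-\frac{\sin 2\psi}{2r^2}$, using $\frac12\sin(2\psi)\p_r\psi=\frac12\p_r(\sin^2\psi)$, and collecting the cutoff terms reproduces $\Om_R$ precisely, and the support and boundedness of $\chi'(\cdot/R)$ give \eqref{eq:OmRest}. No gaps beyond the standard density/regularity caveat that the paper itself leaves implicit.
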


Finally, using Lemma \ref{l:d-size}, one can bound the virial and the error for threshold solutions by its distance to the set of $2$-bubbles. The proof of this fact is the same as in \cite{JL} and is omitted. 

 \begin{lem}\emph{\cite[Lemma 2.16]{JL}}\label{l:error-estim}
 There exists a number $C_0 > 0$ such that for all $\vec \phi = (\phi_0, \phi_1) \in \HH_0$ with $\E(\vec\phi) = 2\E(Q)$
 and all $R > 0$, we have 
\begin{align}
 |\ang{\phi_1, \chi_R r\partial_r \phi_0}| \leq C_0 R \sqrt{\bfd(\vec \phi)}, \label{eq:virial-end} \\
 \left | \Omega_R(\vec \phi) \right | \leq C_0 \sqrt{\bfd(\vec \phi)}. \label{eq:virial-err}
\end{align}
 \end{lem}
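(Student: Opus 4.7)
The plan is to compare $\Om_R(\vec\phi)$ and $\ang{\phi_1, \chi_R r\p_r\phi_0}$ to the analogous quantities for a near-optimal two-bubble approximation of $\vec\phi$, controlling the remainder by Cauchy--Schwarz. One may reduce to the regime $\bfd(\vec\phi) \le \beta_0$ for some fixed absolute constant $\beta_0 > 0$: in the complementary regime Lemma~\ref{l:d-size} provides a uniform $\HH_0$-bound on $\vec\phi$, and the inequalities follow from the crude bounds $|\ang{\phi_1, \chi_R r\p_r\phi_0}| \le 2R\|\phi_1\|_{L^2}\|\p_r\phi_0\|_{L^2}$ and $|\Om_R(\vec\phi)| \lesssim \|\vec\phi\|_{\HH_0}^2$ (the latter following from~\eqref{eq:OmRest}), after absorbing $\sqrt{\bfd(\vec\phi)} \ge \sqrt{\beta_0}$ into the constant. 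Assuming henceforth $\bfd(\vec\phi) \le \beta_0$, pick $\iota \in \{\pm 1\}$ and $0 < \la \le \mu$ nearly achieving the infimum in $\bfd(\vec\phi)$, and write $\phi_0 = \iota(Q_\la - Q_\mu) + g$, $h := \phi_1$, so that $\|(g,h)\|_{\HH_0}^2 + \la/\mu \le 2\bfd(\vec\phi)$.

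Inequality~\eqref{eq:virial-end} is then immediate: $|\ang{\phi_1, \chi_R r\p_r\phi_0}| \le 2R\|h\|_{L^2}\|\p_r\phi_0\|_{L^2}$, with $\|\p_r\phi_0\|_{L^2}^2 \le \E(\vec\phi)/\pi = 8$ controlled by the threshold energy and $\|h\|_{L^2} \le \sqrt{2\bfd(\vec\phi)}$.

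For~\eqref{eq:virial-err} I would compare $\Om_R(\vec\phi)$ with $\Om_R((\iota(Q_\la - Q_\mu), 0))$. The algebraic input is the harmonic map identity $r\p_r Q = \sin Q$, which makes $(\p_r Q_\la)^2 - \sin^2(Q_\la)/r^2$ vanish pointwise. Using this identity together with $\sin^2(a-b) = \sin^2 a + \sin^2 b - 2\sin a \sin b \cos(a-b)$, one obtains
\[
(\p_r(Q_\la - Q_\mu))^2 - \frac{\sin^2(Q_\la - Q_\mu)}{r^2} = -\frac{4}{r^2}\sin Q_\la \sin Q_\mu \sin^2\!\Bigl(\tfrac{Q_\la - Q_\mu}{2}\Bigr).
\]
The explicit formula $\sin Q_\la(r)\sin Q_\mu(r) = 4r^2\la\mu/[(\la^2+r^2)(\mu^2+r^2)]$, split according to the three regimes $r \le \la$, $\la \le r \le \mu$, $r \ge \mu$, yields the uniform pointwise bound $\sin Q_\la \sin Q_\mu \lesssim \la/\mu$, so after integration against $\tfrac{r}{R}\chi'(r/R)\,r\,dr$ (supported in $r \in [R, 2R]$) one obtains $|\Om_R((\iota(Q_\la - Q_\mu), 0))| \lesssim \la/\mu \lesssim \bfd(\vec\phi)$, uniformly in $R$; the sign $\iota$ is irrelevant since $\Om_R$ depends on $\p_r\phi_0$ and $\sin\phi_0$ only through quadratic expressions.

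The difference $\Om_R(\vec\phi) - \Om_R((\iota(Q_\la - Q_\mu), 0))$ decomposes into pieces linear or quadratic in $(g, h)$. The pure $h$ contributions are $\lesssim \|h\|_{L^2}^2 \lesssim \bfd(\vec\phi)$; the expansion $(\p_r(\iota(Q_\la - Q_\mu) + g))^2 - (\p_r\iota(Q_\la - Q_\mu))^2 = 2\iota\p_r(Q_\la - Q_\mu)\p_r g + (\p_r g)^2$ is handled by Cauchy--Schwarz using the scale-invariant bound $\|\p_r(Q_\la - Q_\mu)\|_{L^2} \lesssim 1$, giving a contribution $\lesssim \|g\|_H + \|g\|_H^2 \lesssim \sqrt{\bfd(\vec\phi)}$; and the contribution from $\sin^2\phi_0/r^2$ is estimated using the pointwise bound $|\sin^2(a+g)-\sin^2 a| \le 2|\sin a||g| + g^2$, Hardy's inequality $\|g/r\|_{L^2}\lesssim\|g\|_H$, and Cauchy--Schwarz. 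Summing yields the claim. The main obstacle is the bookkeeping in this last step: several cross-terms must be controlled with constants uniform in both the scales $(\la, \mu)$ and $R$, and the identity $rQ' = \sin Q$ is essential throughout, since without it a single rescaled harmonic map would already produce an $O(1)$ contribution to $\Om_R$, inconsistent with the smallness claimed as $\bfd(\vec\phi) \to 0$.
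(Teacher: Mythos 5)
Your proof is correct, and it follows essentially the argument the paper itself omits (deferring to \cite[Lemma 2.16]{JL}): a crude bound via the energy and Lemma \ref{l:d-size} when $\bfd(\vec\phi)$ is bounded below, and for small $\bfd(\vec\phi)$ a decomposition about a near-optimal two-bubble in which the identity $r\p_r Q_\la = \sin Q_\la$ makes the pure two-bubble contribution to $\Om_R$ of size $O(\la/\mu)$, the remaining linear and quadratic terms in $(g,\phi_1)$ being controlled by Cauchy--Schwarz uniformly in $R$. I see no gaps.
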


 %%%%%%%%%%%%%%%%%%%%    Section 3 %%%%%%%%%%%%%%%%%%%%%%%%%%%%%%%%%%%%%%%%%%%%%%%%%%%%

\section{The modulation method for two-bubble solutions} \label{s:mod} 

In this section we analyze the modulation equations that govern the
evolution of corotational near $2$-bubble solutions.  As in the case of higher equivariant wave maps studied by Jendrej and Lawrie \cite{JL}, the scale of the less concentrated bubble does not change, but it does affect the evolution of the more concentrated bubble. A central challenge which arises in the analysis of corotational maps which is not found in the higher equivariant setting is the fact that the zero mode of the operator obtained by linearizing about the harmonic map $Q$ is a \emph{resonance} rather than an eigenvalue.  A rough outline of this section is as follows.  For a solution $\vec \psi(t)$ with $\bfd(\vec \psi(t))$ small on a time interval $J$, we first use the implicit function theorem to find modulation parameters $\la(t), \mu(t)$ defined on $J$ such that 
$g(t) := \psi(t) - (Q_{\la(t)} - Q_{\mu(t)})$ satisfies appropriate orthogonality conditions and $\bfd(\vec \psi(t)) \simeq \frac{\la(t)}{\mu(t)}$.  We would like to then prove that if the modulation parameters $\la(t), \mu(t)$ are approaching each other in scale, i.e. if $\frac{d}{dt} \la(t)/\mu(t) |_{t = t_0} \geq 0$, then $\la(t)/\mu(t)$ continues to grow in a controlled way in forward time near $t_0$.  In particular, this would imply that $\vec \psi(t)$ has to leave a small neighborhood of the set of two-bubbles.    However, the slow decay of $Q$ requires us to deal with additional technical obstacles not encountered in the case of higher equivariant wave maps.  In particular, we must replace $\la(t)$ with a carefully chosen logarithmic correction.      

\subsection{Modulation Equations}
In this section, we study solutions near two-bubble solutions $\vec \psi(t)$ to~\eqref{eq:wmk}.  More precisely, we consider maps such that $ \bfd( \vec \psi(t))$ (defined by \eqref{eq:ddef}) is small on a time interval $J$.

The operator corresponding to linearizing \eqref{eq:wmk} about the harmonic map $Q_\la$ is the Schr\"odinger operator
\EQ{
\LL_\la:= - \p_r^2 - \frac{1}{r} \p_r + \frac{\cos 2Q_\la}{r^2}.
} 
For convenience we write $\LL := \LL_1$. Differentiating the equation
\begin{align*}
\p_r^2 Q_\lambda + \frac{1}{r} \p_r Q_\lambda - \frac{\sin 2 Q_\lambda}{2r^2} = 0
\end{align*}
with respect to $\lambda$ and setting $\lambda = 1$ implies that $\La Q$ is a zero mode for  $\LL$, i.e., 
\EQ{
\LL \La Q = 0, \quad \La Q \in L^\infty(\R^2).
}

Note that $\La Q \sim \frac{1}{r}$ as $r \rar  \infty$ so that $\La Q$ fails (logarithmically) to be in $L^2(\R^2)$.  We say that $\La Q$ is a \emph{resonance} of $\LL$.  In the $k$-equivariant setting with $k \geq 2$, $\La Q \in L^2(\R^2)$.  This weak decay of $\La Q$ requires more care when studying the modulation equations compared to the higher equivariant setting. We note that in general, we have 
$$\LL_\la Q_\la = 0.$$  

Define 
\begin{align}
\ZZ(r) := \chi_{L}(r) \La Q(r).  
\end{align}
where, as before, $\chi$ is a smooth cutoff.
The parameter $L > 0$ will be chosen later. 
We use $\ZZ$ to obtain a useful choice of modulation parameters (the scales) for the near two-bubble solution $\vec \psi(t)$.  We first recall the following modulation lemma from \cite{JL}, which follows from standard arguments involving the implicit function theorem, an expansion of the nonlinear energy and coercivity properties of $\LL_\la$. 

\begin{lem}\emph{\cite[Lemma $3.1$]{JL}}  \label{l:modeq} There exist $\eta_0 = \eta_0(L) >0$ and $C = C(L) > 0$
 such that the following holds. Let  $\psi(t)$ be a solution to~\eqref{eq:wmk} defined on a time interval $J \subset \R$,  and assume that
 \EQ{
 \bfd_+(\vec\psi(t)) \leq \eta_0\qquad \forall t\in J.
 }
 Then there exist unique $C^1(J)$ functions $\la(t), \mu(t)$ so that the function
\EQ{ \label{eq:gdef1} 
g(t):= \psi(t) - Q_{\la(t)} + Q_{\mu(t)} \in H , 
}
satisfies for all $t \in J$
\begin{gather} 
\ang{ \cZ_{\uln{\lambda(t)}} \mid  g(t)}  = 0  \label{eq:ola},   \\
\ang{\ZZ_{\uln{\mu(t)}} \mid g(t) } = 0  \label{eq:omu},  \\
%\EQ{ 
\bfd_+(\vec \psi(t)) \le \| (g(t), \p_t \psi(t)) \|_{\HH_0}^2 + \frac{\la(t)}{\mu(t)} \le C \bfd_+(\vec\psi(t))\label{eq:gdotgd} . 
\end{gather} 
Moreover, 
\begin{align} 
\| (g(t), \p_t \psi(t) ) \|_{\HH_0} \le C\left( \frac{\la(t)}{\mu(t)} \right)^{1/2} \label{eq:gH} , 
\end{align} 
and hence 
\EQ{ \label{eq:lamud1}
\bfd_+(\vec \psi(t)) \simeq \frac{\la(t)}{\mu(t)}. 
}
%and that for all $t \in J$ we have 
%\EQ{
%\frac{1}{2} \sup_{s \in J} \mu(s) \le \mu(t)  \le 2 \inf_{s \in J} \mu(s)
%} 
Finally, we have the explicit bound for the kinetic energy
\begin{align}
\| \p_t \psi(t) \|_{L^2}^2 \leq 16 \frac{\la(t)}{\mu(t)} + o\left ( \frac{\la(t)}{\mu(t)} \right ). 
\label{eq:leading-psit}
\end{align}
\end{lem}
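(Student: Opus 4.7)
The proof combines the implicit function theorem with coercivity of $\LL$, as in the modulation analysis of \cite{JL}. The principal novelty is the logarithmic resonance: $\Lambda Q \sim 1/r$ as $r\to\infty$ fails (just barely) to lie in $L^2(\R^2)$, which explains the truncation $\cZ=\chi_L\Lambda Q$ and the need to fix $L$ large and then $\eta_0 = \eta_0(L)$ correspondingly small.

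\textbf{Construction of modulation parameters.} I would define $F:H\times(0,\infty)^2\to\R^2$ by
\[ F(\psi,\lambda,\mu) = \bigl(\langle \cZ_{\underline{\lambda}} \mid \psi-Q_\lambda+Q_\mu\rangle,\ \langle \cZ_{\underline{\mu}} \mid \psi-Q_\lambda+Q_\mu\rangle\bigr), \]
and solve $F(\psi,\lambda,\mu) = 0$ in $(\lambda,\mu)$ via the implicit function theorem. At a pure two-bubble $\psi_0 = Q_{\lambda_0}-Q_{\mu_0}$ with $\lambda_0/\mu_0$ small, one computes that the diagonal entries of the Jacobian $\partial_{(\lambda,\mu)}F$ are of size $\langle\cZ\mid\Lambda Q\rangle \sim \log L$, and a direct scaling calculation (using $\Lambda Q(s)\sim s$ as $s\to 0$ and $\Lambda Q(s)\sim 1/s$ as $s\to\infty$) shows that the determinant is nondegenerate of size $(\log L)^2$ up to corrections vanishing as $\lambda_0/\mu_0 \to 0$. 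The IFT then furnishes unique $C^1$ functions $\lambda(t),\mu(t)$ on $J$ satisfying \eqref{eq:ola}--\eqref{eq:omu}, with the $+$ branch singled out by $\bfd_+(\vec\psi(t))\le\eta_0$ together with Lemma \ref{l:dpm}.

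\textbf{Comparability and sharp kinetic estimate.} The lower bound in \eqref{eq:gdotgd} is immediate by using our decomposition as a candidate in the infimum defining $\bfd_+$; the upper bound follows by comparing to an almost minimizer $(\tilde\lambda,\tilde\mu,\tilde g)$ of $\bfd_+(\vec\psi(t))$ and transferring estimates through the Jacobian computation above. For \eqref{eq:gH} and \eqref{eq:leading-psit}, I would use energy conservation $\mathcal{E}(\vec\psi)=8\pi$ and expand around the two-bubble:
\[ \pi\|\partial_t\psi\|_{L^2}^2 + \mathcal{E}_{\mathrm{pot}}(Q_\lambda - Q_\mu) + \langle \LL_\lambda g\mid g\rangle + O(\|g\|_H^2\,\lambda/\mu + \|g\|_H^3) = 8\pi. \]
A direct computation of the interaction integral (closely related to the Bogomol'nyi factorization \eqref{eq:bog}) yields $\mathcal{E}_{\mathrm{pot}}(Q_\lambda-Q_\mu) = 8\pi - 16\pi\,\lambda/\mu + o(\lambda/\mu)$, which is the source of the constant $16$ in \eqref{eq:leading-psit}. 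The linear-in-$g$ cross terms are absorbed into $o(\lambda/\mu)$ using $\LL_\lambda Q_\lambda = 0$ and the orthogonality conditions \eqref{eq:ola}--\eqref{eq:omu}, while the quadratic-in-$g$ term is bounded below by $c\|g\|_H^2$ via coercivity of $\LL_\lambda$ on the appropriate orthogonal complement. Combining these ingredients gives $\pi\|\partial_t\psi\|_{L^2}^2 + c\|g\|_H^2 \le 16\pi\,\lambda/\mu + o(\lambda/\mu)$, which yields \eqref{eq:gH}, \eqref{eq:lamud1}, and the sharp bound \eqref{eq:leading-psit}.

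The main obstacle is the coercivity estimate $\langle \LL_\lambda g\mid g\rangle \ge c\|g\|_H^2$ for $g\in\{\cZ_{\underline{\lambda}},\cZ_{\underline{\mu}}\}^\perp$. The classical spectral argument delivers coercivity only on $\{\Lambda Q\}^\perp$, but since $\Lambda Q\notin L^2(\R^2)$ this orthogonality cannot literally be imposed on generic functions in $H$; passing to the truncated $\cZ = \chi_L\Lambda Q$ costs a logarithmic loss in $L$, and the presence of the second bubble $Q_\mu$ and its associated constraint generate further perturbations. The standard resolution, which I would adopt, is to fix $L = L(\eta_0)$ large enough that the log loss becomes a small prefactor, and then take $\eta_0$ small enough that the residual two-bubble coupling is absorbed by the positive definite part of $\LL_\lambda$.
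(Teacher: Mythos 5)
Your proposal is correct and takes essentially the approach the paper intends: the paper does not prove this lemma but cites \cite[Lemma 3.1]{JL}, describing the proof as ``standard arguments involving the implicit function theorem, an expansion of the nonlinear energy and coercivity properties of $\LL_\la$,'' which are exactly your three ingredients — the IFT applied with the truncated resonance $\cZ = \chi_L \La Q$ and a Jacobian of size $\log L$, the nonlinear energy expansion whose interaction term $-16\pi\,\la/\mu$ produces the sharp constant in \eqref{eq:leading-psit} (the same computation the paper carries out in Section 5), and coercivity of the linearized operator under the two orthogonality conditions with $L$-dependent constants. Your handling of the $k=1$ subtlety ($\La Q \notin L^2$, hence the cutoff, the logarithmic loss, and the order of choosing $L$ before $\eta_0$) is consistent with how the paper sets up its constants $\eta_0(L)$, $C(L)$.
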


\begin{rem}
The little-oh term in \eqref{eq:leading-psit} depends on the parameter $L$,  but it will be important that the leading order term is \emph{independent} of $L$.
\end{rem}

Given the modulation parameters $\la(t), \mu(t)$ we define  
\EQ{ \label{eq:gdef} 
	&g(t) := \psi(t) - Q_{\la(t)} + Q_{\mu(t)} \\
	&\dot g(t):= \p_t \psi(t).
}
Then the vector $\vec g:= (g, \dot g)$ satisfies the equations
\begin{align} \label{eq:ptg} 
&\p_t g = \dot{g} + \la' \La Q_{\ula} - \mu' \La Q_{\umu}, \\
& \p_t \dot g = \p_r^2 g + \frac{1}{r} \p_r g  - \frac{1}{r^2} \left( f( Q_\la - Q_\mu + g) - f(Q_\la) + f(Q_\mu)\right) \label{eq:ptgdot}.
\end{align} 

As a first step towards understanding the behavior of the modulation parameters, we establish bounds on the first derivatives of $
\la(t), \mu(t)$.  This information is not enough to study the interaction of the bubbles for the near two-bubble solution $\vec \psi(t)$ and achieve the goal outlined at the start of the section.  This should also be intuitively clear since $\vec \psi(t)$ satisfies a second-order equation in time, and thus, the interaction of the bubbles should be governed by second derivatives of $\la(t), \mu(t)$. 

\begin{prop} \label{p:modp} 
	% Let $\de>0$ be arbitrary and
	There exists a constant $C > 0$ and $\eta_0 = \eta_0(L) > 0$ with the following property.  Let $J \subset \R$, and let
	$\vec \psi(t)$ be a solution to~\eqref{eq:wmk} on $J$ such that 
	\EQ{
		\bfd(\vec \psi(t))\le \eta_0 \quad \forall t \in J.
	}
	Let $\la(t), \mu(t)$ be the modulation parameters given by Lemma~\ref{l:modeq}. Then for all $t \in J$, we have: 
	\begin{align}
	\abs{\la'(t)} &\leq C (\log L)^{-1/2} 
	\left ( \frac{\la(t)}{\mu(t)} \right )^{1/2}, \label{eq:la'} \\ 
	\abs{ \mu'(t)}& \leq C (\log L)^{-1/2} 
	\left ( \frac{\la(t)}{\mu(t)} \right )^{1/2}.  \label{eq:mu'} 
	\end{align} 
\end{prop}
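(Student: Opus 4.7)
The idea is to derive a $2\times 2$ linear system for $(\lambda'(t), \mu'(t))$ by differentiating the orthogonality conditions \eqref{eq:ola}--\eqref{eq:omu} in time and inverting it. Using the $L^2$-scaling identity $\partial_t(\cZ_{\underline{\lambda}}) = -\tfrac{\lambda'}{\lambda}(\Lambda_0\cZ)_{\underline{\lambda}}$ together with \eqref{eq:ptg} in the relation $\tfrac{d}{dt}\langle \cZ_{\underline{\lambda}} \mid g\rangle = 0$ (and its analogue with $\mu$), I obtain
\[
M \begin{pmatrix}\lambda' \\ \mu'\end{pmatrix} = -\begin{pmatrix}\langle \cZ_{\underline{\lambda}} \mid \dot g\rangle \\ \langle \cZ_{\underline{\mu}} \mid \dot g\rangle\end{pmatrix},
\]
where the matrix $M$ has entries built from $\langle \cZ_{\underline{\alpha}} \mid \Lambda Q_{\underline{\beta}}\rangle$ for $\alpha,\beta\in\{\lambda,\mu\}$, with lower-order corrections of the form $\tfrac{1}{\alpha}\langle (\Lambda_0\cZ)_{\underline{\alpha}} \mid g\rangle$ coming from the time derivative falling on $\cZ_{\underline{\alpha}}$.

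\textbf{Matrix entries.} Both diagonal entries equal the scale-invariant pairing $\langle \cZ \mid \Lambda Q\rangle = \int_0^\infty \chi_L(r)\tfrac{4r^3}{(1+r^2)^2}\,dr = 4\log L + O(1)$. For the off-diagonals, a change of variables combined with the asymptotics $\Lambda Q(s)\sim 2s$ near $0$ and $\Lambda Q(s)\sim 2/s$ at infinity gives
\begin{align*}
\langle \cZ_{\underline{\mu}} \mid \Lambda Q_{\underline{\lambda}}\rangle &= 4\log L + O(1), \\
\langle \cZ_{\underline{\lambda}} \mid \Lambda Q_{\underline{\mu}}\rangle &= O\!\left(L^2(\lambda/\mu)^2\right).
\end{align*}
The asymmetry is natural: $\cZ_{\underline{\mu}}$ is supported out to scale $L\mu\gg\lambda$ and pairs with the $1/r$ tail of $\Lambda Q_{\underline{\lambda}}$ over a log-scale annulus, whereas $\cZ_{\underline{\lambda}}$ lives on $r\lesssim L\lambda\ll\mu$ and only sees the near-origin Taylor expansion of $\Lambda Q_{\underline{\mu}}$.

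\textbf{Main obstacle: the $g$-corrections.} The principal novelty of the corotational case is controlling $\tfrac{1}{\lambda}\langle(\Lambda_0\cZ)_{\underline{\lambda}}\mid g\rangle$ and its $\mu$-analogue. Writing $\Lambda_0=1+\Lambda$ and invoking the orthogonality \eqref{eq:ola} replaces $\Lambda_0\cZ$ by $\Lambda\cZ$; then Cauchy--Schwarz via Hardy's inequality yields
\[
\tfrac{1}{\lambda}\left|\langle(\Lambda\cZ)_{\underline{\lambda}}\mid g\rangle\right| \lesssim L\,\|g\|_H \lesssim L\,(\lambda/\mu)^{1/2}.
\]
The factor $L$ arises because $\|r\,\Lambda\cZ\|_{L^2(rdr)}\sim L$ due to the cutoff, which reflects the fact that $\Lambda Q\notin L^2$ in the corotational setting --- in sharp contrast to the higher equivariant case treated in \cite{JL}. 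To make this correction negligible compared to $\log L$, I shrink $\eta_0=\eta_0(L)$ so that $L^2\,\eta_0(L)\ll (\log L)^2$; since $L$ is fixed before $\eta_0$, this is permissible, and the same choice also kills the small off-diagonal entry $O(L^2(\lambda/\mu)^2)$. This quantitative trade-off between $L$ and $\eta_0$ is the hard part of the proof.

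\textbf{Inversion and conclusion.} With these estimates, $\det M = -(4\log L)^2 + o((\log L)^2)$ and every entry of $M^{-1}$ is $O((\log L)^{-1})$. On the right-hand side, scale-invariance gives $\|\cZ_{\underline{\lambda}}\|_{L^2}^2 = \langle \cZ \mid \Lambda Q\rangle \sim 4\log L$, and the sharp kinetic bound \eqref{eq:leading-psit} provides
\[
|\langle \cZ_{\underline{\lambda}}\mid\dot g\rangle| \leq \|\cZ_{\underline{\lambda}}\|_{L^2}\|\dot g\|_{L^2} \lesssim (\log L)^{1/2}(\lambda/\mu)^{1/2},
\]
and analogously for the $\mu$-entry. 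Multiplying then yields $|\lambda'|,|\mu'| \lesssim (\log L)^{-1}\cdot(\log L)^{1/2}(\lambda/\mu)^{1/2} = (\log L)^{-1/2}(\lambda/\mu)^{1/2}$, which is precisely \eqref{eq:la'}--\eqref{eq:mu'}. The power $(\log L)^{-1/2}$ reflects the interplay between the $(\log L)^{1/2}$ size of $\|\cZ\|_{L^2}$ and the $\log L$ size of $\det M$; no better bound is available through this scheme because the leading order of $\|\dot g\|_{L^2}$ in \eqref{eq:leading-psit} is $L$-independent.
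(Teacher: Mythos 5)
Your proposal is correct and follows essentially the same route as the paper's proof: differentiating the orthogonality conditions \eqref{eq:ola}--\eqref{eq:omu} to obtain a $2\times 2$ system for $(\la',\mu')$, estimating the diagonal entries by $\al_L\sim\log L$, the off-diagonal entries by $O_L((\la/\mu)^2)$ and $O(\log L)$, and then inverting while bounding the right-hand side via $\|\ZZ\|_{L^2}\lesssim(\log L)^{1/2}$ together with the kinetic bound \eqref{eq:leading-psit}. The only differences are cosmetic: the paper absorbs the $L$-dependence of the $g$-correction terms into $O_L((\la/\mu)^{1/2})$ and shrinks $\eta_0=\eta_0(L)$ rather than imposing your explicit constraint $L^2\eta_0\ll(\log L)^2$, and your claim that both diagonal entries equal $\ang{\ZZ\mid\La Q}$ has a sign slip (they are $\approx\pm\al_L$), which however does not affect the size of the determinant or the final estimates.
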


\begin{proof}
	Differentiating the orthogonality conditions~\eqref{eq:ola} and~\eqref{eq:omu} and using \eqref{eq:ptg} we obtain the relations
	\begin{align*}
		- \ang{ \ZZ_{\ula} \mid \dot g} &=    \la' \left(\ang{\ZZ_{\ula} \mid \La Q_{\ula}} -  \ang{\frac{1}{\la} [\La_0 \ZZ]_{\ula} \mid  g} \right) -  \mu' \ang{ \ZZ_{\ula} \mid \La Q_{\umu}}, \\
		- \ang{ \ZZ_{\umu} \mid \dot g} &=    \la' \ang{\ZZ_{\umu} \mid \La Q_{\ula}} +   \mu' \left({-} \ang{ \ZZ_{\umu} \mid \La Q_{\umu}} -  \ang{\frac{1}{\mu} [\La_0 \ZZ]_{\umu} \mid  g} \right).
	\end{align*}
	These two equations yield the following linear system for $(\la', \mu')$, 
	\EQ{
		\pmat{ A_{11} & A_{12} \\ A_{21} & A_{22}} \pmat{ \la' \\ \mu'} =  \pmat{ {-}\ang{\ZZ_{\ula} \mid  \dot g} \\ -  \ang{ \ZZ_{\umu} \mid \dot g} } \label{eq:Mmat} 
	}
	where
	\EQ{ 
		& A_{11} :=  \ang{\ZZ_{\ula} \mid \La Q_{\ula}} -  \ang{\frac{1}{\la} [\La_0 \ZZ]_{\ula} \mid  g},  \\
		& A_{12} := -\ang{ \ZZ_{\ula} \mid \La Q_{\umu}},  \\ 
		&A_{21}:= \ang{\ZZ_{\umu} \mid \La Q_{\ula}},  \\
		& A_{22}:= - \ang{ \ZZ_{\umu} \mid \La Q_{\umu}} -  \ang{\frac{1}{\mu} [\La_0 \ZZ]_{\umu} \mid  g}.
	}
	We now estimate the coefficients of the matrix $A = (A_{ij})$ so that we may invert \eqref{eq:Mmat} and obtain estimates for $(\la', \mu')$. We define 
	\begin{align}
	\al_L := \ang{ \ZZ \mid \La Q} = \int_0^\infty \chi_L |\La Q|^2 r dr.
	\end{align}
    Note that since $|\La Q(r)| \lesssim \frac{1}{1+r}$, we have for all $L > 0$ sufficiently large
    \begin{align}
    \log L \lesssim \al_L \lesssim \log L, \label{eq:al_est}
    \end{align}
    where the implied constants are absolute. 
    
	\begin{claim}
	For $\lambda/ \mu$ sufficiently small (depending on $L$), the diagonal terms satisfy 
	\begin{align}
	A_{11} &= \al_L\left [ 1 + O_L((\la / \mu)^{1/2})\right ], \label{eq:A11est}, \\
	A_{22} &= -\al_L\left [ 1 + O_L((\la/ \mu)^{1/2})\right] \label{eq:A22est}. 
	\end{align}
	\end{claim}
	To prove the claim we simply observe that 
	\begin{align*}
	\left | 
	\ang{\frac{1}{\la} [\La_0 \ZZ]_{\ula} \mid  g} 
	\right | \lesssim \| g \|_{L^\infty} \| \La_0 \ZZ \|_{L^1(rdr)} \lesssim_L \|g \|_{H} \lesssim_L (\la/\mu)^{1/2}. 
	\end{align*}
	Thus, 
	\begin{align*}
	A_{11} = \ang{\ZZ_{\ula} \mid \La Q_{\ula}} -  \ang{\frac{1}{\la} [\La_0 \ZZ]_{\ula} \mid  g} = \al_L + O_L((\la/\mu)^{1/2}),
	\end{align*}
	which establishes \eqref{eq:A11est}.  The estimate \eqref{eq:A22est} is established analogously, and the claim is proved. 

    We now estimate the off-diagonal terms. 
	\begin{claim} \label{c:M12est}
		For $\lambda/ \mu$ sufficiently small (depending on $L$) we have
		\begin{align} \label{eq:M12}
		|A_{12}| \lesssim_L ({\la/\mu})^{2}, \quad 
		|A_{21}| \lesssim \log L,
		\end{align} 
where the implied constant in the estimate for $A_{21}$ is absolute. 
	\end{claim}
 
	Since $r \ZZ(r) \in C^\infty_0$, $\la / \mu \ll 1$ and $|\La Q| \lesssim r$ for small $r$, we conclude that 
	 \begin{align*}
	 |A_{12}| = \Bigl | 
\ang{ \ZZ_{\ula} \mid \La Q_{\umu}} 
	 \Bigr | 
	 = \left | 
\int_0^{2L \la / \mu} \frac{\mu r}{\la} 
Z ( r \mu / \la ) \La Q(r) dr \right |
	 \lesssim_L \int_0^{2L \la / \mu} |\La Q| dr \lesssim_L (\la / \mu)^2. 
	 \end{align*}
	 This proves the first estimate in \eqref{eq:M12}.
	 Let $\s = \la/\mu$. By a change of variables and the explicit expression for $\cl Z$ we have 
	 \begin{align*}
	 |A_{21}| = \Bigl | 
\ang{ \ZZ_{\umu} \mid \La Q_{\ula}} 
	 \Bigr | 
	 &= 	 \Bigl | 
	 \ang{ \ZZ \mid \La Q_{\underline{\la/\mu}}} \Bigr | \\
	&\lesssim \frac{1}{\s}  
\int_0^{2L} \frac{r}{1+r^2} \frac{(r/\sigma)}{1+ (r/\s)^2} r dr \\
&\lesssim \log L
	 \end{align*}
	which proves the second estimate in \eqref{eq:M12} and the claim.
	
	We now solve for  $(\la', \mu')$ by inverting $A$:
	\EQ{
		\pmat{  \la' \\ \mu ' } =  \frac{1}{\det A}  \pmat{ - A_{22} \ang{ \ZZ_{\ula} \mid  \dot g} + A_{12} \ang{\ZZ_{\umu} \mid  \dot g}  \\  A_{21}  \ang{\ZZ_{\ula} \mid  \dot g}- A_{11}  \ang{\ZZ_{\umu} \mid  \dot g}}
	}
	The previous two claims imply that 
	\EQ{
		\det A =   A_{11}A_{22} - A_{12}A_{21} = -\al_L^2\left [1 + O_L((\la/\mu)^{1/2})\right] \label{eq:detA}
	}
	as long as $\la / \mu$ is sufficiently small. It is easy to see that the function $\ZZ = \chi_L \La Q$ satisfies $\| \ZZ \|_{L^2} \lesssim (\log L)^{1/2}$.  Then by Cauchy-Schwarz and \eqref{eq:leading-psit} we have, for $\la/ \mu$ sufficiently small, 
	\begin{align}
	\left | \ang{Z_{\ula} \mid \dot g} \right | + 	\left | \ang{Z_{\umu} \mid \dot g} \right | \lesssim |\log L|^{1/2}
	(\la / \mu)^{1/2} \lesssim \al_L^{1/2} (\la / \mu)^{1/2} \label{eq:rest}
	\end{align}
	where the implied constant is absolute.  Our two claims, \eqref{eq:detA} and \eqref{eq:rest} imply that as long as $\la/\mu$ is sufficiently small  
	\begin{align*}
	|\la'| &\lesssim |\det A|^{-1} \left ( 
	|A_{22}| \left | \ang{Z_{\ula} \mid \dot g} \right | + 
	|A_{12}| \left | \ang{Z_{\umu} \mid \dot g} \right |
	\right ) \\
	&\lesssim \al_L^{-1/2} (\la / \mu)^{1/2} \\
	&\lesssim (\log L)^{-1/2} (\la / \mu)^{1/2}
	\end{align*}
	as desired. 
	A similar argument establishes
	\EQ{ 
		\abs{ \mu'} \lesssim (\log L)^{-1/2} (\la / \mu)^{1/2}
	}
	as well which finishes the proof. 
\end{proof}

\subsection{Refined control of the modulation parameters}\label{s:modest} 

As stated previously, information about the first derivatives of the modulation parameters is not enough to study the evolution of two-bubbles since \eqref{eq:wmk} is second order in time. Due to the slow decay of the $\La Q$, we will in fact need to study second order derivatives of $2 \la \left |\log \la/\mu \right |$ and $\mu$.  Moreover, for technical reasons we will study a function $\zeta = \zeta(t)$ which approximates $2 \la |\log \la/\mu|$ and a function $b = b(t)$ which approximates $\zeta'(t)$ (see Proposition \ref{p:modp2}).  

We first define a truncated virial functional and state some relevant properties. This functional played a fundamental role in the work of Jendrej and Lawrie on threshold dynamics for higher equivariant wave maps \cite{JL} and in the two-bubble construction by Jendrej in~\cite{JJ-AJM}.  It will play a very important role in our work as well. 
For the proofs of the following statements we refer the reader to~\cite[Lemma 4.6]{JJ-AJM} and~\cite[Lemma 5.5]{JJ-AJM}. In what follows, we denote the nonlinearity by 
$f(\rho):= \frac{1}{2} \sin 2 \rho$. 

\begin{lem} \emph{\cite[Lemma 4.6]{JJ-AJM}}
  \label{lem:fun-q}
  For each $c, R > 0$ there exists a function $q(r) = q_{c, R}(r) \in C^{3,1}((0, +\infty))$ with the following properties:
  \begin{enumerate}[label=(P\arabic*)]
    \item $q(r) = \frac{1}{2} r^2$ for $r \leq R$, \label{enum:approx-q}
    \item there exists an absolute constant $\kappa > 0$  such that $q(r) \equiv \tx{const}$ for $r \geq \ti R := \kappa e^{\kappa/c} R$, \label{enum:support-q}
    \item $|q'(r)| \lesssim r$ and $|q''(r)| \lesssim 1$ for all $r > 0$, with constants independent of $c, R$, \label{enum:gradlap-q}
    \item $q''(r) \geq -c$ and $\frac 1r q'(r) \geq -c$, for all $r > 0$, \label{enum:convex-ym}
    \item $(\frac{d^2}{d r^2} + \frac 1r \frac{d}{dr} r)^2 q(r) \leq c\cdot r^{-2}$, for all $r > 0$, \label{enum:bilapl-ym}
    \item $\big|r\big(\frac{q'(r)}{r}\big)'\big| \leq c$, for all $r > 0$. \label{enum:multip-petit-ym}
  \end{enumerate}
\end{lem}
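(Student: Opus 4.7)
The plan is to build $q$ by prescribing the ratio $q'(r)/r$ to be a smooth nonincreasing cutoff that equals $1$ on $[0,R]$ and vanishes past $Re^M$, where $M \sim 1/c$. Fix once and for all a nonincreasing function $\phi \in C^\infty(\R;[0,1])$ with $\phi\equiv 1$ on $(-\infty,0]$, $\phi\equiv 0$ on $[1,\infty)$, and $\|\phi^{(j)}\|_{L^\infty} \le C_0$ for $j=1,2,3,4$, where $C_0$ is absolute. Let $M := \kappa_0/c$ for an absolute constant $\kappa_0>0$ to be fixed, and set
\[
h(r) := \phi\!\left(\tfrac{\log(r/R)}{M}\right), \qquad q(0):=0, \qquad q'(r):= r\, h(r).
\]
By construction $q(r) = r^2/2$ on $(0,R]$ and $q'\equiv 0$ on $[Re^M,\infty)$, which immediately gives (P1) and (P2) with $\tilde R = R\, e^{\kappa_0/c}$ upon taking $\kappa := \max(\kappa_0,1)$.

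Direct differentiation yields
\[
\frac{q'(r)}{r} = h(r), \qquad q''(r) = h(r) + \frac{1}{M}\phi'\!\left(\tfrac{\log(r/R)}{M}\right), \qquad r\left(\tfrac{q'(r)}{r}\right)' = \frac{1}{M}\phi'\!\left(\tfrac{\log(r/R)}{M}\right).
\]
Since $0\le h \le 1$ and $|\phi'|\le C_0$, we read off $|q'|\le r$, $|q''|\le 1+C_0/M$, $q''(r)\ge -C_0/M$, $q'(r)/r\ge 0$, and $|r(q'/r)'|\le C_0/M$. Choosing $\kappa_0\ge C_0$ therefore delivers (P3), (P4) and (P6) simultaneously.

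The main technical point is (P5). Denote the fourth-order operator there by $\mathcal{B}$. On $(0,R]$ we have the explicit $q(r)=r^2/2$, so $\mathcal{B}^2 q$ is a fixed smooth function of $r$ that is either identically zero or bounded with an absolute constant, and the bound $\mathcal{B}^2 q\le c\,r^{-2}$ there is automatic once one controls the transition region (indeed $r^2\mathcal{B}^2 q$ is smooth and vanishes at $r=0$). On $[\tilde R,\infty)$, $q$ is constant and $\mathcal{B}^2 q\equiv 0$. The real work is on the annulus $[R,\tilde R]$, where every derivative of $h$ produces, via the chain rule applied to $\phi(\log(r/R)/M)$, a factor of $(Mr)^{-1}$ times a bounded derivative of $\phi$. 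A Leibniz expansion of $\mathcal{B}^2 q$ then shows that each term has the form $M^{-k}\,r^{-2}\cdot(\text{bounded})$ with $k\ge 1$ and constants depending only on $\phi$. Consequently $|\mathcal{B}^2 q(r)|\le C_1 M^{-1}r^{-2} = (C_1c/\kappa_0)\,r^{-2}$ on $[R,\tilde R]$, and enlarging $\kappa_0\ge\max(C_0,C_1)$ secures (P5) as well.

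The principal obstacle is the careful Leibniz bookkeeping in this last step: one must verify that \emph{every} summand in $\mathcal{B}^2 q$ either vanishes identically outside the transition region or carries a strictly negative power of $M$ inside it, so that a single absolute choice of $\kappa_0$ works for all six properties at once. The fact that the transition happens on a logarithmic scale of length $M$ is essential, since it is precisely this that converts each derivative of $h$ into a factor $(Mr)^{-1}$ while keeping $\|q''\|_{L^\infty}$ controlled by an absolute constant; this is the mechanism that forces the exponential factor $e^{\kappa/c}$ in the size of $\tilde R$.
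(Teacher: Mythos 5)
This lemma is not proved in the paper at all: it is quoted verbatim from Jendrej \cite{JJ-AJM} (Lemma 4.6 there), so the only comparison available is with that cited construction. Your construction is correct and rests on the same mechanism as Jendrej's: one forces $q'(r)/r$ to decay from $1$ to $0$ across an annulus of logarithmic width $M\sim 1/c$, which is exactly what produces the factor $e^{\kappa/c}$ in (P2) and the $O(1/M)=O(c)$ smallness in (P4) and (P6). Realizing this by $q'(r)=r\,\phi\big(\log(r/R)/M\big)$ with a single fixed smooth cutoff $\phi$ is a clean variant of the cited piecewise-in-$\log r$ profile, and it even yields $q\in C^\infty((0,\infty))$, more than the required $C^{3,1}$.

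Two points in your write-up need tightening, one of them a genuine (though easily repaired) flaw. For (P5) on $(0,R]$ you argue that the fourth-order expression (which you denote $\mathcal B^2 q$ after calling the fourth-order operator $\mathcal B$ --- a notational slip) is ``bounded with an absolute constant'' and that the bound is ``automatic''; this is not enough: a nonzero constant $K$ violates $K\le c\,r^{-2}$ near $r=R$ whenever $c<KR^2$, and ``$r^2\mathcal B^2q$ vanishes at $r=0$'' does not rescue it. What actually saves the claim is exact vanishing: any operator of the form $\partial_r^2+\tfrac{a}{r}\partial_r$ maps $r^2/2$ to a constant, so its square annihilates $r^2/2$, and the quantity in (P5) is identically zero on $(0,R)$ (and on $(\tilde R,\infty)$); you should say this. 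On the transition annulus the Leibniz bookkeeping you defer becomes one line if you use the identity $(\partial_r^2+\tfrac1r\partial_r)F(\ell)=F''(\ell)/(M^2r^2)$ for $\ell=\log(r/R)/M$ (reading the operator in (P5) as the square of the planar radial Laplacian, as in \cite{JJ-AJM}): since $(\partial_r^2+\tfrac1r\partial_r)q=2\phi(\ell)+\phi'(\ell)/M$, one gets that the quantity in (P5) equals $\big(2\phi''(\ell)M^{-2}+\phi'''(\ell)M^{-3}\big)r^{-2}$, which is $\le c\,r^{-2}$ once $\kappa_0$ is a large absolute constant. Finally, a small caveat: your bound $|q''|\le 1+C_0c/\kappa_0$ is absolute only in the regime $c\lesssim 1$; for large $c$ one simply uses the function built for $c=1$ (the one-sided conditions (P4)--(P6) weaken as $c$ grows) after enlarging the absolute constant $\kappa$ in (P2). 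None of this affects the soundness of the construction itself.
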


For each $\lambda > 0$ we define the operators $\A(\lambda)$ and $\A_0(\lambda)$ as follows:
\begin{align}
  [\A(\lambda)g](r) &:= q'\big(\frac{r}{\lambda}\big)\cdot \p_r g(r), \label{eq:opA-wm} \\
  [\A_0(\lambda)g](r) &:= \big(\frac{1}{2\lambda}q''\big(\frac{r}{\lambda}\big) + \frac{1}{2r}q'\big(\frac{r}{\lambda}\big)\big)g(r) + q'\big(\frac{r}{\lambda}\big)\cdot\p_r g(r). \label{eq:opA0-wm}
\end{align}
Since $q(r) = \frac{1}{2} r^2$ for $r \leq R$, $\A(\la) g(r) = \frac{1}{\la} \La g(r)$ and $\A_0(\la) g(r) = \frac{1}{\la} \La_0 g(r)$ for $r \leq R$.  One may intuitively think of $\A(\la)$ and $\A_0(\la)$ as extensions of $\frac{1}{\la} \La$ and $\frac{1}{\la} \La_0$ to $r \geq R$ which have good boundedness properties.  The following lemma makes this precise.  In what follows, we denote 
\EQ{
X:= \{ g \in H \mid \frac{g}{r}, \p_r g \in H\}.
}

\begin{lem} \emph{\cite[Lemma 5.5]{JJ-AJM}}
  \label{lem:op-A-wm}
  Let $c_0>0$ be arbitrary. There exists $c>0$ small enough and $R, \ti R>0$ large enough in Lemma~\ref{lem:fun-q} so that the operators $\A(\lambda)$ and $\A_0(\lambda)$ defined in~\eqref{eq:opA-wm} and~\eqref{eq:opA0-wm} have the following properties:
  \begin{itemize}[leftmargin=0.5cm]
    \item the families $\{\A(\lambda): \lambda > 0\}$, $\{\A_0(\lambda): \lambda > 0\}$, $\{\lambda\partial_\lambda \A(\lambda): \lambda > 0\}$
      and $\{\lambda\partial_\lambda \A_0(\lambda): \lambda > 0\}$ are bounded in $\mathscr{L}(H; L^2)$, with the bound depending only on the choice of the function $q(r)$,
    \item %Let $f(\rho):= \frac{k^2}{2}\sin 2 \rho$.  
    For all $\lambda > 0$ and $g_1, g_2 \in X$  there holds
      \begin{multline}  \label{eq:A-by-parts-wm}
      \Big| \ang{ \A(\lambda)g_1\mid  \frac{1}{r^2}\big(f(g_1 + g_2) - f(g_1) - f'(g_1)g_2\big)}  \\ +\ang{ \A(\lambda)g_2\mid \frac{1}{r^2}\big(f(g_1+g_2) - f(g_1) -g_2\big)}\Big| 
        \leq \frac{c_0}{\lambda} \|g_2\|_H^2, 
      \end{multline}
     % with a constant $\eps_0$ arbitrarily small,
    \item For all $g \in X$ we have  %for any $\eps_0 > 0$, if the constants $c$ and $R$ in the definition of $q(r)$ are chosen appropriately, then
\EQ{
        \label{eq:A-pohozaev-wm}
        \ang{\A_0(\lambda)g | \big(\partial_r^2 + \frac 1r\partial_r - \frac{1}{r^2}\big)g} \leq \frac{c_0}{\lambda}\|g\|_{H}^2 - \frac{1}{\lambda}\int_0^{R\lambda}\Big((\partial_r g)^2 + \frac{1}{r^2}g^2\Big) dr, 
        }
        \item Moreover, for $\la, \mu >0$ with $\la/\mu \ll 1$,
 \begin{gather}
      \label{eq:L-A-wm}
      \|\Lambda Q_\uln\lambda - \A(\lambda)Q_\lambda\|_{L^\infty} \leq \frac{c_0}{\lambda},  \\
      \|\La_0 \Lambda Q_\uln\lambda - \A_0(\lambda) \La Q_\lambda\|_{L^2} \leq c_0, \label{eq:Al2}\\
    \| \A(\la) Q_\mu \|_{L^\infty} + \| \A_0(\la) Q_\mu \|_{L^\infty}  \lesssim \frac{1}{\mu},   \label{eq:Ainfty} 
     \end{gather} 
    and, for any $g \in H$, 
    \begin{multline}   \label{eq:approx-potential-wm}
        \bigg|\int_0^{+\infty}\frac 12 \Big(q''\big(\frac{r}{\lambda}\big) + \frac{\lambda}{r}q'\big(\frac{r}{\lambda}\big)\Big)\frac{1}{r^2}\big(f({-}Q_\mu + Q_\lambda + g) - f({-}Q_\mu + Q_\lambda)-g\big)g dr \\
        - \int_0^{+\infty} \frac{1}{r^2}\big(f'(Q_\lambda)-1\big)g^2 dr\bigg| \leq c_0(\|g\|_H^2 + (\lambda/\mu)).  
  \end{multline} 
  \end{itemize}
\end{lem}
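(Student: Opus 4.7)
The plan is to establish the four bulleted items essentially independently, leveraging the defining feature of the operators: by property~\ref{enum:approx-q}, $\A(\la) = \la^{-1}\La$ and $\A_0(\la) = \la^{-1}\La_0$ on the inner region $\{r \leq R\la\}$, while outside this region the weight $q'(r/\la)$ is uniformly bounded and vanishes past $r = \tilde R \la$. This allows me to reduce each claim to a scale-invariant computation on the inner region plus a controlled outer contribution.

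For the boundedness statement, I would use~\ref{enum:gradlap-q} to get the uniform pointwise bounds $|q'(r/\la)| \lesssim \min(r/\la, 1)$ and $|q''(r/\la)| \lesssim 1$; each term in $\A(\la)g$ and $\A_0(\la)g$ then reduces to a bounded multiplier applied to $\p_r g$ or $g/r$, and Cauchy--Schwarz in $L^2(r\,dr)$ yields the $\mathscr{L}(H; L^2)$ bound. The derivatives $\la\partial_\la \A(\la)$ and $\la\partial_\la \A_0(\la)$ replace $q'(r/\la)$ by $-(r/\la)q''(r/\la)$, which enjoys the same estimates. For the asymptotic estimates in the last bullet I would argue pointwise: on $\{r \leq R\la\}$ the differences $\La Q_{\ula} - \A(\la)Q_\la$ and $\La_0\La Q_{\ula} - \A_0(\la)\La Q_\la$ vanish identically, while on the outer region the decay $|Q'(\rho)| \lesssim 1/\rho^2$ forces each difference to be of size $\lesssim 1/r \leq 1/(R\la)$, which is $\leq c_0/\la$ once $R$ is chosen large. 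The bound~\eqref{eq:Ainfty} uses, in addition, that $Q_\mu$ is essentially constant on the support of $q'(r/\la)$ when $\la/\mu \ll 1$. For~\eqref{eq:approx-potential-wm} I would Taylor-expand the nonlinearity to isolate the quadratic term $(f'({-}Q_\mu + Q_\la)-1)g^2/r^2$, compare $f'({-}Q_\mu + Q_\la)$ with $f'(Q_\la)$ using $|f''|\leq 2$ and the smallness of $Q_\mu$ on the $q'$-support (yielding the $c_0(\la/\mu)$ error), and exploit the identity $\tfrac{1}{2}(q''(r/\la) + \tfrac{\la}{r}q'(r/\la)) \equiv 1$ on $\{r \leq R\la\}$ (with the complementary region controlled via~\ref{enum:gradlap-q}).

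The heart of the lemma is the pair of multiplier identities~\eqref{eq:A-by-parts-wm} and~\eqref{eq:A-pohozaev-wm}, which I expect to be the main obstacles. For~\eqref{eq:A-by-parts-wm} the decisive algebraic observation is the symmetrization
\begin{equation*}
\A(\la)g_1 \cdot A + \A(\la)g_2 \cdot B \;=\; \A(\la)(g_1+g_2)\cdot A \;+\; \A(\la)g_2 \cdot (f'(g_1)-1)g_2,
\end{equation*}
where $A := f(g_1+g_2) - f(g_1) - f'(g_1)g_2 = O(g_2^2)$. Writing $A = \int_0^1(1-s)f''(g_1+sg_2)\,ds\cdot g_2^2$, integrating the first summand by parts in $r$ (with $\p_r$ landing on the weight $q'(r/\la)/r^2$), and invoking~\ref{enum:gradlap-q} and~\ref{enum:multip-petit-ym} together with Hardy's inequality yields a bound of order $c_0\,\la^{-1}\|g_2\|_H^2$. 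The residual second summand is handled directly via $|f'(g_1)-1| = 2\sin^2 g_1 \leq 2$ and Cauchy--Schwarz.

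For~\eqref{eq:A-pohozaev-wm} I would perform the classical scaling-multiplier computation: pair $\A_0(\la)g$ with $(\p_r^2 + \tfrac{1}{r}\p_r - \tfrac{1}{r^2})g$ in $L^2(r\,dr)$ and integrate by parts twice to transfer derivatives off $g$, reducing the expression to quadratic forms in $\p_r g$ and $g/r$ with coefficients built from $q''(r/\la)$, $\tfrac{1}{r}q'(r/\la)$, and the radial Laplacian applied to these. On $\{r \leq R\la\}$ the coefficient in front of $(\p_r g)^2$ is exactly $-\tfrac{1}{\la}$, producing the localized negative term on the right-hand side; on the complement, property~\ref{enum:convex-ym} provides the lower bounds $q''(r/\la) \geq -c$ and $\tfrac{\la}{r}q'(r/\la) \geq -c$, which control the outer $(\p_r g)^2$-contributions by $c\,\la^{-1}\|g\|_H^2$, while property~\ref{enum:bilapl-ym} bounds the $g^2$-contribution (arising as a multiple of $\Delta_r^2 q(r/\la)$) by $c\,\la^{-1}\|g\|_H^2$. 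The main delicate point I foresee is the bookkeeping of the boundary contributions at $r = R\la$ in the inner Pohozaev identity: verifying that their sign and size are compatible with the asserted inequality is the precise reason for imposing the \emph{one-sided} convexity~\ref{enum:convex-ym} on $q$, which allows $q$ to deviate from $r^2/2$ beyond the inner region while still preserving the sign of the multiplier.
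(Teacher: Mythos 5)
Your plan treats \eqref{eq:L-A-wm} and \eqref{eq:Al2} by the same pointwise argument, and this is precisely where it breaks down — and it is the one point where the present paper departs from \cite{JJ-AJM} (everything else in the lemma is simply cited; the remark following the statement supplies a new proof only of \eqref{eq:Al2}, exactly because the corotational resonance $\La Q \sim 1/r$ is not in $L^2$). A pointwise bound of size $\lesssim 1/r$ on the outer region is fine for the $L^\infty$ estimate \eqref{eq:L-A-wm}, but for the $L^2$ estimate \eqref{eq:Al2} the relevant region is the annulus $R\la \le r \le \ti R\la$ with $\ti R/R = \kappa e^{\kappa/c}$, and $\big\| r^{-1} \big\|_{L^2(rdr,\,[R\la,\ti R\la])} = \sqrt{\log(\ti R/R)} \sim \sqrt{\kappa/c}$, which \emph{blows up} as $c \to 0$; taking $R$ large does not help. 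The paper's remark fixes this by writing (at $\la=1$) $\A_0(1)\La Q = \frac r2\big(\frac{q'(r)}{r}\big)'\,\La Q + \frac{q'(r)}{r}\,\La_0\La Q$: the coefficient of the non-$L^2$ function $\La Q$ carries the factor $c$ from property \ref{enum:multip-petit-ym}, so its $L^2$ contribution is $\lesssim c^2\log(\ti R/R) \lesssim \kappa c$, while the second piece is handled using $\La_0\La Q = \frac{4r}{(1+r^2)^2} \in L^2$. Without this cancellation structure your argument cannot deliver a bound uniform in the (exponentially large) aspect ratio of the transition region.

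There is a second gap in your treatment of \eqref{eq:A-by-parts-wm}: the constant $c_0$ there must be arbitrarily small, but your "residual second summand" $\ang{\A(\la)g_2 \mid \frac{1}{r^2}(f'(g_1)-1)g_2}$, estimated via $|f'(g_1)-1|\le 2$ and Cauchy--Schwarz, only gives $\frac{C}{\la}\|g_2\|_H^2$ with an absolute constant $C$ (and indeed this term is genuinely not small for general $g_1\in X$), while your "first summand" $\ang{\A(\la)(g_1+g_2)\mid \frac{A}{r^2}}$ is not an exact $r$-derivative, so the proposed integration by parts is unavailable and its natural bound involves $\|g_1\|_H$ as well. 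The two pairings cannot be estimated separately: the point of the statement is that their \emph{sum} has the exact-primitive structure $q'(r/\la)\,\p_r\big[F(g_1,g_2)-\tfrac12 g_2^2\big]$ with $F(g_1,g_2)=\int_0^{g_2}\big(f(g_1+s)-f(g_1)\big)ds$, so that after one integration by parts the surviving coefficient is $\frac{1}{\la r}\,\rho\big(\frac{q'(\rho)}{\rho}\big)'\big|_{\rho=r/\la}$, which vanishes identically for $r\le R\la$ and is $\le c/(\la r)$ by \ref{enum:multip-petit-ym}; combined with $|F-\tfrac12 g_2^2|\lesssim g_2^2$ and Hardy this yields the $\frac{c_0}{\la}\|g_2\|_H^2$ bound. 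Your sketches of the operator bounds, of \eqref{eq:L-A-wm}, \eqref{eq:Ainfty} and of the Pohozaev-type inequality \eqref{eq:A-pohozaev-wm} are broadly in the right spirit, but the two items above are genuine missing ideas, not bookkeeping.
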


\begin{rem}
The argument for the estimate \eqref{eq:Al2} from \cite{JJ-AJM} does not quite apply our case due to the slow decay of $Q$.  We provide a different argument here. We first note that $\La_0 \La Q = \frac{4r}{(1+r^2)^2} \in L^2(\R^2)$ and the estimate \eqref{eq:Al2} is scaling invariant so we can take $\la = 1$.  Since $\La_0 \La Q = \cl A_0(1) \La Q$ for $r \leq R$ and $\cl A_0(1)\La Q = 0$ for $r \geq \tilde R = R \kappa e^{\kappa/c}$, we have
\begin{align}
\left \|\La_0 \Lambda Q - \A_0(1) \La Q\right \|_{L^2}^2 
\leq \int_{R}^\infty |\La_0 \La Q|^2 r dr + \int_{R}^{\tilde R} |\cl A_0(1) \La Q|^2 r dr. 
\end{align}
The first term on the right-hand side above can be made $<c_0^2/2$ as long as $R > 0$ is sufficiently large since $\La_0 \La Q \in L^2$.  For the second term, we write 
\begin{align*}
\cl A_0(1) \La Q = \frac{r}{2} \left (\frac{q'(r)}{r} \right )' \La Q + \frac{q'(r)}{r} \La_0 \La Q.   
\end{align*}
Then by properties \ref{enum:multip-petit-ym} and \ref{enum:gradlap-q} in Lemma \ref{lem:fun-q} we have 
\begin{align*}
 \int_{R}^{\tilde R} |\cl A_0(1) \La Q|^2 \, r dr
& \lesssim c^2 \int_{R}^{\tilde R} |\La Q|^2 \, r dr 
+ \int_{R}^\infty |\La_0 \La Q|^2 \, rdr \\
&\lesssim c^2 \int_R^{R \kappa e^{\kappa/c}} \frac{1}{r} \, dr 
+ \int_R^\infty \frac{1}{r^5} dr \\
&\lesssim c +R^{-4} \leq c_0^2 /2
\end{align*}
as long as $c$ is sufficiently small and $R$ is sufficiently large.  We conclude that for $c,R$ chosen appropriately, we have 
\begin{align*}
\left \|\La_0 \Lambda Q - \A_0(1) \La Q\right \|_{L^2}^2 \leq c_0^2,
\end{align*}
as desired. 
\end{rem}

As before, we let $\chi \in C^\infty_c(\R^2)$ be a smooth radial cutoff.
We then define the function $b(t)$ by  
\EQ{ \label{eq:bdef} 
b(t):= - \ang{ \chi_{M \sqrt{\la(t)\mu(t)}} \La Q_{\underline{\la(t)}}  \mid \dot g(t)}  - \ang{ \dot g(t) \mid \A_0( \la(t) ) g(t)}.
}
Here $M > 0$ is a constant which we will later fix.  Finally, we define 
\EQ{ \label{eq:zetadef} 
\zeta(t) := 2 \la(t) |\log (\la(t)/\mu(t))| - \langle \chi_{M \sqrt{\la(t) \mu(t)}}\Lambda Q_{\uln{\lambda(t)}} \mid g(t)\rangle
}
Note that $\zeta(t)$ is $C^1$ since $\partial_t g(t)$ is continuous in $L^2$ with respect to $t$.  We will now show that we may roughly view $\zeta(t)$ as $2 \la(t) \log \la(t)$ and $b(t)$ as a subtle correction to $\zeta'(t)$.  The essential feature of this correction is that $b'(t)$ (which intuitively is connected to $\la''(t)$) is bounded from below.  More precisely, we prove the following.   

\begin{prop}[Modulation Control] \label{p:modp2} 
Assume the same hypothesis as in  Proposition~\ref{p:modp}. Let $0 < \de < 1/2$ be arbitrarily small, and let $\eta_0$ be as in Lemma~\ref{l:modeq}. 
There exist functions $L_0 = L_0(\de) > 0$, $M_0 = M_0(\de,L) > 0$ and $\eta_1 = \eta_1(\de,L,M) > 0$ such that if $L > L_0$, $M > M_0$ and $\bfd_+(\vec \psi(t)) \le \eta_1 < \eta_0$, then for all $t \in J$ the functions $\la(t), \mu(t), \zeta(t)$ and $b(t)$ (which implicitly depend on $L$ and $M$) satisfy
\begin{align}
&\abs{\frac{\zeta(t)}{2\la(t)|\log (\la(t)/\mu(t))|} - 1} \le \de , \label{eq:bound-on-l} \\
& \abs{\zeta'(t) - b(t) } \le \de \left [\frac{\la(t)}{\mu(t)}
\right]^{\frac{1}{2}} \left |
\log \frac{\la(t)}{\mu(t)} \right |^{\frac{1}{2}}  \le  \de \left [\frac{\zeta(t)}{\mu(t)}
\right]^{\frac{1}{2}},  \label{eq:kala'}  \\ 
\begin{split}
&\abs{b(t)} \le 4 \left [\frac{\la(t)}{\mu(t)}
\right]^{\frac{1}{2}} \left |2
\log \frac{\la(t)}{\mu(t)} \right |^{\frac{1}{2}}  + \de \left [\frac{\la(t)}{\mu(t)}
\right]^{\frac{1}{2}} \left |
\log \frac{\la(t)}{\mu(t)} \right |^{\frac{1}{2}}   \le 5\left [\frac{\zeta(t)}{\mu(t)}
\right]^{\frac{1}{2}}.
\end{split} \label{eq:b-bound} 
\end{align}

Moreover, $b(t)$ is locally Lipschitz and there exists $C_1 = C_1(L) > 0$ such that 
\begin{align}
&|b'(t)| \leq C_1 /\mu(t), \label{eq:b'} \\
&b'(t) \ge (8 - \de) /\mu(t). \label{eq:b'lb}  
\end{align} 
\end{prop}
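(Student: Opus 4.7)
The strategy is to establish the four groups of bounds in sequence, choosing parameters hierarchically: first $L = L(\de)$ large to exploit the gain $(\log L)^{-1/2}$ from Proposition~\ref{p:modp}, then $M = M(\de,L)$ large to position the localization, and finally $\eta_1 = \eta_1(\de,L,M)$ small so that $|\log(\la/\mu)|$ dominates all other logarithms. Write $\xi(t,r) := \chi_{M\sqrt{\la(t)\mu(t)}}(r)\,\La Q_{\underline{\la(t)}}(r)$ throughout.

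For \eqref{eq:bound-on-l}, pair $\ang{\xi \mid g} = \int_0^\infty (r\xi)(g/r)\,r\,dr$; since $r\la^{-1}\La Q(r/\la) = 2(r/\la)^2/(1+(r/\la)^2)$ is bounded, a change of variables gives $\|r\xi\|_{L^2(r\,dr)} \lesssim M\sqrt{\la\mu}$, and Hardy together with Lemma~\ref{l:modeq} gives $\|g/r\|_{L^2} \lesssim (\la/\mu)^{1/2}$. Thus $|\ang{\xi \mid g}| \lesssim M\la$, which is $o(\la|\log(\la/\mu)|)$ once $\eta_1$ is small. For \eqref{eq:kala'}, differentiate $\zeta$ using \eqref{eq:ptg}; the $2\la'|\log(\la/\mu)|$ term produced by $\frac{d}{dt}(2\la|\log(\la/\mu)|)$ is cancelled exactly by $-\la'\ang{\xi\mid\La Q_{\ula}}$: a change of variables exploiting $|\La Q(s)|^2 \sim 4/s^2$ at infinity gives
\begin{align*}
\ang{\xi\mid\La Q_{\ula}} = \int_0^\infty \chi(s\sqrt{\la/\mu}/M)\,|\La Q(s)|^2\,s\,ds = 2|\log(\la/\mu)| + O(\log M + 1).
\end{align*}
The residual terms from $\p_t\chi_{M\sqrt{\la\mu}}$, from $\p_t\La Q_{\ula} = -(\la'/\la)\La_0\La Q_{\ula}$, and from the off-diagonal pairing $\mu'\ang{\xi\mid\La Q_{\umu}}$ are controlled by $|\la'|, |\mu'| \lesssim (\log L)^{-1/2}(\la/\mu)^{1/2}$ (Proposition~\ref{p:modp}), so after cancellation $\zeta'(t) - b(t) = \ang{\dot g \mid \A_0(\la) g} + E(t)$; the first term is at most $\|\dot g\|_{L^2}\|\A_0(\la)g\|_{L^2} \lesssim \la/\mu$ by Lemma~\ref{lem:op-A-wm}, and $E(t)$ is dominated by $\de(\la/\mu)^{1/2}|\log(\la/\mu)|^{1/2}$ once $L$ and $\eta_1^{-1}$ are large. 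The pointwise bound \eqref{eq:b-bound} follows similarly: $\|\xi\|_{L^2}^2 = 2|\log(\la/\mu)| + O(\log M + 1)$ paired with the sharp $\|\dot g\|_{L^2}^2 \leq 16\la/\mu + o(\la/\mu)$ of \eqref{eq:leading-psit}, plus $|\ang{\dot g\mid\A_0(\la)g}| \lesssim \la/\mu$.

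The decisive step is the lower bound \eqref{eq:b'lb}. Writing $\Delta := \p_r^2 + r^{-1}\p_r$ and using \eqref{eq:ptgdot},
\begin{align*}
-\ang{\xi\mid\p_t\dot g} = -\ang{\xi\mid\Delta g} + \Big\langle\xi \mid \frac{\mathcal{I} + f'(Q_\la - Q_\mu)g + N_2(g)}{r^2}\Big\rangle,
\end{align*}
where $\mathcal{I}(r) := f(Q_\la - Q_\mu) - f(Q_\la) + f(Q_\mu)$ and $N_2(g) = O(g^2)$. Integration by parts combined with $\LL_\la \La Q_{\ula} = 0$ reduces the linear-in-$g$ contributions to $\ang{\xi \mid r^{-2}[f'(Q_\la - Q_\mu) - f'(Q_\la)]g} = O(\sqrt{\la/\mu}/\mu)$, using $|f'(Q_\la - Q_\mu) - f'(Q_\la)| = O(Q_\mu) = O(r/\mu)$ on $\supp\xi$. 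The principal positive contribution comes from the interaction integral: Taylor-expanding $Q_\mu \approx 2r/\mu$ gives $\mathcal{I} = (1-\cos 2Q_\la)\cdot 2r/\mu + O(r^2/\mu^2) = 4r\sin^2 Q_\la/\mu + O(r^2/\mu^2)$, and the Bogomol'nyi identity $\sin Q_\la = \La Q_\la$ together with
\begin{align*}
\int_0^\infty (\La Q(s))^3\,ds = \int_0^\infty \frac{8s^3}{(1+s^2)^3}\,ds = 2
\end{align*}
yields $\ang{\xi\mid\mathcal{I}/r^2} = 8/\mu + o(1/\mu)$. The term $-\ang{\p_t\dot g\mid \A_0(\la)g}$ is organized via the Pohozaev estimate \eqref{eq:A-pohozaev-wm}, the virial nonlinear cancellation \eqref{eq:A-by-parts-wm}, and the potential approximation \eqref{eq:approx-potential-wm} from Lemma~\ref{lem:op-A-wm}, producing a nonnegative coercive quadratic in $g$ (which is discarded) plus errors $O(c_0/\mu)$. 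The remaining derivative-in-time contributions from $\p_t\xi$, $\p_t\A_0(\la)$, and $\p_t g - \dot g = \la'\La Q_{\ula} - \mu'\La Q_{\umu}$ are handled using Proposition~\ref{p:modp}. Choosing $c_0, L^{-1}, M^{-1}, \la/\mu$ sufficiently small relative to $\de$ in that order gives $b'(t) \geq (8-\de)/\mu$; the same identity with absolute values throughout yields \eqref{eq:b'} and local Lipschitzness.

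The main obstacle—and the principal departure from the higher-equivariant case of \cite{JL}—is the slow decay $\La Q(r) \sim 2/r$, which fails to lie in $L^2(\R^2)$. This is precisely what forces the introduction of the geometric-mean cutoff at scale $M\sqrt{\la\mu}$, the logarithmic correction $\zeta \approx 2\la|\log(\la/\mu)|$ in place of $\la$ itself, and the auxiliary operator $\A_0(\la)$ (which requires the delicate modification of the virial codified in Lemma~\ref{lem:op-A-wm}). Every step accrues logarithmic losses $\log L$ or $\log M$, and the argument closes only because the hierarchical ordering of smallness ensures these losses are dominated by $|\log(\la/\mu)|$—bookkeeping that does not arise for $k\geq 2$, where $\La Q^k \in L^2(\R^2)$.
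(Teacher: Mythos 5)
Your overall scheme coincides with the paper's: the same hierarchical choice of $L$, $M$, $\eta_1$, the same cancellation of $2\la'|\log(\la/\mu)|$ against $\la'\ang{\chi_{M\sqrt{\la\mu}}\La Q_{\ula}\mid \La Q_{\ula}}$ in the proof of \eqref{eq:kala'}, the same $L^2$ pairing with \eqref{eq:leading-psit} for \eqref{eq:b-bound}, and the same identification of the leading interaction term $8/\mu$ via $\sin Q=\La Q$ and $\int_0^\infty(\La Q)^3\,dr=2$ (your Taylor expansion in $Q_\mu$ is a cosmetic variant of the paper's exact trigonometric splitting). However, your treatment of the quadratic-in-$g$ terms in the lower bound \eqref{eq:b'lb} has a genuine gap. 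The quadratic form produced by the $\A_0(\la)$ virial computation is \emph{not} nonnegative: the Pohozaev estimate \eqref{eq:A-pohozaev-wm} yields the positive localized piece $\frac{1}{\la}\int_0^{R\la}\big((\p_r g)^2+g^2/r^2\big)\,rdr$, but the potential approximation \eqref{eq:approx-potential-wm} contributes in addition $\frac{1}{\la}\int_0^\infty r^{-2}\big(f'(Q_\la)-1\big)g^2\,dr$ with $f'(Q_\la)-1=\cos 2Q_\la-1\le 0$. Since $\|g\|_H^2\lesssim \la/\mu$ by \eqref{eq:gH}, this negative contribution is a priori of size $O(1/\mu)$, i.e.\ of exactly the same order as the main term $8/\mu$, so it cannot simply be discarded. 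The paper controls the sum of the two pieces by the localized coercivity estimate of \cite[Lemma 5.4, eq.\ (5.28)]{JJ-AJM}, and this is precisely where the orthogonality condition $\ang{\ZZ_{\ula}\mid g}=0$ from Lemma~\ref{l:modeq} enters (together with the choice of $R$ large, which makes the resulting constant $c_1$ small). Your sketch never invokes the orthogonality condition at this stage; without it the asserted nonnegativity is false in general and $b'\ge(8-\de)/\mu$ does not follow.

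A second, smaller gloss: when you "integrate by parts using $\LL_\la\La Q_{\ula}=0$", the cutoff does not commute with $\LL_\la$. The paper handles this via the factorization $\LL_\la=A_\la^*A_\la$ with $A_\la\La Q_{\ula}=0$, so the commutator becomes $\frac{1}{M\sqrt{\la\mu}}\ang{\chi'_{M\sqrt{\la\mu}}\La Q_{\ula}\mid A_\la g}\lesssim_L M^{-1}$; this is the concrete place where the largeness of $M$ is used, beyond "positioning the localization," and it should appear explicitly in your error budget. The remaining steps (your $L^2\times L^2$ proof of \eqref{eq:bound-on-l} in place of the paper's $L^\infty\times L^1$ pairing, and the estimates leading to \eqref{eq:kala'} and \eqref{eq:b'}) are sound and essentially identical to the paper's argument.
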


\begin{proof} Since we will take $\eta_1 < \eta_0$, the modulation parameters are well-defined and $C^1$ on the interval $J$.   We also note that by rescaling $\vec \psi(t_0)$ for some $t_0 \in J$ and shrinking the interval $J$ if necessary, we can assume that $\frac 12 \le \mu(t) \le 2$ on $J$.  Throughout the argument, implied constants and big-oh terms will depend on the parameters $L$ and $M$ unless stated otherwise.  

We first prove \eqref{eq:bound-on-l}. By Proposition \ref{p:modp} we have $\|g\|_{L^\infty} \leq \| g \|_H \lesssim \lambda^\frac 12$. Thus,
\begin{align*}
\left | 
\ang{\chi_{M \sqrt{ \mu \la}} \La Q_{\ula} \mid g} 
\right | &\lesssim 
\la^{3/2} \int_0^{4M / \sqrt{\la}} |\La Q| r dr \\
&\lesssim  \la.  
\end{align*}
We conclude that 
\begin{align*}
\frac{1}{2\la |\log (\la/\mu)|}\left | 
\ang{\chi_{\sqrt{\mu \la}} \La Q_{\ula} \mid g} 
\right | \lesssim |\log \la|^{-1} 
\end{align*}
which can be made smaller than $\de$ as long as $\la/\mu$ is sufficiently small compared to $L$ and $M$. This proves \eqref{eq:bound-on-l}. 

Now we prove~\eqref{eq:kala'}.
From \eqref{eq:ptg} we have
\begin{equation}
\label{eq:bound-on-l-666}
\begin{aligned}
\frac{d}{dt} \ang{\chi_{M \sqrt{\la \mu}} \Lambda Q_{\ula} \mid g} &= \ang{\chi_{M \sqrt{\la \mu}} \Lambda Q_{\ula}\mid \dot g} + \lambda'\ang{ \chi_{M\sqrt{\la \mu}} \Lambda Q_{\ula}\mid \Lambda Q_{\ula}} \\
&- \mu'\ang{\chi_{M \sqrt{\la \mu}} \Lambda Q_{\ula}\mid \Lambda Q_{\umu}} 
-\frac{\lambda'}{\lambda} \ang{\chi_{M \sqrt{\la \mu}} \Lambda_0\Lambda Q_{\ula} \mid g} \\&- 
\Bigl ( \frac{\la'}{2\la} + \frac{\mu'}{2\mu} \Bigr ) \ang{ \Lambda \chi_{ M \sqrt{\la \mu}} \Lambda Q_{\ula}\mid g},
\end{aligned}
\end{equation}
Since $|\La Q| \lesssim r^{-1}$, 
\begin{align*}
\int_{\sqrt{\la \mu}}^{2M \sqrt{\la \mu}} |\La Q_{\ula}|^2 rdr 
\lesssim 
\int_{\sqrt{\frac{\mu}{\la}}}^{2M \sqrt{\frac{\mu}{\la}}} r^{-1} dr 
\lesssim 1. 
\end{align*}
Thus, 
\begin{align*}
\la' \int_0^\infty \chi_{M \sqrt{\la \mu}} |\La Q_{\ula}|^2 r dr &= 
\la' \int_0^{\sqrt{\la \mu}} |\La Q_{\ula}|^2 rdr + O(\la') \\
&= 2 \la' |\log (\la/\mu)| + O(\la^{1/2}). 
\end{align*}
We now show that the remaining terms on the right hand side of \eqref{eq:bound-on-l-666} are $\ll |\log \la|^{1/2} \lambda^{1/2}$ for all $L$ and $M$ large and $\la/\mu$ sufficiently small compared to $L$ and $M$.
Since we are assuming  $\frac 12 \le  \mu \leq 2$, we have by \eqref{eq:mu'} 
  \EQ{
  |\mu'|\abs{\ang{\chi_{M \sqrt{\la \mu}} \Lambda Q_{\ula}\mid \Lambda Q_{\umu}}}& \lesssim \la^{1/2} \int_0^{4M\sqrt{\la}} \frac{1}{\la} \frac{r}{\la} |Q_r(r/ \la)| \frac{1}{\mu} \frac{r}{\mu}  |Q_{r}(r/ \mu)| \, r d r \\
  & \lesssim \la^{-1/2} \int_0^{4M\sqrt{\la}}  \frac{ (r/ \la)}{ 1+ (r/\la)^{2}} \frac{r^{2}}{ 1+ r^{2}} \, d r \\
  & \lesssim \la^{1/2} \int_0^{4M\sqrt{\la}}  \frac{ r^{2}}{ \la^{2} + r^{2}} \frac{r}{ 1+ r^{2}} \, d r  \\
  &\lesssim \la^{3/2}. 
  }
Thus, the third term in \eqref{eq:bound-on-l-666}
 is $\ll \la^{1/2} |\log \la |^{1/2}$. 
For the fourth term, we have
\begin{equation}
\Big|\frac{\lambda'}{\lambda}\ang{\chi_{M \sqrt{\la\mu}} \Lambda_0\Lambda Q_{\ula}\mid g}\Big| \lesssim |\lambda'|\|g\|_{L^\infty}\left \|\chi_{M \sqrt{\mu/\la}}\Lambda_0\Lambda Q\right \|_{L^1} \lesssim \lambda \|\chi_{4M/\sqrt{\lambda}}\Lambda_0\Lambda Q\|_{L^1}. 
\end{equation}
Now $\La_0 \La Q = \frac{4r}{(1+r^2)^2}$ so 
\begin{align*}
\|\chi_{4M/\sqrt{\lambda}}\Lambda_0\Lambda Q\|_{L^1} \lesssim 1.
\end{align*}
Thus, 
\begin{align*}
\Big|\frac{\lambda'}{\lambda}\ang{\chi_{M \sqrt{\la\mu}} \Lambda_0\Lambda Q_{\ula}\mid g}\Big| \lesssim \la \ll \la^{1/2} |\log \la|^{1/2}. 
\end{align*}
For the fifth term appearing in \eqref{eq:bound-on-l-666}, we have 
\begin{align*}
\left |  \ang{ \Lambda \chi_{M \sqrt{\la \mu}} \Lambda Q_{\ula}\mid g}
\right | &\lesssim \| g \|_{L^\infty} \int_{M \sqrt{\la \mu}}^{2M \sqrt{\la \mu}} |\La Q_{\ula}| r dr \\
&\lesssim \la^{3/2} \int_{M/2\sqrt{\la}}^{4M/\sqrt{\la}} |\La Q| r dr \\
&\lesssim \la . 
\end{align*}
By \eqref{eq:la'} and \eqref{eq:mu'} we conclude that for all $\la$ sufficiently small depending on $L$ and $M$,  
\begin{align*}
\left |
\Bigl ( \frac{\la'}{2\la} + \frac{\mu'}{2\mu} \Bigr ) \ang{ \Lambda \chi_{M \sqrt{\la \mu}} \Lambda Q_{\ula}\mid g}
\right | \lesssim  \la^{1/2} \ll \la^{1/2} |\log \la|^{1/2}. 
\end{align*}

From \eqref{eq:bound-on-l-666} and the previous bounds we conclude that
\begin{equation}
\label{eq:bound-on-l-6666}
\left |2 \la' \log (\la/\mu) - \frac{d}{dt }\left \langle \chi_{M \sqrt{\la \mu}}\Lambda Q_{\uln{\lambda}} \mid g \right \rangle + \ang{ \chi_{M \sqrt{\la \mu}}\Lambda Q_{\ula}\mid \dot g} \right | \ll \la^{1/2} |\log \la|^{1/2}.
\end{equation}
By \eqref{eq:la'} and \eqref{eq:mu'}
\begin{align*}
\frac{d}{dt} 2 \la \log (\la / \mu) = 2 \la' \log (\la / \mu) + 2 (\la' \mu - \mu' \la ) / \mu = 2 \la' \log (\la / \mu) + O(\la^{1/2}).
\end{align*}
From this estimate and \eqref{eq:bound-on-l-6666} we obtain 
\begin{align}
\left |\zeta' + \ang{\chi_{\sqrt{\la \mu}}\Lambda Q_{\ula}\mid \dot g} \right | \ll \la^{1/2} |\log \la |^{1/2}. \label{eq:bound6667}
\end{align}
Recall that 
 \EQ{
 b(t):= - \ang{ \chi_{\sqrt{\la \mu}}\La Q_{\ula} \mid  \dot g}  - \ang{ \dot g \mid \A_0(\la) g}
}
 By~\eqref{eq:gH}  and Lemma~\ref{lem:op-A-wm} we have
\EQ{  \label{eq:dotgAg} 
 \left | \ang{ \dot g \mid \A_0(\la) g} \right | \lesssim  \| \dot g \|_{L^2} \| \A_0(\la) g \|_{L^2} \lesssim \| (g, \dot g) \|_{\HH_0}^2 \lesssim \la \ll \la^{1/2} |\log \la|^{1/2}.
 }
This estimate and \eqref{eq:bound6667} imply
\EQ{
\abs{\zeta' - b}  \ll \lambda^{1/2} |\log \la|^{1/2}.
}
for $L$ and $M$ large and $\la/\mu$ sufficiently small depending on $L$ and $M$.  This completes the proof of \eqref{eq:kala'}. 

To prove \eqref{eq:b-bound}, we argue as above and obtain 
\EQ{
\abs{b(t)} &\le \| \chi_{M \sqrt{\la \mu}} \La Q_{\ula}\|_2 \| \p_t \psi\|_{2}  - O(\la) \\
&= \bigl [ 2 \log (\la / \mu) + O(1) \bigr ]^{1/2} \| \p_t \psi \|_2 - O(\la).
} 
By \eqref{eq:leading-psit} we have 
\EQ{
\| \p_t \psi(t) \|_2^2 \le 16 (\la/ \mu)  + o(\la).
}
The previous two estimates combined yield ~\eqref{eq:b-bound}. 

We now turn to proving ~\eqref{eq:b'lb} and ~\eqref{eq:b'}.  By approximating the initial data $\vec \psi(t_0)$ for some $t_0 \in J$ by smooth functions and using the well-posedness theory, we may assume that $\vec \psi(t)$ is smooth on $J$. We differentiate $b(t)$ and use the formulae~\eqref{eq:ptg},~\eqref{eq:ptgdot} to obtain 
\EQ{ \label{eq:b'1} 
b'(t) &= \frac{\la'}{\la} \ang{\chi_{M\sqrt{\la \mu}}[ \La_0 \La Q]_{\ula} \mid \dot g }   - \ang{ \chi_{M\sqrt{\la \mu}}\La Q_{\ula}  \mid \p_t \dot g} 
 - \ang{\p_t  \dot g \mid \A_0( \la) g}  \\ 
& \quad - \frac{\la'}{\la} \ang{ \dot g \mid  \la \p_\la \A_0(\la) g}  -  \ang{ \dot g \mid \A_0(\la) \p_t g} \\
& \quad + \left ( \frac{\la'}{2\la} + \frac{\mu'}{2\mu} \right ) 
\ang{\La \chi_{M\sqrt{\la \mu}} \La Q_{\ula} \mid \dot g } \\
& = \frac{\la'}{\la} \ang{\chi_{M \sqrt{\la \mu}} [ \La_0 \La Q]_{\ula} \mid \dot g }   \\
 &\quad -  \ang{ \chi_{M \sqrt{\la \mu}} \La Q_{\ula} \mid  \p_r^2 g + \frac{1}{r} \p_r g  - \frac{1}{r^2} \left( f( Q_\la - Q_\mu + g) - f(Q_\la) +f(Q_\mu)\right)} \\
& \quad - \ang{ \p_r^2 g + \frac{1}{r} \p_r g  - \frac{1}{r^2} \left( f( Q_\la - Q_\mu + g) - f(Q_\la) + f(Q_\mu)\right) \mid \A_0(\la) g}  \\
& \quad - \frac{\la'}{\la} \ang{ \dot g \mid  \la \p_\la \A_0(\la) g} - \ang{ \dot g \mid \A_0(\la) \dot g} \\
&\quad  -  \la' \ang{ \dot g \mid \A_0(\la) \La Q_{\ula}} 
 + \mu' \ang{ \dot g \mid \A_0(\la) \La Q_{\umu}} \\
&\quad +  \left ( \frac{\la'}{2\la} + \frac{\mu'}{2\mu} \right ) 
\ang{\La \chi_{M\sqrt{\la \mu}} \La Q_{\ula} \mid \dot g }.
}

We first discard those terms which are $\ll 1$ as long as $L > 0 $ is sufficiently large, $M > 0$ is sufficiently large depending on $L$, and $\la / \mu$ is sufficiently small depending on $L$ and $M$. Consider the last term appearing above.  Here we will choose the size of $L$.  For some absolute constant $C_2 > 0$, we have
\begin{align}\label{eq:Qanulus2}
\| \La \chi_{M \sqrt{\la \mu}} \La Q_{\ula} \|_{L^2} \leq C_2.
\end{align}
If $C$ is the constant in \eqref{eq:la'}, then we choose $L > 0$ so large so that 
\begin{align}\label{eq:chooseL}
80 C C_2 (\log L)^{-1/2} \leq \frac{\de}{100}.  
\end{align}
Then by Cauchy Schwarz, \eqref{eq:Qanulus2}, \eqref{eq:leading-psit} and \eqref{eq:chooseL}, we conclude that 
\begin{align}
\left | 
\frac{\la'}{\la} 
\ang{\La \chi_{M\sqrt{\la \mu}} \La Q_{\ula} \mid \dot g }
\right | \leq \frac{|\la'|}{\la} \| \La \chi_{M \sqrt{\la \mu}} \La Q_{\ula} \|_{L^2} \| \dot g \|_{L^2}
\leq \frac{2 C (\log L)^{-1/2} \la^{1/2}}{\la} C_2 40 \la^{1/2} 
\leq \frac{\de}{100}
\end{align}
as long as $\la / \mu$ is sufficiently small.  Similarly, we have
\begin{align}
\left | 
\frac{\mu'}{\mu} 
\ang{\La \chi_{M\sqrt{\la \mu}} \La Q_{\ula} \mid \dot g }
\right | \leq \frac{|\la'|}{\la} \| \La \chi_{M \sqrt{\la \mu}} \La Q_{\ula} \|_{L^2} \| \dot g \|_{L^2}
\leq 4 C_1 (\log L)^{-1/2} \la^{1/2} C_2 40 \la^{1/2} 
\leq \frac{\de}{100} 
\end{align}
as long as $\la / \mu$ is sufficiently small.  Thus, the last term above can be made $\leq \de/100$.  
We now consider the first and sixth term appearing above. By Cauchy Schwarz and the fact that $\La_0 \La Q \in L^2$, we have 
\begin{align*}
\left | 
\frac{\la'}{\la} \ang{(1-\chi_{M \sqrt{\la \mu}}) [ \La_0 \La Q]_{\ula} \mid \dot g }  
\right | 
\lesssim \frac{|\la'|}{\la}\| \dot g \|_{L^2} \| \La_0 \La Q \|_{L^2(r \geq M \sqrt{\mu/\la})} \lesssim \| \La_0 \La Q \|_{L^2(r \geq M \sqrt{\mu/\la})} \ll 1. 
\end{align*}
Then the first term and the sixth term combined yield 
\begin{align*}
\frac{\la'}{\la} \ang{\chi_{M \sqrt{\la \mu}} [ \La_0 \La Q]_{\ula} \mid \dot g }  -   \la' \ang{ \dot g \mid \A_0(\la) \La Q_{\ula}}
&=
 \frac{\la'}{\la} \ang{[ \La_0 \La Q]_{\ula} \mid \dot g }  -   \la' \ang{ \dot g \mid \A_0(\la) \La Q_{\ula}} + o(1)\\
 &= \frac{\la'}{\la}  
 \ang{[ \La_0 \La Q]_{\ula} - \A_0(\la) \La Q_{\ula} \mid \dot g} + o(1),
\end{align*}
where the little-oh satisfies $|o(1)| \ll 1$ as long as $L > 0 $ is sufficiently large, $M > 0$ is sufficiently large depending on $L$, and $\la / \mu$ is sufficiently small depending on $L$ and $M$.
By \eqref{eq:Al2} 
\begin{align*}
\frac{|\la'|}{\la} \left | 
\ang{[ \La_0 \La Q]_{\ula} - \A_0(\la) \La Q_{\ula} \mid \dot g} \right |
\leq C \la^{-1/2} \| \dot g \|_{L^2} \| [ \La_0 \La Q]_{\ula} - \A_0(\la) \La Q_{\ula} \|_{L^2} \lesssim c_0 \ll 1,
\end{align*}
as long as $c_0$ is sufficiently small.  We conclude that
\begin{align}
\left | \frac{\la'}{\la} \ang{ \chi_{M \sqrt{\la \mu}} [ \La_0 \La Q]_{\ula} \mid \dot g }  -   \la' \ang{ \dot g \mid \A_0(\la) \La Q_{\ula}} \right | 
\ll 1. 
\end{align}

Since $(\la \p_\la \A_0(\la)): H \to L^2$ is bounded, we have that the fourth term satisfies  
\EQ{
\left |
\frac{\la'}{\la} \ang{ \dot g \mid (\la \p_\la \A_0(\la)) g}
\right | \lesssim \la^{-\frac 12} \|(g, \dot g) \|_{\HH_0}^2 \lesssim \la^{\frac 12} \ll 1. 
}
Via integration by parts, the fifth term appearing above satisfies 
\EQ{
	\ang{ \dot g \mid \A_0(\la) \dot g} = 0.
}
Finally, since $1/2 \leq \mu \leq 2$ we have  
\EQ{
\left | \mu' \ang{ \dot g \mid \A_0(\la) \La Q_{\umu}} \right | &= \frac{|\mu'|}{\mu} \left | \ang{ \dot g \mid \A_0(\la) \La Q_{\mu}} \right |\\
& \lesssim  \abs{ \mu'} \| \dot g \|_{L^2} \lesssim  \la \ll 1. 
}
 
We now introduce some notation.  Until the end of the proof, we write $ A \simeq B$ if $A = B$ up to terms which which can be made $< \de$ as long as $L > 0 $ is sufficiently large, $M > 0$ is sufficiently large depending on $L$, and $\la / \mu$ is sufficiently small depending on $L$ and $M$.
We have shown so far that 
\EQ{ \label{eq:b'2}
b'(t) & \simeq   -  \ang{ \chi_{M \sqrt{\la \mu}} \La Q_{\ula} \mid  \p_r^2 g + \frac{1}{r} \p_r g  - \frac{1}{r^2} \left( f( Q_\la - Q_\mu + g) - f(Q_\la) + f(Q_\mu)\right)} \\
& \quad - \ang{ \p_r^2 g + \frac{1}{r} \p_r g  - \frac{1}{r^2} \left( f( Q_\la - Q_\mu + g) - f(Q_\la) + f(Q_\mu)\right) \mid \A_0(\la) g}  
}
We now choose the size of $M > 0$ (depending on $L$).  Recall that 
\EQ{
 \LL_\la \La Q_{\ula} := \left (- \p_{rr} - \frac{1}{r} \p_r + \frac{f'(Q_\la)}{r^2} \right ) \La Q_{\ula} = 0. 
}
In fact, since we have the factorization $\LL_\la = A_\la^* A_\la$ with 
$A_\la = -\p_r + \frac{\cos Q_\la}{r}$,  we must have 
\begin{align*}
A_\la \La Q_{\ula} = 0.  
\end{align*}
Thus, 
\EQ{
 \ang{ \chi_{M\sqrt{\la \mu}} \La Q_{\ula} \mid  \p_r^2 g + \frac{1}{r} \p_r g - \frac{f'(Q_\la)}{r^2} g } &=  -\ang{ \chi_{M\sqrt{\la \mu} } \La Q_{\ula} \mid 
 A^*_{\la} A_\la g} \\
&= -\ang{ A_\la \left ( 
	\chi_{M \sqrt{\la \mu}} \La Q_{\ula} 
	\right ) \mid A_\la g } \\
&= \frac{1}{M \sqrt{\la \mu}} \ang{ 
	\chi'_{M \sqrt{\la \mu}} \La Q_{\ula} 
	 \mid A_\la g }. 
}
Since $\chi'_{M \sqrt{\la \mu}}$ is bounded by 2 and is supported on the annulus $\{ M \sqrt{\la \mu} \leq r \leq 2M \sqrt{\la \mu}\}$, Cauchy-Schwarz and Proposition \ref{p:modp} imply
\begin{align*}
\frac{1}{M \sqrt{\la \mu}} \left | \ang{ 
	\chi'_{M \sqrt{\la \mu}} \La Q_{\ula} 
	\mid A_\la g } \right | \lesssim M^{-1} \la^{-\frac 12} \| A_\la g \|_{L^2} 
\lesssim_L M^{-1} \la^{-\frac 12} \| g \|_{H} \lesssim_L M^{-1}. 
\end{align*}
Thus, for $M > M_0(L)$, the above term is $\ll 1$.  We conclude that
\begin{align*}
 \ang{ \chi_{M \sqrt{\la \mu}} \La Q_{\ula} \mid  \p_r^2 g + \frac{1}{r} \p_r g } \simeq 
 \ang{ \chi_{M \sqrt{\la \mu}} \La Q_{\ula} \mid \frac{f'(Q_\la)}{r^2} g}.
\end{align*}

We now rewrite~\eqref{eq:b'2} as 
\EQ{ \label{eq:b'3} 
b'(t) & \simeq     \ang{  \chi_{M \sqrt{\la \mu}} \La Q_{\ula} \mid  \frac{1}{r^2} \Big( f( Q_\la - Q_\mu + g) - f(Q_\la) + f(Q_\mu)- f'(Q_\la)g\Big)} \\
& \quad - \ang{ \p_r^2 g + \frac{1}{r} \p_r g  - \frac{1}{r^2} \Big( f( Q_\la - Q_\mu + g) - f(Q_\la) + f(Q_\mu)\Big) \mid \A_0(\la) g}.  
} 
We add, subtract and regroup to obtain
\begin{align}  \label{eq:lead3}
b'(t)  &\simeq 
% \ang{ \La Q_{\ula} \mid  \frac{1}{r^2} \Big( f( Q_\la - Q_\mu + g) - f(Q_\la) + f(Q_\mu)- f'(Q_\la)g\Big)} \\ 
 \ang{\chi_{M \sqrt{\la \mu}} \La Q_{\ula} \mid \frac{1}{r^2}  \Big( f(Q_{\la} - Q_\mu) - f(Q_\la) + f(Q_\mu) \Big)} \\ 
  &+  \ang{\chi_{M \sqrt{\la \mu}} \La Q_{\ula} \mid \frac{1}{r^2} \Big( f'(Q_\la - Q_\mu) - f'(Q_\la) \Big) g}  \label{eq:2ndterm} \\ 
& +  \ang{ \chi_{M \sqrt{\la \mu}}\La Q_{\ula} \mid \frac{1}{r^2} \Big( f(Q_\la - Q_\mu + g) - f(Q_\la - Q_\mu) - f'(Q_\la - Q_\mu) g \Big)} \label{eq:3rdterm}  \\
 & - \ang{ \p_r^2 g + \frac{1}{r} \p_r g  - \frac{1}{r^2} \Big( f( Q_\la - Q_\mu + g) - f(Q_\la) + f(Q_\mu)\Big) \mid \A_0(\la) g}  \label{eq:4thterm}.
\end{align}
We now identify the first term above as the leading order contribution.
\begin{claim} \label{c:lead} 
\begin{align}
\ang{ \chi_{M \sqrt{\la \mu}} \La Q_{\ula} \mid \frac{1}{r^2}  \Big( f(Q_{\la} - Q_\mu) - f(Q_\la) + f(Q_\mu) \Big)}  \simeq \frac{8}{\mu}.
\label{eq:mlead}
\end{align}
\end{claim} 
By trigonometric identities
\begin{align} 
f(Q_{\la} - Q_\mu) - f(Q_\la) + f(Q_\mu)  &= \frac{1}{2} ( \sin2 Q_\la( \cos 2Q_\mu - 1) + \sin 2 Q_\mu( 1 -  \cos 2 Q_\la)) \\
& = -   \sin 2 Q_\la \sin^2 Q_\mu + \sin 2 Q_\mu  \sin^2 Q_\la \\
& =  - \sin 2 Q_\la (\La Q_\mu)^2 + \sin 2 Q_\mu (\La Q_\la)^2.     \label{eq:trig1} 
\end{align}
We show that the first term in the above expansion gives a negligible contribution to the $L^2$ pairing on the left side of \eqref{eq:mlead}.  Indeed, if we denote $\sigma := \la / \mu$, then as long as $\sigma \ll 1$ depending on $L$ and $M$,   
\begin{align*}
\left | \ang{
\chi_{M \sqrt{\la \mu}} \La Q_{\ula} \mid 
\frac{\sin 2Q_{\la}}{r^2} (\La Q_{\mu})^2 } \right |
&\lesssim \frac{1}{\la} \int_0^{2M \sqrt{\la \mu}} |\La Q_{\la}|^2 |\La  Q_{\mu}|^2 \frac{dr}{r} \\
&\lesssim 
\frac{1}{\sigma} \int_0^{2M \sqrt{\sigma}} |\La Q_{\sigma}|^2 |\La Q|^2 \frac{dr}{r} \\
&= 
\frac{1}{\sigma} \int_0^{2M \sqrt{\sigma}} \frac{(r/\sigma)^2}{(1 + (r/\sigma)^2)^2} \frac{r^2}{(1+r^2)^2} \frac{dr}{r} \\
&\lesssim 
\sigma \Bigl [ 
\int_0^\sigma \sigma^{-4} r^3 dr + 
\int_\sigma^{2M \sqrt{\sigma}} \frac{r^3}{(\sigma^2 + r^2)^2} dr
\Bigr ] \\
&\lesssim \sigma [|\log \sigma| + \log M] \ll 1. 
\end{align*}
Thus, 
\begin{align}
\ang{\chi_{M \sqrt{\la \mu}} \La Q_{\ula} \mid  \Big( f(Q_{\la} - Q_\mu) - f(Q_\la) + f(Q_\mu) \Big)} \simeq \big\langle \chi_{M \sqrt{\la \mu}} \La Q_{\ula} \mid  \frac{1}{r^2}(\La Q_{\la})^2 \sin 2Q_\mu \big\rangle \label{eq:firstlead}.
\end{align}

We now compute 
\EQ{ \label{eq:lead1} 
\Big\langle \chi_{M \sqrt{\la \mu}} \La Q_{\ula} &\mid  \frac{1}{r^2}(\La Q_{\la})^2 \sin 2Q_\mu \Big\rangle= \frac{1}{ \la} \int_0^\infty \chi_{M \sqrt{\s}} ( \La Q_{\s})^3 \sin 2 Q \frac{ d r }{r}  \\
& = \frac{1}{\la}  \int_0^{\sqrt{\s}} ( \La Q_{\s})^3 \sin 2 Q\frac{ d r }{r}   + \frac{1}{\la}  \int_{\sqrt{\s}}^\infty \chi_{M \sqrt{\sigma}}( \La Q_{\s})^3 \sin 2 Q \frac{ d r }{r} 
}
Since $|\La Q| \lesssim r^{-1}$ for $r$ large and $\sigma \sim \la$, we have  
\begin{align}
 \frac{1}{\la}  \int_{\sqrt{\s}}^\infty  | \La Q_{\s}|^3 \frac{ d r }{r} 
&\lesssim 
\frac{1}{\s}  
 \int_{\sqrt{\s}}^\infty |\La Q_{\s}|^3 \frac{ d r }{r} \\
&\lesssim \frac{1}{\sigma} \int_{1/\sqrt{\sigma}}^\infty |\La Q|^3 \frac{dr}{r} \\
&\lesssim \sigma^{1/2} \ll 1.  
\end{align}
Thus, from \eqref{eq:lead1} it follows that 
\begin{align}
\big\langle \chi_{\sqrt{\la \mu}} \La Q_{\ula} &\mid  \frac{1}{r^2}(\La Q_{\la})^2 \sin 2Q_\mu \big\rangle \simeq 
\frac{1}{\la}  \int_0^{\sqrt{\s}} ( \La Q_{\s})^3 \sin 2 Q \frac{ d r }{r}.  
\end{align}

Since $\s = \la/ \mu  \ll 1$, on the interval $[0, \sqrt{\s}]$ we write 
\EQ{ \label{eq:sin2Q-small} 
	\sin 2 Q = 4 r \frac{ 1 - r^{2}}{ (1+ r^{2})^2} = 4 r + O(r^{3})
}
We compute 
\EQ{
\frac{1}{\la} \int_0^{\sqrt{\s}} ( \La Q_{\s})^3 4r \, \frac{d r}{r} &=  \frac{4\s}{\la} \int_0^{\frac{1}{\sqrt{\s}}}  (\La Q)^3 \, d r \\
& = \frac{4\s}{\la} \int_0^\infty  (\La Q)^3  \, d r -  4 \frac{\s}{\la} \int_{\frac{1}{\sqrt{\s}}}^{\infty}  (\La Q)^3 \, d r \\
& = \frac{ 8 }{\mu} + O( \s)
}
where the integral $\int_0^\infty  (\La Q)^3  \, dr = 2$ is evaluated using substitution. By~\eqref{eq:sin2Q-small}, 
\EQ{\left |
\frac{1}{\la}  \int_0^{\sqrt{\s}}( \La Q_{\s})^3( \sin 2Q - 4r) \, \frac{d r}{r}\right |& \lesssim \frac{1}{\la}  \int_0^{\sqrt{\s}}| \La Q_{\s}|^3 r^{2} \, d r \\
& = \frac{\s^{3}}{\la} \int_0^{\frac{1}{\sqrt{\s}}}|\La Q|^3 r^{2} \, d r  \lesssim \s^{2} \abs{\log \s} \ll 1.   
}
Thus, 
\begin{align}
\big\langle \chi_{\sqrt{\la \mu}} \La Q_{\ula} &\mid  \frac{1}{r^2}(\La Q_{\la})^2 \sin 2Q_\mu \big\rangle \simeq 
\frac{1}{\la}  \int_0^{\sqrt{\s}} ( \La Q_{\s}(r))^3 \sin 2 Q(r) \frac{ d r }{r} \simeq \frac{8}{\mu}  \label{eq:secondlead}. 
\end{align}
Combining \eqref{eq:firstlead} and \eqref{eq:secondlead} we conclude that 
\EQ{
\ang{\chi_{\sqrt{\la \mu}} \La Q_{\ula} \mid  \Big( f(Q_{\la} - Q_\mu) - f(Q_\la) + f(Q_\mu) \Big)} \simeq 
\frac{8}{\mu}
}
as desired. 
\end{proof} 
For what follows, we list the following useful identities: 
\begin{align}
& \La^2 Q = \frac{1}{2} \sin 2 Q = 2 r \frac{ 1-r^{2}}{ (1 + r^{2})^2} \label{eq:La2Q}, \\ 
&\La^3 Q  =2 r \left( \frac{1+ r^{2} - 5r^{4} - r^{6}}{(1+r^{2})^4}\right) \label{eq:La3Q}, \\
& \La_0 \La Q = ( r\p_r +1)(r \p_r Q) = 2 \La Q + r^2 \p_r^2 Q \label{eq:DLa}. 
\end{align}

We now claim that the term~\eqref{eq:2ndterm} in the expansion of $b'(t)$ satisfies
\EQ{ \label{eq:2term} 
 \left | \ang{ \chi_{M \sqrt{\la \mu}} \La Q_{\ula} \mid \frac{1}{r^2} \Big( f'(Q_\la - Q_\mu) - f'(Q_\la) \Big) g} \right | \lesssim (\la / \mu)^{1/2}.
}
First note that the we have 
\EQ{
 f'(Q_\la - Q_\mu) - f'(Q_\la)& =  \sin 2Q_\la \sin 2Q_\mu -2 \cos 2Q_\la \sin^2Q_\mu \\
&= 4 \La^2 Q_\la \La^2 Q_\mu- (\La Q_\mu)^2\cos 2Q_\la \label{trig}
}
By ~\eqref{eq:La2Q} and ~\eqref{eq:La3Q} we have 
\begin{align}
|\La Q| + |\La^2 Q| \lesssim \frac{r}{1+r^2}. 
\end{align}
We first estimate 
\begin{align}
\abs{ \ang{ \chi_{M \sqrt{\la \mu}} \La Q_{\ula} \mid \frac{4}{r^2} \La^2 Q_\la \La^2 Q_\mu g}} &\lesssim \| g \|_{L^\infty}
\frac{1}{\la} \int_0^{2M\sqrt{\la \mu}} 
|\La Q_{\la}| |\La^2 Q_{\la}| |\La^2 Q_{\mu}| \frac{d r}{r} \\
&\lesssim \| g \|_{H}
\frac{1}{\s} \int_0^{2M\sqrt{\s}} 
|\La Q_{\s}| |\La^2 Q_{\s}| |\La^2 Q| \frac{d r}{r} \\
&\lesssim \| g \|_{H} \int_0^{2M/\sqrt{\s}} |\La Q| |\La^2 Q| d r 
\lesssim \s^{1/2}.
\end{align}
where $\s = \la/ \mu$ as before.  We then estimate 
\EQ{
\abs{\ang{ \chi_{M \sqrt{\la \mu}} \La Q_{\ula} \mid \frac{1}{r^2}( (\La Q_\mu)^2\cos 2Q_\la)  g}} &\lesssim \| g\|_{H} \left( \int_0^\infty  (\La Q_{\s})^2 (\La Q)^4 \frac{ d r}{r} \right)^{\frac{1}{2}} \\
& \lesssim \s^{1/2}  \left( \int_0^\infty  (\La Q_{\s})^2 \frac{ d r}{r} \right)^{\frac{1}{2}} \lesssim \s^{1/2}.
}
The previous two bounds along with \eqref{trig} imply \eqref{eq:2term}. 

In summary, we have shown thus far that 
\begin{align} 
b'(t)  -
\frac{8}{\mu}
&\simeq  \ang{ \chi_{M \sqrt{\la \mu}}\La Q_{\ula} \mid \frac{1}{r^2} \Big( f(Q_\la - Q_\mu + g) - f(Q_\la - Q_\mu) - f'(Q_\la - Q_\mu) g \Big)} \label{eq:3rdtermb}  \\
& - \ang{ \p_r^2 g + \frac{1}{r} \p_r g  - \frac{1}{r^2} \Big( f( Q_\la - Q_\mu + g) - f(Q_\la) + f(Q_\mu)\Big) \mid \A_0(\la) g}.  \label{eq:4thtermb}
\end{align}

We now rewrite~\eqref{eq:3rdtermb} as  
\begin{multline}
\Big\langle \chi_{M \sqrt{\la \mu}} \Lambda Q_\uln\lambda \mid
\frac{1}{r^2}\big(f({-}Q_\mu + Q_\lambda + g) - f({-}Q_\mu + Q_\lambda) - f'({-}Q_\mu + Q_\lambda)g\big)\Big\rangle \\ 
=  -  \ang{  \A(\la) g \mid \frac{1}{r^2}\big(f({-}Q_\mu + Q_\lambda + g) - f({-}Q_\mu + Q_\lambda) -g\big)} \\
+\ang{ \A(\la) g \mid \frac{1}{r^2}\big(f({-}Q_\mu + Q_\lambda + g) - f({-}Q_\mu + Q_\lambda) -g\big)} \\
+\ang{ \A(\la)  (Q_\la- Q_\mu) \mid \frac{1}{r^2}\big(f({-}Q_\mu + Q_\lambda + g) - f({-}Q_\mu + Q_\lambda) - f'({-}Q_\mu + Q_\lambda)g\big)}  \\
+ \ang{ \A(\la) Q_\mu \mid \frac{1}{r^2}\big(f({-}Q_\mu + Q_\lambda + g) - f({-}Q_\mu + Q_\lambda) - f'({-}Q_\mu + Q_\lambda)g\big)}  \\
+ \ang{ \chi_{M\sqrt{\la \mu}} \bigl ( \La Q_{\ula} - \A(\la)  Q_\la \bigr )  \mid  \frac{1}{r^2}\big(f({-}Q_\mu + Q_\lambda + g) - f({-}Q_\mu + Q_\lambda) - f'({-}Q_\mu + Q_\lambda)g\big)}
\end{multline}
We remark that we used the fact that $\chi_{M \sqrt{\la \mu}} \A(\la) Q_\la = \A(\la) Q_\la$ (as long as $\la/\mu$ is small) to obtain the previous expression.   
The second and third terms above can be estimated using ~\eqref{eq:A-by-parts-wm} with $g_1 = Q_{\la} - Q_\mu$ and $g_2 = g$: 
\begin{multline}
\bigg|\ang{ \A(\la) g \mid \frac{1}{r^2}\big(f({-}Q_\mu + Q_\lambda + g) - f({-}Q_\mu + Q_\lambda) -g\big)} 
\\ +\ang{ \A(\la)  (Q_\la- Q_\mu) \mid \frac{1}{r^2}\big(f({-}Q_\mu + Q_\lambda + g) - f({-}Q_\mu + Q_\lambda) - f'({-}Q_\mu + Q_\lambda)g\big)} \bigg|
 \lesssim_L c_0 
\end{multline}
which is $\ll 1$ as long as $c_0$ is taken sufficiently small. 
The pointwise bound
\begin{multline}
\abs{ f(Q_\la - Q_\mu + g) - f(Q_\la - Q_\mu) - f'(Q_\la - Q_\mu) g }  \\
= \frac{1}{2}\abs{ \sin(2Q_\la - 2Q_\mu)[ \cos 2 g - 1] + \cos(2Q_\la - 2Q_\mu)[ \sin 2g -2 g]} 
\lesssim \abs{g}^2
\end{multline} 
and~\eqref{eq:Ainfty} imply that the second to last line of the above satisfies
\EQ{
 \left | \ang{ \A(\la) Q_\mu \mid \frac{1}{r^2}\big(f({-}Q_\mu + Q_\lambda + g) - f({-}Q_\mu + Q_\lambda) - f'({-}Q_\mu + Q_\lambda)g\big)} \right | \lesssim \frac{1}{\mu} \| g \|_{H}^2 \lesssim \la \ll 1. 
}
Using \eqref{eq:L-A-wm} we estimate the last line of the expansion of~\eqref{eq:3rdterm} similarly:  
\begin{multline}
\abs{\ang{ \chi_{M \sqrt{\la \mu}}(\La Q_{\ula} - \A(\la)  Q_\la)  \mid  \frac{1}{r^2}\big(f({-}Q_\mu + Q_\lambda + g) - f({-}Q_\mu + Q_\lambda) - f'({-}Q_\mu + Q_\lambda)g\big)}}
\\ \lesssim \|\La Q_{\ula} - \A(\la)  Q_\la \|_{L^\infty} \| g\|^2_H  \lesssim_L c_0
\ll 1. 
\end{multline} 
Thus, we have shown that 
\begin{multline}  \label{eq:3rdterm2} 
\Big\langle \chi_{M \sqrt{\la \mu}}\Lambda Q_\uln\lambda \mid
\frac{1}{r^2}\big(f({-}Q_\mu + Q_\lambda + g) - f({-}Q_\mu + Q_\lambda) - f'({-}Q_\mu + Q_\lambda)g\big)\Big\rangle  \\ \simeq -  \ang{ \A(\la) g \mid \frac{1}{r^2}\big(f({-}Q_\mu + Q_\lambda + g) - f({-}Q_\mu + Q_\lambda) -g\big)},
\end{multline}
which by \eqref{eq:3rdtermb} implies 
\begin{align} 
b'(t)  -
\frac{8}{\mu}
&\simeq -  \ang{ \A(\la) g \mid \frac{1}{r^2}\big(f({-}Q_\mu + Q_\lambda + g) - f({-}Q_\mu + Q_\lambda) -g\big)} \\
& - \ang{ \p_r^2 g + \frac{1}{r} \p_r g  - \frac{1}{r^2} \Big( f( Q_\la - Q_\mu + g) - f(Q_\la) + f(Q_\mu)\Big) \mid \A_0(\la) g}.  \label{eq:4thtermb}
\end{align}
 
We now consider the line~\eqref{eq:4thtermb}.  By adding and subtracting terms and ~\eqref{eq:A-pohozaev-wm} we have
\begin{equation}
\label{eq:mod-dtb-2nd-line}
\begin{aligned}
&{-} \Big\langle \partial_r^2 g + \frac 1r \partial_r g - \frac{1}{r^2}\big(f(Q_\la- Q_\mu + g) - f(Q_\la) +f(Q_\mu)\big) \mid  \A_0(\lambda)g\Big\rangle \\
& = -\ang{ \A_0(\la) g \mid  \partial_r^2 g + \frac 1r \partial_r g - \frac{1}{r^2}g} \\
&\quad + \ang{ \A_0(\la) g \mid \frac{1}{r^2} \Big( f(Q_\la - Q_\mu) - f(Q_\la) + f(Q_\mu) \Big)}\\
& \quad +\ang{ \A_0(\la) g \mid  \frac{1}{r^2} \Big( f(Q_\la - Q_\mu + g) - f(Q_\la-Q_\mu) -  g \Big)} \\
&\geq -\frac{c_0}{\lambda}\|g \|_H^2 + \frac{1}{\lambda}\int_0^{R\lambda}\Big((\partial_r g)^2 + \frac{1}{r^2}g^2\Big)rd r \\
&+ \ang{ \A_0(\la) g \mid \frac{1}{r^2} \Big( f(Q_\la - Q_\mu) - f(Q_\la) + f(Q_\mu) \Big)}\\
&+ \Big\langle \A_0(\lambda)g \mid 
\frac{1}{r^2}\big(f({-}Q_\mu + Q_\lambda + g) + f(-Q_\mu + Q_\la) - g\big)\Big\rangle
\end{aligned}
\end{equation}
where $R$ is defined in the statement of Lemma~\ref{lem:op-A-wm}. 
From ~\eqref{eq:trig1} we have the pointwise estimate  
\EQ{
\abs{ f(Q_\la - Q_\mu) - f(Q_\la) + f(Q_\mu)} \lesssim (\La Q_\la)^2 (\La Q_\mu) + \La Q_\la (\La Q_{\mu})^2.
}
By Lemma \ref{lem:op-A-wm} $\| \A_0(\la) g \|_{L^2} \lesssim \|g \|_{H}$ and $\A_0(\la) g$ is supported on a ball of radius $C R \la$.  Thus, the second to last line above satisfies  
\EQ{
\bigg| \bigg\langle &\A_0(\la) g \mid \frac{1}{r^2} \Big( f(Q_\la - Q_\mu) - f(Q_\la) + f(Q_\mu) \Big) \bigg \rangle \bigg| \\ & \lesssim \|g \|_H\left[ \left( \int_0^{C R \s} r^{-2} (\La Q_\s)^4 (\La Q)^2 \,  \frac{d r}{r} \right)^{\frac{1}{2}} +  \left( \int_0^{C R \s} r^{-2} (\La Q)^4 (\La Q_\s)^2 \,  \frac{d r}{r} \right)^{\frac{1}{2}} \right] \\
&\lesssim \|g \|_{H} \lesssim \la^{1/2} \ll 1.
} 

Thus, 
\EQ{ \label{eq:34terms} 
& -  \ang{ \A(\la) g \mid \frac{1}{r^2}\big(f({-}Q_\mu + Q_\lambda + g) - f({-}Q_\mu + Q_\lambda) -g\big)}  \\
&{-} \Big\langle \partial_r^2 g + \frac 1r \partial_r g - \frac{1}{r^2}\big(f(Q_\la- Q_\mu + g) - f(Q_\la) +f(Q_\mu)\big) \mid  \A_0(\lambda)g\Big\rangle  \\
& \ge  \frac{1}{\lambda}\int_0^{R\lambda}\Big((\partial_r g)^2 + \frac{1}{r^2}g^2\Big)r dr  \\
& \quad +\Big\langle \big(\A_0(\lambda) - \A(\lambda)\big) g \mid \frac{1}{r^2}\big(f({-}Q_\mu + Q_\lambda + g) - f({-}Q_\mu+Q_\lambda)- g\big)\Big\rangle + o(1).
}

The difference $\A_0(\lambda) - \A(\lambda)$ is given by the operator of multiplication by $\frac{1}{2\lambda} \Big(q''\big(\frac{r}{\lambda}\big) + \frac{\lambda}{r}q'\big(\frac{r}{\lambda}\big)\Big)$.
By \eqref{eq:approx-potential-wm} we have  
\EQ{ \label{eq:34terms-error} 
 \Big\langle \big(\A_0(\lambda) - \A(\lambda)\big) g\mid \frac{1}{r^2}\big(f({-}Q_\mu + Q_\lambda + g) - f({-}Q_\mu+Q_\lambda)- k^2g\big)\Big\rangle   \\ 
 =  \frac{1}{\lambda}\int_0^{\infty} \frac{1}{r^2}\big(f'(Q_\lambda)-1\big)g^2 dr  + O(c_0 \la)
}
where $c_0>0$ is as in Lemma~\ref{lem:op-A-wm}. 

The estimates \eqref{eq:4thtermb},~\eqref{eq:34terms} and~\eqref{eq:34terms-error} combine to yield
\EQ{
b'(t)  - \frac{8}{\mu}
\geq \frac{1}{\lambda}\int_0^{R\lambda}\Big((\partial_r g)^2 + \frac{1}{r^2}g^2\Big)r dr  + \frac{1}{\lambda}\int_0^{\infty} \frac{1}{r^2}\big(f'(Q_\lambda)-1\big)g^2 dr + o(1).
}
The orthogonality condition $\ang{ \ZZ_{\ula} \mid g} = 0$ implies 
the localized coercivity estimate, 
\EQ{
\frac{1}{\lambda}\int_0^{R\lambda}\Big((\partial_r g)^2 + \frac{1}{r^2}g^2\Big)r dr  + \frac{1}{\lambda}\int_0^{\infty} \frac{1}{r^2}\big(f'(Q_\lambda)-1\big)g^2 dr   \ge - \frac{c_1}{\la} \| g\|_H^2  
}
(see~\cite[Lemma 5.4, eq. (5.28)]{JJ-AJM} for the proof). The constant $c_1>0$ appearing above can be made small by choosing $R$ sufficiently large.  Since $\|  g \|_{H}^2 \lesssim \la$, we conclude that
\begin{align*}
b'(t) - \frac{8}{\mu} \geq -\frac{\delta}{2} \geq -\frac{\delta}{\mu}
\end{align*} 
as long as $L$ is sufficiently large, $M$ is sufficiently large depending on $L$ and $\la / \mu$ is sufficiently small depending on $L$ and $M$. 
\qed 

From Proposition~\ref{p:modp} and Proposition~\ref{p:modp2} we now show that, roughly, if the modulation parameters are approaching each other in scale, then the solution to \eqref{eq:wmk} is ejected from a small neighborhood of the set of two-bubbles. 

\begin{rem}\label{r:param}
We now fix the parameter $L$ and $M$ used in the definition of $\zeta(t)$ for the remainder of the section.  In particular, we fix $L = L_0$ and $M = M_0$, large enough so that the estimates in Proposition \ref{p:modp2} hold for 
\begin{align*}
\de = \frac{1}{2018},
\end{align*} 
whenever $\bfd(\vec \psi(t)) < \eta_1 = \eta_1(L_0,M_0) < \eta_0$. 
\end{rem}

\begin{prop}
	\label{prop:modulation}
	Let $C > 0$. Then for all $\eps_0>0$ sufficiently small, and for any $\eps>0$ sufficiently small relative to $\e_0$ the following holds.
	Let $\vec\psi(t): [T_0, T_+) \to \HH_0$ be a solution of \eqref{eq:wmk}. 
	Assume that $t_0 \in [T_0, T_+)$ is so that $\bfd(\vec \psi(t_0)) \leq \eps$
	and $\frac{d}{dt} (\zeta(t)/\mu(t))\vert_{t = t_0} \geq 0$.
	Then there exist $t_1$ and $t_2$, $T_0 \leq t_0 \leq t_1 \leq t_2 < T_+$,
	such the follows estimates hold: 
	\begin{align}
	\bfd(\vec \psi(t)) &\geq 2\eps,\qquad \text{for }t \in [t_1, t_2], \label{eq:d-t1-t2}\\
	\bfd(\vec \psi(t)) &\leq \frac 14\eps_0,\qquad \text{for }t \in [t_0, t_1], \label{eq:d-t0-t1}\\
	\bfd(\vec \psi(t_2)) &\geq 2\eps_0, \label{eq:d-at-t2} \\
	\int_{t_1}^{t_2} \|\partial_t\psi(t)\|_{L^2}^2d t &\geq C\int_{t_0}^{t_1}\sqrt{\bfd(\vec \psi(t))}d t
	\label{eq:err-absorb}
	\end{align}
	If we assume that $\frac{d}{dt} (\zeta(t)/\mu(t))\vert_{t = t_0} \le 0$, then analogous statements hold with times  $t_2  \le  t_1 \le t_0$.  
\end{prop}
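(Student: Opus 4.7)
My plan is to pick $t_1$ as the first time after $t_0$ at which $\bfd(\vec\psi(t))$ reaches $2\eps$, and $t_2$ as the first time after $t_1$ at which it reaches $2\eps_0$; the existence of both and all four estimates are then driven by the one-sided second derivative bound $b'(t) \ge (8-\de)/\mu(t)$ from Proposition~\ref{p:modp2}. Before starting, I would rescale so that $\mu(t_0) = 1$, which combined with \eqref{eq:mu'} gives $\mu(t) \in [\tfrac{1}{2}, 2]$ on any bounded time interval on which $\bfd \le \eta_1$, and require $\eps_0 < \eta_1$ so that the modulation quantities $\la, \mu, \zeta, b$ are defined throughout $[t_0, t_2]$. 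The first substantive step is to translate the hypothesis $\frac{d}{dt}(\zeta/\mu)\vert_{t=t_0} \ge 0$ into an almost-positivity statement for $b(t_0)$: expanding the derivative gives $\zeta'(t_0) \ge \zeta(t_0)\mu'(t_0)/\mu(t_0)$, and \eqref{eq:mu'} together with \eqref{eq:bound-on-l} make the right-hand side much smaller than the error $\de\sqrt{\la/\mu}\,|\log(\la/\mu)|^{1/2}$ in \eqref{eq:kala'}, so that $b(t_0) \gtrsim -\sqrt{\eps|\log\eps|}$.

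Integrating $b'(t) \ge 8-2\de$ then produces $b(t) \ge c_0(t - t_0) - C_1\sqrt{\eps|\log\eps|}$ for absolute constants $c_0, C_1 > 0$, so $b$ becomes strictly positive after a short transient, after which $\zeta'(t) \ge b(t)/2$ by \eqref{eq:kala'} and hence $\zeta$ strictly increases. Since $\zeta(t) \approx 2\la(t)|\log(\la(t)/\mu(t))|$ by \eqref{eq:bound-on-l} and $\mu$ is essentially frozen, the distance $\bfd(\vec\psi(t)) \sim \la(t)/\mu(t)$ grows monotonically past this transient; coupling this growth with the upper bound $|b| \le 5\sqrt{\zeta/\mu}$ from \eqref{eq:b-bound} produces the quantitative crossing-time estimates $t_1 - t_0 \lesssim \sqrt{\eps|\log\eps|}$ and $t_2 - t_0 \lesssim \sqrt{\eps_0|\log\eps_0|}$. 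Hence both $t_1$ and $t_2$ are finite, and for $\eps$ chosen much smaller than $\eps_0$ they satisfy \eqref{eq:d-t1-t2}, \eqref{eq:d-t0-t1}, and \eqref{eq:d-at-t2}.

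The hardest part, and the step I would spend the most care on, is the absorption estimate \eqref{eq:err-absorb}; the plan is to route everything through $b$ in two opposite directions. On one hand, unwinding the definition \eqref{eq:bdef} with Cauchy--Schwarz and the scaling computation $\|\chi_{M\sqrt{\la\mu}}\La Q_{\uln{\la}}\|_{L^2}^2 \sim |\log(\la/\mu)|$ (using $|\La Q(r)| \sim r^{-1}$ on the truncation annulus) yields the pointwise lower bound $\|\partial_t\psi(t)\|_{L^2}^2 \gtrsim b(t)^2/|\log(\la(t)/\mu(t))|$ throughout $[t_1, t_2]$; the second term $\ang{\dot g \mid \A_0(\la)g}$ in \eqref{eq:bdef} is $O(\la^{1/2}\|\dot g\|_{L^2})$ and negligible. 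On the other hand, integrating $\zeta' \approx b$ on $[t_1, t_2]$ gives $\int_{t_1}^{t_2} b(t)\, dt \ge \zeta(t_2) - \zeta(t_1) - o(1) \gtrsim \eps_0|\log\eps_0|$ once $\eps \ll \eps_0$. Cauchy--Schwarz combined with $t_2 - t_1 \lesssim \sqrt{\eps_0|\log\eps_0|}$ then yields $\int_{t_1}^{t_2} b(t)^2\, dt \gtrsim (\eps_0|\log\eps_0|)^{3/2}$, and hence $\int_{t_1}^{t_2}\|\partial_t\psi\|_{L^2}^2\, dt \gtrsim (\eps_0|\log\eps_0|)^{3/2}/|\log\eps|$. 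Comparing with $\int_{t_0}^{t_1}\sqrt{\bfd(\vec\psi(t))}\, dt \le \sqrt{2\eps}\,(t_1 - t_0) \lesssim \eps\sqrt{|\log\eps|}$, the inequality \eqref{eq:err-absorb} with the prescribed constant $C$ reduces to $(\eps_0|\log\eps_0|)^{3/2} \ge C\,\eps\,|\log\eps|^{3/2}$, which is satisfied as soon as $\eps$ is chosen sufficiently small in terms of $\eps_0$ and $C$. The symmetric case $\frac{d}{dt}(\zeta/\mu)\vert_{t=t_0} \le 0$ is then handled by time reversal of \eqref{eq:wmk}.
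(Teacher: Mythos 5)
Your outline rests on exactly the same engine as the paper's proof: the one-sided bound \eqref{eq:b'lb} from Proposition~\ref{p:modp2}, the near-identity \eqref{eq:kala'}, the upper bound \eqref{eq:b-bound}, and the extraction of $b(t_0)\gtrsim -\zeta(t_0)^{1/2}$ from the sign hypothesis at $t_0$ via \eqref{eq:mu'} and \eqref{eq:bound-on-l}. Your bookkeeping for \eqref{eq:err-absorb} is a legitimate variant of the paper's: you get $\|\partial_t\psi\|_{L^2}^2\gtrsim b^2/\abs{\log(\la/\mu)}$ by Cauchy--Schwarz on \eqref{eq:bdef} and then lower-bound $\int_{t_1}^{t_2}b$ by $\zeta(t_2)-\zeta(t_1)$ plus Cauchy--Schwarz in time, whereas the paper instead obtains a pointwise lower bound $b\geq \al_1/2$ on $[t_1,t_2]$ together with a lower bound on $t_2-t_1$; both yield a lower bound for $\int_{t_1}^{t_2}\|\partial_t\psi\|_{L^2}^2$ depending only on $\eps_0$, which dominates $C\eps\abs{\log\eps}^{1/2}$ once $\eps$ is small relative to $\eps_0$ and $C$.

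There are, however, two concrete gaps. First, you define $t_1,t_2$ as first crossings of $\bfd$ and justify \eqref{eq:d-t1-t2} by asserting that ``$\bfd\sim\la/\mu$ grows monotonically past the transient.'' The quantity that is actually monotone in your argument is $\zeta$ (or, in the paper, $\xi:=b+\zeta^{1/2}$, which satisfies $\xi'\geq 1/4$ and $\tfrac12\zeta^{1/2}\leq\xi\leq 10\zeta^{1/2}$); $\bfd$ itself is only comparable to $\la/\mu$ through \eqref{eq:gdotgd}--\eqref{eq:lamud1}, and even $\la/\mu$ is not strictly monotone given only monotonicity of $\zeta$, because of the $\de$-sized error in \eqref{eq:bound-on-l}. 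So with your choice of $t_1$, $\bfd$ may dip below $2\eps$ on $[t_1,t_2]$ and \eqref{eq:d-t1-t2} does not follow as stated; the fix (as in the paper) is to define $t_1$ by a threshold on the monotone quantity, set high enough --- comparable to $(\eps\abs{\log\eps})^{1/2}$ for $\xi$ --- that exceeding it forces $\bfd\geq 2\eps$ despite the comparability constants. Second, you assert that ``both $t_1$ and $t_2$ are finite,'' but your growth estimates are only valid as long as the solution exists and remains in the modulation regime; nothing in the outline excludes the possibility that $T_+$ arrives before $\bfd$ reaches $2\eps_0$, and the proposition requires $t_2<T_+$. The paper closes this by observing that the monotone growth of $\zeta$ keeps $\la(t)$ bounded away from $0$ and then invoking \cite[Corollary A.4]{JJ-AJM} to rule out breakdown, after which $\xi'\geq 1/4$ forces the threshold to be reached; some ingredient of this type is indispensable in your argument as well. (A smaller point: the claim that $\mu\in[1/2,2]$ on ``any bounded time interval'' with $\bfd\leq\eta_1$ is not correct as stated; you should instead use $\int\abs{\mu'}\,dt\lesssim$ the short length of $[t_0,t_2]$, or the paper's bootstrap via $\int\abs{\mu'}\lesssim\sqrt{\eps_1}$.)
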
 

\begin{proof} 
	From \eqref{eq:lamud1}, ~\eqref{eq:bound-on-l} and Remark \ref{r:param}, it follows that if $\e_1 > 0$ is sufficiently small and 
	$\zeta(t)/\mu(t) \leq 4\eps_1$, then the estimates in Proposition \ref{p:modp2} hold with $\delta = 1/2018$
	in a neighborhood of $t_0$. In particular, we have 
	\begin{align}\label{eq:comparabile1}
	\frac{1}{4} \frac{\zeta(t)}{\mu(t)} \leq \frac{\la(t)}{\mu(t)} \left |\log \frac{\la(t)}{\mu(t)} \right | \leq \frac{\zeta(t)}{\mu(t)}. 
	\end{align}
	Let $t_2$ be the first time $t_2 \geq t_0$ such that $\zeta(t_2) / \mu(t_2) = 4\eps_1$. If there is no such time, we set  $t_2 = T_+$.  Define $$f(x) = x |\log x|,$$ which is smooth and increasing on $(0,100 \e_1)$ for $\e_1$ sufficiently small and satisfies $\lim_{x \rar 0^+} f(x) = 0$. Then \eqref{eq:comparabile1} becomes 
	\begin{align}\label{eq:comp}
	\frac{1}{4} \frac{\zeta(t)}{\mu(t)} \leq f (\la(t)/\mu(t)) \leq \frac{\zeta(t)}{\mu(t)}. 
	\end{align}
	Then if $t_2 < T_+$ we have $f(\la(t_2)/\mu(t_2)) \geq \e_1$ which by \eqref{eq:lamud1} implies \eqref{eq:d-at-t2} by taking $\e_0$ comparable to $f^{-1}(\e_1)$. 
	By the scaling symmetry of the equation, we can assume that $\mu(t_0) = 1$.
	%We let $M = 1$ in the 
	Let $t_3 \leq t_2$ be the last time such that $\mu(t) \in [\frac 12, 2]$ for all $t \in [t_0, t_3]$. If there is no such final time we set $t_3= t_2$. We will see by a bootstrapping argument that we can always take $t_3 = t_2$ and that $t_2 < T_+$. 
	
	By Remark \ref{r:param} and by taking $\e_1$ small enough, we have by \eqref{eq:b'lb} 
	\begin{equation}
	\label{eq:b'lb2}
	b'(t) \geq 1.
	\end{equation}
	We also obtain from \eqref{eq:kala'}
	\begin{equation}
	\zeta'(t) \geq b(t) - \zeta(t)^\frac 12.
	\end{equation}
	Consider $\xi(t) := b(t) + \zeta(t)^\frac 12$.
	Using the two inequalities above we obtain
	\begin{equation}
	\label{eq:psi'}
	\begin{aligned}
	\xi'(t) \geq 1 + \frac 12 \zeta(t)^{-\frac 12}
	\Big( b(t) - \zeta(t)^\frac 12\Big) 
	= \frac{1}{2} \zeta(t)^{-\frac 12} (b(t) + \zeta(t)^{\frac 12})
	= \frac{1}{2} \zeta(t)^{-\frac 12} \xi(t).
	\end{aligned}
	\end{equation}
	By \eqref{eq:b-bound} and the fact that $\mu(t) \in (1/2,2)$, we conclude that 
	\begin{equation}
	\label{eq:psi-bound}
	\xi(t) \leq 10 \zeta(t)^\frac 12.
	\end{equation}
	
	Let $\xi_1(t) := b(t) + \frac{1}{2}\zeta(t)^\frac 12 = \frac 12 b(t) + \frac 12 \xi(t) =  \xi(t) - \frac{1}{2} \zeta(t)^{\frac 12}$.
	Since $b'(t) \geq 0$, we have
	\begin{equation}
	\label{eq:psi1'}
	\xi_1'(t) \geq \frac 12 \xi'(t) \geq \frac 14 \zeta(t)^{-\frac 12} \xi(t) \geq \frac 14 \zeta(t)^{-\frac 12} \xi_1(t).
	\end{equation}
	
	Since $\mu(t_0) = 1$, we have $0 \leq \frac{d}{dt} (\lambda(t)/\mu(t))\vert_{t = t_0} = \zeta'(t_0) - \zeta(t_0)\mu'(t_0)$, so \eqref{eq:mu'} and~\eqref{eq:bound-on-l} imply that $\zeta'(t_0) \geq -\frac{1}{8}\zeta(t_0)^\frac 12$ as long as $\eps$ is taken small enough. This fact and \eqref{eq:kala'} gives $b(t_0) \geq -\frac{1}{4}\zeta(t_0)^\frac 12$,
	so $\xi_1(t_0) > 0$ and \eqref{eq:psi1'} yields $\xi_1(t) > 0$ for all $t \in [t_0, t_3]$.
	Thus
	\begin{equation}
	\label{eq:psi-lbound}
	\xi(t) \geq \frac{1}{2}\zeta(t)^\frac 12, \qquad \text{for }t \in [t_0, t_3].
	\end{equation}
	This lower bound along with $\xi'(t) \geq \frac{1}{2} \zeta(t)^{-\frac 12} \xi(t)$ imply
	\begin{align}
	\xi'(t) \geq \frac{1}{4}. \label{eq:psi'2}
	\end{align}
	By \eqref{eq:psi-bound} we see that  $\zeta(t)$ and thus $\la(t)$  is far from $0$ on $[t_0, t_3]$.
	
	The bounds \eqref{eq:psi-bound}, \eqref{eq:comp} and \eqref{eq:lamud1} imply that there exists a constant $\alpha_0$
	such that $\xi(t) \geq 40 [f(\al_0 \e)]^{1/2}$ forces $\bfd(\vec \psi(t)) \geq 2\eps$.
	Let $t_1 \in [t_0, t_3]$ be the last time such that $\xi(t_1) = 40  [f(\al_0 \e)]^{1/2}$
	(set $t_1 = t_3$ if no such time exists).
	Then by \eqref{eq:psi-lbound} and~\eqref{eq:comp}  we have	
	\EQ{
		[f(\la(t)/\mu(t))]^{1/2}  \le \zeta(t)^{\frac{1}{2}} \leq 80 [f(\al_0 \e)]^{1/2} \quad\text{for }t\in[t_0, t_1],
	}
	which yields \eqref{eq:d-t0-t1} if $\eps$ is small enough.
	
	We now claim that $\mu(t) \in (1/2, 2)$ for all $t \in [t_0,t_2]$ and that $t_2 < T_+$. Recall that on $[t_0,t_3]$ we have $\xi'(t) > 0$ as well as 
	\begin{align*}
	\zeta(t)^{\frac 12} \leq 2 \xi(t) \leq 20 \zeta(t)^{\frac 12}. \quad 
		\xi'(t) \geq \frac{1}{2} \zeta^{-\frac 12}(t) \xi(t), \quad 
		\zeta(t) \leq 8 \e_1. 
	\end{align*}  
	Thus, by \eqref{eq:mu'}
	\begin{align*}
	&\int_{t_0}^{t_3} |\mu'| dt \lesssim \int_{t_0}^{t_3} 
	\zeta(t)^{\frac 12} dt \lesssim 
	\int_{t_0}^{t_3} \xi(t) dt 
	\lesssim \int_{t_0}^{t_3} \zeta(t)^{\frac 12} \xi'(t) dt \\ 
	&\quad \lesssim \sqrt{\e_1} \int_{t_0}^{t_3} \xi'(t) dt 
	\lesssim \sqrt{\e_1} \xi(t_3) 
	\lesssim \e_1, 
	\end{align*}
	where the implied constant is absolute. Thus, 
	we get $\mu(t_3) \in [2/3, 3/2]$ if $\eps_1$ is small enough, which implies that $t_3 = t_2$.
	Now suppose that there is no $t_2 \geq t_0$ such that $\zeta(t_2)/\mu(t_2) = \eps_1$.
	Then, since $\zeta(t)$ (and hence $\la(t)$) is far from $0$, by \cite[Corollary A.4]{JJ-AJM}
	the solution is global and \eqref{eq:psi'2} implies that $\xi(t)$ is eventually $O(1)$.  
	Thus $\zeta(t)$ is eventually $O(1)$ which contradicts our definition of $t_2$.
	This implies that there exists $t_2 < T_+$ such that $\zeta(t_2)/\mu(t_2) = \eps_1$,
	which implies \eqref{eq:d-at-t2} by choosing $\eps_0$ comparable to $f^{-1}(\eps_1)$.
	
	By \eqref{eq:kala'} and~\eqref{eq:b-bound} we have $|\zeta'(t)| \lesssim |\zeta(t)|$.  Thus, 
	there exists an absolute constant $\al_1 > 0$
	such that $\zeta(t) \geq \frac 14 \eps_1$ for $t \in [t_2 - \al_1, t_2]$.  Since $\zeta(t) \lesssim f(\al_0 \e)$ on $[t_0,t_1]$, we must have $t_2 - t_1 \geq \al_1$ if $f(\al_0\e) \ll \e_1$.  
	Then \eqref{eq:b'lb2} yields
	\begin{equation}
	b(t) \geq b(t_1) + \al_1 \geq b(t_0) + \al_1, \qquad \text{for }t \in [t_1, t_2].
	\end{equation}
	Thus, if $\eps$ is small enough, we get
	\begin{align}
	b(t) \geq \al_1/2, \quad t \in [t_1, t_2]. \label{b_lower}
	\end{align}
	By Proposition \ref{p:modp}, the Cauchy-Schwarz inequality and the definition of $b(t)$ we have 
	\begin{align*}
	b(t) \lesssim |\log \la|^{1/2} \| \dot g \|_{L^2}.
	\end{align*}
	Since $\la(t) \leq \zeta(t) \leq 8 \e_1$ on $[t_0,t_2]$, we conclude that  
	there exists an absolute constant $\al_2 > 0$ such that on $[t_0,t_2]$
	\begin{align}
	|b(t)| \leq \al_2 | \log \e_1|^{1/2} \| \dot g \|_{L^2}. \label{b_upper}
	\end{align}
	Integrating, from $t_1$ to $t_2$ the lower bound \eqref{b_lower} and using \eqref{b_upper} we obtain 
	\begin{align*}
	\frac{\al_1^3}{4}
	\leq \int_{t_1}^{t_2} |b(t)|^2 dt \leq \al_2^2 |\log \eps_1| \int_{t_1}^{t_2} \| \dot g(t) \|_{L^2}^2 dt,
	\end{align*}
	which implies
	\begin{align}
	\frac{\al_1^3}{4 \al_2^2  |\log \eps_1|} \leq \int_{t_1}^{t_2} \| \p_t \psi (t) \|^2_{L^2} dt. \label{eq:err-lbound} 
	\end{align}
	Recall that on $[t_0,t_1]$, we have $\xi'(t) \geq 1/4$ and $|\xi(t)| + \zeta(t)^{1/2} \lesssim \sqrt{\e \al_0 |\log \al_0 \e|}$ where $\al_0$ is an absolute constant. Thus, 
	\begin{align}
	\int_{t_0}^{t_1} \sqrt{\bfd (\vec \psi(t))} dt 
	\lesssim \int_{t_0}^{t_1} \sqrt{\zeta(t)} dt 
	\lesssim \int_{t_0}^{t_1} \sqrt{\zeta(t)} \xi'(t) dt \lesssim 
	\sqrt{\e |\log \e|} \int_{t_0}^{t_1} \xi'(t) dt 
	\lesssim \eps |\log \eps|
	\end{align}
	where the implied constant is absolute.
	This estimate and \eqref{eq:err-lbound} imply \eqref{eq:err-absorb} after choosing $\e$ sufficiently small.
\end{proof}

\section{Dynamics of Non-Scattering Threshold Solutions} \label{s:dynamics}

In this section we prove the main result, Theorem \ref{t:main}. We will obtain it as a consequence of the following proposition. 

\begin{prop}\label{p:psi_t} 
Let $\psi(t) :(T_-, T_+) \to \HH_0$ be a corotational wave map with $\E(\vec \psi) = 2 \E(\vec Q)$ which does not scatter in forward time. Then
\EQ{ 
\lim_{t \to T_+} \bfd(\vec \psi(t)) = 0.\label{eq:psi_t}
}
\end{prop}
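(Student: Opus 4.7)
The plan is to argue by contradiction, following the outline sketched in Section~1. First I would establish that $\bfd(\vec\psi(t_n)) \to 0$ along some sequence $t_n \to T_+$ by applying Theorem~\ref{t:cjk} to $\vec\psi$, which gives a decomposition into $J$ bubbles plus a radiation term $\vec\varphi_L$. The Pythagorean energy identity~\eqref{eq:enorth}, the assumption $\E(\vec\psi) = 2\E(\vec Q) = 8\pi$, the fact that $\vec\psi$ does not scatter, and the topological constraint $\sum_j \iota_j = 0$ jointly force $J = 2$, $\iota_1 = -\iota_2 \in \{\pm 1\}$, and $\vec\varphi_L \equiv 0$. Thus $\vec\psi(t_n)$ is $o_{\HH_0}(1)$-close to a pure two-bubble, so $\bfd(\vec\psi(t_n)) \to 0$.

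Suppose now, for contradiction, that $\bfd$ does not vanish at $T_+$: there exist $\eps_0 > 0$ (taken small enough for Proposition~\ref{prop:modulation} to apply with a large absorbing constant $C$) and $s_n \to T_+$ with $\bfd(\vec\psi(s_n)) \geq 2\eps_0$. Fix $\eps \ll \eps_0$. The oscillation of $t \mapsto \bfd(\vec\psi(t))$ between small and large values, together with its continuity, allows me to select an increasing sequence of \emph{ejection times} $t_0^{(m)} \to T_+$ with $\bfd(\vec\psi(t_0^{(m)})) \leq \eps$ and $\tfrac{d}{dt}(\zeta(t)/\mu(t))\big|_{t = t_0^{(m)}} \geq 0$ (a sign condition necessarily met just before each excursion of $\bfd$ past $2\eps_0$). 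Applying Proposition~\ref{prop:modulation} at each $t_0^{(m)}$ produces companion times $t_1^{(m)} \leq t_2^{(m)}$ satisfying \eqref{eq:d-t1-t2}--\eqref{eq:err-absorb}, with the proof of that proposition providing an absolute lower bound on $t_2^{(m)} - t_1^{(m)}$.

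The core estimate comes from integrating the virial identity~\eqref{eq:vir} on $[t_0^{(m)}, t_2^{(m)}]$ at a scale $R$ to be chosen. The boundary term at $t_0^{(m)}$ is of size $C_0 R\sqrt{\eps}$ by~\eqref{eq:virial-end}, and at $t_2^{(m)}$ is bounded by $C(\eps_0)R$ using~\eqref{eq:d-big}. The error $\Omega_R$ on the bad portion $[t_0^{(m)}, t_1^{(m)}]$ is pointwise bounded by $C_0\sqrt{\bfd(\vec\psi(t))}$ via~\eqref{eq:virial-err}; the absorbing estimate~\eqref{eq:err-absorb}, with the constant $C$ in Proposition~\ref{prop:modulation} chosen much larger than $C_0$, then dominates this contribution by half of the kinetic integral on the good portion $[t_1^{(m)}, t_2^{(m)}]$. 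On the good portion, Lemma~\ref{l:1profile} yields scales $\nu_m > 0$ and, along a subsequence, a compact profile limit $\vec\varphi_\infty \in \HH_0$ with $\vec\varphi_\infty \neq 0$ and $\E(\vec\varphi_\infty) = 8\pi$; choosing $R$ much larger than the spatial length scale of this limit renders $\Omega_R$ uniformly small on $[t_1^{(m)}, t_2^{(m)}]$. These combined estimates force $\int_{t_1^{(m)}}^{t_2^{(m)}} \|\p_t\psi(t)\|_{L^2}^2\, dt \to 0$ as $m \to \infty$ along a subsequence.

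Passing to the rescaled trajectories $\vec\Psi_m(s) := \vec\psi(t_1^{(m)} + \nu_m s)_{1/\nu_m}$, continuous dependence of~\eqref{eq:wmk} on initial data gives convergence in $C([0, S_0]; \HH_0)$ to the solution $\vec\Psi_\infty(s)$ of~\eqref{eq:wmk} with $\vec\Psi_\infty(0) = \vec\varphi_\infty$, for some fixed $S_0 > 0$. The kinetic bound, together with the bounded length of the good intervals in rescaled time, transfers to $\int_0^{S_0} \|\p_s\Psi_\infty(s)\|_{L^2}^2 ds = 0$, so $\vec\Psi_\infty$ is stationary and $\vec\varphi_\infty$ is a nontrivial corotational harmonic map in $\HH_0$. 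However, the reduction of the corotational harmonic map ODE to the pendulum equation $\phi_{ss} = \tfrac 12 \sin 2\phi$ (in $s = \log r$) with boundary conditions $\phi(\pm\infty) = 0$ admits only the trivial solution, contradicting $\vec\varphi_\infty \neq 0$. The main obstacle will be controlling the scales $\nu_m$: one must show $\nu_m$ stays in a fixed compact range after a further subsequence extraction, both to pick $R$ uniformly on the good intervals and to ensure $\nu_m^{-1}(t_2^{(m)} - t_1^{(m)})$ is bounded below, so that $S_0 > 0$ may indeed be fixed.
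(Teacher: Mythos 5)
Your opening step (using Theorem \ref{t:cjk}, the Pythagorean expansions and the topological constraint to get $\liminf_{t\to T_+}\bfd(\vec\psi(t))=0$) and your overall strategy (ejection times, virial identity, compactness, stationary limit) are the right skeleton, but the central analytic step is wrong as stated. You integrate the virial identity over a single ejection window $[t_0^{(m)},t_2^{(m)}]$ whose right endpoint satisfies $\bfd(\vec\psi(t_2^{(m)}))\ge 2\eps_0$, and claim this forces $\int_{t_1^{(m)}}^{t_2^{(m)}}\|\p_t\psi\|_{L^2}^2\,dt\to 0$. This cannot work for two reasons. First, your boundary term at $t_2^{(m)}$ is of size $C(\eps_0)R$ with $R$ fixed and large; it is a constant, not $o(1)$, and nothing in your scheme (no division by the length of the good interval, no smallness of $\bfd$ at the right endpoint) makes it negligible, so the inequality you obtain only bounds the kinetic integral by a fixed constant. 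Second, and more decisively, the conclusion you want is false: the proof of Proposition \ref{prop:modulation} shows $b(t)\gtrsim \al_1$ and $t_2-t_1\ge\al_1$ on the ejection interval, whence $\int_{t_1}^{t_2}\|\p_t\psi\|_{L^2}^2\,dt\ge \al_1^3/(4\al_2^2|\log\eps_1|)>0$ uniformly in $m$ (this lower bound is exactly how \eqref{eq:err-absorb} is produced). So the quantity you want to send to $0$ is bounded below by a positive constant, and the subsequent extraction of a stationary limit collapses.

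The paper's proof avoids this by proving an \emph{averaged} statement rather than a per-interval one: it integrates the virial identity between two times $c_{m_j},c_{m_k}$ at which $\bfd(\vec\psi(c_{m}))\to 0$ along a subsequence (so \emph{both} boundary terms are $\lesssim R\sqrt{\bfd}$ with $\bfd$ small, by \eqref{eq:virial-end}), sums over all good and bad intervals in between, absorbs the bad-interval errors via \eqref{eq:err-mod-contr-1}--\eqref{eq:err-mod-contr-2}, chooses $R$ proportional to the maximal good-interval length so that the good-interval errors are small \emph{relative to interval length} (Lemma \ref{l:err-on-I}), and then divides by $|I\cap(c_{m_j},c_{m_k})|$ to conclude that the kinetic energy averaged over the good set tends to zero; only the average over some sequence of good intervals vanishes (their lengths may grow), which is compatible with the per-interval lower bound. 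Moreover, to pass to the limit it is not enough, and generally not true, that the compactness scales stay in a compact range (your proposed fix in the last paragraph); what is needed, and what the paper proves in Lemma \ref{l:interval-bd} and Corollary \ref{cor:k1}, is that the compactness modulus $\nu(t)$ is comparable to the length $\nu_m$ of the good interval, so that rescaling time and space by $\nu_m$ turns the averaged kinetic smallness into $\int_{-s_1}^{s_1}\|\p_s\fy(s)\|_{L^2}^2\,ds=0$ for the limiting solution on a fixed window. These two ingredients, the averaging with endpoints where $\bfd\to0$ and the scale--interval-length comparability, are missing from your proposal and are precisely what make the argument close.
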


As a first step, we state a direct consequence of Theorem~\ref{t:cjk}.
\begin{prop}\label{p:cjk}
Let $\vec \psi(t) : (T_-, T_+) \to \HH_0$ be a corotational wave map with $\E(\vec\psi) = 2 \E( \vec Q)$
which does not scatter in forward time. Then
\EQ{ \label{eq:seqbub} 
\liminf_{t\to T_+} \bfd(\vec \psi(t)) = 0.
}
\end{prop}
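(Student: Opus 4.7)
The plan is to invoke the sequential soliton resolution of Theorem~\ref{t:cjk} and to exploit the threshold energy, the degree-$0$ topological constraint, and the non-scattering hypothesis in order to pin down the profile decomposition. Applying Theorem~\ref{t:cjk} with $k=1$ to $\vec \psi$ furnishes a sequence $t_n \to T_+$, an integer $J \geq 0$, a solution $\vec \varphi_L \in \HH_0$ to the linearized equation \eqref{free}, signs $\iota_j \in \{-1,+1\}$, and scales $0 < \la_{n,1} \ll \cdots \ll \la_{n,J}$ such that
\begin{equation}
\vec \psi(t_n) = \sum_{j=1}^J \iota_j \vec Q_{\la_{n,j}} + \vec \varphi_L(t_n) + o_{\HH_0}(1).
\end{equation}

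The next step is a Pythagorean energy expansion. Since the bubbles sit at asymptotically well-separated scales and the linear radiation is asymptotically orthogonal to them in energy (cf.\ Lemma~\ref{l:enorth} together with the standard bubble interaction computation), one obtains
\begin{equation}
8\pi \;=\; \E(\vec \psi) \;=\; 4\pi J + \E(\vec \varphi_L) + o_n(1).
\end{equation}
Since $\E(\vec \varphi_L) \geq 0$, this forces $J \in \{0,1,2\}$. The topological constraint $\vec \psi(t_n) \in \HH_0$ together with $\lim_{r \to \infty} Q_\la(r) = \pi$ gives $\sum_{j=1}^J \iota_j = 0$, ruling out $J = 1$. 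To rule out $J = 0$, one writes $\vec \psi(t_n) = \vec \varphi_L(t_n) + o_{\HH_0}(1)$ and applies the long-time perturbation Lemma~\ref{l:pert} with $\vec \fy = \vec \varphi_L$: since $\vec \varphi_L$ solves the free equation, the Strichartz estimate \eqref{eq:strich} applied to $\varphi_L/r$ gives $\|\varphi_L/r\|_{L^3_tL^6_x(\R\times\R^4)}<\infty$, so the nonlinear defect $(\varphi_L/r)^3 Z(\varphi_L)$ has $L^1_tL^2_x$ norm on $[t_n,T_+)$ tending to $0$ as $n\to\infty$. Lemma~\ref{l:pert} then yields $\|\psi/r\|_{L^3_tL^6_x((t_n,T_+)\times\R^4)}<\infty$ for $n$ large, which contradicts the non-scattering hypothesis via the criterion in Proposition~\ref{l:scattering}.

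Therefore $J = 2$, and the constraints force $\iota_2 = -\iota_1 =: -\iota$ together with $\vec \varphi_L \equiv 0$, reducing the decomposition to
\begin{equation}
\vec \psi(t_n) = \iota \bigl( \vec Q_{\la_{n,1}} - \vec Q_{\la_{n,2}} \bigr) + o_{\HH_0}(1), \qquad \la_{n,1}/\la_{n,2} \to 0.
\end{equation}
The definition \eqref{eq:ddef} of $\bfd$ immediately gives $\bfd(\vec \psi(t_n)) \to 0$, whence $\liminf_{t \to T_+} \bfd(\vec \psi(t)) = 0$. The only step requiring more than routine bookkeeping is the exclusion of $J = 0$: one must verify quantitatively that the nonlinear defect $(\varphi_L/r)^3 Z(\varphi_L)$ is small enough in $L^1_tL^2_x$ on $[t_n,T_+)$ to feed into Lemma~\ref{l:pert}; all remaining steps are immediate consequences of Theorem~\ref{t:cjk} and the energy/topological identities.
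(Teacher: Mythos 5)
Your argument is correct and is essentially the paper's (unwritten) proof: the paper states Proposition~\ref{p:cjk} as a direct consequence of Theorem~\ref{t:cjk}, and your bookkeeping---the Pythagorean energy expansion forces $J \le 2$, the degree-zero constraint $\sum_j \iota_j = 0$ forces $J$ even, non-scattering excludes $J=0$ (note $T_+<\infty$ already gives $J\ge 1$), and the threshold energy kills the radiation when $J=2$, so that $\bfd(\vec\psi(t_n))\to 0$ by definition \eqref{eq:ddef}---is exactly the intended route. The one soft spot is the appeal to Lemma~\ref{l:pert} in the $J=0$ case, whose hypotheses include an a priori bound on $\|\vec\psi\|_{L^\infty_t([t_n,T_+);\HH_0)}$ that you do not supply; this is harmless, since one can instead observe that the free evolution of the data $\vec\psi(t_n)$ has $L^3_tL^6_x$ norm on $[t_n,T_+)$ tending to $0$ (by Strichartz applied to the small difference $\vec\psi(t_n)-\vec\varphi_L(t_n)$ plus the vanishing tail of $\varphi_L/r$) and conclude global existence and scattering directly from the small-data contraction argument underlying Proposition~\ref{l:scattering}.
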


For the remainder of this section, we will always denote by $\vec \psi(t)$ a solution to~\eqref{eq:wmk}, $\vec\psi(t):(T_-, T_+) \to \HH_0$,
such that $\E(\vec \psi) = 2 \E(\vec Q)$
and $\vec\psi(t)$ does not scatter in forward time.
  A rough sketch of our strategy to prove Proposition \ref{p:psi_t} is as follows.  By our preliminary step Proposition \ref{p:cjk}, we know that $\bfd(\vec \psi(t))$ tends to 0 along a sequence of times.  If Proposition \ref{p:psi_t} were false, then we split the maximal time interval of existence into a collection of \emph{bad} intervals where $\vec \psi(t)$ is close to the set of two-bubbles, and \emph{good} intervals where $\vec \psi(t)$ is far from them.  On the union of good intervals which we denote by $I$, we use Lemma \ref{l:d-size} and Lemma \ref{l:1profile} to show that the $\vec \psi(t)$ has the following \emph{compactness property}: there exists a continuous function $\nu(t) : I \rar (0,\infty)$ such that the trajectory 
\begin{align*}
\cl K = \{ \vec \psi(t)_{1/\nu(t)} \mid t \in I \}
\end{align*} 
is pre-compact in $\cl H_0$. Solutions with the compactness property do not radiate energy, and thus we expect that such solutions are given by rescalings of stationary solutions (harmonic maps).  If this intuition is correct, we arrive at a contradiction since the only degree-0 harmonic map is the constant map which has energy equal to $0 \neq 8 \pi$.  
 
To prove that a solution with the compactness property on the union of good intervals is stationary, we will use the virial identity.  Integrating \eqref{eq:vir} from $t =\tau_1$ to $t = \tau_2$ yields 
\EQ{ \label{eq:virtau}
\int_{\tau_1}^{\tau_2} \| \p_t \psi (t) \|_{L^2}^2 \,  d t  &\le  \abs{ \ang{ \p_t\psi \mid \chi_R r \p_r \psi}(\tau_1)}  + \abs{ \ang{ \p_t\psi \mid   \chi_R r \p_r \psi}(\tau_2)} \\
 &\quad +  \int_{\tau_1}^{\tau_2} \abs{\Om_{R}( \vec \psi(t))} \, d t
 }
 where the error $\Om_{R}(\vec \psi(t))$ is given by ~\eqref{eq:OmRdef}.
By Lemma~\ref{l:error-estim}, we obtain 
 \EQ{ \label{eq:virR} 
 \begin{aligned}
 \int_{\tau_1}^{\tau_2} \| \p_t \psi (t) \|_{L^2}^2 \,  d t  &\leq C_0\Big(  R \sqrt{\bfd(\vec\psi(\tau_1))} +  R \sqrt{\bfd(\vec\psi(\tau_2))}\Big) \\ &+  \int_{\tau_1}^{\tau_2} \abs{\Om_{R}( \vec \psi(t))} \, d t.
 \end{aligned}
 }
We will then show that by choosing the parameters $R, \tau_1, \tau_2$ appropriately and using Proposition \ref{p:modp}, we can absorb the error term involving $\Om_{R}(\vec \psi(t))$ from the right hand side into the left hand side.  The resulting averaged smallness of $\| \p_t \psi(t) \|_{L^2}$ and the compactness property allow us to conclude that $\vec \psi(t) = \vec 0$, our desired contradiction. 
  
\subsection{Splitting time into the good and the bad}
Before we are able to split the time interval of existence into good and bad intervals, we establish the following initial splitting of the time axis. 
\begin{lem}\label{cl:pre-split}
Suppose that~\eqref{eq:psi_t} fails. Then for any $\eps_0 > 0$ sufficiently small there exist
sequences $p_n$, $q_n$ such that
\EQ{
T_- < p_0 < q_0 < p_1 < q_1 < \dots < p_{n-1} < q_{n-1} < p_n < q_n < \dots 
}
with the property that for all $n \in \N$: 
\begin{gather}
\forall t \in [p_n, q_n]: \bfd(\vec\psi(t)) \leq \eps_0, \label{eq:d-small-all-pq} \\
\forall t \in [q_n, p_{n+1}]: \bfd(\vec\psi(t)) \geq \frac 12\eps_0, \label{eq:d-large-all-pq} \\
\lim_{n \to \infty}p_n = \lim_{n\to\infty}q_n = T_+. \label{eq:pq-lim}
\end{gather}
\end{lem}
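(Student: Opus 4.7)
The plan is to combine the continuity of $t\mapsto\bfd(\vec\psi(t))$ with the oscillatory behavior of $\bfd(\vec\psi(t))$ as $t\to T_+$, and then produce the sequences by an inductive sup/inf construction based on the intermediate value theorem.

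First I would observe that the failure of~\eqref{eq:psi_t} yields $c:=\limsup_{t\to T_+}\bfd(\vec\psi(t))>0$, while Proposition~\ref{p:cjk} gives $\liminf_{t\to T_+}\bfd(\vec\psi(t))=0$. Continuity of $t\mapsto \bfd(\vec\psi(t))$ would be established by combining the continuity of the trajectory in $\HH_0$ (Proposition~\ref{l:scattering}) with a direct Lipschitz-type estimate: testing $\bfd(\vec\phi')$ against near-optimal parameters $(\la,\mu,\iota)$ for $\bfd(\vec\phi)$, and vice versa, yields
\[
|\bfd(\vec\phi)-\bfd(\vec\phi')|\leq C\big(1+\|\vec\phi\|_{\HH_0}+\|\vec\phi'\|_{\HH_0}\big)\|\vec\phi-\vec\phi'\|_{\HH_0},
\]
which suffices since $\|\vec\psi(t)\|_{\HH_0}$ is locally bounded in $t$.

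Next, I would fix $\eps_0 \in (0,c)$ small enough to accommodate later smallness requirements, and construct the sequences inductively. Pick $p_0$ large enough that $\bfd(\vec\psi(p_0))<\eps_0/2$, which is possible since $\liminf\bfd=0$. Given $p_n$ with $\bfd(\vec\psi(p_n))\leq\eps_0/2$, set
\[
q_n:=\sup\big\{t\geq p_n : \bfd(\vec\psi(s))\leq\eps_0\ \forall s\in[p_n,t]\big\},\qquad p_{n+1}:=\inf\big\{t>q_n : \bfd(\vec\psi(t))\leq\eps_0/2\big\}.
\]
Finiteness of $q_n$ follows from $\limsup\bfd>\eps_0$ and finiteness of $p_{n+1}$ from $\liminf\bfd=0$. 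Continuity of $\bfd\circ\vec\psi$ forces the boundary values $\bfd(\vec\psi(q_n))=\eps_0$ and $\bfd(\vec\psi(p_{n+1}))=\eps_0/2$, which in turn guarantees the strict ordering $p_n<q_n<p_{n+1}$. Properties~\eqref{eq:d-small-all-pq} and~\eqref{eq:d-large-all-pq} then follow directly from the definitions.

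Finally, to establish~\eqref{eq:pq-lim}, I would argue by contradiction: if $p_n$ (equivalently $q_n$) failed to converge to $T_+$, the monotone increasing sequences would converge to a common limit $L<T_+$ inside the interval of existence, and continuity of $\bfd\circ\vec\psi$ would force $\bfd(\vec\psi(L))=\lim_n\bfd(\vec\psi(q_n))=\eps_0$ while simultaneously $\bfd(\vec\psi(L))=\lim_n\bfd(\vec\psi(p_{n+1}))=\eps_0/2$, which is absurd. The only nontrivial technical point is verifying the continuity of $\bfd\circ\vec\psi$; once that is in hand, the construction reduces to a clean intermediate value exercise.
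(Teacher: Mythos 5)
Your proposal is correct and follows essentially the same route as the paper: an inductive first-exit/first-return construction driven by $\limsup_{t\to T_+}\bfd(\vec\psi(t))>\eps_0$ (failure of \eqref{eq:psi_t}) and $\liminf_{t\to T_+}\bfd(\vec\psi(t))=0$ (Proposition \ref{p:cjk}), with \eqref{eq:pq-lim} obtained by the same continuity contradiction at a putative limit $T_1<T_+$. The only cosmetic differences are that you define $p_{n+1}$ by an $\inf$ rather than a $\sup$ and that you spell out, via the Lipschitz-type bound on $\bfd$, the continuity of $t\mapsto\bfd(\vec\psi(t))$ that the paper invokes implicitly.
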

\begin{proof}
Suppose that \eqref{eq:psi_t} fails. Let $\eps_0 > 0$ so that
\EQ{\label{eq:eps-limsup}
0 < \eps_0 < \min(\limsup_{t\to T_+}\bfd(\vec\psi(t)), \eta_1)
}
Then there exists $T_0 \in (T_-, T_+)$ such that $\bfd(\vec\psi(T_0)) > \eps_0$.
Define
\EQ{
p_0 := \sup\Big\{t: \bfd(\vec\psi(\tau)) \geq \frac 12 \eps_0, \forall \tau\in[T_0, t]\Big\}.
}
By Proposition \ref{p:cjk} we have $p_0 < T_+$ and $\bfd(\vec\psi(p_0)) = \frac 12 \eps_0$.
For $n \geq 1$ we define inductively:
\begin{align}
q_{n-1} := \sup\Big\{t: \bfd(\vec\psi(\tau)) \leq \eps_0, \forall \tau\in[p_{n-1}, t]\Big\}, \\
p_n := \sup\Big\{t: \bfd(\vec\psi(\tau)) \geq \frac 12 \eps_0, \forall \tau\in[q_{n-1}, t]\Big\}.
\end{align}
By \eqref{eq:eps-limsup}, Proposition~\ref{p:cjk} and induction
we have for all $n \in \N$
\begin{gather}
p_{n-1} < q_{n-1} < T_+, \\
q_{n-1} < p_n < T_+, \\
\bfd(\vec\psi(p_n)) = \frac 12\eps_0, \label{eq:d-val-at-p} \\
\bfd(\vec\psi(q_n)) = \eps_0 \label{eq:d-val-at-q}.
\end{gather}
The estimates \eqref{eq:d-large-all-pq} and \eqref{eq:d-small-all-pq} are immediate consequences of our choice
of $p_n$ and $q_n$. Suppose now that \eqref{eq:pq-lim} does not hold. Since $p_n$ and $q_n$ are increasing sequences, we have 
\EQ{
\lim_{n \to \infty}p_n = \lim_{n\to\infty}q_n = T_1 < T_+.
}
By continuity of the flow, $\bfd(\vec\psi(t))$ has a limit as $t \to T_1$ which contradicts \eqref{eq:d-val-at-p} and \eqref{eq:d-val-at-q}.
\end{proof}
\begin{lem}\label{cl:split}
Let $\eps > 0$. There exist $\zeta_0, \eps' > 0$ so that the following holds.
Assume that $\bfd(\vec\psi(t)) < \eta_1$, with $\eta_1$ as in the remarks immediately preceding Proposition \ref{p:psi_t}. Let $\lambda(t)$, $\mu(t)$ be the modulation parameters
given by Lemma~\ref{l:modeq}. Let $\zeta(t)$ be defined as in~\eqref{eq:zetadef}. Then
\begin{align}
\frac{\zeta(t)}{\mu(t)} \geq \zeta_0 &\Rightarrow \bfd(\vec\psi(t)) > \eps', \label{eq:d-geq-epsp} \\
\frac{\zeta(t)}{\mu(t)} \leq \zeta_0 &\Rightarrow \bfd(\vec\psi(t)) < \eps \label{eq:d-leq-eps}.
\end{align}
\end{lem}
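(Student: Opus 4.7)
The plan is to show that both $\zeta(t)/\mu(t)$ and $\bfd(\vec\psi(t))$ are comparable to the same continuous, strictly increasing function of $\la(t)/\mu(t)$ near zero, and then extract the two implications by inverting that function.

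First I would reduce to the case when $\bfd_+$ is the small distance. Since the assumption $\bfd(\vec\psi(t)) < \eta_1 \le \al_0$ together with Lemma~\ref{l:dpm} forces exactly one of $\bfd_\pm(\vec\psi(t))$ to lie below $\al_0$, up to the obvious symmetry (which replaces $\zeta$ by the analogous functional built from $-Q$) we may assume $\bfd(\vec\psi(t)) = \bfd_+(\vec\psi(t)) < \eta_1$. Then Lemma~\ref{l:modeq} produces the modulation parameters $\la(t), \mu(t)$ satisfying \eqref{eq:lamud1}, and Proposition~\ref{p:modp2} applies with the parameters fixed in Remark~\ref{r:param}. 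Setting $f(x) := x\,|\log x|$, the estimate \eqref{eq:bound-on-l} together with \eqref{eq:lamud1} yields absolute constants $c_1, c_2, C_1 > 0$ such that
\begin{align*}
c_1\, f\!\left(\tfrac{\la(t)}{\mu(t)}\right) \;\le\; \tfrac{\zeta(t)}{\mu(t)} \;\le\; c_2\, f\!\left(\tfrac{\la(t)}{\mu(t)}\right), \qquad C_1^{-1}\, \bfd(\vec\psi(t)) \;\le\; \tfrac{\la(t)}{\mu(t)} \;\le\; C_1\, \bfd(\vec\psi(t)).
\end{align*}

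Next I would exploit the fact that $f$ is continuous and strictly increasing on $(0, 1/e)$ with $f(0^+) = 0$, so $f^{-1}$ is continuous, strictly increasing, and vanishes at $0$. Given the target $\eps > 0$, I would first pick $\zeta_0 > 0$ so small that $C_1\, f^{-1}(\zeta_0/c_1) < \eps$ and $\zeta_0/c_1$ lies in the range of invertibility of $f$; the sandwich then gives $\zeta/\mu \le \zeta_0 \Rightarrow f(\la/\mu) \le \zeta_0/c_1 \Rightarrow \la/\mu \le f^{-1}(\zeta_0/c_1) \Rightarrow \bfd < \eps$, which is \eqref{eq:d-leq-eps}. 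Once $\zeta_0$ is fixed, I would set $\eps' := C_1^{-1}\, f^{-1}(\zeta_0/c_2)$, so that $\zeta/\mu \ge \zeta_0 \Rightarrow f(\la/\mu) \ge \zeta_0/c_2 \Rightarrow \la/\mu \ge f^{-1}(\zeta_0/c_2) \Rightarrow \bfd \ge \eps'$, which is \eqref{eq:d-geq-epsp}.

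I do not anticipate a substantive obstacle: the entire lemma is essentially the combination of the logarithmic bound \eqref{eq:bound-on-l} on $\zeta$ with the distance bound \eqref{eq:lamud1} on $\bfd_+$, together with the monotonicity of $f$ near zero. The only point that requires care is the order of quantifiers---namely, that $\zeta_0$ must be selected first in terms of $\eps$, and $\eps'$ extracted afterwards from $\zeta_0$---which is precisely how the statement is phrased.
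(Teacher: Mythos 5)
Your proposal is correct and follows essentially the same route as the paper: combine \eqref{eq:bound-on-l} (giving $\zeta(t)/\mu(t) \simeq f(\la(t)/\mu(t))$ with $f(x) = x|\log x|$) with \eqref{eq:gdotgd}--\eqref{eq:lamud1} (giving $\bfd \simeq \la/\mu$), and use the monotonicity of $f$ near zero to fix $\zeta_0$ from $\eps$ and then $\eps'$ from $\zeta_0$. The only cosmetic difference is that for \eqref{eq:d-leq-eps} the paper bypasses $f^{-1}$ by using the direct linear bound $\la/\mu \le 2\,\zeta/\mu$, whereas you invert $f$ in both directions; this changes nothing of substance.
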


\begin{proof}
By Lemma~\ref{l:modeq}, $$\bfd(\vec\psi(t)) \leq (C^2+1)\frac{\la(t)}{\mu(t)} \le 2(C^2+1)\frac{\zeta(t)}{\mu(t)},$$
so we get \eqref{eq:d-leq-eps} with any $\zeta_0 < \eps/2(C^2+1)$.

To prove \eqref{eq:d-geq-epsp}, we first note that by \eqref{eq:bound-on-l}, we have 
\begin{align*}
\frac{\la(t)}{\mu(t)} \left | 
\log \frac{\la(t)}{\mu(t)}
\right | \geq \zeta_0/3. 
\end{align*}
Since the function $f(x) = x |\log x|$ is increasing on $(0,C \eta_1)$ for $\eta_1$ small, we conclude that 
\begin{align*}
\frac{\la(t)}{\mu(t)} \geq f^{-1}(\zeta_0/3). 
\end{align*}
Thus, by \eqref{eq:gdotgd} we obtain \eqref{eq:d-geq-epsp} for any $\e' < \frac{1}{C} f^{-1}(\zeta_0/3)$. 
\end{proof}

We now split the time axis into a collection of good and bad intervals. 

\begin{lem}\label{l:time_split}
Suppose that the conclusion ~\eqref{eq:psi_t} fails. Let $\eps_0>0$ be small enough so that the conclusions of Claim~\ref{cl:pre-split}
and Proposition~\ref{prop:modulation} hold, and let $C_0$ denote the constant from Lemma \ref{l:error-estim}.  Then there exist $\eps, \eps' > 0$ with $\eps' < \eps$  and $\eps< \frac{1}{10} \eps_0$ as in Proposition~\ref{prop:modulation}, 
and sequences of times $(a_m), (b_m), (c_m)$ such that 
\EQ{
T_- < a_1 < c_1 < b_1 < \dots < a_m < c_m < b_m < a_{m+1} < \dots 
}
and the following holds for all $m = 2, 3, 4, \ldots$:
\begin{gather}
\forall t \in [b_{m}, a_{m+1}]: \bfd(\vec\psi(t)) \geq \eps', \label{eq:d-large-all} \\
\exists t \in [b_{m}, a_{m+1}]: \bfd(\vec\psi(t)) \geq 2\eps, \label{eq:d-large-some} \\
\bfd(\vec\psi(a_m)) = \bfd(\vec\psi(b_{m})) = \eps, \label{eq:d-small-some} \\
C_0 \int_{a_{m+1}}^{c_{m+1}}\sqrt{\bfd(\vec\psi(t))}\,d t \leq \frac{1}{10}\int_{b_{m}}^{a_{m+1}}\|\partial_t\psi(t)\|_{L^2}^2\,d t \label{eq:err-mod-contr-1} \\
C_0 \int_{c_{m}}^{b_{m}}\sqrt{\bfd(\vec\psi(t))}\,d t \leq \frac{1}{10}\int_{b_{m}}^{a_{m+1}}\|\partial_t\psi(t)\|_{L^2}^2\,d t \label{eq:err-mod-contr-2}
\end{gather}
and
\EQ{
\liminf_{m \to\infty} \bfd(\vec\psi(c_m)) = 0. \label{eq:d-conv-0}
}
\end{lem}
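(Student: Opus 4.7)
The plan is to combine the pre-splitting from Claim \ref{cl:pre-split} with a careful extraction of minima of $\zeta(t)/\mu(t)$ at which Proposition \ref{prop:modulation} can be applied in both time directions. I would first fix $\eps_0 \in (0, \eta_1)$ small enough that the conclusions of both Claim \ref{cl:pre-split} and Proposition \ref{prop:modulation} hold, with the constant $C$ in the latter chosen equal to $10 C_0$. Let $\eps \in (0, \eps_0/10)$ be the associated parameter from Proposition \ref{prop:modulation}, and set $\eps' := \tfrac{1}{2}\min(\eps, \eps_0/2)$. The failure of \eqref{eq:psi_t} then supplies via Claim \ref{cl:pre-split} sequences $p_n < q_n \to T_+$ with $\bfd(\vec\psi(\cdot)) \leq \eps_0$ on $[p_n, q_n]$ and $\bfd(\vec\psi(\cdot)) \geq \eps_0/2$ on $[q_n, p_{n+1}]$.

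I would select the $c_m$'s as follows. Proposition \ref{p:cjk} supplies $s_k \to T_+$ with $\bfd(\vec\psi(s_k)) \to 0$; for large $k$ each $s_k$ lies in the interior of some $[p_{n(k)}, q_{n(k)}]$. The continuous function $\zeta(t)/\mu(t)$ (well-defined via Lemma \ref{l:modeq}) is bounded below at the endpoints $p_{n(k)}, q_{n(k)}$ by a positive constant (since $\bfd \geq \eps_0/2$ there forces $\zeta/\mu \geq \zeta_0$ by Lemma \ref{cl:split}) while tending to $0$ at $s_k$, so its minimum on $[p_{n(k)}, q_{n(k)}]$ is attained at some interior point $c$ with $(\zeta/\mu)'(c) = 0$. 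Extracting a subsequence so that the resulting $c_m$'s lie in distinct intervals $[p_{n_m}, q_{n_m}]$ with $n_m$ strictly increasing gives $c_1 < c_2 < \cdots \to T_+$ with $\bfd(\vec\psi(c_m)) \to 0$ and $(\zeta/\mu)'(c_m) = 0$. The vanishing derivative allows me to apply Proposition \ref{prop:modulation} \emph{both} forward and backward at $c_m$, producing times $t_{2,m}^- \leq t_{1,m}^- \leq c_m \leq t_{1,m}^+ \leq t_{2,m}^+$ with the usual small/large/ejection structure and virial-type integral bounds with constant $10C_0$.

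I then define
\begin{align*}
b_m := \max\{t \in [c_m, t_{1,m}^+] : \bfd(\vec\psi(t)) = \eps\}, \qquad a_m := \min\{t \in [t_{1,m}^-, c_m] : \bfd(\vec\psi(t)) = \eps\},
\end{align*}
which are well-defined for large $m$ by the intermediate value theorem since $\bfd(\vec\psi(c_m)) < \eps$ while $\bfd(\vec\psi(t_{1,m}^\pm)) \geq 2\eps$. Properties \eqref{eq:d-small-some}, \eqref{eq:d-large-some}, and \eqref{eq:d-conv-0} are then immediate, as are the strict inequalities $a_m < c_m < b_m$.

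The main obstacle is the nesting/placement argument needed for $b_m < a_{m+1}$, for \eqref{eq:d-large-all}, and for the integral bounds \eqref{eq:err-mod-contr-1}--\eqref{eq:err-mod-contr-2}. The driving mechanism is the numerical gap between the sub-threshold $\eps_0/4$ on $[c_m, t_{1,m}^+]$ from Proposition \ref{prop:modulation} and the over-threshold $\eps_0/2$ on $[q_{n_m}, p_{n_m+1}]$ from Claim \ref{cl:pre-split}: connectedness forces $t_{1,m}^+ \leq q_{n_m}$ and, symmetrically, $t_{1,m+1}^- \geq p_{n_{m+1}}$. Combined with $\bfd(\vec\psi(t_{2,m}^\pm)) \geq 2\eps_0 > \eps_0$ (which excludes $t_{2,m}^\pm$ from every $[p_j, q_j]$), this yields
\begin{align*}
[t_{1,m}^+, t_{2,m}^+] \cup [t_{2,m+1}^-, t_{1,m+1}^-] \subset [b_m, a_{m+1}], \qquad b_m \leq q_{n_m} < p_{n_{m+1}} \leq a_{m+1}.
\end{align*}
Splitting $[b_m, a_{m+1}]$ as $[b_m, q_{n_m}] \cup [q_{n_m}, p_{n_{m+1}}] \cup [p_{n_{m+1}}, a_{m+1}]$ then gives \eqref{eq:d-large-all}: the middle piece uses $\bfd \geq \eps_0/2 \geq \eps'$ from Claim \ref{cl:pre-split}, while on the outer pieces $\bfd \geq \eps$ follows from the extremal choice of $b_m, a_{m+1}$ together with the $\bfd \geq 2\eps$ portion of Proposition \ref{prop:modulation}. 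Finally, \eqref{eq:err-mod-contr-1}--\eqref{eq:err-mod-contr-2} reduce, via the above inclusions, to the virial estimates supplied by Proposition \ref{prop:modulation} applied backward at $c_{m+1}$ and forward at $c_m$ respectively, with the constant $10 C_0$ absorbing the factor $\tfrac{1}{10}$ on the right-hand sides.
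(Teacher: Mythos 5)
Most of your construction runs parallel to the paper's proof: you locate interior critical points $c_m$ of $\zeta(t)/\mu(t)$ inside intervals $[p_{n_m},q_{n_m}]$, apply Proposition~\ref{prop:modulation} at $c_m$ in both time directions, define $a_m,b_m$ as the extremal level-$\eps$ crossings, and deduce \eqref{eq:d-small-some}, \eqref{eq:d-large-some}, \eqref{eq:d-conv-0} and the virial bounds \eqref{eq:err-mod-contr-1}--\eqref{eq:err-mod-contr-2} from the inclusions $[t^+_{1,m},t^+_{2,m}]\cup[t^-_{2,m+1},t^-_{1,m+1}]\subset[b_m,a_{m+1}]$ (your stated justification for these inclusions via ``$t_2^\pm$ lies in no $[p_j,q_j]$'' is loose, but they do follow, as in the paper, from $\bfd(\vec\psi(a_{m+1}))=\bfd(\vec\psi(b_m))=\eps$ versus $\bfd\geq 2\eps$ on $[t_1,t_2]$ together with $t^+_{1,m}\leq q_{n_m}<p_{n_{m+1}}\leq t^-_{1,m+1}$).

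The genuine gap is in \eqref{eq:d-large-all}. You select only those intervals $[p_{n},q_{n}]$ that happen to contain a time $s_k$ of the particular sequence from Proposition~\ref{p:cjk}, and you take $\eps':=\tfrac12\min(\eps,\eps_0/2)$ ad hoc. Your argument for the middle piece $[q_{n_m},p_{n_{m+1}}]$ invokes Claim~\ref{cl:pre-split}, but that claim gives $\bfd\geq\tfrac12\eps_0$ only on the gaps $[q_n,p_{n+1}]$ between \emph{consecutive} intervals. If $n_{m+1}>n_m+1$, the segment $[q_{n_m},p_{n_{m+1}}]\subset[b_m,a_{m+1}]$ contains skipped intervals $[p_j,q_j]$ on which the only available bound is $\bfd\leq\eps_0$; since your selection rule does not force the $s_k$ to visit every interval where $\bfd$ becomes small, $\bfd$ may drop below $\eps'$ (indeed arbitrarily low) there, and \eqref{eq:d-large-all} --- which is essential later for the uniform bound of Lemma~\ref{l:d-size} and the compactness on the good intervals --- fails. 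The paper avoids this by taking $\eps'$ to be the constant of Lemma~\ref{cl:split} (so that $\bfd(\vec\psi(t))\leq\eps'$ forces $\zeta(t)/\mu(t)<\zeta_0$, keeping the interior-minimum argument available) and by selecting \emph{every} index $n$ with $\inf_{[p_n,q_n]}\bfd\leq\eps'$; then any time with $\bfd<\eps'$ automatically lies in a selected interval, and the bounds $\bfd\geq\eps$ on $[p_{n_m},a_m]$ and on $[b_m,q_{n_m}]$ place it in $[a_m,b_m]$. Note that simply enlarging your selection to all intervals with $\inf\bfd\leq\eps/2$ does not repair the proof, because $\bfd\leq\eps/2$ at a point does not imply $\zeta/\mu<\zeta_0$ there ($\zeta/\mu\sim\bfd\,|\log\bfd|\gg\bfd$), so on some selected intervals you could not produce an interior critical point $c_m$; the smaller, Lemma~\ref{cl:split}-calibrated $\eps'$ is what makes both halves of the argument mesh.
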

\begin{proof}
Let $\eps, \eps_0 > 0$ be small, $\e < \frac{1}{10} \e_0$, so that Claim~\ref{cl:pre-split}
and Proposition~\ref{prop:modulation} hold
with the constant $C = 10 C_0$ in Proposition~\ref{prop:modulation}.
Let $\zeta_0$ and $\eps'$ be as in Lemma~\ref{cl:split}. We first construct the sequence of times $(c_m)$. By Proposition  \ref{p:cjk} and our initial splitting, there exists a sequence $0 \leq n_1 < n_2 < ...$ of indices such that
\EQ{\label{eq:inf-leq-eps}
\inf_{t\in[p_{n_m}, q_{n_m}]} \bfd(\vec\psi(t)) \leq \eps'.
}
Since $\bfd(\vec \psi(t)) < \e_0$ on $[p_n,q_n]$, and $\e_0$ is small, the modulation parameters $\lambda(t)$, $\mu(t)$ and $\zeta(t)$ 
are well defined on $[p_{n_m}, q_{n_m}]$.
Let $c_m \in [p_{n_m}, q_{n_m}]$ be such that
\EQ{
\zeta(c_m)/\mu(c_m) = \inf_{t \in [p_{n_m}, q_{n_m}]}\zeta(t)/\mu(t).
}
By Lemma~\ref{cl:split} and \eqref{eq:inf-leq-eps} we conclude that $\zeta(c_m) / \mu(c_m) < \zeta_0$.
By Lemma~\ref{cl:split} it also follows
that $\bfd(\vec\psi(c_m)) < \eps < \frac{1}{10}\eps_0$.
Hence $c_m \in (p_{n_m}, q_{n_m})$ and
\EQ{
\frac{d}{dt} \Big|_{t = c_m}\Big(\frac{\zeta(t)}{\mu(t)}\Big) = 0.
}

By Proposition~\ref{prop:modulation} with $t_0$ = $c_m$, there exists $t_1,t_2$ with $t_2 < t_1 < t_0 = c_m$ such that the following holds:
	\begin{align}
	\bfd(\vec \psi(t)) &\geq 2\eps,\qquad \text{for }t \in [t_2, t_1], \label{eq:d-t1-t2_sp}\\
	\bfd(\vec \psi(t)) &\leq \frac 14\eps_0,\qquad \text{for }t \in [t_1, t_0], \label{eq:d-t0-t1_sp}\\
	\bfd(\vec \psi(t_2)) &\geq 2\eps_0, \label{eq:d-at-t2_sp} \\
	\int_{t_2}^{t_1} \|\partial_t\psi(t)\|_{L^2}^2d t &\geq 10 C_0\int_{t_1}^{t_0}\sqrt{\bfd(\vec \psi(t))}d t
	\label{eq:err-absorb_sp}
	\end{align}
We denote $\al_m := t_2$.  Since $\bfd(\vec \psi(t)) \geq \e_0/2$ on $[q_{n_m-1}, p_{n_m}]$, \eqref{eq:d-t0-t1_sp} implies that $t_1 \in (p_{n_m}, c_m]$. Define 
\EQ{
a_m := \sup\{t \geq t_1: \bfd(\vec\psi(t)) \geq \eps,\ \forall \tau\in [t_1, t]\}.
}
Note that the supremum is well defined since \eqref{eq:d-t1-t2_sp} implies that $\bfd(\vec\psi(t_1)) > \eps$.
Since $\bfd(\vec\psi(c_m)) < \eps$, we have $a_m \in (p_{n_m}, c_m)$
and $\bfd(\vec\psi(a_m)) = \eps$.

The bound \eqref{eq:d-t1-t2_sp} implies that
$\bfd(\vec\psi(t)) \geq \eps$ for $t \in [\al_m, t_1]$.
Since $\bfd(\vec\psi(t)) \geq \eps$ for $t \in [t_1, a_m]$ by our definition of $a_m$, we conclude that
\EQ{\label{eq:d-sigm-am}
\bfd(\vec\psi(t)) \geq \eps,\ \forall t \in [\al_m, a_m].
}
The bounds \eqref{eq:d-at-t2_sp} and \eqref{eq:d-small-all-pq} imply that $t_2 < p_{n_m}$. 
Thus,
by \eqref{eq:d-sigm-am} we have 
\EQ{\label{eq:d-pnm-am}
\bfd(\vec\psi(t)) \geq \eps,\ \forall t \in [p_{n_m}, a_m].
}
Moreover, since $t_1 > p_{n_m}$ we have 
\begin{align}\label{eq:al_m}
\bfd(\vec \psi(t)) \geq 2\e, \quad \forall t \in [\al_m, p_{n_m}]. 
\end{align} 
Finally, \eqref{eq:err-absorb_sp} implies that
\EQ{\label{eq:err-absorb-sig}
\int_{\al_m}^{a_m} \|\partial_t\psi(t)\|_{L^2}^2d t \geq 10 C_0\int_{a_m}^{c_m}\sqrt{\bfd(\vec \psi(t))}d t.
}

We now use Proposition~\ref{prop:modulation} with $t_0$ = $c_m$ in the forward time direction and
obtain times $t_1, t_2$ (different from the previous) with $c_m < t_1 < t_2$.  Arguing as before, we conclude that $t_1 \in (c_m, q_{n_m})$ and define 
\EQ{
b_m := \inf\{t \leq t_1: \bfd(\vec\psi(t)) \geq \eps,\ \forall \tau\in[t, t_1]\}.
}
As in the construction of $a_m$, we conclude that $b_m \in (c_m, q_{n_m})$ and $\bfd(\vec\psi(b_m)) = \eps$. We denote $\beta_m := t_2$. By nearly the same arguments used to establish \eqref{eq:d-sigm-am} and \eqref{eq:d-pnm-am}, we conclude that 
\begin{gather}
\label{eq:d-bm-taum}
\bfd(\vec\psi(t)) \geq \eps,\ \forall t \in [b_m, \beta_m], \\
\label{eq:d-bm-qnm}
\bfd(\vec\psi(t)) \geq \eps,\ \forall t \in [b_m, q_{n_m}], \\
\label{eq:err-absorb-tau}
\int_{b_m}^{\beta_m} \|\partial_t\psi(t)\|_{L^2}^2d t \geq 10 C_0\int_{c_m}^{b_m}\sqrt{\bfd(\vec \psi(t))}d t.
\end{gather}

We now prove \eqref{eq:err-mod-contr-1}. The argument for \eqref{eq:err-mod-contr-2} is completely analogous and will be omitted. 
 By \eqref{eq:err-absorb-sig}, it suffices to prove that $b_{m-1} < \al_m$. If not, then by \eqref{eq:al_m}, it follows that $\e = \bfd(\psi(b_{m-1})) \geq 2 \e$, a contradiction.

To prove \eqref{eq:d-large-all}, let $t \in \R$ such that $\bfd(\vec\psi(t)) < \eps'$.
By our initial splitting, $t \in [p_{n_m}, q_{n_m}]$ for some $m$. Then
\eqref{eq:d-pnm-am} and \eqref{eq:d-bm-qnm} imply that $t\in[a_m, b_m]$.

Finally, we prove the convergence \eqref{eq:d-conv-0}.  By Proposition \ref{p:cjk} and \eqref{eq:d-large-all}, it follows that 
\begin{align*}
\liminf_{m \rar \infty} \frac{\zeta(c_m)}{\mu(c_m)} = 0. 
\end{align*}
Since the function $f(x) = x |\log x|$ is increasing for small $x$ and $\zeta(t)/\mu(t) \sim f(\la(t)/\mu(t))$, it follows that $\liminf_m (\la(c_m)/\mu(c_m)) = 0$. The claim \eqref{eq:d-conv-0} then follows from \eqref{eq:lamud1}. 
\end{proof}

\begin{rem}\label{rem:eps-small}
It follows from the proof that $\eps$ can be taken as small as we wish.
\end{rem}

\subsection{Compactness on good intervals}

For the remainder of the proof of Proposition~\ref{p:psi_t},
we fix $\eps, \eps' > 0$ and the partition of the time axis given by Lemma 
\ref{l:time_split}. The intervals $[b_{m-1},a_m]$ on which $\bfd(\vec \psi(t)) \geq \e'$ are what we referred to earlier as the good intervals and are denoted by
\EQ{
I_m := [b_{m-1}, a_m], \qquad I := \bigcup_{m\geq 1} I_m.
}
We then have 
\begin{equation}
\forall t\in I: \bfd(\vec\psi(t)) \geq \eps'. \label{eq:d-large-I}
\end{equation}

We now show that $\vec\psi(t)$ has the following compactness property on $I$.

\begin{lem} \label{l:Icompact} 
There exists a function $\nu \in C(I;(0,\infty))$ such that the modulated trajectory 
\EQ{
\calK:=  \{  \vec \psi(t)_{1/\nu(t)} \mid t \in I \} \subset \HH_0 
}
is pre-compact in $\HH_0$. 
\end{lem}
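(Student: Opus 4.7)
The plan is to first establish the sequential compactness of $\{\vec \psi(t) : t \in I\}$ modulo scaling by combining the a priori bound from Lemma~\ref{l:d-size} with the single-profile extraction in Lemma~\ref{l:1profile}, and then to select a continuous scale $\nu(t)$ by a standard normalization.

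First, since $\bfd(\vec \psi(t)) \geq \eps'$ for every $t \in I$ by~\eqref{eq:d-large-I}, estimate~\eqref{eq:d-big} of Lemma~\ref{l:d-size} gives a constant $C(\eps')>0$ with
\[
\sup_{t \in I} \|\vec \psi(t)\|_{\HH_0} \leq C(\eps').
\]
I would then prove the following sequential statement: for every $(t_n) \subset I$, after passing to a subsequence, there exist scales $\nu_n>0$ and a nonzero $\vec\varphi \in \HH_0$ such that $\vec \psi(t_n)_{1/\nu_n} \to \vec\varphi$ strongly in $\HH_0$. Indeed, by~\eqref{eq:pq-lim} and the structure of $I$, a subsequence of $(t_n)$ either converges to some $t_\ast \in I_{\max}(\vec\psi)$, in which case continuity of the wave-map flow in $\HH_0$ gives $\vec \psi(t_n) \to \vec \psi(t_\ast)$ strongly and we may take $\nu_n \equiv 1$; or else $t_n \to T_+$, in which case the uniform $\HH_0$ bound above permits a direct application of Lemma~\ref{l:1profile}, which delivers the scales $\nu_n$ and the profile $\vec \varphi$.

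To construct a continuous modulation $\nu \in C(I;(0,\infty))$, I would fix a reference profile $\vec \phi_\ast \in \HH_0$ arising from the sequential compactness above and a small $\delta_0 > 0$ smaller than a universal constant times $\|\vec\phi_\ast\|_{\HH_0}$, and set
\[
\nu(t) := \inf\Big\{ \nu > 0 : \inf_{\varrho>0} \big\| \vec\psi(t)_{1/\nu} - (\vec\phi_\ast)_{1/\varrho} \big\|_{\HH_0} \leq \delta_0 \Big\}.
\]
The sequential compactness just proved ensures that this quantity is well-defined, positive, and locally bounded on $I$, while continuity of $t \mapsto \vec \psi(t)$ in $\HH_0$ transfers to continuity of $\nu(t)$ via an elementary argument. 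Pre-compactness of $\calK$ is then automatic: any sequence in $\calK$ has the form $\vec \psi(t_n)_{1/\nu(t_n)}$, and the sequential compactness combined with the defining normalization shows the scales $\nu(t_n)$ differ from the compactness-producing scales only by factors bounded away from $0$ and $\infty$, so a subsequence converges strongly in $\HH_0$.

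The main obstacle is the continuous selection of $\nu(t)$; the sequential compactness itself is essentially immediate from Lemma~\ref{l:1profile} combined with local flow continuity. The subtlety in the selection is the possibility that different sequences in $I$ produce limit profiles that are not scaling-equivalent to a single reference $\vec\phi_\ast$. This is handled by observing, via a diagonal extraction using Lemma~\ref{l:1profile}, that every such limit profile must lie in the closure of the orbit of a single $\vec\phi_\ast$ under rescaling, so one global reference suffices and the residual ambiguity is absorbed into the choice of $\delta_0$.
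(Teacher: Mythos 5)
Your first step is sound and is exactly the paper's first step: the uniform bound $\sup_{t\in I}\|\vec\psi(t)\|_{\HH_0}\le C(\eps')$ from Lemma~\ref{l:d-size} together with Lemma~\ref{l:1profile} (and flow continuity for interior limit times) gives sequential pre-compactness modulo scales. The gap is in your continuous selection of $\nu(t)$, and it is twofold. First, your defining formula is scale-degenerate: the $\HH_0$ norm is invariant under the rescaling $\vec\phi\mapsto\vec\phi_\sigma=(\phi_0(\cdot/\sigma),\tfrac1\sigma\phi_1(\cdot/\sigma))$, so for every $\nu>0$
\EQ{
\inf_{\varrho>0}\big\|\vec\psi(t)_{1/\nu}-(\vec\phi_*)_{1/\varrho}\big\|_{\HH_0}
=\inf_{\sigma>0}\big\|\vec\psi(t)-(\vec\phi_*)_{\sigma}\big\|_{\HH_0},
}
a quantity that does not depend on $\nu$ at all. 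Hence the set $\{\nu>0:\ \cdots\le\delta_0\}$ in your definition is either all of $(0,\infty)$ or empty, and your $\nu(t)$ is either $0$ or $+\infty$; it never defines a map $I\to(0,\infty)$. Second, even the natural repair (drop the inner infimum and set $\nu(t):=\inf\{\nu>0:\|\vec\psi(t)_{1/\nu}-\vec\phi_*\|_{\HH_0}\le\delta_0\}$) requires that every $\vec\psi(t)$, $t\in I$, lie within $\delta_0$ of the scaling orbit of a \emph{single} profile $\vec\phi_*$. Your justification of this — that all limit profiles produced by Lemma~\ref{l:1profile} lie in the closure of one scaling orbit, via a diagonal extraction — is not a proof: the lemma only yields some nonzero limit along a subsequence, with no uniqueness, and different sequences $t_n$ may a priori produce limits that are not rescalings of one another. (Continuity of the repaired $\nu(t)$ would also need an argument; such infima are in general only semicontinuous in $t$.)

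The paper avoids both difficulties with a profile-free, scale-breaking normalization: $\nu(t)$ is defined as the unique $\nu>0$ for which $\int_0^\infty e^{-r}\big((\p_t\psi_{1/\nu}(t,r))^2+(\p_r\psi_{1/\nu}(t,r))^2+\psi_{1/\nu}(t,r)^2/r^2\big)\,r\,dr=\tfrac12\|\vec\psi(t)\|_{\HH_0}^2$. After the change of variables $\rho=\nu r$ the left side is continuous and strictly monotone in $\nu$, interpolating between $0$ and $\|\vec\psi(t)\|_{\HH_0}^2$, so $\nu(t)$ exists, is unique, and inherits continuity from the continuity of $t\mapsto\vec\psi(t)$ in $\HH_0$. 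Pre-compactness of $\calK$ is then obtained by contradiction: if $\vec\psi(t_n)_{1/\nu(t_n)}$ had no convergent subsequence, extract the compactness scales $\nu_n$ and a nonzero limit $\vec\fy$ from the sequential step and pass to the limit in the normalization identity; if $\nu_n/\nu(t_n)\to 0$ or $\infty$ the weighted integral would tend to $\|\vec\fy\|_{\HH_0}^2$ or $0$ rather than $\tfrac12\|\vec\fy\|_{\HH_0}^2>0$, so $\nu_n/\nu(t_n)$ stays in a compact subset of $(0,\infty)$ and a subsequence of $\vec\psi(t_n)_{1/\nu(t_n)}$ converges after all. You should replace your selection mechanism by this (or some other scale-covariant functional that does not reference a limit profile).
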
 

\begin{proof}%[Proof of Lemma~\ref{l:Icompact}]
We will first show pre-compactness along an arbitrary sequence of times. In particular, we claim that if $t_n \in I$ is a sequence of times, then there exists a subsequence, which we continue to denote by $t_n$, and a sequence of scales $\nu_n \in (0,\infty)$ so that $\vec \psi(t_n)_{1/\nu_n}$ converges strongly in $\HH_0$. Indeed, by Lemma~\ref{l:d-size} and \eqref{eq:d-large-I}, we conclude that
\EQ{ \label{eq:IepsH} 
\| \vec \psi(t) \|_{\HH_0} \le C(\eps') \quad \forall t \in I.
}
The claim now follows immediately from Lemma~\ref{l:1profile}.  

We now transfer the above sequential pre-compactness to full pre-compactness using continuity of the flow.
For $t \in I$, we define $\nu(t)$ to be the unique positive number so that 
\begin{equation}
\begin{gathered}
\int_0^\infty  \mathrm e^{-r}\left(( \p_t \psi_{1/\nu(t)}(t, r))^2+( \p_r \psi_{1/\nu(t)}(t, r))^2 +   \frac{(\psi_{1/\nu(t)}(t, r))^2}{r^2} \right) rdr \\ = \frac 12 \| \vec \psi(t) \|_{\HH_0}^2
\end{gathered}
\end{equation}
By the change of variables $\rho = \nu(t) r$, we see that $\nu(t)$ is well defined.  Moreover, since the flow $t \mapsto \vec \psi(t)$ is continuous in $\HH_0$, the function $\nu(t)$ is continuous.

Assume, towards a contradiction, that $\{ \vec \psi(t)_{1/\nu(t)} \mid t \in I \}$ is not pre-compact in $\HH_0$.
Then there exists a sequence of times $t_n \in I$ such that $\vec \psi(t_n)_{1/\nu(t_n)}$ has no convergent subsequence.
By our sequential pre-compactness claim, there exist a subsequence (still denoted $t_n$)
and scales $\nu_n$ such that $\vec \psi(t_n)_{1/\nu_n}$ converges in $\HH_0$
to some $\vec\fy = (\fy_0, \fy_1) \neq (0,0)$ (see Lemma \ref{l:1profile}).

We claim that there exist $C > 0$ such that  $1/C \leq \nu_n / \nu(t_n) \leq C$. Suppose not. Then after passing to a subsequence, either $\nu_n / \nu(t_n) \rar \infty$ or $\nu_n / \nu(t_n) \rar 0$.  By a change of variables we have
\begin{align*}
\int_0^\infty &  \mathrm e^{-r}\left(( \p_t \psi_{1/\nu(t_n)}(t_n, r))^2+( \p_r \psi_{1/\nu(t_n)}(t_n, r))^2 +   \frac{(\psi_{1/\nu(t_n)}(t_n, r))^2}{r^2} \right) rdr \\&=
\int_0^\infty  \mathrm e^{- \nu_n r / \nu(t_n)}\left(( \p_t \psi_{1/\nu_n}(t_n, r))^2+( \p_r \psi_{1/\nu_n}(t_n, r))^2 +   \frac{(\psi_{1/\nu_n}(t_n, r))^2}{r^2} \right) rdr 
\end{align*}  
Since $\|\vec\psi(t_n)\|_{\HH_0}$ converges to $\|\vec\fy\|_{\HH_0}$, the choice of $\nu(t)$ implies the left hand side above converges to $\frac{1}{2} \| \vec \varphi \|_{\HH_0}^2 > 0$.  Since $\vec \psi(t_n)_{1/\nu_n} \rar \vec \varphi$ in $\HH_0$, if either $\nu_n / \nu(t_n) \rar \infty$ or $\nu_n / \nu(t_n) \rar 0$ then the right hand side converges to either 0 or $\| \vec \varphi \|_{\HH_0}^2$, a contradiction. This proves the claim. 

Since $1/C \leq \nu_n / \nu(t_n) \leq C$, the sequence $\nu_n / \nu(t_n)$ has a sub sequential limit in $(0,\infty)$.
Thus $\vec \psi(t_n)_{1/\nu(t_n)}$ has a convergent subsequence in $\HH_0$.  This contradicts our initial assumption that  $\vec \psi(t_n)_{1/\nu(t_n)}$ has no convergent subsequence and finishes the proof. 
\end{proof}

For $m \in \N$, we denote the length of a good interval by 
\EQ{
\nu_m := |I_m| = a_m - b_{m-1}.
}
\begin{lem}\label{l:interval-bd}
There exists $C_2 > 0$ such that for all $m \geq 1$ and all $t \in I_m$ we have 
\EQ{
\frac{1}{C_2} \nu(t) \leq \nu_m \leq C_2 \nu(t).
}
\end{lem}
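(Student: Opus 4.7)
The plan is to prove both inequalities by contradiction, exploiting the pre-compactness of $\calK$ from Lemma~\ref{l:Icompact} together with the structural constraints on $\bfd$ over good intervals from Lemma~\ref{l:time_split}. Throughout, the uniform bound $\|\vec{\psi}(t)\|_{\HH_0}\le C(\eps')$ on $I$ from~\eqref{eq:IepsH} provides the common input.

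For the upper bound $\nu(t)\le C_2\nu_m$: suppose it fails along sequences $m_n$ and $t_n\in I_{m_n}$ with $\nu(t_n)/\nu_{m_n}\to\infty$. Rescale $\vec{\psi}^{(n)}(s):=\vec{\psi}(t_n+s\nu(t_n))_{1/\nu(t_n)}$. Then $\vec{\psi}^{(n)}(0)=\vec{\psi}(t_n)_{1/\nu(t_n)}\in\calK$, so along a subsequence $\vec{\psi}^{(n)}(0)\to\vec{\phi}_0$ in $\HH_0$. In the rescaled time, the good interval has length $\nu_{m_n}/\nu(t_n)\to 0$. By local well-posedness (Proposition~\ref{l:scattering}) and continuous dependence in $\HH_0$, with modulus depending only on $C(\eps')$, one has $\|\vec{\psi}^{(n)}(s)-\vec{\psi}^{(n)}(0)\|_{\HH_0}<\eta$ throughout the rescaled good interval once $n$ is large, for any prescribed $\eta>0$. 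Combining scale invariance of the $\HH_0$-norm with the elementary Lipschitz bound $|\sqrt{\bfd(\vec{f})}-\sqrt{\bfd(\vec{g})}|\le C\|\vec{f}-\vec{g}\|_{\HH_0}$ that follows directly from~\eqref{eq:ddef}, this translates to $|\sqrt{\bfd(\vec{\psi}(t))}-\sqrt{\bfd(\vec{\psi}(t_n))}|\le C\eta$ for all $t\in I_{m_n}$. But~\eqref{eq:d-small-some} gives $\bfd(\vec{\psi}(b_{m_n-1}))=\eps$ while~\eqref{eq:d-large-some} produces $t^{*}\in I_{m_n}$ with $\bfd(\vec{\psi}(t^{*}))\ge 2\eps$; choosing $\eta\ll\sqrt{2\eps}-\sqrt{\eps}$ yields a contradiction.

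For the lower bound $\nu_m\le C_2\nu(t)$: suppose $\nu_{m_n}/\nu(t_n)\to\infty$ for some sequence $t_n\in I_{m_n}$. By possibly replacing $t_n$ with another point of $I_{m_n}$ where $\nu$ is small and invoking time-reversibility of~\eqref{eq:wmk} when necessary, we may assume $(a_{m_n}-t_n)/\nu(t_n)\to\infty$. Rescaling as above and passing to a subsequence, $\vec{\psi}^{(n)}(0)\to\vec{\phi}_0$ in $\HH_0$. Because $\|\vec{\psi}^{(n)}(s)\|_{\HH_0}\le C(\eps')$ on the rescaled good interval $[0,(a_{m_n}-t_n)/\nu(t_n)]$, iterating local well-posedness shows the limit $\vec{\phi}(s)$ with $\vec{\phi}(0)=\vec{\phi}_0$ is defined on all of $[0,\infty)$, and $\vec{\psi}^{(n)}(s)\to\vec{\phi}(s)$ in $\HH_0$ uniformly on compact subsets. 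Scale invariance of $\bfd$ together with~\eqref{eq:d-large-I} gives $\bfd(\vec{\phi}(s))\ge\eps'$ for all $s\ge 0$. Since $\E(\vec{\phi})=8\pi$, Proposition~\ref{p:cjk} applied to $\vec{\phi}$ forces $\vec{\phi}$ to scatter forward (otherwise $\liminf_{s\to\infty}\bfd(\vec{\phi}(s))=0$, conflicting with $\bfd\ge\eps'$), so $\|\vec{\phi}/r\|_{L^3_sL^6_r([0,\infty))}<\infty$. The long-time perturbation Lemma~\ref{l:pert} then yields $\|\vec{\psi}^{(n)}/r\|_{L^3L^6([0,\infty))}$ uniformly bounded for $n$ large; rescaling back via scale invariance of the $L^3L^6$ norm on $\R^4$, $\|\psi/r\|_{L^3_tL^6_r([t_n,T_+(\vec{\psi})))}<\infty$, and Proposition~\ref{l:scattering} forces $\vec{\psi}$ to scatter forward, contradicting the standing hypothesis of Section~\ref{s:dynamics}.

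The main obstacle is the lower-bound argument, where one must verify that the limiting wave map $\vec{\phi}$ extends globally forward in time; this uses the uniform $\HH_0$-bound on good intervals to iterate local well-posedness without finite-time breakdown. A secondary subtlety is arranging $(a_{m_n}-t_n)/\nu(t_n)\to\infty$ by choosing $t_n$ suitably inside $I_{m_n}$, which requires the symmetric time-reversed argument (producing backward scattering for $\vec{\phi}$ on $(-\infty,0]$) when the minimum of $\nu$ on $I_{m_n}$ is attained in the right half of the interval.
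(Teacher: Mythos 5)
Your first half (the direction where the rescaled good interval shrinks, $\nu_{m_n}/\nu(t_n)\to 0$) is essentially the paper's argument: extract a limit $\vec\phi_0$ of $\vec\psi(t_n)_{1/\nu(t_n)}$ from the pre-compactness of $\calK$, observe that every time in $I_{m_n}$ corresponds to a rescaled time tending to $0$, and contradict the fact that $\bfd$ equals $\eps$ at the endpoints \eqref{eq:d-small-some} but exceeds $2\eps$ somewhere inside \eqref{eq:d-large-some}. One caveat: your phrase ``continuous dependence in $\HH_0$ with modulus depending only on $C(\eps')$'' is not correct for this energy-critical problem (neither the local existence time nor the continuity modulus is controlled by the size of the data alone); the step is rescued exactly because you have already extracted a strongly convergent subsequence, so you should argue, as the paper does, via the local solution with data $\vec\phi_0$ and perturbation on a fixed short interval.

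The genuine gap is in the other direction ($\nu_{m_n}/\nu(t_n)\to\infty$). You assert that the limiting solution $\vec\phi$ is defined on all of $[0,\infty)$ and that $\vec\psi^{(n)}(s)\to\vec\phi(s)$ uniformly on compact subsets, ``by iterating local well-posedness'' from the uniform bound $\|\vec\psi(t)\|_{\HH_0}\le C(\eps')$ on good intervals. For the energy-critical equation \eqref{eq:wmk} a bound on the $\HH_0$ norm gives no lower bound on the lifespan and does not preclude finite-time breakdown (type II/threshold blow-up solutions, such as the one constructed in Theorem~\ref{t:main2}, exist precisely at this energy), so $T_+(\vec\phi_0)$ could a priori be finite and nothing in your argument rules this out; consequently the claims ``$\bfd(\vec\phi(s))\ge\eps'$ for all $s\ge0$,'' the forced forward scattering of $\vec\phi$, and the subsequent application of Lemma~\ref{l:pert} on $[0,\infty)$ are all unsupported. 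The paper avoids this entirely: by Lemma~\ref{l:1profile} the limit is a non-scattering threshold solution on its maximal interval, so Proposition~\ref{p:cjk} furnishes a \emph{fixed} time $\sigma<T_+(\vec\phi_0)$ with $\bfd(\vec\phi(\sigma))\le\tfrac12\eps'$; continuous dependence on the compact interval $[0,\sigma]$ then gives $\bfd(\vec\psi(t_n+\nu(t_n)\sigma))\le\tfrac12\eps'$ for large $n$, while $(a_{m_n}-t_n)/\nu(t_n)\to\infty$ guarantees $t_n+\nu(t_n)\sigma$ still lies in the good interval, contradicting \eqref{eq:d-large-I}. Your scheme could be repaired along these lines (drop globality, work on $[0,T_+(\vec\phi_0))$ and contradict $\liminf\bfd=0$ directly), but as written the key step is missing.
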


\begin{proof}[Proof of Lemma~\ref{l:interval-bd}]
The proof is by contradiction.   Suppose that there exists a sequence of integers $m_\ell$ and times $t_\ell \in I_{m_\ell}$ such that
\EQ{ \label{eq:int-bd-part-1}
\lim_{\ell\to\infty}\frac{\nu_{m_\ell}}{\nu(t_\ell)} = 0.
}
Let $\vec\psi_\ell(s)$ be the solution of \eqref{eq:wmk} with initial data
$
\vec\psi_\ell(0) = \vec\psi(t_\ell)_{1/\nu(t_\ell)}.
$
By Lemma \ref{l:Icompact} and after extraction of a subsequence, there exists $\vec \varphi_0 \neq (0,0)$ such that $\vec\psi_\ell(0) \rar \vec\fy_0$ in $\HH_0$. 
Let $\vec\fy(s)$ be the solution of \eqref{eq:wmk} with initial data
$\vec\fy(0) = \vec\fy_0$ which is defined on some interval $[-s_0,s_0]$. By the well-posedness theory for \eqref{eq:wmk}, the flow $\vec\psi_\ell(s)$ exists for $s \in [{-}s_0, s_0]$ for all sufficiently large $\ell$, and $$\lim_{\ell \rar \infty} \| \vec\psi_\ell(s) - \vec\fy(s) \|_{L^\infty_t([-s_0,s_0]; \HH_0)} = 0.$$

Let $t_\ell' \in I_{m_\ell}$ be any sequence of times.  We define $s_\ell = \frac{t_\ell' - t_\ell}{\nu(t_\ell)}$.
By \eqref{eq:int-bd-part-1} we have that $\lim_{\ell} s_\ell = 0$.
Thus, $s_\ell \in [{-}s_0, s_0]$ for all $\ell$ sufficiently large, and we conclude that 
\EQ{
\lim_{\ell\to\infty}\|\vec\psi_\ell(s_\ell) - \vec\fy(s_\ell)\|_{\HH_0} = 0.
}
By continuity of the flow $\lim_{\ell\to \infty}\|\vec\fy(s_\ell) - \vec\fy_0\|_{\HH_0} = 0$,
which by the triangle inequality implies 
\EQ{
\lim_{\ell\to\infty}\|\vec\psi_\ell(s_\ell) - \vec\fy_0\|_{\HH_0} = 0.
}
In particular, $\lim_{\ell\to\infty} \bfd(\vec\psi_\ell(s_\ell)) = \bfd(\vec\fy_0)$.
By the time translation and scaling symmetry of \eqref{eq:wmk}, we have $\vec\psi_\ell(s_\ell) = \vec\psi(t_\ell')_{1/\nu(t_\ell)}$.   Thus, $\bfd(\vec\psi(t_\ell')) = \bfd(\vec\psi_\ell(s_\ell))$
and we obtain
\EQ{
\lim_{\ell\to\infty} \bfd(\vec\psi(t_\ell')) = \bfd(\vec\fy_0),
}
for any sequence $t_\ell' \in I_{m_\ell}$. If we choose $t_{\ell}' = a_{m_\ell}$ then we conclude that $\bfd(\vec \varphi_0) = \e$.  But by Lemma \ref{l:time_split}, there exists a sequence of times $t_{\ell}'$ such that $\bfd(\vec \psi(t'_\ell)) \geq 2\e$ so then $\bfd(\vec \varphi_0) \geq 2\e$.  This is a contradiction, and we obtain the lower bound of the lemma. 

Suppose now that there exist a sequence of integers $m_\ell$ and times $t_\ell \in I_{m_\ell}$ such that
\EQ{
\lim_{\ell\to\infty}\frac{\nu(t_\ell)}{\nu_{m_\ell}} = 0.
}
After extracting a subsequence, either $t_\ell \leq \frac{a_{m_\ell} + b_{m_\ell-1}}{2}$ for all $\ell$ or $t_\ell \geq \frac{a_{m_\ell} + b_{m_\ell-1}}{2}$ for all $\ell$.  In the former case we conclude that 
\EQ{\label{eq:int-bd-part-2}
\lim_{\ell\to\infty}\frac{\nu(t_\ell)}{a_{m_\ell} - t_\ell} = 0.
}
We will consider this situation only; the other case is treated similarly. 

As before we denote by $\vec\psi_\ell(s)$ the solution of \eqref{eq:wmk} with initial data
$\vec\psi_\ell(0) = \vec\psi(t_\ell)_{1/\nu(t_\ell)}$,
and assume (after extraction if necessary) $\vec\psi_\ell(0) \to \vec\fy_0 \in \HH_0$.
Let $\vec\fy(s): (-T_-(\vec \fy_0), T_+(\vec \fy_0)) \to \HH_0$ be the solution of \eqref{eq:wmk} with initial data
$\vec\fy(0) = \vec\fy_0$.
By Lemma~\ref{l:1profile}, the solution $\vec \fy(s)$ does not scatter in forward or backward time and has threshold energy 
\EQ{
 \E( \vec \fy) = \E( \vec \psi) = 2 \E(\vec Q).
} 
Then by Proposition~\ref{p:cjk} there exists $\sigma \in[0, T_+(\vec\fy_0))$ such that
$ \bfd(\vec\fy(\sigma)) \leq \frac 12 \eps'$.
By the well-posedness theory for \eqref{eq:wmk}, $\vec\psi_\ell(s)$ is defined for $s \in [0, \sigma]$ for all $\ell$ sufficiently large and
\begin{align*} 
\vec\psi_\ell(\sigma) \to \vec\phi(\sigma) \mbox{ in } \HH_0. 
\end{align*}
Thus, $\bfd(\vec\psi_\ell(\sigma)) \to \bfd(\vec\phi(\sigma)) \leq \frac 12 \eps'$.

Define $t_\ell' := t_\ell + \nu(t_\ell)\sigma$. Then by the time translation and scaling symmetries of \eqref{eq:wmk} we have $\vec\psi(t_\ell') = \vec\psi_\ell(\sigma)_{\nu(t_\ell)}$, so 
\EQ{
\lim_{\ell \to \infty} \bfd(\vec\psi(t_\ell')) = \lim_{\ell\to\infty} \bfd(\vec\psi_\ell(\sigma)) \leq \frac 12 \eps'.
}
Our assumption \eqref{eq:int-bd-part-2} implies that for all $\ell$ sufficiently large, we have
$t_\ell \leq t_\ell' \leq a_{m_\ell}$.  Thus, by \eqref{eq:d-large-all} we conclude that 
$\bfd(\vec\psi(t_\ell')) \geq \eps'$, a contradiction.  Thus the upper bound of the lemma holds, and the proof is complete. 
\end{proof}

An immediate corollary of Lemma \ref{l:interval-bd} is the following. 

\begin{cor}\label{cor:k1}
	The modulated trajectory
	\EQ{
		\calK_1:=  \bigcup_{m \geq 1}\{  \vec \psi(t)_{1/\nu_m} \mid t \in I_m \} \label{eq:K1-comp}
	}
	is pre-compact in $\HH_0$. 
\end{cor}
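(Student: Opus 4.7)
The plan is to reduce the pre-compactness of $\calK_1$ to that of $\calK$ established in Lemma~\ref{l:Icompact}, using the two-sided bound $\nu(t)/\nu_m \in [1/C_2, C_2]$ on each interval $I_m$ provided by Lemma~\ref{l:interval-bd}. The key structural fact is that the rescaling operator $T_\rho$ on $\HH_0$, defined on pairs $(\fy_0, \fy_1)$ by $T_\rho(\fy_0, \fy_1) := ((\fy_0)_\rho, (\fy_1)_{\uln{\rho}})$, is an $\HH_0$-isometry and satisfies the group law $T_\rho \circ T_\sigma = T_{\rho\sigma}$.

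The first step is the algebraic identity
\EQ{
\vec\psi(t)_{1/\nu_m} = T_{\nu(t)/\nu_m}\bigl(\vec\psi(t)_{1/\nu(t)}\bigr) \quad \forall t \in I_m,
}
which is immediate from the group law applied with $\rho = 1/\nu_m$ and $\sigma = 1/\nu(t)$. Given an arbitrary sequence $(t_n)$ with $t_n \in I_{m_n}$, Lemma~\ref{l:Icompact} produces a subsequence (still indexed by $n$) and some $\vec\fy \in \HH_0$ such that $\vec\psi(t_n)_{1/\nu(t_n)} \to \vec\fy$ strongly in $\HH_0$. By Lemma~\ref{l:interval-bd}, the scale ratios $\rho_n := \nu(t_n)/\nu_{m_n}$ lie in the compact interval $[1/C_2, C_2]$, so a further extraction yields $\rho_n \to \rho_* \in [1/C_2, C_2]$.

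The proof is then completed by the joint continuity of the map $(\rho, \vec\fy) \mapsto T_\rho\vec\fy$ on the product $[1/C_2, C_2] \times \HH_0$. This is a standard density argument: since $T_\rho$ is an $\HH_0$-isometry for each $\rho$, we get continuity in $\vec\fy$ uniformly in $\rho$; and for each fixed $\vec\fy \in \HH_0$, continuity in $\rho$ is obtained by approximating $\vec\fy$ by smooth, compactly supported data, for which the continuity of $\rho \mapsto T_\rho \vec\fy$ is elementary. Combining these two inputs gives
\EQ{
\vec\psi(t_n)_{1/\nu_{m_n}} = T_{\rho_n}\bigl(\vec\psi(t_n)_{1/\nu(t_n)}\bigr) \to T_{\rho_*}\vec\fy \quad \text{in } \HH_0,
}
which establishes the required pre-compactness.

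There is no substantive analytic obstacle: the corollary is essentially a bookkeeping step once Lemmas~\ref{l:Icompact} and~\ref{l:interval-bd} are in hand, and the only minor technical point --- joint continuity of rescaling on a compact range of scales --- is a soft functional-analytic fact that follows from the density of smooth compactly supported pairs in $\HH_0$ together with the isometry property of $T_\rho$.
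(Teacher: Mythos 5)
Your argument is correct and is exactly the route the paper intends: the paper states Corollary~\ref{cor:k1} as an immediate consequence of Lemma~\ref{l:Icompact} and the two-sided bound of Lemma~\ref{l:interval-bd}, with no further proof given. Your write-up simply makes explicit the standard bookkeeping (group law and isometry of rescaling, compactness of the ratio $\nu(t_n)/\nu_{m_n}$ in $[1/C_2,C_2]$, and joint continuity of $(\rho,\vec\fy)\mapsto T_\rho\vec\fy$ via density), all of which is sound.
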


Before concluding the proof of Proposition \ref{p:psi_t}, we record the following standard consequence of compactness of the trajectory. 

\begin{lem}\label{l:err-on-I}
	Given any $\de > 0$, there exists $R_0 > 0$ such that if $R_1 \geq R_0$, then for all $m = 2, 3, \ldots$ we have
	\EQ{
		\int_{I_m}|\Omega_{\nu_m R_1}(\vec\psi(t))|d t \leq \de \nu_m.
	}
\end{lem}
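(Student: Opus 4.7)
The plan is to reduce the bound on $|\Omega_{\nu_m R_1}(\vec\psi(t))|$ via the scaling invariance of the energy tail, and then invoke the pre-compactness of the modulated trajectory $\calK_1$ furnished by Corollary~\ref{cor:k1}. Specifically, from~\eqref{eq:OmRest} we have the pointwise bound
\EQ{
|\Om_{\nu_m R_1}(\vec\psi(t))| \lesssim \int_{\nu_m R_1}^{\infty}\left(\psi_t^2(t,r) + \psi_r^2(t,r) + \frac{\sin^2\psi(t,r)}{r^2}\right) r\, dr.
}
A direct change of variables $r \mapsto \nu_m r$ shows that the right-hand side equals
\EQ{
\int_{R_1}^{\infty}\left( \dot\fy^2(r) + (\p_r \fy(r))^2 + \frac{\sin^2\fy(r)}{r^2}\right) r\, dr,
}
where $(\fy,\dot\fy) := \vec\psi(t)_{1/\nu_m} \in \calK_1$. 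In other words, $|\Om_{\nu_m R_1}(\vec\psi(t))|$ is controlled by the tail of the energy density of the rescaled element of $\calK_1$.

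Now I would use the fact that a pre-compact subset of $\HH_0$ has uniformly vanishing tails. By Corollary~\ref{cor:k1} the set $\calK_1$ is pre-compact in $\HH_0$, so for any $\de' > 0$ there is an $R_0 > 0$ such that for every $(\fy,\dot\fy)\in \calK_1$ and every $R_1 \geq R_0$,
\EQ{
\int_{R_1}^{\infty}\left(\dot\fy^2 + (\p_r \fy)^2 + \frac{\sin^2\fy}{r^2}\right) r\, dr \le \de'.
}
This fact follows from a standard argument: if the tails did not vanish uniformly there would exist a sequence in $\calK_1$ with $H \times L^2$ mass concentrated on $\{r \geq R_n\}$ with $R_n \to \infty$, contradicting strong convergence along any subsequence.

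Combining the last two displayed inequalities, for $R_1 \geq R_0$ we obtain $|\Om_{\nu_m R_1}(\vec\psi(t))| \le C\de'$ uniformly for $t \in I_m$ and $m \geq 2$, where $C$ is the implicit constant from~\eqref{eq:OmRest}. Integrating over $I_m$, which has length $\nu_m$, gives
\EQ{
\int_{I_m} |\Om_{\nu_m R_1}(\vec\psi(t))|\, dt \le C \de'\, \nu_m,
}
and choosing $\de' = \de/C$ from the outset completes the proof. The only conceptual step is the uniform tail-vanishing for pre-compact subsets of $\HH_0$, which is standard; the scaling computation is essentially bookkeeping since the relevant energy density is scale-invariant, so I do not anticipate any real obstacle.
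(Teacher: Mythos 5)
Your proof is correct and follows essentially the same route as the paper: bound $\Omega_{\nu_m R_1}$ by the exterior energy via \eqref{eq:OmRest}, rescale to reduce to a uniform tail estimate for elements of $\calK_1$, and conclude from the pre-compactness given by Corollary~\ref{cor:k1}. The only difference is that you spell out the standard uniform-tail-vanishing argument that the paper leaves implicit.
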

\begin{proof}
	By a change of variables, it suffices to show that
	\EQ{
		\left | \Omega_{R_1}(\vec\psi(t)_{1/\nu_m}) \right| \leq \de, 
		\quad \forall t \in I,
	}
for all $R_1$ sufficiently large.  By \eqref{eq:OmRest}
\begin{align*}
\left | \Omega_{R_1}(\vec\psi(t)_{1/\nu_m}) \right | 
\lesssim \cl E^\infty_{R_1}(\vec\psi(t)_{1/\nu_m}). 
\end{align*}
By the pre-compactness of the trajectory $\KK_1$, the result follows. 
\end{proof}

\subsection{Proof of Proposition \ref{p:psi_t}}. 
By \eqref{eq:d-conv-0}, there exists a sequence of times $c_{m_j}$ such that $\bfd(\vec \psi(c_{m_j})) \rar 0$ as $j \rar \infty$.  Let $\de > 0$, and let $R_0$ be as in Lemma \ref{l:err-on-I}.  For $m_j < m_k$, we define
\begin{align*}
R := R_0 \max_{m_j < m < m_k} \nu_m. 
\end{align*} 
By the virial identity, Lemma \ref{l:vir}, we have 
\begin{align*}
%\int \displaylimits_{I\cap (c_{m_j},c_{m_k})} \| \p_t \psi(t)\|^2 dt \leq 
\int_{c_{m_j}}^{c_{m_k}} \| \p_t \psi(t) \|^2 dt &\leq 
 \left | \ang{ \p_t \psi \mid \chi_R \, r \p_r \psi}(t) \Big |_{t = c_{m_j}}^{t = c_{m_k}} \right | + \sum_{m = m_j}^{m_k-1} \int_{b_{m}}^{a_{m+1}} | \Om_{R}(\vec \psi(t)) | \, dt \\
&+ \sum_{m=m_j}^{m_k-1}\int_{c_m}^{b_m}| \Om_{R}(\vec \psi(t)) | d t 
+ \sum_{m=m_j}^{m_k-1} \int_{a_{m+1}}^{c_{m+1}} | \Om_{R}(\vec \psi(t)) | d t.
\end{align*}
Replacing the left-hand side above with an integral over only the good intervals and using Lemma \ref{l:error-estim} to bound the right-hand side, we obtain
\begin{align}
\begin{split}\label{eq:virial_est}
\int \displaylimits_{I\cap (c_{m_j},c_{m_k})} \| \p_t \psi(t)\|^2 dt
 &\leq 
C_0 R \left [ \sqrt{\bfd(\vec \psi(c_{m_j}))} + \sqrt{\bfd(\vec \psi(c_{m_k}))} \right ] + \sum_{m = m_j}^{m_k-1} \int_{b_{m}}^{a_{m+1}} | \Om_{R}(\vec \psi(t)) | \, dt \\
&+ C_0 \sum_{m=m_j}^{m_k-1}\int_{c_m}^{b_m}\sqrt{\bfd(\vec\psi(t))}d t 
+ C_0 \sum_{m=m_j}^{m_k-1}  \int_{a_{m+1}}^{c_{m+1}}\sqrt{\bfd(\vec\psi(t))}d t.
\end{split}
\end{align}
The estimate \eqref{eq:err-mod-contr-1} implies that 
\begin{align*}
 C_0 \sum_{m=m_j}^{m_k-1}\int_{c_m}^{b_m}\sqrt{\bfd(\vec\psi(t))}d t 
 \leq \frac{1}{10} \sum_{m = m_j}^{m_k-1} \int_{b_{m}}^{a_{m+1}} \| \p_t \psi(t)\|^2 dt = \frac{1}{10} \int \displaylimits_{I\cap (c_{m_j},c_{m_k})} \| \p_t \psi(t)\|^2 dt,
\end{align*}
and the estimate \eqref{eq:err-mod-contr-2} implies that 
\begin{align*}
 C_0 \sum_{m=m_j}^{m_k-1}  \int_{a_{m+1}}^{c_{m+1}} \sqrt{\bfd(\vec\psi(t))}d t 
 \leq \frac{1}{10} \sum_{m = m_j}^{m_k-1} \int_{b_{m}}^{a_{m+1}} \| \p_t \psi(t)\|^2 dt = \frac{1}{10} \int \displaylimits_{I\cap (c_{m_j},c_{m_k})} \| \p_t \psi(t)\|^2 dt.
\end{align*}
By our choice of $R$ we have 
\begin{align*}
 \sum_{m = m_j}^{m_k-1} \int_{b_{m}}^{a_{m+1}} | \Om_{R}(\vec \psi(t)) | \, dt
 \leq \de  \sum_{m = m_j}^{m_k-1} \nu_m = \de |I \cap (c_{m_j},c_{m_k})|, 
\end{align*}
as well as $R \leq R_0  |I \cap (c_{m_j},c_{m_k})|$. The previous three estimates and \eqref{eq:virial_est} imply that 
\begin{align}
\fint \displaylimits_{I\cap (c_{m_j},c_{m_k})} \| \p_t \psi(t)\|^2 dt 
\leq \frac{5C_0 R_0}{3} \left [ \sqrt{\bfd(\vec \psi(c_{m_j}))} + \sqrt{\bfd(\vec \psi(c_{m_k}))} \right ] + \frac{5 \de}{3}. 
\end{align}
Since $\bfd (\vec \psi(c_{m_j})) \rar 0$ as $j \rar \infty$ and $\de$ is arbitrary, we conclude that 
\begin{align}\label{eq:nearly}
\limsup_{j \rar \infty} \limsup_{k \rar \infty} 
\fint \displaylimits_{I\cap (c_{m_j},c_{m_k})} \| \p_t \psi(t)\|^2_{L^2} dt = 0. 
\end{align}

We claim that
there exists a sequence of good intervals $I_{m_\ell} = [b_{m_\ell-1}, a_{m_\ell}]$ such that 
\begin{align}
\lim_{\ell \rar \infty} \fint \displaylimits_{I_{m_\ell}} \| \p_t \psi(t) \|^2_{L^2} dt = 0. \label{average_psit}
\end{align}
If not, then there exists $\de_0 > 0$ such that for all $m = 2, 3, \ldots$, we have 
$\int_{I_m} \| \p_t \psi(t) \|^2_{L^2} dt \geq \de_0 \nu_m.$ Summing this lower bound implies that for every $c_{m_j} < c_{m_k}$ we have 
\begin{align*}
\int  \displaylimits_{I\cap (c_{m_j},c_{m_k})} \| \p_t \psi(t) \|_{L^2}^2 dt \geq \de_0 |I \cap (c_{m_j},c_{m_k})|, 
\end{align*}
which contradicts \eqref{eq:nearly}. This proves our claim.  

We now conclude the proof of Proposition \ref{p:psi_t}. We 
denote the midpoint of our sequence of good intervals $[b_{m_\ell-1}, a_{m_\ell }]$ by $t_{m_\ell} := \frac 12(b_{m_\ell-1} + a_{m_\ell})$.
Note that for any $0<s_1 \le 1/2$ we have
\EQ{
b_{m_{\ell}-1} \le  t_{m_\ell} - \nu_{m_\ell} s_1  \leq t_{m_\ell} + \nu_{m_\ell} s_1  \le   a_{m_\ell}. \label{eq:tm-dist}
}
We define a sequence of solutions of \eqref{eq:wmk} via
\EQ{
\vec \psi_{m_\ell}(s) := \vec\psi(t_{m_\ell} + \nu_{m_\ell} s)_{1/\nu_{m_\ell}}\qquad \text{for }s \in [-s_1, s_1].
}
Then by a change of variables, \eqref{eq:tm-dist} and \eqref{average_psit} we have 
\begin{align}\label{eq:dt_zero}
\lim_{\ell \rar \infty} \int_{-s_1}^{s_1} \|\partial_s \psi_{m_\ell}(s)\|_{L^2}^2d s = 0.
\end{align}
By Corollary \ref{cor:k1} and extraction of a subsequence if necessary, $\vec \psi_{m_\ell}(0) \to \vec\fy_0$ in $\HH_0$.
Let $\vec\fy(s)$ be the solution of \eqref{eq:wmk}
with initial data $\vec\fy(0) = \vec\fy_0$. For $s_1 >  0$  sufficiently small, $\vec  \varphi(t)$ is defined on $[-s_1,s_1]$, and by the well-posedness theory for \eqref{eq:wmk} we have 
\begin{align*}
\lim_{\ell \rar \infty} \sup_{s \in [-s_1,s_1]}
\| \vec\psi_{m_\ell}(s) - \vec\fy(s) \|_{\HH_0} = 0. 
\end{align*}
By \eqref{eq:dt_zero} we conclude that 
\EQ{
\int_{-s_1}^{s_1}\|\partial_s \fy(s)\|_{L^2}^2d s = 0,
}
so $\vec \fy(s)$ is a harmonic map. The only degree-0 harmonic map is the constant map $\vec \fy = (0,0)$.  This contradicts the fact that $\cl E(\vec \varphi) = \cl E(\vec \psi) = 8\pi \neq 0$.  The proof of Proposition \ref{p:psi_t} is complete.  

\qed 

\subsection{Proof of Theorem~\ref{t:main}}
We first use Proposition \ref{p:psi_t} to prove $\vec \psi(t)$ converges to a pure two-bubble or anti two-bubble as $t \rar T_+$.  Let $\e >  0$ be sufficiently small. By Proposition \ref{p:psi_t} there exists a $T_0 \in (T_-, T_+)$ such that 
\begin{align}
\bfd(\vec \psi(t)) < \e, \quad \forall t \geq T_0. 
\end{align}
 We further assume that $\eps < \alpha_0$,
where $\alpha_0$ is the constant from Lemma~\ref{l:dpm}. Towards a contradiction, assume that $\vec \psi(t)$ alternates between being close to a pure two-bubble and anti two-bubble, i.e. that there exist $t_1, t_2 \geq T_0$, $t_1 < t_2$ such that
$\bfd_+(\vec\psi(t_1)) \leq \eps$ and $\bfd_-(\vec\psi(t_2)) \leq \eps$.
By Lemma~\ref{l:dpm} we have $\bfd_+(\vec\psi(t_2)) \geq \alpha_0$
and $\bfd_-(\vec \psi(t_1)) \geq \alpha_0$.  By continuity there exists $t_0 \in (t_1,t_2)$ such that $\bfd_+(\vec\psi(t_0)) = \bfd_-(\vec \psi(t_0))$.  But then again by Lemma \ref{l:dpm}, we conclude that $\bfd_+(\vec \psi(t_0)) = 
\bfd_-(\vec \psi(t_0)) > \al_0 > \e$.  This contradicts our definition of $T_0$ which proves the desired convergence.  Without loss of generality, we assume that $\bfd_+(\vec \psi(t)) \rar 0$ as $t \rar T_+$. 

We now prove finite time blow-up and asymptotics of the scales. By taking $T_0$ larger if necessary, we may assume that 
\begin{align*}
\bfd_+(\vec \psi(t)) < \e, \quad \forall t \geq T_0. 
\end{align*}
We note that as long as $\e >  0$ is sufficiently small, 
the modulation parameters $\lambda(t)$ and $\mu(t)$
are well-defined on $[T_0, T_+)$, and by Lemma \ref{l:modeq}
\begin{align}
\vec \psi(t) = \vec Q_{\la(t)} + \vec Q_{\mu(t)} + o_{\HH_0}(1), \quad \mbox{ as } t \rar T_+. 
\end{align}
Let $\e_0 > 0$ and choose $\e$ smaller if necessary so that the conclusions of Proposition \ref{prop:modulation} hold. 
Let $\zeta(t)$ be as in~\eqref{eq:zetadef} with $L$ and $M$ chosen as in Remark \ref{r:param} so that $\zeta(t) \sim \la(t) |\log \la(t)/\mu(t) |$.
By rescaling if necessary, we can assume that $\mu(T_0) = 1$.

Since $\bfd_+(\vec \psi(t)) \rar 0$ as $t \rar T_+$, there exists a sequence of times $\tau_n \to T_+$ such that
\EQ{
\frac{d}{dt} \Big|_{t = \tau_n}\Big(\frac{\zeta(t)}{\mu(t)}\Big) \leq 0.
}
Then there exist times $t_1 \leq t_0 =: \tau_n$ and $t_2 \leq t_1$ satisfying the conclusions of Proposition \ref{prop:modulation}.
By our choice of $T_0$ and \eqref{eq:d-t1-t2} we have $t_1 \leq T_0$ for every $t_0 = \tau_n$.  From the proof of Proposition \ref{prop:modulation} we recall that $\mu(t) \in [1/2,2]$ on $[T_0,\tau_n]$, and the function $$\xi(t) = -b(t) + \zeta(t)^{\frac 12}$$ satisfies for all $t \in [T_0,\tau_n]$
	\begin{align}\label{eq:fin_est}
	\zeta(t)^{\frac 12} \leq 2 \xi(t) \leq 20 \zeta(t)^{\frac 12}. \quad 
	\xi'(t) \leq -\frac{1}{2} \zeta^{-\frac 12}(t) \xi(t). 
	\end{align}  
Since $\tau_n \rar T_+$, these same bounds hold on $[T_0,T_+)$.  From \eqref{eq:lamud1}, \eqref{eq:b-bound}, \eqref{eq:fin_est} and the fact that $\bfd_+(\vec \psi(t)) \rar 0$ as $t \rar T_+$ we can conclude  
\begin{align*}
\zeta(t) \rar 0 \mbox{ and } \xi(t) \rar 0 \mbox{ as } t \rar T_+. 
\end{align*} 
From \eqref{eq:fin_est} we see that $\xi(t)$ is positive on $[T_0, T_+)$ and satisfies $\xi'(t) \leq -1/4$.  Since $\xi(t) \rar 0$ as $t \rar T_+$, we conclude that $T_+ < \infty$ which proves finite time blow-up. 

We now turn to the asymptotics of the scales.  The estimates \eqref{eq:fin_est} and \eqref{eq:mu'} imply that 
	\begin{align*}
	&\int_{T_0}^{T_+} |\mu'| dt \lesssim \int_{T_0}^{T_+}
	\zeta(t)^{\frac 12} dt \lesssim 
	\int_{T_0}^{T_+} \xi(t) dt 
	\lesssim \int_{T_0}^{T_+} \zeta(t)^{\frac 12} (-\xi'(t)) dt 
	\lesssim \int_{T_0}^{T_+} (-\xi'(t)) dt 
	\lesssim 1
	\end{align*}
Thus, $\mu(t)$ converges to some $\mu_0 \in [1/2,2]$. For the decay of $\la(t)$, we first recall that by \eqref{eq:fin_est} we have $\xi'(t) \lesssim -1$.  By Lemma \ref{p:modp}, we see that 
\begin{align*}
|\xi'(t)| \lesssim |b'(t)| + \zeta^{-1/2} |\zeta'(t)| \lesssim 1.
\end{align*}
Thus, there exists $C > 0$ such that 
\begin{align*}
-C \leq \xi'(t) \leq -\frac{1}{C}, \quad \forall t \in [T_0, T_+),
\end{align*}
which implies 
\begin{align}
\frac{1}{C} (T_+ - t) \leq \xi(t) \leq C(T_+ - t), \quad \forall t \in [T_0, T_+).
\end{align}
Since $\xi(t) \sim \zeta(t)^{\frac{1}{2}} \sim [\la(t) |\log \la(t)|]^{\frac 12}$ on $[T_0, T_+)$, we conclude that 
\begin{align*}
\la(t) |\log \la(t)| \sim (T_+ - t)^2, \quad \mbox{ as } t \rar T_+
\end{align*}
as desired. 

Finally, we show that $\vec \psi$ scatters backward in time. Suppose not. 
Then $-\infty < T_- < T_+ < \infty$, and $\int_{T_-}^{T_+} \sqrt{\bfd(\vec \psi(t))} \, dt < \infty$ by what we have shown up to this point. The virial identity \eqref{eq:vir}, \eqref{eq:virial-end} and the fact that $\bfd(\vec \psi(t)) \rar 0$ as $t \rar T_{\pm}$ imply that 
\begin{align*}
\int_{T_-}^{T_+} \| \p_t \psi(t) \|_{L^2}^2 \, dt 
\leq \int_{T_-}^{T_+} |\Om_R(\psi(t))| \, dt, \quad \forall R > 0. 
\end{align*}
For all $t \in (T_-,T_+)$, we have 
$|\Om_R(\vec \psi(t))| \leq C_0 \sqrt{\bfd(\vec \psi(t)} \in L^1(T_-,T_+)$ and
$\lim_{R \rar \infty} \Om_R(\vec \psi(t)) = 0$.  Thus, by the dominated convergence theorem
\begin{align*}
\int_{T_-}^{T_+} \| \p_t \psi(t) \|_{L^2}^2 \, dt  = 0. 
\end{align*}
We conclude that $\vec \psi$ is a degree-0 harmonic map, i.e $\vec \psi = (0,0)$.  This contradicts $\cl E(\vec \psi) = 8\pi$ and finishes the proof. 

\qed

\section{Construction of a Minimal Blow-up Solution}

\subsection{Proof of Theorem \ref{t:main2}} Let $T > 0$ be small (to be determined later).  We define a function $\ell(t) : [0,T) \rar [0,\infty)$ implicitly by the relation 
\begin{align}\label{eq:ell_def}
\ell(t) |\log \ell(t)| = 2t^2, \quad t \in (0,T),
\end{align} 
with $\ell(0) = 0$. By elementary calculus it is easy to see that 
$\ell \in C^\infty(0,T),$ $\ell$ is increasing on $[0,T)$ and
\begin{align}\label{eq:ell_deriv}
\ell'(t) |\log \ell(t)| = 4t \left [ 1 + O(|\log \ell(t)|^{-1}) \right ].
\end{align}
In particular, this implies that 
\begin{align}
\frac{\ell(t)}{\ell'(t)} &= \frac{t}{2}  \left [ 1 + O(|\log \ell(t)|^{-1}) \right ], \label{eq:ellrelation1} \\
\frac{\ell(t)}{(\ell'(t))^2|\log \ell(t)|} &= \frac{1}{8} + O(|\log \ell(t)|^{-1}) \label{eq:ellrelation2}. 
\end{align}

Let $t_n$ be a sequence in $(0,T)$ which is monotonically decreasing to 0.  We define a sequence of initial data at time $t = t_n$ via 
\begin{align}
\psi_{0,n} &:=  Q_{\ell(t_n)} -  Q, \\
\psi_{1,n} &:= -\ell'(t_n) \La Q_{\underline{\ell(t_n)}} \chi_{\sqrt{R_n \ell(t_n)}},
\end{align} 
where $\chi$ is now a sharp cutoff, $\chi(r) = 1$ for $0 \leq r \leq 1$ and $\chi(r) = 0$ for $r > 1$, and $R_n > 0$ is chosen so that 
\begin{align}
\cl E(\psi_{0,n}, \psi_{1,n}) = 2\cl E(\vec Q).  
\end{align}
We first show that $R_n$ exists and that $R_n + R_n^{-1}$ is bounded. 

\begin{lem}\label{l:R_nlem}
For $T > 0$ sufficiently small, for all $n$ there exists $R_n > 0$ such that the pair of initial data $(\psi_{0,n}, \psi_{1,n})$ defined above satisfies 
$\cl E(\psi_{0,n}, \psi_{1,n}) = 2 \cl E(Q)$. Moreover, there exists $R > 0$ such that  
\begin{align*}
\frac{1}{R} \leq R_n \leq R. 
\end{align*}
\end{lem}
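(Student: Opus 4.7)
The plan is to decompose $\cl E(\psi_{0,n}, \psi_{1,n}) = P_n + K_n(R_n)$ into a potential part $P_n$ (independent of $R_n$) and a kinetic part $K_n(R_n)$ (strictly increasing and continuous in $R_n$). The equation $\cl E(\psi_{0,n}, \psi_{1,n}) = 2\cl E(\vec Q) = 8\pi$ then reduces to solving $K_n(R_n) = 8\pi - P_n$ via the intermediate value theorem, with the uniform two-sided bound on $R_n$ extracted from precise asymptotic expansions as $\ell(t_n) \to 0$.

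First I would analyze the potential $P_n = \pi \int_0^\infty [(\partial_r \psi_{0,n})^2 + \sin^2 \psi_{0,n}/r^2] r\, dr$, which depends only on $\ell(t_n)$. Since $\psi_{0,n} = Q_{\ell(t_n)} - Q$ satisfies $\psi_{0,n}(0) = \psi_{0,n}(\infty) = 0$, the boundary term in the Bogomol'nyi identity \eqref{eq:bog} vanishes, giving $P_n = \pi \int_0^\infty (\partial_r \psi_{0,n} - \sin \psi_{0,n}/r)^2 r\, dr$. Using $\partial_r Q = \sin Q/r$ together with the half-angle identities $\sin(Q_\lambda/2) = r/\sqrt{\lambda^2+r^2}$ and $\sin(Q/2) = r/\sqrt{1+r^2}$, a direct manipulation yields the closed form
\begin{equation*}
P_n = 16\pi(1-\ell(t_n))^2 \int_0^\infty \frac{r^5\,dr}{(\ell(t_n)^2+r^2)^2(1+r^2)^2}.
\end{equation*}
This inner integral equals $1/2$ at $\ell(t_n)=0$, and rescaling $r = \ell(t_n)s$ in the deficit shows $P_n = 8\pi - 16\pi\,\ell(t_n) + O(\ell(t_n)^2|\log \ell(t_n)|)$. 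In particular $0 < 8\pi - P_n < 8\pi$ for $T$ small.

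Second, the sharp cutoff makes the kinetic energy explicitly computable; the substitution $u = \ell(t_n)^2 + r^2$ gives
\begin{equation*}
K_n(R) := \pi\|\psi_{1,n}\|_{L^2}^2 = 2\pi(\ell'(t_n))^2 \Big[\log\!\Big(1+\frac{R}{\ell(t_n)}\Big) - \frac{R}{R+\ell(t_n)}\Big].
\end{equation*}
The map $R \mapsto K_n(R)$ is continuous and strictly increasing on $[0,\infty)$, with $K_n(0)=0$ and $K_n(R) \to \infty$ as $R \to \infty$, so the intermediate value theorem produces a unique $R_n > 0$ solving $K_n(R_n) = 8\pi - P_n$.

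Finally, to obtain the uniform bound on $R_n$, I would insert the expansion $\log(1+R/\ell(t_n)) - R/(R+\ell(t_n)) = |\log \ell(t_n)| + \log R - 1 + O(\ell(t_n)/R)$, valid as $\ell(t_n) \to 0$ uniformly for $R$ in any compact subset of $(0,\infty)$, together with the identity $2\pi(\ell'(t_n))^2|\log \ell(t_n)| = 16\pi\,\ell(t_n) + 32\pi\,\ell(t_n)/|\log \ell(t_n)| + O(\ell(t_n)/|\log \ell(t_n)|^2)$ obtained from \eqref{eq:ellrelation2}. Combining these yields
\begin{equation*}
K_n(R) = 16\pi\,\ell(t_n) + \frac{16\pi\,\ell(t_n)}{|\log \ell(t_n)|}(\log R + 1) + O\!\Big(\frac{\ell(t_n)}{|\log \ell(t_n)|^2}\Big),
\end{equation*}
so that evaluating at $R = 1/(3e)$ and $R = 3/e$ makes $K_n(R) - (8\pi - P_n)$ negative and positive respectively, once $T$ is small enough. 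Monotonicity of $K_n$ then forces $R_n \in [1/(3e), 3/e]$ for all $n$, and in fact $R_n \to e^{-1}$; this proves the lemma with $R = 3/e$. The main technical effort is the accurate bookkeeping of these two asymptotic expansions, in particular extracting the $\ell(t_n)/|\log \ell(t_n)|$ correction in the kinetic energy (which is what actually pins down the value of $R_n$); apart from that, the proof is a direct application of the intermediate value theorem and the monotonicity of $K_n$.
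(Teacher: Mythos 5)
Your proposal is correct, and it reaches the same reduction as the paper by a somewhat different computation. The skeleton is shared: both split the energy into a part independent of $R_n$ plus the kinetic term, compute the kinetic term exactly (your $K_n(R) = 2\pi(\ell')^2\bigl[\log(1+R/\ell) - \tfrac{R}{R+\ell}\bigr]$ is the same quantity as the paper's \eqref{eq:psi1_n}--left side), and solve for $R_n$ by the intermediate value theorem applied to a monotone function of $R$. The difference is in how the energy deficit $8\pi - P_n$ is evaluated: the paper expands the two-bubble energy into interaction terms $4\int \La Q_{\ell}(\La Q)^3\,\tfrac{dr}{r}$ and $2\int(\La Q_\ell)^2(\La Q)^2\,\tfrac{dr}{r}$ (quoting the expansion from the $k$-equivariant paper of Jendrej--Lawrie) and estimates these asymptotically, whereas you use the Bogomol'nyi factorization \eqref{eq:bog} for the degree-$0$ map $Q_{\ell}-Q$ to get the exact closed form $P_n = 16\pi(1-\ell)^2\int_0^\infty \tfrac{r^5\,dr}{(\ell^2+r^2)^2(1+r^2)^2}$; I checked this identity and the resulting expansion $P_n = 8\pi - 16\pi\ell + O(\ell^2|\log\ell|)$, and they are correct and match the paper's \eqref{eq:s51}--\eqref{eq:s52}. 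Your endgame is also slightly cleaner and sharper: you trap $R_n$ between two fixed test radii by monotonicity (and even identify $R_n \to e^{-1}$), while the paper first applies the IVT and then deduces boundedness by rearranging \eqref{elllast} using $R_n/\ell(t_n)\to\infty$.

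One small bookkeeping point: the refined identity $2\pi(\ell')^2|\log\ell| = 16\pi\ell + 32\pi\ell/|\log\ell| + O(\ell/|\log\ell|^2)$ cannot be read off from \eqref{eq:ellrelation2} as stated, since that equation carries a relative error $O(|\log\ell|^{-1})$ which swamps the second term; you should instead differentiate \eqref{eq:ell_def} exactly, which gives $\ell'(|\log\ell|-1)=4t$ and hence $(\ell')^2|\log\ell| = 8\ell\,|\log\ell|^2/(|\log\ell|-1)^2$, from which your expansion follows. This only matters for pinning the constant $e^{-1}$; for the lemma's actual conclusion (a uniform two-sided bound on $R_n$) the coarser form $2\pi(\ell')^2|\log\ell| = 16\pi\ell + O(\ell/|\log\ell|)$ already yields $\log R_n = O(1)$, so there is no gap in the proof of the statement.
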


\begin{proof}
We expand the nonlinear energy and obtain (see Section 3 of \cite{JL})
\begin{align*}
2 \cl E(Q) &= \cl E(\psi_{0,n}, \psi_{1,n}) \\
&= 2 \cl E(Q) + \int_0^\infty \psi_{1,n}^2 r dr 
- 4 \int_0^\infty \La Q_{\ell(t_n)} (\La Q)^3 \frac{dr}{r} +
2 \int_0^\infty (\La Q_{\ell(t_n)})^2 (\La Q)^2 r \frac{dr}{r},
\end{align*}
so that 
\begin{align}\label{eq:psi1_n}
\int_0^\infty \psi_{1,n}^2 r dr = 
 4 \int_0^\infty \La Q_{\ell(t_n)} (\La Q)^3 \frac{dr}{r} -
2 \int_0^\infty (\La Q_{\ell(t_n)})^2 (\La Q)^2 \frac{dr}{r}. 
\end{align}
By a change of variables, the left side of \eqref{eq:psi1_n} is readily computed to be
\begin{align}
\begin{split}\label{eq:s53}
\int_0^\infty \psi_{1,n}^2 r dr &= (\ell'(t_n))^2 \int_0^{\sqrt{R_n/\la_n}} |\La Q|^2 r dr \\
&= 2 (\ell'(t_n))^2 \left [ 
\log \Bigl (1 + \frac{R_n}{\ell(t_n)} \Bigr ) + \frac{1}{1 + R_n/\ell(t_n)} 
- 1
\right ].  
\end{split}
\end{align}
For the right side of \eqref{eq:psi1_n}, we first consider the expression
\begin{align}
 4 \int_0^\infty \La Q_{\s} (\La Q)^3 \frac{dr}{r} &=
 64 \s \int_0^\infty \frac{r^3}{(\s^2 + r^2)(1 + r^2)^3} dr \\
 &= 64 \s \int_0^\s \frac{r^3}{(\s^2 + r^2)(1 + r^2)^3} dr dr
 + 64 \s \int_\s^\infty \frac{r^3}{(\s^2 + r^2)(1 + r^2)^3} dr    
\end{align}
where for brevity we have set $\s = \ell(t_n)$. Now 
\begin{align*}
\int_0^\s \frac{r^3}{(\s^2 + r^2)(1 + r^2)^3} dr \lesssim \int_0^\s r dr \lesssim \s^2.  
\end{align*}
Since $\frac{1}{\s^2 + r^2} = \frac{1}{r^2} + \frac{\s^2}{(\s^2 + r^2)r^2}$, we have 
\begin{align*}
\int_\s^\infty \frac{r^3}{(\s^2 + r^2)(1 + r^2)^3} dr
&= \int_\s^\infty \frac{r}{(1 + r^2)^3} dr 
+ \s^2 \int_\s^\infty \frac{r}{(1+r^2)^3(\s^2 + r^2)} \\
&= \frac{1}{4} + O(\s^2 |\log \s|). 
\end{align*}
We conclude that 
\begin{align}\label{eq:s51}
 4 \int_0^\infty \La Q_{\ell(t_n)} (\La Q)^3 \frac{dr}{r} = 16 \ell(t_n) \Bigl [1 + 
 O(\ell(t_n)^2 |\log \ell(t_n)|)\Bigr ].  
\end{align}
By a similar argument we also obtain 
\begin{align}\label{eq:s52}
 \int_0^\infty (\La Q_{\ell(t_n)})^2 (\La Q)^2 \frac{dr}{r} \lesssim 
 \ell(t_n)^2 |\log \ell(t_n)|. 
\end{align}
Combining \eqref{eq:psi1_n}, \eqref{eq:s53}, \eqref{eq:s51} and \eqref{eq:s52} we obtain 
\begin{align}
\log \Bigl (1 + \frac{R_n}{\ell(t_n)} \Bigr ) + \frac{1}{1 + R_n/\ell(t_n)}
- 1
= \frac{8 \ell(t_n)}{(\ell'(t_n))^2} \Bigl [ 1 + O(\ell(t_n)|\log \ell(t_n)|) \Bigr ].
\end{align} 
Thus by \eqref{eq:ellrelation2} 
\begin{align}\label{elllast} 
\log \Bigl (1 + \frac{R_n}{\ell(t_n)} \Bigr ) + \frac{1}{1 + R_n/\ell(t_n)}
- 1
= |\log \ell(t_n)| \Bigl [ 1 + O(|\log \ell(t_n)|^{-1}) \Bigr ]. 
\end{align}
The function $f(x) = \log (1 + x) + \frac{1}{1+x} - 1$ is continuous, is equal to 0 when $x = 0$ and tends to $\infty$ as $x \rar \infty$.  Thus, by the intermediate value theorem and as long as $T$ is sufficiently small, there exist $R_n$ satisfying \eqref{elllast} for all $n$.  From \eqref{elllast} we see that $R_n / \ell(t_n) \rar \infty$ as $n \rar 0$. Rearranging the previous expression yields 
\begin{align*}
\log R_n = 1 - \log \Bigl (1 + \frac{\ell(t_n)}{R_n} \Bigr ) - \frac{1}{1 + R_n/\ell(t_n)} + O(1).  
\end{align*}
Since $R_n / \ell(t_n) \rar \infty$, the right side of the previous expression is bounded.  This concludes the proof of the lemma.  
\end{proof}

Let $\vec \psi_n(t)$ denote the solution to \eqref{eq:wmk} with initial data 
$\vec \psi_n(t_n) = (\psi_{0,n}, \psi_{1,n})$.  We remark that the previous computations yield
\begin{align} \label{eq:psi_1n_L2}
\| \psi_{1,n} \|_{L^2}^2 
= 16 \ell(t_n) \Bigl [
1 + O(\ell(t_n)^2 |\log \ell(t_n)| )
\Bigr ]
\end{align}
Therefore, as long as $T > 0$ is small, for all $t$ in a neighborhood of $t_n$ the modulation parameters $\la_n(t)$ and $\mu_n(t)$ are well defined for $\vec \psi_n(t)$ and 
\begin{align*}
\la_n(t_n) = \ell(t_n), \quad \mu_n(t_n) = 1.  
\end{align*}
If we denote $g_n(t) := \psi_n(t) - (Q_{\la_n(t)} - Q_{\mu_n(t)})$ and $\dot g_n(t) = \p_t \psi_n(t)$, then 
\begin{align*}
g_n(t_n) = 0, \quad \dot g_n(t_n) = -\ell'(t_n) \La Q_{\underline{\ell(t_n)}} \chi_{\sqrt{R_n \ell(t_n)}}.  
\end{align*}
Let $\zeta_n(t)$ and $b_n(t)$ be defined as in \eqref{eq:zetadef}, \eqref{eq:bdef} for each $\vec \psi_n$, i.e. 
\EQ{ 
	\zeta_n(t) &:= 2 \la_n(t) |\log (\la_n(t)/\mu_n(t))| - \langle \chi_{M \sqrt{\la_n(t) \mu_n(t)}}\Lambda Q_{\uln{\lambda_n(t)}} \mid g_n(t)\rangle, \\
	b_n(t)&:= - \ang{ \chi_{M \sqrt{\la_n(t)\mu_n(t)}} \La Q_{\underline{\la_n(t)}}  \mid \dot g_n(t)}  - \ang{ \dot g_n(t) \mid \A_0( \la_n(t) ) g_n(t)}.
}
\begin{cor}\label{l:zeta'}
As long $M > 0$ is sufficiently large we have 
\begin{align}
b_n(t_n) = 8 t_n \left [ 1 + O ( |\log \ell(t_n)|^{-1}) \right ], 
\end{align}
\end{cor}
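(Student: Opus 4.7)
The plan is to exploit the especially simple form of $\vec\psi_n$ at the initial time $t = t_n$, where $g_n(t_n) = 0$ identically. In particular, the second summand $\langle \dot g_n(t) \mid \mathcal{A}_0(\lambda_n(t)) g_n(t)\rangle$ in the definition of $b_n(t)$ vanishes at $t = t_n$, so that, using $\lambda_n(t_n) = \ell(t_n)$, $\mu_n(t_n) = 1$, and the explicit formula for $\dot g_n(t_n) = \psi_{1,n}$, one is reduced to computing
\begin{equation*}
b_n(t_n) = \ell'(t_n)\,\big\langle \chi_{M\sqrt{\ell(t_n)}} \Lambda Q_{\underline{\ell(t_n)}} \,\big|\, \chi_{\sqrt{R_n \ell(t_n)}} \Lambda Q_{\underline{\ell(t_n)}}\big\rangle.
\end{equation*}

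Next, I would rescale by $s = r/\ell(t_n)$ to rewrite the inner product as a weighted integral of $(\Lambda Q(s))^2 s$ against a product of two cutoffs, one activating at $s \sim M/\sqrt{\ell(t_n)}$ and the other at $s \sim \sqrt{R_n/\ell(t_n)}$. Using Lemma~\ref{l:R_nlem} (so $R_n$ is bounded above and below independently of $n$), both cutoff scales are of size $\sim 1/\sqrt{\ell(t_n)}$, and the integral splits as $\int_0^A (\Lambda Q)^2 s\,ds$ for some $A\sim 1/\sqrt{\ell(t_n)}$, plus a remainder supported on an annulus where $s \sim 1/\sqrt{\ell(t_n)}$. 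Because $|\Lambda Q(s)|^2 \sim 4/s^2$ as $s\to\infty$, the remainder is $O(1)$, while the main piece can be evaluated explicitly via $u = 1 + s^2$ to give
\begin{equation*}
\int_0^A (\Lambda Q(s))^2 s\,ds = 2\log(1+A^2) + O(1) = 2|\log \ell(t_n)| + O(1).
\end{equation*}

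Combining these two observations yields $b_n(t_n) = \ell'(t_n)\big[2|\log \ell(t_n)| + O(1)\big]$. The conclusion then follows by invoking the asymptotic~\eqref{eq:ell_deriv}, which gives $\ell'(t_n)|\log \ell(t_n)| = 4 t_n[1 + O(|\log\ell(t_n)|^{-1})]$ and $\ell'(t_n) = O(t_n/|\log \ell(t_n)|)$, so that the $O(1)$ contribution becomes $O(t_n/|\log \ell(t_n)|)$ after multiplication by $\ell'(t_n)$.

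The only technical point that requires some care is confirming that the precise shape of the two cutoffs (smooth versus sharp, different activation scales $M$ versus $\sqrt{R_n}$) does not disturb the leading-order logarithm; this is easy because both transition regions live at radii of the same order $1/\sqrt{\ell(t_n)}$, and on that annulus $(\Lambda Q)^2 s$ is integrable to an $O(1)$ quantity. No modulation theory or coercivity is needed here: the whole corollary is a direct substitution into the definition of $b_n$ combined with the explicit asymptotics of $\ell$ built into~\eqref{eq:ell_def}.
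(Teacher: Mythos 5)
Your proposal is correct and follows essentially the same route as the paper: evaluate $b_n$ at $t=t_n$, where $g_n(t_n)=0$ kills the $\mathcal{A}_0$ term, reduce $b_n(t_n)$ to the pairing of $\chi_{M\sqrt{\ell(t_n)}}\Lambda Q_{\underline{\ell(t_n)}}$ with $\psi_{1,n}$, observe this equals $\ell'(t_n)\bigl[2|\log \ell(t_n)|+O(1)\bigr]$, and conclude via \eqref{eq:ell_deriv}. The only minor difference is that the paper chooses $M^2>R$ so the pairing is literally $\|\psi_{1,n}\|_{L^2}^2/\ell'(t_n)$ and then reuses \eqref{eq:psi_1n_L2} together with \eqref{eq:ellrelation2}, whereas you recompute the logarithmic integral directly (needing only the two-sided bound on $R_n$ from Lemma \ref{l:R_nlem} and the fact that the cutoff transition regions contribute $O(1)$); the two are the same calculation rearranged, and your variant has the small advantage of not depending on how $M$ compares with $R_n$.
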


\begin{proof}
%We compute
%\begin{align*}
%\frac{d}{dt} \frac{\zeta_n(t)}{\mu_n(t)} \Big |_{t = t_n} = 
%\zeta'(t_n) - \mu_n'(t_n) \zeta_n(t_n). 
%\end{align*}
%By \eqref{eq:mu'}
%\begin{align}\label{eq:s55}
%|\mu_n'(t_n)\zeta_n(t_n)| \lesssim \zeta_n(t_n)^{3/2} \lesssim t_n^{3/2} 
%\end{align}
%For $\delta > 0$ small
%\begin{align*}
%|\zeta_n'(t_n) - b_n(t_n)| \leq \de \zeta_n(t_n)^{1/2} 
%\end{align*}
%by \eqref{eq:kala'}.  
Let $M^2$ be larger than $R$ given by Lemma \ref{l:R_nlem}.  Then by \eqref{eq:psi_1n_L2} and \eqref{eq:ellrelation1} we have 
\begin{align*}
b_n(t_n) &=  -\ang{ \chi_{M \sqrt{\ell_n(t_n)}} \La Q_{\underline{\ell(t_n)}} \mid \dot g_n(t_n)} \\
&= \frac{1}{\ell'(t_n)} \| \psi_{1,n}(t_n) \|_{L^2}^2 \\
&= \frac{16 \ell(t_n)}{\ell'(t_n)} \Bigl [ 1 + O(\ell(t_n)^2 |\log \ell(t_n)|) \Bigr ] \\
&= 2 \ell'(t_n) |\log \ell(t_n)| \Bigl [ 1 + O( |\log \ell(t_n)|^{-1}) \Bigr ] \\ 
&= 8 t_n \Bigl [ 1 + O( |\log \ell(t_n)|^{-1}) \Bigr ].
\end{align*}
\end{proof}

Let $L = L_0 > 0$, $M = M_0 > 0$ and $\eta_1 > 0$ be chosen so that the conclusions of Proposition \ref{p:modp2} hold with $\de = \frac{1}{2018}$ and so that the conclusion of Corollary \ref{l:zeta'} holds.  
Let 
\begin{align*}
T_n' = \sup \Bigl \{ t \in [t_n,T] \mid \vec \psi_n(s) \mbox{ exists, } \bfd_+(\vec \psi_n(s) < \eta_1, \mbox{ and } \mu_n(s) \in (1/2, 2) \quad \forall s\in[t_n,t]  \Bigr \}.
\end{align*}
We will show that $T_n' = T$ as long as $T$ is sufficiently small.  

Let $t \in [t_n,T'_n]$.  By \eqref{eq:kala'}, \eqref{eq:b-bound} and our assumption on $\mu_n(t)$ 
\begin{align*}
\zeta_n(t) &= \zeta_n(t_n) + \int_{t_n}^t \zeta_n'(s) ds \\
&\leq \zeta_n(t_n) + \int_{t_n}^t [|b_n(s)| +  \zeta_n(s)^{1/2}] ds  \\
&\leq \zeta_n(t_n) + 6 \int_{t_n}^t \zeta_n(s)^{1/2} ds. 
\end{align*}
Thus, 
\begin{align*}
\zeta_n(t) \leq 2 \zeta_n(t_n) + 36 (t - t_n)^2 
\end{align*}
Since $\zeta_n(t_n) = 2 \ell(t_n) |\log \ell(t_n)| = 4 t_n^2$, we conclude that 
\begin{align}\label{eq:zetabound}
\zeta_n(t) \leq 148 t^2.   
\end{align}
Then by \eqref{eq:bound-on-l} 
\begin{align}\label{eq:lambound}
\la_n(t) |\log \la_n(t)| \leq 75 t^2. 
\end{align}
We now consider $\mu_n(t)$.  By the fundamental theorem of calculus, \eqref{eq:mu'}, \eqref{eq:bound-on-l} and \eqref{eq:zetabound} there exists an absolute constant $\beta > 0$ such that 
\begin{align}\label{eq:mubound}
|\mu_n(t) - 1| \leq \beta t^2.   
\end{align}
By \eqref{eq:gH}, \eqref{eq:lambound} and our assumption on $\mu_n$ there exists a constant $\al > 0$ such that 
\begin{align}\label{s56}
\| \vec \psi_n(t) - (\vec Q_{\la_n(t)} - Q_{\mu_n(t)}) \|^2_{\HH_0} \leq \al t^2. 
\end{align}
In summary, we have shown that 
\begin{align}
\la_n(t) |\log \la_n(t)| &\leq 75 t^2, \\
|\mu_n(t) - 1 | &\leq \beta t^2, \\
\bfd_+(\vec \psi_n(t)) &\leq (\al + 150) t^2. 
\end{align}
By a continuity argument, it follows that $T_n' = T$ provided that  $\vec \psi_n(t)$ is defined on $[t_n,T]$.  We now prove this fact. 

Let $t \in [t_n,T'_n]$.  By Corollary \ref{l:zeta'} and \eqref{eq:b'lb} we have
\begin{align}\label{eq:b_nlower}
b_n(t) \geq \frac{1}{2} \Bigl (8 - \frac{1}{2018} \Bigr ) (t - t_n) + 8 t_n 
\Bigl[1 + O(|\log \ell(t_n)|^{-1}) \Bigr ] 
\geq 3 (t - t_n) + 5 t_n \geq 3 t. 
\end{align}
By \eqref{eq:kala'}, \eqref{eq:b_nlower} and \eqref{eq:zetabound} we have
\begin{align*}
\zeta_n'(t) \geq b_n(t) - \frac{2}{2018} \zeta_n^{1/2}(t) \geq 3 t - \frac{2 \sqrt{148}}{2018} t \geq 2 t.
\end{align*}
By the fundamental theorem of calculus we conclude that 
\begin{align}
\zeta_n(t) \geq \zeta_n(t_n) + t^2 - t_n^2 
= 4 t_n^2 + t^2 - t_n^2 \geq t^2.  
\end{align}
By \eqref{eq:bound-on-l}, the previous implies that 
\begin{align}\label{lamlower}
\la_n(t) |\log \la_n(t)| \geq \frac{1}{3} t^2.
\end{align}
The estimates \eqref{lamlower}, \eqref{eq:lambound} and \eqref{s56} imply
\begin{align}\label{intermediate}
\inf_{\substack{\mu \in [1/2,2] \\
		\la |\log \la| \in [t^2/3,75t^2]}}  \| \vec \psi_n(t) - (Q_\la - Q_\mu) \|_{\HH_0}^2 \leq\al t^2
\end{align}
on $[t_n,T_n']$.  By Corollary A.4 of \cite{JJ-AJM} we conclude that the interval of existence of $\vec \psi_n$ strictly includes $[t_n,T_n']$ as long as long as $T$ is small. Thus, we have proved that $T_n' = T$. 

The bound \eqref{intermediate} also implies that we may pass to a weak limit and obtain our desired blow-up solution.  Indeed, for any $T_0 < T$ 
\begin{align}
\inf_{\substack{\mu \in [1/2,2] \\
		\la |\log \la| \in [T_0^2/3,75T^2]}}  \| \vec \psi_n(t) - (Q_\la - Q_\mu) \|_{\HH_0}^2 \leq\al T^2, \quad \forall t \in [T_0,T], \forall n. 
\end{align}
By Corollary A.6 of \cite{JJ-AJM} we can conclude, after shrinking $T$ and extracting subsequences if necessary, there exists a solution $\vec \psi_c(t)$ defined on $(0,T]$ such that $\vec \psi_n(t) \rightharpoonup_n \vec \psi_c(t)$ for all $t \in (0,T]$.  By weak convergence and \eqref{intermediate} 
\begin{align*}
\inf_{\substack{\mu \in [1/2,2] \\
		\la |\log \la| \in [t^2/3,75t^2]}}  \| \vec \psi(t) - (Q_\la - Q_\mu) \|_{\HH_0}^2 \leq \al t^2
\end{align*}
Thus, $\vec \psi_c$ is the desired solution with blow-up time $T_- = 0$. 
\qed

\bibliographystyle{plain}
\bibliography{researchbib}
\bigskip

\centerline{\scshape Casey Rodriguez}
\smallskip
{\footnotesize
 \centerline{Department of Mathematics, Massachusetts Institute of Technology}
\centerline{77 Massachusetts Ave, 2-246B, Cambridge, MA 02139, U.S.A.}
\centerline{\email{caseyrod@mit.edu}}
}

\end{document}